\definecolor{blue}{rgb}{0,0,0.9}
\definecolor{red}{rgb}{0.9,0,0}
\definecolor{green}{rgb}{0,0.9,0}
\newcommand{\floor}[1]{\left\lfloor#1\right\rfloor}
\newcommand{\ceil}[1]{\left\lceil#1\right\rceil}
\newcommand{\be}{\begin{equation}}
\newcommand{\ee}{\end{equation}}
\newcommand{\ben}{\begin{enumerate}}
\newcommand{\een}{\end{enumerate}}
\newcommand{\bfr}{\begin{frame}}
\newcommand{\efr}{\end{frame}}
\newcommand{\bit}{\begin{itemize}}
\newcommand{\eit}{\end{itemize}}
\newcommand{\R}{\mathbb R}
\newcommand{\E}{\mathbb{E}}
\newcommand{\cA}{\mathcal A}
\newcommand{\cB}{\mathcal B}
\newcommand{\cC}{\mathcal C}
\newcommand{\cD}{\mathcal D}
\newcommand{\cE}{\mathcal E}
\newcommand{\cF}{\mathcal F}
\newcommand{\cG}{\mathcal G}
\newcommand{\cL}{\mathcal L}
\newcommand{\cM}{\mathcal M}
\newcommand{\cS}{\mathcal S}
\newcommand{\cU}{\mathcal U}
\newcommand{\wtnabla}{\widetilde\nabla}
\DeclareMathOperator*{\argmin}{argmin}
\def\norm#1{\left\|#1\right\|}
\def\inprod#1#2{\langle#1,\,#2\rangle}
\def\eps{\epsilon}
\def\Mat{{\rm Mat}}
 \newtheorem{assumption}{Assumption}
\def\va{{\bm{a}}}
\def\vb{{\bm{b}}}
\def\vc{{\bm{c}}}
\def\vd{{\bm{d}}}
\def\vg{{\bm{g}}}
\def\vh{{\bm{h}}}
\def\vp{{\bm{p}}}
\def\vu{{\bm{u}}}
\def\vv{{\bm{v}}}
\def\vw{{\bm{w}}}
\def\vx{{\bm{x}}}
\def\vy{{\bm{y}}}
\def\vz{{\bm{z}}}
\def\mA{{\bm{A}}}
\def\mM{{\bm{M}}}
\def\mW{{\bm{W}}}
\begin{document}

\title{Nonconvex Stochastic Bregman Proximal Gradient Method with Application to Deep Learning}

\author{\name Kuangyu Ding \email  kuangyud@u.nus.edu \\
       \addr Department of Mathematics\\
       National University of Singapore\\
       10 Lower Kent Ridge Road, Singapore 119076
       \AND
       \name Jingyang Li \email  li\_jingyang@u.nus.edu \\
       \addr Department of Mathematics\\
       National University of Singapore\\
       10 Lower Kent Ridge Road, Singapore 119076
       \AND
       \name Kim-Chuan Toh \email  mattohkc@nus.edu.sg \\
       \addr Department of Mathematics \\ Institute of Operations Research and Analytics\\
       National University of Singapore\\
       10 Lower Kent Ridge Road, Singapore 119076}

\editor{}

\maketitle

\begin{abstract}
Stochastic gradient methods for minimizing nonconvex composite objective functions typically rely on the Lipschitz smoothness of the differentiable part, but this assumption fails in many important problem classes like quadratic inverse problems and neural network training, leading to instability of the algorithms in both theory and practice. To address this, we propose a family of stochastic Bregman proximal gradient (SBPG) methods that only require smooth adaptivity. SBPG replaces the quadratic approximation in SGD with a Bregman proximity measure, offering a better approximation model that handles non-Lipschitz gradients in nonconvex objectives. We establish the convergence properties of vanilla SBPG and show it achieves optimal sample complexity in the nonconvex setting. Experimental results on quadratic inverse problems demonstrate SBPG's robustness in terms of stepsize selection and sensitivity to the initial point. Furthermore, we introduce a momentum-based variant, MSBPG, which enhances convergence by relaxing the mini-batch size requirement while preserving the optimal oracle complexity. We apply MSBPG to the training of deep neural networks, utilizing a polynomial kernel function to ensure smooth adaptivity of the loss function. Experimental results on benchmark datasets confirm the effectiveness and robustness of MSBPG in training neural networks. Given its negligible additional computational cost compared to SGD in large-scale optimization, MSBPG shows promise as a universal open-source optimizer for future applications.

\end{abstract}

\begin{keywords}
Nonconvex stochastic algorithm, Bregman distance, Smooth adaptivity, Deep neural network, Algorithmic stability
\end{keywords}

\section{Introduction}\label{sec-introduction}
In this paper, we present and analyze a family of nonconvex stochastic Bregman proximal gradient methods (SBPG) for solving the following generic stochastic minimization problem:
\begin{equation}
\min_{\vx\in \overline{C}}\;\E_{{\bm\xi}}[f(\vx,{\bm\xi})]+R(\vx),
\label{min-prob}
\end{equation}
where $f(\cdot,{\bm\xi})$ is a nonconvex differentiable function on $\overline{C}$, $R$ is a proper lower-semicontinuous convex function, ${\bm\xi}$ is a random variable, and $\overline{C}$ is the closure of $C$, which is a nonempty convex open subset of $\R^d$. We denote $F(\vx)\coloneqq\E_{{\bm\xi}}[f(\vx,{\bm\xi})]$, and $\Phi(\vx)\coloneqq F(\vx)+R(\vx)$. This type of stochastic minimization problem is common in machine learning and statistics \citep{hastie2009elements, shapiro2021lectures, zhang2004solving}, where the optimizer has limited access to the distribution of $\bm\xi$ and can only draw samples from it. In many instances, the smooth part of the objective function $F(\vx)$ can be formulated as a finite-sum structure $F(\vx)=\frac{1}{n}\sum_{i=1}^nf_i(\vx)$. Although the distribution of $\bm\xi$ is known in such cases, when $n$ is extremely large, calculating the true gradient for the smooth part of the objective function becomes extremely expensive. As a result, stochastic first-order methods, originating from the work of \citet{robbins1951stochastic}, have emerged as the prevailing approach for solving these large-scale optimization problems. In particular, stochastic (proximal) gradient descent and its numerous variants \citep{duchi2011adaptive,duchi2009efficient,gu2020unified,kingma2014adam,JMLR:v18:16-410,wang2022riemannian} have been widely used in large-scale stochastic optimization for machine learning \citep{lecun2015deep,shapiro2021lectures,zhang2004solving}. From a modeling perspective, stochastic (proximal) gradient descent can be viewed as minimizing a sequence of upper quadratic approximations of the nonconvex objective $\Phi(\vx)$:
\begin{equation}
{\vx^{k+1}}=\argmin_{\vx\in \overline{C}}\left\{\underbrace{F({\vx^k},{\bm\Xi}_{k})+\inprod{\widetilde\nabla_k}{\vx-{\vx^k}}+\frac{1}{2\alpha_k}\|\vx-{\vx^k}\|^2}_{F_{{\vx^k}}(\vx):\;model\;of\;F\;at\;{\vx^k}}+R(\vx)\right\},
\label{convention-model} 
\end{equation}
where $F(\vx^k,{\bm\Xi}_k)\coloneqq\frac{1}{|{\bm\Xi}_k|}\sum_{{\bm\xi}\in{\bm\Xi}_k}f(\vx^k,{\bm\xi})$, ${\bm\Xi_k}$ is the set of samples of ${\bm\xi}$ at the $k$-th iteration, and $\widetilde{\nabla}_k$ is an estimator of the exact gradient $\nabla F({\vx^k})$. This modeling perspective is well-established in deterministic optimization and has been used in methods such as Newton method, Gauss-Newton method, bundle method, and trust-region method, as discussed in various sources such as \citet{hiriart1993convex,nesterov2003introductory,lin2007trust,paren2022stochastic}.

Despite their widespread use, stochastic gradient methods \eqref{convention-model} face several well-known challenges both in theory and practice. One of the key assumptions underlying the analysis of these methods is the Lipschitz continuity of the gradient of the differentiable part, which is critical for ensuring convergence and establishing complexity results. However, this assumption is not always valid. For instance, even a simple function like $F(x)=x^4$ does not admit a globally Lipschitz gradient over $\R$, illustrating the limitations in analyzing stochastic gradient methods in more general settings. In addition, choosing the appropriate stepsize is another challenge in both the theoretical analysis and the practical usage of stochastic gradient methods. {In practice, the stepsize plays a decisive role—arguably the most important—in determining the convergence behavior of the algorithm. Finding the optimal stepsize can be time-consuming, as engineers often need to conduct numerous experiments to fine-tune it. From a theoretical perspective, choosing a cautious stepsize is necessary to ensure the descent property at each iteration, which is typically proportional to the inverse of the Lipschitz smoothness parameter. As a result, the absence of Lipschitz smoothness can lead to instability in classical stochastic first-order methods, complicating both their theoretical analysis and practical implementation.}

To address these issues, classical approaches often resort to either line search or more complicated inner loops, but these methods can negatively impact the efficiency of the algorithm or even become intractable in a stochastic setting. For example, stochastic proximal point algorithm (PPA) models the approximation of $F(\vx)$ in \eqref{convention-model} as $F_{{\vx^k}}(\vx)=F(\vx,{\bm\xi}_k)+\frac{1}{2\alpha_k}\|\vx-{\vx^k}\|^2$ \citep{bertsekas2011incremental,bianchi2016ergodic,patrascu2017nonasymptotic,rockafellar1976monotone}, making the selection of stepsize $\alpha_k$ more robust than the original model \eqref{convention-model}. However, the application of stochastic PPA is limited due to the difficulty of solving the subproblems, especially for complicated objectives, such as training deep neural networks. In such cases, solving the subproblem is almost as difficult as solving the original problem, rendering the method impractical. Recently, \citet{bauschke2017descent,lu2018relatively} have proposed using Bregman proximity measures to relax the assumption of gradient Lipschitz continuity to smooth adaptivity. The Bregman gradient method was first introduced as the mirror descent scheme by \citet{nemirovskij1983problem} for minimizing convex nonsmooth functions. From the modeling perspective, Bregman methods consider the following subproblem at each iteration:
\begin{equation}
{\vx^{k+1}}=\argmin_{\vx\in \overline{C}}\left\{\underbrace{F({\vx^k},{\bm\Xi}_{k})+\inprod{\widetilde\nabla_k}{\vx-{\vx^k}}+\frac{1}{\alpha_k}\cD_\phi(\vx,{\vx^k})}_{F_{{\vx^k}}(\vx):\;model\;of\;F\;at\;{\vx^k}}+R(\vx)\right\},
\label{model-Bregman}
\end{equation}
where $\cD_\phi$ is the Bregman distance induced by the kernel function $\phi$. To illustrate the advantage of the Bregman proximity model, we present a toy example. Consider the objective function $F(x)=x^4$, which does not admit a globally Lipschitz continuous gradient. We compare the performance of the upper quadratic approximation model \eqref{convention-model} and Bregman proximity model \eqref{model-Bregman}. As shown in Figure \eqref{model-base-figure}(a), the Bregman proximity model \eqref{model-Bregman} ($F_2(x)$) with the kernel function $\phi(x)=\frac{1}{2}x^2+\frac{1}{4}x^4$ can provide a closer approximation to $F(x)$ than the upper quadratic approximation model \ref{convention-model} ($F_1(x)$), as the yellow curve stays closer to the curve of the objective function $F(x)=x^4$. This improved approximation enables the Bregman gradient method to generate ${x^{k+1}}$ that makes more substantial progress toward the optimal solution ($x^*=0$) compared to the gradient descent method, as shown in Figure \ref{model-base-figure}(b).
\begin{figure}
\centering
\subfigure[Two models' approximations of $F(x)$]{
\includegraphics[width=5.5cm,height=4.5cm]{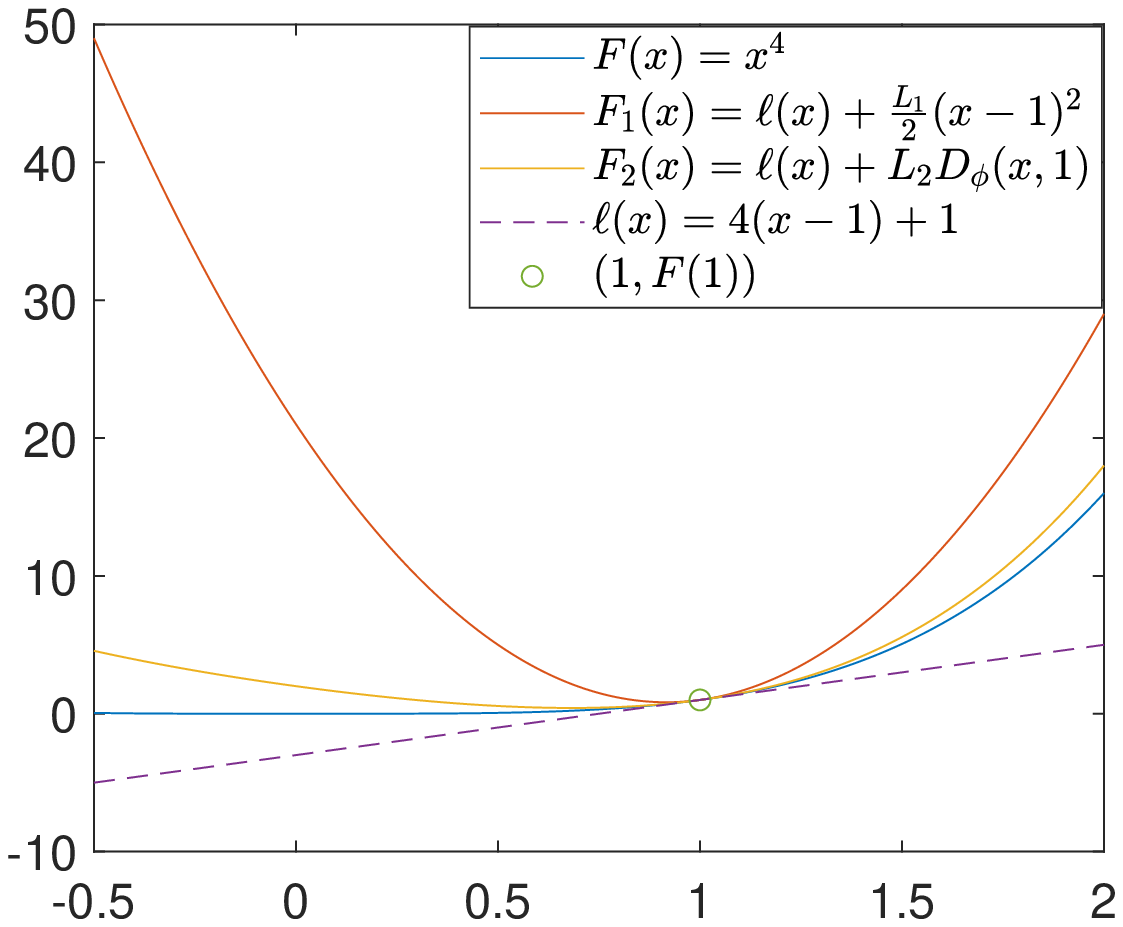}
}
\subfigure[Zoomed-in of version plot (a)]{
\includegraphics[width=5.5cm,height=4.5cm]{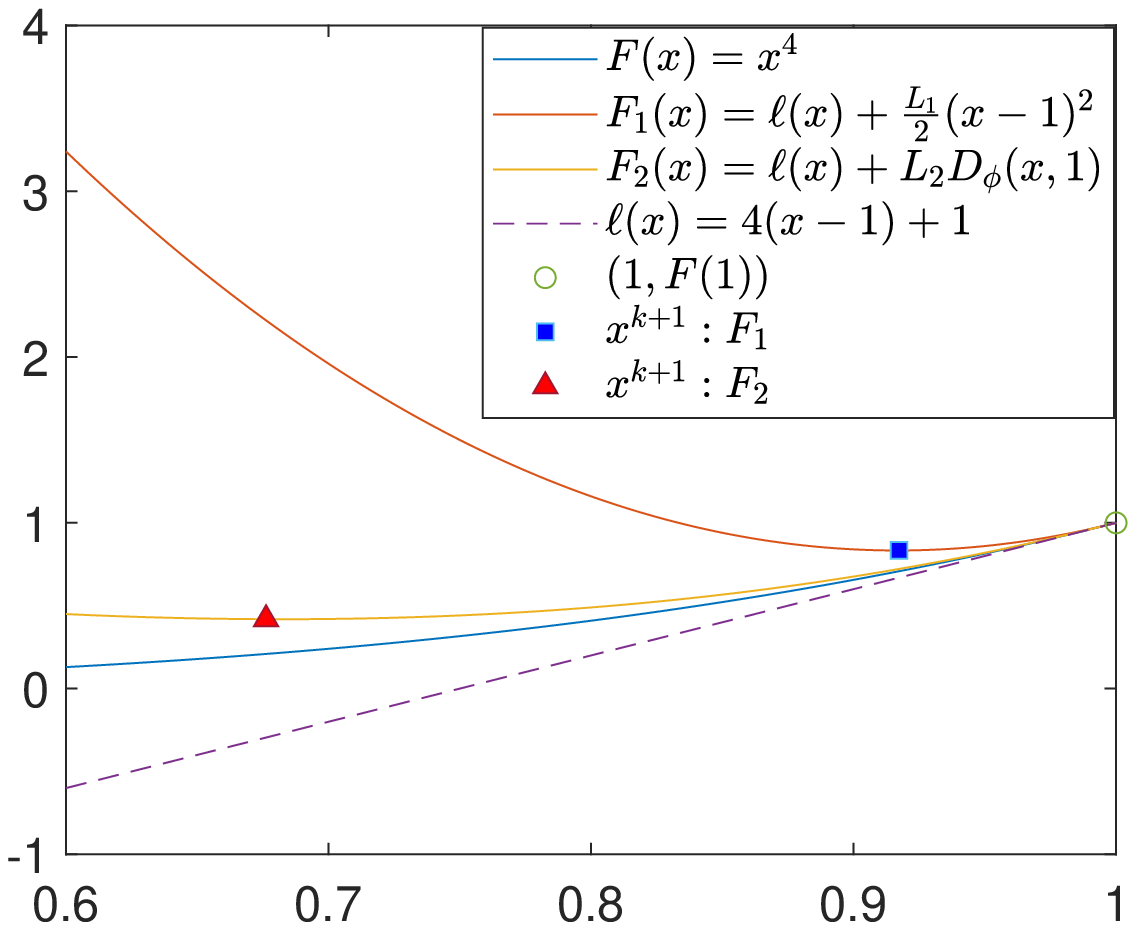}
}
\caption{For function $F(x)=x^4$, which does not admit a globally Lipschitz continuous gradient. We restrict the feasible set to $[-0.5,2]$. Consider the models \eqref{convention-model} and \eqref{model-Bregman} of $F$ at ${x^k}=1$. The Lipschitz smooth constant of $F$ with respect to the kernel $\phi(x)=\frac{1}{2}x^2$ is 48. The smooth adaptivity constant of $F$ with respect to the kernel $\phi(x)=\frac{1}{2}x^2+\frac{1}{4}x^4$ is 4. The figure in (b) is a zoomed-in version of the plot in (a) for the range $[0.6,1]$. The unique minimum of $F(x)$ is at $x=0$.}
\label{model-base-figure}
\end{figure}

While several stochastic extensions of Bregman methods based on smooth adaptivity assumptions have been developed recently, the existing literature primarily focuses on stochastic convex problems \citep{dragomir2021fast, hanzely2021fastest, lu2019relative}. Convergence analyses for Bregman methods in nonconvex settings \citep{latafat2022bregman, wang2023bregman} rely heavily on variance reduction techniques and finite sum structure. However, these approaches are either memory-intensive or require periodic computation of the full gradient, making them impractical for large-scale problems such as deep neural networks as demonstrated in \citep{defazio2019ineffectiveness}.

{Beyond the optimization literature, recent works in learning theory \citep{azizan2019stochastic, li2021implicit, sun2022mirror, sun2023unified} have focused on specific scenarios, such as highly overparameterized models or linear models for classification and regression tasks, examining the implicit bias of Bregman-type methods. For instance, \citep{azizan2019stochastic} demonstrates that in overparameterized models, mirror descent iterates converge to the global minimum closest  to the initial point in terms of Bregman divergence, provided the initial point is near the minimum set. Other works \citep{li2021implicit, sun2022mirror, sun2023unified} explore convergence towards the direction that maximizes the margin in classification problems. Our work, however, focuses on the optimization properties of stochastic Bregman proximal gradient (SBPG) in general nonconvex optimization problems, which presents a different perspective from the implicit bias literature. As a result, we will not review the broader literature on implicit bias in this context.}

As we can see, stochastic Bregman methods have not been fully explored in the general large-scale nonconvex problems such as training neural networks, and rigorous numerical evaluations of their performance are limited. Additionally, the current literature gives insufficient attention to the robustness of stochastic Bregman methods, particularly regarding the selection of stepsizes and initial points—factors that can have a substantial impact on their effectiveness in large-scale applications.

In this paper, we consider stochastic Bregman proximal gradient methods (SBPG) for nonconvex problems with application to the training of deep neural networks. We establish the convergence result of a vanilla SBPG without Lipschitz smoothness assumption for nonconvex problems. {Moreover, we propose a momentum-based variant, denoted as MSBPG, and prove that it offers improved convergence properties compared to vanilla SBPG, particularly through a relaxed requirement on the mini-batch size. Both methods exhibit sample complexity of $\mathcal{O}(\epsilon^{-4})$, which matches the optimal bound for stochastic first order methods \citep{arjevani2023lower}.} We apply MSBPG to the training of deep neural networks with a polynomial kernel function, which ensures the smooth adaptivity of the loss function. {Through an implicit reformulation, we observe that MSBPG enhances the robustness of neural network training by mitigating the gradient explosion phenomenon. For numerical illustrations, we conduct experiments on quadratic inverse problems (QIP) and testify vanilla SBPG's robustness to stepsize selection and initial point scaling. We also conduct extensive experiments on training deep neural networks for image classification and LSTMs for language modeling by employing MSBPG, which is well-suited for solving large-scale problems. Experimental results on representative benchmarks show that MSBPG converges to stationary points and, in some cases, nearly achieves the global minimum (i.e., training loss approaches zero). Moreover, MSBPG achieves comparable generalization performance to widely-used optimization algorithms, such as SGD~\citep{robbins1951stochastic}, Adam~\citep{kingma2014adam}, and AdamW~\citep{loshchilov2017decoupled}, and in some instances, even outperforms these methods. The polynomial kernel function employed contributes to improved algorithmic stability compared to standard SGD, which may partly explain the good generalization performance of MSBPG observed in our experiments (see Appendix B for details). As the primary focus of this paper is on the optimization properties of Bregman-type methods, we only provide a preliminary analysis of algorithmic stability 
in the appendix.} Furthermore, compared with standard SGD, SBPG/MSBPG is more robust to large stepsize and initial point scaling, which are the common reasons behind gradient explosion. 

 To summarize, our contributions are as follows:
\begin{itemize}
    \item[1.] {\textbf{Development of SBPG for General Nonconvex Composite Problems:} We investigate Stochastic Bregman Proximal Gradient (SBPG) method to solve nonconvex problems without finite-sum structure, which employs Bregman distance to handle the non-Lipschitz gradient continuity. we establish its convergence properties along with optimal sample complexity of $\mathcal{O}(\epsilon^{-4})$. Furthermore, we propose a momentum-based variant, MSBPG, which improves the convergence property by relaxing the mini-batch size requirements, which is more suitable for large-scale problems. To our knowledge, this is the first integration of momentum techniques into a stochastic Bregman proximal gradient framework for nonconvex problems.}
    \item[2.] \textbf{Tailored MSBPG for Deep Neural Networks with Polynomial Kernel:} We apply MSBPG to training deep neural networks (DNN), which leverages on a suitable polynomial kernel function to ensure that the DNN's loss function is smooth adaptable with respect to the designed kernel function.  Through an implicit reformulation, we observe that MSBPG is more robust than the traditional SGD in terms of stepsize selection and initialization. We highlight that MSBPG is a theoretically derived method that is able to ease the difficulty of selecting stepsize, mitigate gradient explosion, and maintain good generalization performance simultaneously. This distinguishes MSBPG from many existing techniques that rely on intuition and empirical observations. 
    \item[3.] \textbf{Empirical Evaluation of SBPG/MSBPG:} We demonstrate the efficiency and robustness of SBPG/MSBPG across various applications, including sparse quadratic inverse problems and large-scale deep neural networks. In the quadratic inverse problem, SBPG shows greater robustness to both stepsize and initial point selection. When training deep neural networks, MSBPG achieves comparable generalization performance to commonly used optimizers such as SGD, Adam, and AdamW, and in many cases, even outperforms them. Additionally, our method demonstrates robustness to stepsize selection and initialization. These results highlight the potential of MSBPG as a powerful tool for optimizing complex and large-scale deep neural networks, thus offering a promising direction for future research in this area.  
\end{itemize}
The remainder of this paper is organized as follows. In Section \ref{section-notation-preliminaries}, we present notation, some related preliminaries, and our problem setting. In Section \ref{section-vanilla-SBPG}, we first describe SBPG and establish its convergence results. Then, we propose a momentum-based SBPG (MSBPG) and prove its improved convergence properties. In Section \ref{DNN implementation}, we adapt SBPG/MSBPG to the training of deep neural networks and analyze its capacity in mitigating gradient explosion. In Section \ref{section-experiments}, we present numerical experiments that demonstrate the efficiency and robustness of vanilla SBPG on quadratic inverse problems and {MSBPG on} 
training deep neural networks. Finally, we give some concluding remarks in Section \ref{section-conclusion}. Additional supplementary materials are provided in the Appendix.

\section{Preliminaries and Problem setting}\label{section-notation-preliminaries}
In this paper, vectors are represented using boldface letters like $\vv$, while scalars are represented using normal font. Given a proper, lower-semicontinuous function $F:\R^d\rightarrow\bar\R := [-\infty,\infty]$, $dom\,F=\{\vx:F(\vx)<\infty\}$. The Fenchel conjugate function of $F$ is defined as $F^*(\vy)=\sup\{\inprod{\vx}{\vy}-F(\vx):\vx\in\R^d\}$. Given a set $\cS\subset\R^d$, $\bar\cS$ denotes its closure, $int\,\cS$ denotes the set of interior points. A function is of class $\cC^k(\cS)$ if it is $k$ times differentiable  and the $k$-th derivative is continuous on $\cS$. We say that $F$ is level bounded if the 
set $\{x:F(\vx)<\alpha\}$ is bounded for any real number $\alpha$. Given a matrix $A$, ${\rm Vec}(A)$ denotes the vectorization of $A$ by column order. ${\rm Mat}(\cdot)$ is the inverse operation of ${\rm Vec}(\cdot)$, which reshapes a vector back into its original matrix form. Define the operator $\text{Diag}(\cdot)$ to {map}  a vector into a diagonal matrix with diagonal elements equal to the corresponding entries of the vector. The Hadamard product is represented by the symbol $\circ$. If we use the notation $\|\cdot\|$ without any additional explanation, we assume that it refers to the Euclidean vector norm for vectors and the Frobenius matrix norm for matrices. 

Let $(\Omega,\cF,\mathbb P)$ be a  probability space. Given a random variable ${\bm\xi}$ and a $\sigma$-algebra $\cF$, we write ${\bm\xi}\vartriangleleft\cF$ if ${\bm\xi}$ is measurable over $\cF$. {Let $\{{\bm\xi}_k\}_{k\geq0}$ be a stochastic process, and $\{\cF_k\}_{k\geq0}$ be a filtration, where $\cF_k$ defined by a $\sigma$-algebra $\cF_k:=\sigma(\bm\xi_0, \ldots, \bm\xi_{k-1}) $ on $\Omega$. The conditional expectation is denoted by $\E[\cdot|\cF_k]$.  For simplicity, we use the notation  $\E[\cdot]$ to denote $\E[\cdot|\cF_{\infty}]$. The sequence $\{\vx^k\}_{k\geq0}$ generated by our proposed method is adapted to the filtration $\{\cF_k\}_{k\geq0}$, i.e. $\vx^k\vartriangleleft\cF_k$, for all $k\geq0$. The notation $\wtnabla_k$ represents an estimator of the exact gradient $\nabla F(\vx^k)$, which satisties $\widetilde\nabla_k\vartriangleleft\cF_{k+1}$. This estimator is applicable to both the vanilla and momentum cases. The stochastic error is denoted by ${\bm\varepsilon}_k=\nabla F(\vx^k)-\wtnabla_k$. The unbiasedness of the stochastic error {$\bm{\varepsilon}_k$} is assumed throughout this paper, i.e., $\E[{\bm\varepsilon}_k|\cF_{k}]=0$.}
The following supermartingale convergence lemma is a fundamental tool in the analysis of stochastic algorithms.
\begin{lemma} \citep{robbins1951stochastic}
Let $\left\{y_k\right\},\left\{u_k\right\},\left\{a_k\right\}$ and $\left\{b_k\right\}$ be non-negative adapted processes with respect to the filtration $\left\{\mathcal{F}_k\right\}$ such that $\sum_{k=0}^\infty a_k<\infty, \sum_{k=0}^\infty b_k<\infty$, and for all $k$, $\mathbb{E}\left[y_{k+1} \mid \mathcal{F}_k\right] \leq\left(1+a_k\right) y_k-u_k+b_k$ almost surely. Then, $\left\{y_k\right\}$ converges almost surely to a non-negative finite random variable and $\sum_{k=0}^\infty u_k<\infty$ almost surely.
\label{martingale-convergence}
\end{lemma}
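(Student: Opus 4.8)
The plan is to reduce the statement to the martingale convergence theorem via the classical Robbins--Siegmund discounting device, which absorbs the multiplicative perturbation $1+a_k$ into a deterministic-looking discount factor. First I would set $P_0=1$ and $P_k=\prod_{j=0}^{k-1}(1+a_j)$. Since each $a_j\ge 0$ is $\cF_j$-measurable and $\sum_k a_k<\infty$ almost surely, the sequence $\{P_k\}$ is adapted, non-decreasing, and trapped between $1$ and $P_\infty\le\exp(\sum_j a_j)<\infty$, so $P_k\to P_\infty\in[1,\infty)$ almost surely. The single identity that drives everything is $P_{k+1}=(1+a_k)P_k$, which is exactly what cancels the factor $1+a_k$ appearing in the recursion.

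Next I would introduce the auxiliary process
\[
w_k \;=\; \frac{y_k}{P_k}+\sum_{j=0}^{k-1}\frac{u_j}{P_{j+1}}-\sum_{j=0}^{k-1}\frac{b_j}{P_{j+1}}.
\]
Using that $P_{k+1},u_k,b_k$ are $\cF_k$-measurable and dividing the hypothesis $\E[y_{k+1}\mid\cF_k]\le(1+a_k)y_k-u_k+b_k$ by $P_{k+1}$, a direct computation (in which the freshly added $j=k$ terms cancel the $\pm u_k/P_{k+1}$ and $\pm b_k/P_{k+1}$ contributions) yields $\E[w_{k+1}\mid\cF_k]\le w_k$, so $\{w_k\}$ is a supermartingale. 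Moreover, because $P_{j+1}\ge 1$ and $y_k,u_k\ge 0$, it is bounded below by $w_k\ge-\sum_{j=0}^{\infty}b_j>-\infty$ almost surely.

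I would then invoke the supermartingale convergence theorem to conclude that $w_k$ converges almost surely to a finite limit $w_\infty$, and extract both claims from this single fact. Since $\sum_{j<k} u_j/P_{j+1}$ is non-decreasing and, by rearranging the definition of $w_k$ and discarding the non-negative term $y_k/P_k$, is bounded above by $w_k+\sum_{j<k}b_j/P_{j+1}\to w_\infty+\sum_j b_j/P_{j+1}<\infty$, the series $\sum_j u_j/P_{j+1}$ converges; since $P_{j+1}\le P_\infty<\infty$, this gives $\sum_j u_j<\infty$ almost surely. Feeding this back, $y_k/P_k=w_k-\sum_{j<k}u_j/P_{j+1}+\sum_{j<k}b_j/P_{j+1}$ is a combination of almost surely convergent sequences, hence converges, and multiplying by $P_k\to P_\infty\in[1,\infty)$ shows $y_k$ converges almost surely to a non-negative finite limit.

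The main obstacle is measure-theoretic rather than algebraic: the lower bound on $w_k$ involves $\sum_j b_j$, which is only finite almost surely and need not be integrable, so the plain $L^1$-bounded version of the supermartingale convergence theorem does not apply verbatim. I would resolve this by localization---introduce the stopping times $\tau_m=\inf\{k:\sum_{j=0}^{k-1}b_j>m\}$, note that the stopped supermartingale $\{w_{k\wedge\tau_m}\}$ has negative part bounded by $m$ and is therefore $L^1$-bounded, apply the convergence theorem to each, and observe that $w_k$ converges on every $\{\tau_m=\infty\}$ while $\bigcup_m\{\tau_m=\infty\}=\{\sum_j b_j<\infty\}$ carries full probability. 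A symmetric truncation of $y_0$ disposes of any lack of integrability at the initial step, after which all the remaining steps are the routine bookkeeping sketched above.
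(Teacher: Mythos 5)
The paper itself gives no proof of this lemma: it is quoted directly from the literature (it is the Robbins--Siegmund supermartingale convergence theorem) and used as a black box, so there is no in-paper argument to compare yours against. Your proposal is the classical proof of that theorem and is essentially correct: $P_{k+1}=(1+a_k)P_k$ does cancel the multiplicative perturbation, the discounted process $w_k$ is a supermartingale by exactly the computation you describe, the localization is the right way to handle the fact that $\sum_j b_j$ is only almost surely finite, and both conclusions are extracted correctly at the end (monotonicity gives $\sum_j u_j/P_{j+1}<\infty$, hence $\sum_j u_j<\infty$ since $P_{j+1}\le P_\infty<\infty$, and then $y_k/P_k$ is a sum of convergent sequences). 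The only flaw is an indexing slip in the stopping time: with $\tau_m=\inf\{k:\sum_{j=0}^{k-1}b_j>m\}$, the stopped process at time $\tau_m$ already contains the term $b_{\tau_m-1}/P_{\tau_m}$, so its negative part is bounded only by $m+b_{\tau_m-1}$, and $b_{\tau_m-1}$ need not be integrable, which is precisely the difficulty you were trying to avoid. Defining instead $\tau_m=\inf\{k:\sum_{j=0}^{k}b_j>m\}$ (still a stopping time since $b_k\vartriangleleft\cF_k$) guarantees $\sum_{j=0}^{k-1}b_j\le m$ for every $k\le\tau_m$, giving the deterministic bound $w_{k\wedge\tau_m}\ge-m$ and hence $L^1$-boundedness of the stopped supermartingale; the rest of your argument then goes through verbatim.
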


\subsection{Smooth adaptable functions}
In this subsection, we introduce the concept of smooth adaptivity, initially proposed by \citet{bolte2018first}. This concept extends the idea of relative smoothness for convex functions, first introduced in \citet{bauschke2017descent, lu2018relatively}, and has since been applied in various contexts \citep{hanzely2021accelerated, yang2021inexact, yang2022bregman,yang2024inexact}. We first give the definitions of kernel function and Bregman distance.
\begin{definition}\label{Bregman-distance-def}
(Kernel function and Bregman distance). Let $\cS$ be a nonempty, convex and open subset of $\R^d$. A function $\phi:\R^d\rightarrow(-\infty,+\infty]$ is called a kernel function associated with $\cS$ if it satisfies the following two conditions:
\begin{enumerate}
    \item $\phi$ is proper, lower-semicontinuous and convex, with $dom\,\phi\subset\bar\cS$, $dom\,\partial\phi=\cS$.
    \item $\phi\in\cC^1(\cS)$ and $int\,dom\,\phi=\cS$.
\end{enumerate}
Denote the class of kernel function {associated with $\cS$}  by $\cM(\cS)$. Given $\phi\in\cM(\cS)$, the Bregman distance \citep{bregman1967relaxation} generated by $\phi$ is defined as $\cD_\phi(\vx,\vy):dom\,\phi\times int\,dom\,\phi\rightarrow[0,+\infty)$, where
\[
\cD_\phi(\vx,\vy)=\phi(\vx)-\phi(\vy)-\inprod{\nabla\phi(\vy)}{\vx-\vy}.
\]
\end{definition}
Bregman distance measures the difference between the value of $\phi$ at $\vx$ and its linear approximation at $\vy$ based on the gradient of $\phi$ at $\vy$. Some basic properties of Bregman distance can be found in \citep{chen1993convergence,teboulle2018simplified}. Some  kernel functions commonly used in optimization are $\frac{1}{2}\|\vx\|^2$, $\frac{1}{2}\|\vx\|^2+\frac{\alpha}{4}\|\vx\|^4$, $-\sum_{i=1}^d\log\vx_i$ and $\sum_{i=1}^d\vx_i\log\vx_i$, where $\frac{1}{2}\|\vx\|^2\in\cM(\R^d)$ recovers the classical half squared Euclidean distance. The kernel function $\frac{1}{2}\|\vx\|^2+\frac{\alpha}{4}\|\vx\|^4\in\cM(\R^d)$ has found applications in various problems, such as quadratic inverse problems, non-negative matrix factorization, and low-rank minimization \citep{bolte2018first,dragomir2021quartic}. The entropy function $\sum_{i=1}^d\vx_i \log \vx_i\in\cM(\R^d_{++})$ is commonly used in applications that involve probability constraints, where the resulting Bregman distance is known as the Kullback–Leibler (KL) divergence. Throughout the paper we will focus on the following pair of functions $(f,\phi)$ satisfying smooth adaptivity condition. We introduce this concept in the following definition: 
\begin{definition}\label{relative-smooth-def}
 (Smooth adaptivity). Given a kernel function $\phi\in\cM(\cS)$, 
 a proper lower-semicontinuous function $f:\R^d\rightarrow(-\infty,+\infty]$ with $dom\,f\supset dom\,\phi $ that  is $\cC^1$ on $\cS$. $f$ is $L$-smooth adaptable with respect to $\phi$ if there exists $L>0$, such that $L\phi+f$ and $L\phi-f$ are convex on $\cS$.
\end{definition}
Alternative definition of smooth adaptivity is the two-side descent lemma \cite[Lemma 2.1]{bolte2018first}. When both $f$ and $\phi$ belong to $\cC^2(\cS)$, we can verify their smooth adaptivity by comparing the Hessians of $f$ and $\phi$.
\begin{lemma}\label{two-side-descent}
$f$ is $L$-smooth adaptable with respect to $\phi\in\cM(\cS)$, if and only if 
\[
|f(\vx)-f(\vy)-\inprod{\nabla f(\vy)}{\vx-\vy}|\leq L\cD_\phi(\vx,\vy),\;\forall\,\vx,\vy\in int\,dom\,\phi.
\]
Moreover, when both $f$ and $\phi$ {belong to} $\cC^2(int\,dom\,\phi)$, then the above is equivalent to
\[
\exists L>0,\;L \nabla^2\phi(\vx){\pm}\nabla^2f(\vx) \succeq 0, \text { for all } \vx \in int\,dom\phi.
\]
\end{lemma}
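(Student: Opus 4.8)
The plan is to prove the two stated equivalences in turn, both resting on standard first- and second-order characterizations of convexity for differentiable functions on the open convex set $\cS = int\,dom\,\phi$. Throughout, set $g_+ := L\phi + f$ and $g_- := L\phi - f$. Since $\phi, f \in \cC^1(\cS)$, both $g_\pm \in \cC^1(\cS)$, and by Definition~\ref{relative-smooth-def} the $L$-smooth adaptability of $f$ with respect to $\phi$ is precisely the simultaneous convexity of $g_+$ and $g_-$ on $\cS$. The whole argument then amounts to translating this convexity into the displayed inequalities.

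For the first equivalence, I would invoke the first-order characterization: a $\cC^1$ function $g$ is convex on the convex set $\cS$ if and only if $g(\vx) \geq g(\vy) + \inprod{\nabla g(\vy)}{\vx - \vy}$ for all $\vx, \vy \in \cS$. Applying this to $g_+$ and rearranging — moving the $\phi$-terms to one side so that $L[\phi(\vx) - \phi(\vy) - \inprod{\nabla\phi(\vy)}{\vx-\vy}] = L\cD_\phi(\vx,\vy)$ appears — yields the lower bound $f(\vx) - f(\vy) - \inprod{\nabla f(\vy)}{\vx-\vy} \geq -L\cD_\phi(\vx,\vy)$. Running the identical computation for $g_-$ produces the matching upper bound $f(\vx) - f(\vy) - \inprod{\nabla f(\vy)}{\vx-\vy} \leq L\cD_\phi(\vx,\vy)$. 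Together these two one-sided estimates are exactly the claimed absolute-value inequality; and because each rearrangement is an equivalence, reading the chain backwards recovers the convexity of $g_\pm$ from the inequality, giving the converse for free.

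For the second equivalence, under the extra hypothesis $f, \phi \in \cC^2(int\,dom\,\phi)$ we have $g_\pm \in \cC^2$, so I would appeal to the second-order characterization: a $\cC^2$ function is convex on an open convex set if and only if its Hessian is positive semidefinite everywhere. Convexity of $g_+$ is then $L\nabla^2\phi(\vx) + \nabla^2 f(\vx) \succeq 0$ and convexity of $g_-$ is $L\nabla^2\phi(\vx) - \nabla^2 f(\vx) \succeq 0$ for all $\vx \in int\,dom\,\phi$; combining the two gives the compact condition $L\nabla^2\phi(\vx) \pm \nabla^2 f(\vx) \succeq 0$.

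The argument is essentially bookkeeping, so I do not expect a deep obstacle. The only points requiring care are (i) tracking the signs carefully so that $g_+$ yields the lower bound and $g_-$ the upper bound, and (ii) using that the first- and second-order convexity criteria are genuine equivalences on the open convex set $\cS$ rather than mere implications — which is legitimate because $\cS$ is convex and the functions are continuously (resp. twice continuously) differentiable there. I would also remark that restricting the inequality to $\vx, \vy \in int\,dom\,\phi$ sidesteps any boundary subtleties stemming from $\phi$ being only $\cC^1$ on $\cS$.
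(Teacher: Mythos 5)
Your proof is correct and follows essentially the same route as the source the paper defers to for this result (\citealp[Lemma~2.1]{bolte2018first}): the paper itself gives no proof, and the cited argument is exactly your translation of convexity of $L\phi\pm f$ via the first-order characterization into the two one-sided bounds, and via the second-order characterization into the Hessian condition. The sign bookkeeping ($g_+$ giving the lower bound, $g_-$ the upper bound) and the use of genuine equivalences on the open convex set $\cS=int\,dom\,\phi$ are handled correctly.
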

The following four-point identity is frequently employed in our proofs, and can be easily verified.
\begin{lemma}(Four points identity) 
Given points $\va,\vb,\vc,\vd$ and any convex function $\phi$ which is differentiable at $\va$ and $\vb$, then
\[
\inprod{\nabla\phi(\va)-\nabla\phi(\vb)}{\vc-\vd}=\cD_\phi(\vc,\vb)+\cD_\phi(\vd,\va)-\cD_\phi(\vc,\va)-\cD_\phi(\vd,\vb).
\]
\label{four-point-identity}
\end{lemma}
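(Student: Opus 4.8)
The plan is to prove this by direct algebraic expansion, substituting the definition of the Bregman distance $\cD_\phi(\vx,\vy)=\phi(\vx)-\phi(\vy)-\inprod{\nabla\phi(\vy)}{\vx-\vy}$ into each of the four terms on the right-hand side and then collecting like terms. Since $\phi$ is assumed differentiable at $\va$ and $\vb$, all four Bregman distances appearing in the identity are well defined: $\cD_\phi(\vc,\vb)$ and $\cD_\phi(\vd,\vb)$ require differentiability at $\vb$, while $\cD_\phi(\vd,\va)$ and $\cD_\phi(\vc,\va)$ require differentiability at $\va$. Convexity of $\phi$ is not actually needed for the identity itself—only differentiability at the two base points $\va,\vb$—so the computation is purely formal.

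First I would write out each term explicitly and group the contributions into two families: the scalar function values $\phi(\cdot)$ and the linear (gradient) terms. Tracking the signs in $\cD_\phi(\vc,\vb)+\cD_\phi(\vd,\va)-\cD_\phi(\vc,\va)-\cD_\phi(\vd,\vb)$, the function-value contributions are $\bigl(\phi(\vc)-\phi(\vb)\bigr)+\bigl(\phi(\vd)-\phi(\va)\bigr)-\bigl(\phi(\vc)-\phi(\va)\bigr)-\bigl(\phi(\vd)-\phi(\vb)\bigr)$, and I would observe that every one of $\phi(\va),\phi(\vb),\phi(\vc),\phi(\vd)$ appears once with a plus sign and once with a minus sign, so this entire block cancels to zero.

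Next I would handle the linear terms, which are the only surviving contributions. Collecting the coefficients of $\nabla\phi(\va)$ gives $\inprod{\nabla\phi(\va)}{(\vc-\va)-(\vd-\va)}=\inprod{\nabla\phi(\va)}{\vc-\vd}$, and collecting the coefficients of $\nabla\phi(\vb)$ gives $\inprod{\nabla\phi(\vb)}{(\vd-\vb)-(\vc-\vb)}=-\inprod{\nabla\phi(\vb)}{\vc-\vd}$. Adding these two yields exactly $\inprod{\nabla\phi(\va)-\nabla\phi(\vb)}{\vc-\vd}$, which is the left-hand side, completing the proof.

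There is no genuine obstacle here; the only thing to be careful about is the \emph{bookkeeping of signs} during the regrouping, particularly ensuring that the base-point arguments $-\va$ and $-\vb$ inside the inner products cancel correctly so that the net result depends only on the difference $\vc-\vd$. The identity holds for any point $\vx$ in place of $\vc,\vd$ and is independent of convexity, which is why it can be freely invoked throughout the later convergence analysis whenever one needs to convert an inner product of gradient differences into a combination of Bregman distances.
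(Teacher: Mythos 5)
Your proof is correct: the direct expansion of the four Bregman distances, cancellation of the function-value block, and regrouping of the gradient terms yields exactly $\inprod{\nabla\phi(\va)-\nabla\phi(\vb)}{\vc-\vd}$, and this is precisely the routine verification the paper alludes to when it states the identity "can be easily verified" without writing out a proof. Your observation that convexity is not needed and only differentiability at the base points $\va$ and $\vb$ matters is also accurate.
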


\subsection{Bregman Proximal Mapping}\label{subsec-Bregman-Proximal-mapping}
Throughout this paper, we make the following basic assumptions.
\begin{assumption}\label{relative-smooth-assumption}
(Basic requirements). In problem \eqref{min-prob}:
\begin{itemize}
 \item[A1.]  $F$ is a proper lower-semicontinuous function with $dom\,\phi\subset dom\,F$, {and it} is $\cC^1$ on $int\,C$.
 \item[A2.] The Legendre kernel (Definition \ref{Legendre}) $\phi\in\cM(C)$ is $\mu$-strongly convex {for some} $\mu>0$.  $F(\cdot)$ is $L_F$-smooth adaptable with respect to $\phi$.
 \item[A3.] $R$ is is a proper, lower-semicontinuous and convex function with $dom\,R\cap int\,C\neq\emptyset$.
 \item[A4.] $\inf_{\vx\in\overline{C}}\{\Phi(\vx)\}>-\infty$.  
\end{itemize}
\end{assumption}
 Assumption \ref{relative-smooth-assumption} is a standard requirement for Bregman-type methods and is usually satisfied in practice. 
 It ensures the well-definedness of Bregman-type methods, as shown in \citep{bolte2018first,latafat2022bregman}. We also recall the definition of the Legendre function in \citep{latafat2022bregman}, which makes additional supercoercive conditions on the concept in \citep{rockafellar1997convex}. 

\begin{definition}\label{Legendre}
(Legendre kernel). Let $\phi:{\overline{C}}\rightarrow(-\infty,\infty]$ be a proper
lower-semicontinuous convex function. 
It is called essentially smooth if $\text{int\;dom}\,\phi$ is nonempty and $\phi$ is differentiable on $\text{int\;dom}\,\phi$, moreover $\lim_{k\rightarrow\infty}\|\nabla\phi({\vx^k})\|=\infty$ whenever $\{{\vx^k}\}_{k\in\mathbb N}$ converges to a boundary point of $\text{dom}\,\phi$. The function $\phi$ is called Legendre function if it is essentially smooth, {strictly} convex on $\text{int\,dom}\,\phi$ and supercoercive, i.e. $\lim_{\|\vx\|\rightarrow\infty}\frac{\phi(\vx)}{\|\vx\|}=\infty$.
\end{definition}

\begin{definition}\label{Bregman-prox-maping}
Given a nonempty convex open set $C$, a proper lower-semicontinuous convex function $R$ and a Legendre kernel function $\phi\in\cM(C)$, $\vx\in int\,dom\,\phi$, we denote the Bregman proximal mapping by ${\rm Prox}^\phi_R:=(\nabla\phi+\partial R)^{-1}\nabla\phi$, which is equivalent to 
\begin{equation}
{\rm Prox}^\phi_R(\vx)=\argmin_{\vu\in\overline{C}}\;\{R(\vu)+\cD_\phi(\vu,\vx)\}.
\label{Breg-prox-def}
\end{equation}  
\end{definition}

Note that the objective function of \eqref{Breg-prox-def} is strictly convex on $\text{dom}\,\phi\cap \text{dom}R$, therefore \eqref{Breg-prox-def} has at most one solution. To ensure that \eqref{Breg-prox-def} is well-defined, the following result claims that ${\rm Prox}^\phi_{\alpha R}(\vx)$ is well-defined for any $\alpha>0$, and moreover ${\rm Prox}^\phi_{\alpha R}(\vx)\in\text{int\;dom}\,\phi$ under standard assumptions. The proof can be found in Appendix \ref{appendix-preliminary}.
\begin{lemma}
Suppose Assumption \ref{relative-smooth-assumption} holds. Then \eqref{Breg-prox-def} has a unique solution. Moreover, the solution ${\rm Prox}^\phi_{\alpha R}(\vx)\in C$. 
\label{well-def-Breg-prox}
\end{lemma}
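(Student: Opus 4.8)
The plan is to regard the objective $g(\vu) := \alpha R(\vu) + \cD_\phi(\vu,\vx)$ as an extended-valued convex function and split the statement into two independent claims: existence/uniqueness of the minimizer, and its membership in $C = int\,dom\,\phi$. For \emph{existence and uniqueness}, I would first verify that $g$ is proper, lower-semicontinuous, and strongly convex. Properness follows from A3: choosing $\vu_0 \in dom\,R \cap int\,C$ gives $R(\vu_0) < \infty$ and $\phi(\vu_0) < \infty$, so $g(\vu_0) < \infty$. Lower-semicontinuity holds since $R$ is lsc and $\cD_\phi(\cdot,\vx) = \phi(\cdot) - \phi(\vx) - \inprod{\nabla\phi(\vx)}{\cdot - \vx}$ is $\phi$ plus an affine term, hence lsc. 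By A2, $\phi$ is $\mu$-strongly convex, so $\cD_\phi(\cdot,\vx)$ is $\mu$-strongly convex and adding the convex $\alpha R$ preserves this; in particular $\cD_\phi(\vu,\vx) \geq \tfrac{\mu}{2}\|\vu - \vx\|^2$. Combining this quadratic growth with an affine minorant of the proper convex function $R$ shows $g$ is coercive, so by the Weierstrass theorem its infimum over the closed set $\overline{C}$ is attained; strong (hence strict) convexity then forces the minimizer to be unique.

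For the \emph{interiority} claim, let $\bar\vu$ denote the unique minimizer, and note $\bar\vu \in dom\,\phi$ since $g(\bar\vu) < \infty$. It remains to exclude $\bar\vu$ lying on the boundary $dom\,\phi \setminus C$. Because $g$ is convex and $\bar\vu$ is its (unconstrained, extended-valued) minimizer, the necessary condition $0 \in \partial g(\bar\vu)$ holds. Writing $g = \alpha R + \phi - \ell$ with $\ell(\vu) = \phi(\vx) + \inprod{\nabla\phi(\vx)}{\vu - \vx}$ affine, the Moreau--Rockafellar sum rule applies under the qualification $dom\,R \cap int\,dom\,\phi \neq \emptyset$ from A3, yielding $\partial g(\bar\vu) = \alpha\,\partial R(\bar\vu) + \partial\phi(\bar\vu) - \nabla\phi(\vx)$. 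The decisive ingredient is the kernel-function structure $\phi \in \cM(C)$ (Definition \ref{Bregman-distance-def}), which gives $dom\,\partial\phi = C = int\,dom\,\phi$; equivalently, essential smoothness of the Legendre kernel forces $\partial\phi(\bar\vu) = \emptyset$ at any boundary point $\bar\vu \notin C$. Hence a boundary minimizer would give $\partial g(\bar\vu) = \emptyset$, contradicting $0 \in \partial g(\bar\vu)$, and therefore $\bar\vu \in C$. The argument is unaffected by the scaling $\alpha > 0$, so it covers ${\rm Prox}^\phi_{\alpha R}(\vx)$ as stated.

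The hard part will be the interiority step, since it is the only place where the Legendre/essential-smoothness hypothesis is genuinely used and where one must be careful about the subdifferential calculus at a boundary point. The subtleties are (i) correctly invoking essential smoothness to conclude $\partial\phi$ is empty off $C$, and (ii) validating the sum rule under the correct constraint qualification. If one prefers to avoid the sum rule, an equivalent route is a directional-derivative argument: moving from a hypothetical boundary minimizer $\bar\vu$ toward an interior point $\vu_0 \in dom\,R \cap int\,C$ along $\vu_t = (1-t)\bar\vu + t\vu_0$, the gradient blow-up $\|\nabla\phi(\vu_t)\| \to \infty$ encoded in the Legendre definition drives the one-sided directional derivative of $\phi$, and hence of $g$, to $-\infty$, contradicting the minimality of $\bar\vu$. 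Either route relies on the same mechanism, namely that the Legendre kernel prevents the Bregman proximal mapping from touching the boundary of $C$.
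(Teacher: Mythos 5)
Your proposal is correct and follows essentially the same route as the paper's proof: coercivity plus the Weierstrass theorem for existence, strict convexity for uniqueness, and the fact that $dom\,\partial\phi = int\,dom\,\phi$ for a Legendre kernel to rule out boundary minimizers via nonemptiness of the subdifferential at the optimum. The only cosmetic differences are that you derive coercivity from the $\mu$-strong convexity in A2 rather than from supercoercivity as the paper does, and you are more explicit than the paper about the Moreau--Rockafellar sum rule and its qualification.
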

The following proposition for Bregman proximal mapping generalizes the nonexpansive property of the classical proximal mapping (in the case $\phi(\vx)=\frac{1}{2}\|\vx\|^2$). This property is commonly used in convergence proofs. The proof of the following proposition can be found in Appendix \ref{appendix-preliminary}.
\begin{proposition}
 Suppose Assumption \ref{relative-smooth-assumption} holds. Let $\vx_i^+:={\rm Prox}^\phi_R(\nabla\phi^*(\vx_i))$, $i=1,2$. Then $\|\vx^+_1-\vx^+_2\|\leq\frac{1}{\mu}\|\vx_1-\vx_2\|$.
\label{nonexpansive}
\end{proposition}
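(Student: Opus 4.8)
The plan is to translate the composite mapping $\vx_i^+={\rm Prox}^\phi_R(\nabla\phi^*(\vx_i))$ into a single resolvent-type inclusion and then exploit monotonicity of $\partial R$ together with the strong convexity of $\phi$. First I would set $\vy_i:=\nabla\phi^*(\vx_i)$ so that $\vx_i^+={\rm Prox}^\phi_R(\vy_i)$. By Lemma~\ref{well-def-Breg-prox} each $\vx_i^+$ lies in $C=\mathrm{int}\,\mathrm{dom}\,\phi$, so the first-order optimality condition for the strictly convex minimization in \eqref{Breg-prox-def} holds, giving $0\in\partial R(\vx_i^+)+\nabla\phi(\vx_i^+)-\nabla\phi(\vy_i)$. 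The crucial simplification comes from the Legendre assumption on $\phi$ (Assumption~\ref{relative-smooth-assumption}, A2), under which $\nabla\phi^*=(\nabla\phi)^{-1}$ and hence $\nabla\phi(\vy_i)=\nabla\phi(\nabla\phi^*(\vx_i))=\vx_i$. Thus the optimality condition collapses to the clean inclusion
\[
\vx_i-\nabla\phi(\vx_i^+)\in\partial R(\vx_i^+),\qquad i=1,2,
\]
which identifies $\vx_i^+$ as $(\nabla\phi+\partial R)^{-1}(\vx_i)$, exactly the operator-form definition of ${\rm Prox}^\phi_R\circ\nabla\phi^*$.

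Second, I would subtract the two inclusions and pair them with $\vx_1^+-\vx_2^+$. Since $R$ is convex, $\partial R$ is monotone, so
\[
\inprod{(\vx_1-\nabla\phi(\vx_1^+))-(\vx_2-\nabla\phi(\vx_2^+))}{\vx_1^+-\vx_2^+}\geq0.
\]
Rearranging and invoking the $\mu$-strong convexity of $\phi$, which yields $\inprod{\nabla\phi(\vx_1^+)-\nabla\phi(\vx_2^+)}{\vx_1^+-\vx_2^+}\geq\mu\|\vx_1^+-\vx_2^+\|^2$, gives
\[
\inprod{\vx_1-\vx_2}{\vx_1^+-\vx_2^+}\geq\mu\|\vx_1^+-\vx_2^+\|^2.
\]

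Finally, applying Cauchy--Schwarz to the left-hand side bounds it by $\|\vx_1-\vx_2\|\,\|\vx_1^+-\vx_2^+\|$, and dividing through by $\|\vx_1^+-\vx_2^+\|$ (the case $\vx_1^+=\vx_2^+$ being trivial) delivers $\|\vx_1^+-\vx_2^+\|\leq\frac1\mu\|\vx_1-\vx_2\|$. The calculation itself is short and routine; the only genuine subtlety---the step I would be most careful about---is justifying the two facts that make the argument go through: that the optimality inclusion is valid at all (which requires $\vx_i^+\in\mathrm{int}\,\mathrm{dom}\,\phi$ so that $\nabla\phi(\vx_i^+)$ is well-defined, supplied by Lemma~\ref{well-def-Breg-prox}), and that $\nabla\phi^*$ genuinely inverts $\nabla\phi$, which is where the Legendre/essential-smoothness hypothesis on the kernel is indispensable rather than cosmetic.
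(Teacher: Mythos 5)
Your proposal is correct and follows essentially the same route as the paper's own proof: both derive the inclusion $\vx_i-\nabla\phi(\vx_i^+)\in\partial R(\vx_i^+)$ from the operator-form definition ${\rm Prox}^\phi_R=(\nabla\phi+\partial R)^{-1}\nabla\phi$, then combine monotonicity of $\partial R$, the $\mu$-strong convexity of $\phi$, and Cauchy--Schwarz. Your version merely spells out more explicitly the justification that $\nabla\phi^*$ inverts $\nabla\phi$ and that $\vx_i^+\in\mathrm{int}\,\mathrm{dom}\,\phi$, which the paper leaves implicit.
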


In this paper, we make the assumption that $R$ and $\phi$ are simple enough so that \eqref{Breg-prox-def}  either has a closed-form solution or {admits} an efficient subroutine to solve it. Using the definition of the Bregman proximal mapping, we can then define the Bregman gradient mapping associated with \eqref{min-prob}. This mapping measures the 
{solution accuracy of}
the methods we propose. Note that $\phi$ is a Legendre kernel, which implies that $\phi^*\in\cC^{1}(\R^d)$ is strictly convex and $(\nabla\phi)^{-1}=\nabla\phi^*$ \citep[Corollary 13.3.1, Theorem 26.5]{rockafellar1997convex}. Therefore, the following concept is well-defined. 

\begin{definition}[Bregman Gradient Mapping]
Given $\alpha>0$, a nonempty convex open set $C$ and a Legendre kernel function $\phi\in\cM(C)$, the Bregman gradient mapping associated with \eqref{min-prob} is defined as follows
\[
\mathcal G_\alpha(\vx)=\frac{\vx-{\rm Prox}^\phi_{\alpha R}\left(\nabla\phi^*(\nabla \phi(\vx)-\alpha\nabla F(\vx))\right)}{\alpha}.
\]
To simplify notation, we use $\cG(\vx)$ to denote  $\cG_1(\vx)$ when $\alpha=1$.
\label{Breg-prox-gradient-mapping}
\end{definition}
When the kernel function $\phi(\vx) = \frac{1}{2}\|\vx\|^2$, the resulting Bregman Gradient Mapping becomes equivalent to the classical Gradient Mapping \citep{nesterov2003introductory,nesterov2005smooth}, which measures the solution's accuracy for proximal gradient methods. 
\begin{definition}
(Limiting subdifferential \citep[Definition 8.3]{RockWets98}) Consider a function $f:\R^d\rightarrow\bar\R$ and a point ${\vx}$, the regular subdifferential is defines as
\[
\hat\partial f(\vx)=\{\vv:f(\vy)\geq f(\vx)+\inprod{\vv}{\vy-\vx}+o(\|\vy-\vx\|)\}.
\]
The limiting subdifferential is defined as 
\[
\partial f(\vx)=\{\vv:\vx_n\rightarrow \vx,f(\vx_n)\rightarrow f(\vx),\vv_n\in\hat\partial f(\vx_n),\;and\;\vv_n\rightarrow \vv\}.
\]
\end{definition}
{ Now, we restrict our attention on the case $C=\R^d$.} By Fermat's rule \cite[Theorem 10.1]{RockWets98}, the set of critical point of $\Phi$ is given by
\[
{\rm crit}\,\Phi=\left\{\vx\in\R^d:\;0\in\partial\Phi(\vx)\equiv\nabla F(\vx)+\partial R(\vx)\right\}.
\]
The Bregman Gradient Mapping can also be used to evaluate the 
{solution accuracy} for Bregman methods. Let $\vx^+ = \text{Prox}^\phi_{\alpha R}(\nabla\phi^*(\nabla \phi(\vx) - \alpha\nabla F(\vx)))$. From Definition \ref{Breg-prox-gradient-mapping} and equation \eqref{Breg-prox-def}, it can be easily verified by definition that $0 \in \partial\Phi(\vx) \Leftrightarrow 0 = \mathcal{G}_{\alpha}(\vx).$ 
{Hence,} $0 \in \partial\Phi(\vx^+)$ for any $\alpha > 0$. 
The proof of this result is omitted for brevity. Furthermore, if $\nabla\phi$ is $L_\phi$-Lipschitz continuous, then the following proposition holds, implying that $\|\cG_\alpha(\vx)\|$ can be used as a reasonable criterion {to measure the accuracy of $\vx$.}
\begin{proposition}
\label{prop-Bregman-mapping-subdifferential}
Suppose Assumption \ref{relative-smooth-assumption} holds and that $\nabla\phi$ is $L_\phi$ Lipschitz continuous. Then, we have the following inequality:
\[
{\rm dist}\left(0,\partial\Phi({\vx^+})\right)\leq(1+\alpha L_F)L_\phi\|\cG_\alpha(\vx)\|.
\]
\end{proposition}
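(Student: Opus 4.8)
The plan is to exhibit an explicit element of $\partial\Phi(\vx^+)$ coming from the first-order optimality condition of the subproblem that defines $\vx^+$, and then bound its norm. Set $\vy:=\nabla\phi^*(\nabla\phi(\vx)-\alpha\nabla F(\vx))$, so that $\nabla\phi(\vy)=\nabla\phi(\vx)-\alpha\nabla F(\vx)$ and, by Definition \ref{Bregman-prox-maping}, $\vx^+={\rm Prox}^\phi_{\alpha R}(\vy)=\argmin_{\vu\in\overline{C}}\{\alpha R(\vu)+\cD_\phi(\vu,\vy)\}$. Since $\nabla_{\vu}\cD_\phi(\vu,\vy)=\nabla\phi(\vu)-\nabla\phi(\vy)$, Fermat's rule yields $0\in\alpha\partial R(\vx^+)+\nabla\phi(\vx^+)-\nabla\phi(\vy)$. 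Substituting $\nabla\phi(\vy)$ and dividing by $\alpha$ shows
\[
\vs:=\tfrac{1}{\alpha}\big(\nabla\phi(\vx)-\nabla\phi(\vx^+)\big)-\nabla F(\vx)\in\partial R(\vx^+),
\]
so that $\vg:=\nabla F(\vx^+)+\vs\in\nabla F(\vx^+)+\partial R(\vx^+)=\partial\Phi(\vx^+)$, and consequently ${\rm dist}(0,\partial\Phi(\vx^+))\le\|\vg\|$.

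Next I would decompose $\vg$ and apply the triangle inequality,
\[
\vg=\big(\nabla F(\vx^+)-\nabla F(\vx)\big)+\tfrac{1}{\alpha}\big(\nabla\phi(\vx)-\nabla\phi(\vx^+)\big),
\]
giving $\|\vg\|\le\|\nabla F(\vx^+)-\nabla F(\vx)\|+\tfrac{1}{\alpha}\|\nabla\phi(\vx)-\nabla\phi(\vx^+)\|$. The second term is immediate: $\nabla\phi$ is $L_\phi$-Lipschitz and, by Definition \ref{Breg-prox-gradient-mapping}, $\|\vx-\vx^+\|=\alpha\|\cG_\alpha(\vx)\|$, so it is bounded by $\tfrac{L_\phi}{\alpha}\cdot\alpha\|\cG_\alpha(\vx)\|=L_\phi\|\cG_\alpha(\vx)\|$.

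The crux, and the step I expect to be the main obstacle, is bounding $\|\nabla F(\vx^+)-\nabla F(\vx)\|$, i.e.\ upgrading the relative-smoothness hypothesis of Assumption \ref{relative-smooth-assumption} into a genuine Lipschitz estimate for $\nabla F$ with the sharp constant $L_FL_\phi$. When $F,\phi\in\cC^2$ this is transparent via Lemma \ref{two-side-descent}: smooth adaptivity gives $-L_F\nabla^2\phi(\vx)\preceq\nabla^2 F(\vx)\preceq L_F\nabla^2\phi(\vx)$, while $L_\phi$-Lipschitzness of $\nabla\phi$ gives $\nabla^2\phi(\vx)\preceq L_\phi\mI$, whence $\|\nabla^2 F(\vx)\|\le L_FL_\phi$ and so $\nabla F$ is $L_FL_\phi$-Lipschitz. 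The delicate point is obtaining the same constant in the general $\cC^1$ setting: here one observes that $L_\phi$-Lipschitzness of $\nabla\phi$ forces (by Cauchy--Schwarz, hence monotonicity of $L_\phi\,\mathrm{Id}-\nabla\phi$) the function $\tfrac{L_\phi}{2}\|\cdot\|^2-\phi$ to be convex, and then combines this with the two convex functions $L_F\phi\pm F$ supplied by smooth adaptivity through the identity
\[
\tfrac{L_FL_\phi}{2}\|\cdot\|^2\pm F=(L_F\phi\pm F)+L_F\big(\tfrac{L_\phi}{2}\|\cdot\|^2-\phi\big),
\]
a sum of convex functions; this identifies $F$ as smooth adaptable with respect to the Euclidean kernel, the characterization underlying Lipschitz continuity of $\nabla F$.

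With $\nabla F$ shown to be $L_FL_\phi$-Lipschitz, the first term is at most $L_FL_\phi\|\vx^+-\vx\|=\alpha L_FL_\phi\|\cG_\alpha(\vx)\|$, and adding the two estimates gives
\[
{\rm dist}\big(0,\partial\Phi(\vx^+)\big)\le\big(\alpha L_FL_\phi+L_\phi\big)\|\cG_\alpha(\vx)\|=(1+\alpha L_F)L_\phi\|\cG_\alpha(\vx)\|,
\]
which is exactly the claimed bound.
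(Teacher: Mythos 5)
Your proposal is correct and follows essentially the same route as the paper's proof: extract the subgradient element from the optimality condition defining $\vx^+$, split it into the $\nabla F$ difference and the $\nabla\phi$ difference, and bound each using $\|\vx^+-\vx\|=\alpha\|\cG_\alpha(\vx)\|$. The only difference is that you supply a full justification (via the convexity identity combining $L_F\phi\pm F$ with $\tfrac{L_\phi}{2}\|\cdot\|^2-\phi$) for the estimate $\|\nabla F(\vx^+)-\nabla F(\vx)\|\le L_FL_\phi\|\vx^+-\vx\|$, which the paper simply asserts without proof.
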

 We also define the stochastic counterpart of Definition \ref{Breg-prox-gradient-mapping}, which is commonly utilized to evaluate the accuracy of solutions for nonconvex stochastic proximal gradient methods, as discussed in \citep{ghadimi2016mini}.
\begin{definition}\label{stoc-Bregman-grad-map}
(Stochastic Bregman Gradient Mapping).
Given $\alpha>0$ a nonempty convex open set $C$ and a Legendre kernel function $\phi\in\cM(C)$, the stochastic Bregman gradient mapping associated with \eqref{min-prob} is defined as follows
 \[
 \widetilde{\mathcal G}_{\alpha}({\vx}):=\frac{{\vx}-{\rm Prox}^\phi_{\alpha R}\left(\nabla\phi^*\left(\nabla\phi({\vx})-\alpha\widetilde\nabla\right)\right)}{\alpha},\;\text{where }\widetilde{\nabla}\;\text{is an estimator of }\nabla F(\vx).
 \]   
\end{definition}

\section{Stochastic Bregman Proximal Gradient Method}\label{section-vanilla-SBPG}
In this section, we will study the Stochastic Bregman Proximal Gradient method (SBPG) with the following update scheme:
\begin{equation}
{\vx^{k+1}}=\argmin_{\vx\in\overline{C}}\;R(\vx)+\inprod{\widetilde\nabla_k}{\vx-{\vx^k}}+\frac{1}{\alpha_k}\cD_\phi(\vx,{\vx^k}).
\label{vanilla-SBPG}
\tag{SBPG}
\end{equation}
We call {the above method as} "vanilla" SBPG in this section, meaning that the method we study is a basic version without any additional techniques such as variance reduction, momentum, etc., except for the use of mini-batches. In this case, we suppose the following assumptions.
\begin{assumption}\label{variance-assumption}
(Noise requirement).
The estimator satisfies the following two conditions:
$$\E[\widetilde{\nabla}_k|\cF_k]=\nabla F({\vx_k}) \quad \text{and}\quad 
\mathbb E[\|\widetilde\nabla_k-\nabla F({\vx^k})\|^2|\cF_k]\leq\frac{\sigma^2}{m_k},$$ where $m_k$ is the size of the mini-batch in the $k$-th iteration.    
\end{assumption}
 {Note} that we do not assume a finite-sum structure for $F(\vx)$ in this section. The solution of \eqref{vanilla-SBPG} can be written in the form of the Bregman proximal mapping. This is stated in the following proposition. 
 \begin{proposition} 
  \label{subprob-closed-form-sol} 
 Suppose Assumption \ref{relative-smooth-assumption} holds. Then the solution of \eqref{vanilla-SBPG} can be written as the following Bregman proximal mapping:
 \[
 {\vx^{k+1}}={\rm Prox}^\phi_{\alpha_k R}\left(\nabla\phi^*\left(\nabla\phi({\vx^k})-\alpha_k\widetilde\nabla_k\right)\right).
 \]
 \end{proposition}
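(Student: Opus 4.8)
The plan is to show that the two minimization problems—the one defining $\vx^{k+1}$ in \eqref{vanilla-SBPG} and the one defining ${\rm Prox}^\phi_{\alpha_k R}(\cdot)$ in Definition \ref{Bregman-prox-maping}—have identical objectives up to a positive scaling and additive constants, so that their (unique) minimizers must coincide. First I would multiply the \eqref{vanilla-SBPG} objective by $\alpha_k>0$, which leaves both the feasible set $\overline{C}$ and the argmin unchanged. Then I expand the Bregman distance according to Definition \ref{Bregman-distance-def},
\[
\cD_\phi(\vx,\vx^k)=\phi(\vx)-\phi(\vx^k)-\inprod{\nabla\phi(\vx^k)}{\vx-\vx^k},
\]
and discard every term that is constant in $\vx$ (namely $-\alpha_k\inprod{\widetilde\nabla_k}{\vx^k}$, $-\phi(\vx^k)$, and $\inprod{\nabla\phi(\vx^k)}{\vx^k}$). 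Collecting the remaining linear terms, the scaled objective collapses to
\[
\alpha_k R(\vx)+\phi(\vx)-\inprod{\nabla\phi(\vx^k)-\alpha_k\widetilde\nabla_k}{\vx}.
\]

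Next I set $\vz^k:=\nabla\phi^*\left(\nabla\phi(\vx^k)-\alpha_k\widetilde\nabla_k\right)$. Since $\phi$ is a Legendre kernel, the excerpt already records that $\phi^*\in\cC^1(\R^d)$ and $(\nabla\phi)^{-1}=\nabla\phi^*$, so $\nabla\phi(\vz^k)=\nabla\phi(\vx^k)-\alpha_k\widetilde\nabla_k$ for any $\widetilde\nabla_k\in\R^d$. Substituting this identity rewrites the linear term as $-\inprod{\nabla\phi(\vz^k)}{\vx}$, and a reverse application of the Bregman-distance expansion shows that the objective equals $\alpha_k R(\vx)+\cD_\phi(\vx,\vz^k)$ modulo constants in $\vx$. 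By Definition \ref{Bregman-prox-maping}, this is precisely the problem solved by ${\rm Prox}^\phi_{\alpha_k R}(\vz^k)$.

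Finally, Lemma \ref{well-def-Breg-prox}, which holds under Assumption \ref{relative-smooth-assumption}, guarantees that this Bregman proximal subproblem has a unique minimizer lying in $C$; combined with the strict convexity of the \eqref{vanilla-SBPG} objective this forces $\vx^{k+1}={\rm Prox}^\phi_{\alpha_k R}(\vz^k)$, which is exactly the claimed closed form. The only delicate point—more a verification than a genuine obstacle—is justifying the conjugate–gradient identity $\nabla\phi\big(\nabla\phi^*(\vy)\big)=\vy$ for the specific argument $\vy=\nabla\phi(\vx^k)-\alpha_k\widetilde\nabla_k$; this requires the range of $\nabla\phi$ to be all of $\R^d$, which is ensured by the Legendre hypothesis (so that $int\,dom\,\phi^*=\R^d$) via the cited corollaries of \citet{rockafellar1997convex}. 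Everything else is the routine bookkeeping of dropping $\vx$-independent constants.
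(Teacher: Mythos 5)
Your proposal is correct. It differs from the paper's proof in mechanism, though both arguments turn on the same two facts: the conjugate--gradient identity $\nabla\phi\big(\nabla\phi^*(\vy)\big)=\vy$ guaranteed by the Legendre property, and uniqueness of the minimizer from strict convexity. The paper works at the level of first-order optimality conditions: it writes the subdifferential inclusion $0\in\partial R(\vx^{k+1})+\widetilde\nabla_k+\frac{1}{\alpha_k}(\nabla\phi(\vx^{k+1})-\nabla\phi(\vx^k))$ characterizing the \eqref{vanilla-SBPG} iterate, writes the analogous inclusion for $\vu^{k+1}={\rm Prox}^\phi_{\alpha_k R}(\nabla\phi^*(\nabla\phi(\vx^k)-\alpha_k\widetilde\nabla_k))$, simplifies the latter with $\nabla\phi\circ\nabla\phi^*=\mathrm{id}$, and observes that the two inclusions are identical and admit a unique solution. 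You instead work at the level of the objectives themselves: scale by $\alpha_k$, expand both Bregman distances, and discard $\vx$-independent constants to see that the two objective functions literally coincide over $\overline{C}$ modulo an additive constant. Your route avoids any subdifferential calculus and is arguably more elementary bookkeeping; the paper's route avoids tracking which terms are constants and transfers more directly to settings where one only has the stationarity condition. Your closing remark correctly identifies the one point that needs care in either argument, namely that $\nabla\phi^*$ is defined on all of $\R^d$ and maps into $int\,dom\,\phi$ (so that ${\rm Prox}^\phi_{\alpha_k R}$ is being evaluated at a legitimate argument), which follows from supercoercivity and essential smoothness of the Legendre kernel exactly as the paper's citation of \citet{rockafellar1997convex} provides.
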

 \begin{proof}
From the optimality condition of the main subproblem \eqref{vanilla-SBPG}, we have
 \[
 0\in\partial R({\vx^{k+1}})+\wtnabla_k+\frac{1}{\alpha_k}\left(\nabla\phi({\vx^{k+1}})-\nabla\phi({\vx^k})\right).
 \]
Let $\vu^{k+1}:={\rm Prox}^\phi_{\alpha_k R}\left(\nabla\phi^*\left(\nabla\phi({\vx^k})-\alpha_k\widetilde\nabla_k\right)\right)$. From the definition of Bregman proximal mapping, we have
\[
\vu^{k+1}=\arg\min_\vu\;\left\{\alpha_kR(\vu)+{\cD_\phi}\left(\vu,\nabla\phi^*\left(\nabla\phi({\vx^k})-\alpha_k\widetilde\nabla_k\right)\right)\right\},
\]
which is equivalent to
\[
0\in\alpha_k\partial R(\vu^{k+1})+\nabla\phi(\vu^{k+1})-\nabla\phi\big(\nabla\phi^*(\nabla\phi({\vx^k})-\alpha_k\wtnabla_k)\big).
\]
Note that the function $\phi^*$ is the Fenchel conjugate of the Legendre kernel $\phi$, which implies that $\nabla\phi(\nabla\phi^*(\vw))=\vw$ for all $\vw\in\R^d$, as stated in \cite[Corollary 13.3.1, Theorem 26.5]{rockafellar1997convex}. Furthermore, since the objective function in \eqref{vanilla-SBPG} is strictly convex, there exists a unique solution to the inclusion above. By comparing the two inclusions, we can conclude that $\vu^{k+1}={\vx^{k+1}}$.
 \end{proof}
Based on Proposition \ref{subprob-closed-form-sol} and definition of the definition of $\widetilde{\cG}_\alpha(\vx)$, we can easily observe that $\vx^{k+1}=\vx^k-\alpha_k\widetilde{\cG}_{\alpha_k}(\vx^k)$. We can derive the following proposition, which bounds the difference between $\mathcal{G}_{\alpha}(\vx)$ and $\widetilde{\mathcal{G}}_{\alpha}(\vx)$ directly from Proposition \ref{nonexpansive}. The proof is omitted for brevity.
\begin{proposition}
\label{gradient-map-gap}
Suppose Assumption \ref{relative-smooth-assumption} holds. At the $k$-th step, we have the estimation:
    \[
    \|\mathcal G_{\alpha_k}({\vx^k})-\widetilde{\mathcal G}_{\alpha_k}({\vx^k})\|\leq\frac{1}{\mu}\|\nabla F({\vx^k})-\widetilde{\nabla}_k\|=\frac{\|{\bm\varepsilon}_k\|}{\mu},
    \]
    where 
    {$\bm{\varepsilon}_k=\nabla F({\vx^k})-\widetilde{\nabla}_k.$}
\end{proposition}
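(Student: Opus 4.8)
The plan is to reduce the statement to the Bregman nonexpansiveness established in Proposition \ref{nonexpansive}. Writing out Definitions \ref{Breg-prox-gradient-mapping} and \ref{stoc-Bregman-grad-map}, both $\cG_{\alpha_k}(\vx^k)$ and $\widetilde{\cG}_{\alpha_k}(\vx^k)$ carry the same leading term $\vx^k/\alpha_k$, so subtracting them cancels that term and leaves
\[
\cG_{\alpha_k}(\vx^k)-\widetilde{\cG}_{\alpha_k}(\vx^k)=\frac{1}{\alpha_k}\Big[{\rm Prox}^\phi_{\alpha_k R}\big(\nabla\phi^*(\vx_2)\big)-{\rm Prox}^\phi_{\alpha_k R}\big(\nabla\phi^*(\vx_1)\big)\Big],
\]
where $\vx_1:=\nabla\phi(\vx^k)-\alpha_k\wtnabla_k$ and $\vx_2:=\nabla\phi(\vx^k)-\alpha_k\nabla F(\vx^k)$. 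Since $\phi$ is a Legendre kernel, $\nabla\phi^*=(\nabla\phi)^{-1}$ maps $\R^d$ into $int\,dom\,\phi=C$, so both arguments land in the region where the proximal map is well-defined by Lemma \ref{well-def-Breg-prox}, and the displayed identity makes sense.

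The one step that needs justification is that Proposition \ref{nonexpansive} applies verbatim with $\alpha_k R$ in place of $R$. This is the crux of the argument: the proposition is stated for ${\rm Prox}^\phi_R$, but its nonexpansiveness constant $1/\mu$ originates solely from the $\mu$-strong convexity of the kernel $\phi$ (Assumption \ref{relative-smooth-assumption}, A2), and $\alpha_k R$ is again a proper lower-semicontinuous convex function whenever $R$ is and $\alpha_k>0$. Hence, setting $\vx_i^+:={\rm Prox}^\phi_{\alpha_k R}(\nabla\phi^*(\vx_i))$ for $i=1,2$, the same estimate yields $\|\vx_1^+-\vx_2^+\|\leq\frac{1}{\mu}\|\vx_1-\vx_2\|$.

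It then remains to evaluate the right-hand side and carry out the cancellation. The difference of the two arguments is $\vx_1-\vx_2=\alpha_k\big(\nabla F(\vx^k)-\wtnabla_k\big)=\alpha_k{\bm\varepsilon}_k$, so $\|\vx_1-\vx_2\|=\alpha_k\|{\bm\varepsilon}_k\|$. Combining this with the displayed identity and the nonexpansiveness bound gives
\[
\|\cG_{\alpha_k}(\vx^k)-\widetilde{\cG}_{\alpha_k}(\vx^k)\|=\frac{1}{\alpha_k}\|\vx_1^+-\vx_2^+\|\leq\frac{1}{\alpha_k}\cdot\frac{\alpha_k}{\mu}\|{\bm\varepsilon}_k\|=\frac{\|{\bm\varepsilon}_k\|}{\mu},
\]
where the factors of $\alpha_k$ cancel, which is exactly the claim. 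Overall this is essentially a one-line application of the nonexpansive estimate, so I do not anticipate any serious obstacle; the only points requiring care are the bookkeeping of signs in the two proximal arguments and the observation that the nonexpansiveness modulus $1/\mu$ is insensitive to replacing $R$ by $\alpha_k R$. Notably, no further use of smooth adaptivity is needed beyond what Proposition \ref{nonexpansive} already invokes.
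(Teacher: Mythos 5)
Your proof is correct and follows exactly the route the paper intends: the paper omits the proof, stating only that the bound follows directly from Proposition \ref{nonexpansive}, and your argument---cancel the common $\vx^k/\alpha_k$ term, apply the $1/\mu$-nonexpansiveness (which indeed holds verbatim with $\alpha_k R$ in place of $R$), and use $\|\vx_1-\vx_2\|=\alpha_k\|{\bm\varepsilon}_k\|$---is precisely that reduction. The only blemish is an immaterial transposition of $\vx_1$ and $\vx_2$ in your displayed difference (the definitions give $\cG_{\alpha_k}(\vx^k)-\widetilde{\cG}_{\alpha_k}(\vx^k)=\frac{1}{\alpha_k}[{\rm Prox}^\phi_{\alpha_k R}(\nabla\phi^*(\vx_1))-{\rm Prox}^\phi_{\alpha_k R}(\nabla\phi^*(\vx_2))]$), which disappears upon taking norms.
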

Before presenting the main convergence result, we state the following one-step descent lemma below.
\begin{lemma}
Suppose Assumption \ref{relative-smooth-assumption} holds. The sequence generated by SBPG satisfies the following condition: 
\[
\Phi({\vx^{k+1}})\leq\Phi({\vx^k})-\frac{1}{\alpha_k}\cD_\phi({\vx^k},{\vx^{k+1}})-\left(\frac{1}{\alpha_k}-L_F\right)\cD_\phi({\vx^{k+1}},{\vx^k})+\inprod{{\bm\varepsilon}_k}{{\vx^{k+1}}-\vx^{k}}.
\]
\label{basic-descent-lemma-2}
\end{lemma}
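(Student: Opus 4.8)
The plan is to combine three ingredients: the first-order optimality condition of the \eqref{vanilla-SBPG} subproblem, the convexity of $R$, and the upper (one-sided) descent inequality furnished by the smooth adaptivity of $F$. First I would write down the optimality condition for $\vx^{k+1}$, which was already derived in the proof of Proposition \ref{subprob-closed-form-sol}:
\[
0 \in \partial R(\vx^{k+1}) + \wtnabla_k + \frac{1}{\alpha_k}\bigl(\nabla\phi(\vx^{k+1}) - \nabla\phi(\vx^k)\bigr).
\]
This identifies an explicit subgradient $\vg^{k+1} := -\wtnabla_k - \frac{1}{\alpha_k}(\nabla\phi(\vx^{k+1}) - \nabla\phi(\vx^k)) \in \partial R(\vx^{k+1})$. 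Using the subgradient inequality $R(\vx^k) \ge R(\vx^{k+1}) + \inprod{\vg^{k+1}}{\vx^k - \vx^{k+1}}$ and substituting $\vg^{k+1}$ gives an upper bound on $R(\vx^{k+1})$ involving the term $-\inprod{\wtnabla_k}{\vx^{k+1}-\vx^k}$ and the $\phi$-gradient inner product $-\frac{1}{\alpha_k}\inprod{\nabla\phi(\vx^{k+1})-\nabla\phi(\vx^k)}{\vx^{k+1}-\vx^k}$.

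Next I would convert that last inner product into Bregman distances. Applying the four-point identity (Lemma \ref{four-point-identity}) with $\va = \vc = \vx^{k+1}$ and $\vb = \vd = \vx^k$ (and using $\cD_\phi(\vx,\vx)=0$) yields the symmetric relation $\inprod{\nabla\phi(\vx^{k+1})-\nabla\phi(\vx^k)}{\vx^{k+1}-\vx^k} = \cD_\phi(\vx^k,\vx^{k+1}) + \cD_\phi(\vx^{k+1},\vx^k)$. In parallel, I invoke only the upper branch of the smooth adaptivity of $F$ (Lemma \ref{two-side-descent}), namely $F(\vx^{k+1}) \le F(\vx^k) + \inprod{\nabla F(\vx^k)}{\vx^{k+1}-\vx^k} + L_F\,\cD_\phi(\vx^{k+1},\vx^k)$.

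Finally, adding the two estimates and recalling $\Phi = F + R$, the $\wtnabla_k$ and $\nabla F(\vx^k)$ linear terms combine into $\inprod{\nabla F(\vx^k) - \wtnabla_k}{\vx^{k+1}-\vx^k} = \inprod{{\bm\varepsilon}_k}{\vx^{k+1}-\vx^k}$, while the Bregman terms regroup as $-\frac{1}{\alpha_k}\cD_\phi(\vx^k,\vx^{k+1}) - (\frac{1}{\alpha_k} - L_F)\cD_\phi(\vx^{k+1},\vx^k)$, which is exactly the claimed bound. There is no deep obstacle here; this is a standard one-sided Bregman descent estimate. The only points requiring care are (i) using the \emph{upper} branch of smooth adaptivity so that the $L_F\,\cD_\phi(\vx^{k+1},\vx^k)$ term carries the sign that partially cancels against $-\frac{1}{\alpha_k}\cD_\phi(\vx^{k+1},\vx^k)$, and (ii) correctly splitting the symmetric Bregman sum, so that $\cD_\phi(\vx^k,\vx^{k+1})$ retains the full $1/\alpha_k$ weight while $\cD_\phi(\vx^{k+1},\vx^k)$ picks up the reduced weight $(1/\alpha_k - L_F)$.
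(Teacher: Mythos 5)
Your proposal is correct and follows essentially the same route as the paper's proof: the optimality condition of the subproblem, convexity of $R$, the four-point identity (the paper keeps a general test point $\vx$ before setting $\vx=\vx^k$, whereas you specialize immediately to the symmetric identity $\inprod{\nabla\phi(\vx^{k+1})-\nabla\phi(\vx^k)}{\vx^{k+1}-\vx^k}=\cD_\phi(\vx^k,\vx^{k+1})+\cD_\phi(\vx^{k+1},\vx^k)$), and the upper branch of the smooth-adaptivity descent inequality. Your regrouping of the $\wtnabla_k$ and $\nabla F(\vx^k)$ linear terms into $\inprod{{\bm\varepsilon}_k}{\vx^{k+1}-\vx^k}$ and the splitting of the Bregman weights match the paper's argument exactly.
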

\begin{proof}
By the optimality condition of \eqref{vanilla-SBPG}, we obtain that 
\[
0\in\partial R({\vx^{k+1}})+\wtnabla_k+\frac{1}{\alpha_k}\left(\nabla\phi({\vx^{k+1}})-\nabla\phi({\vx^k})\right).
\]
Appealing to the convexity of $R$, we have
\[
R(\vx)-R({\vx^{k+1}})\geq\big\langle{-\wtnabla_k-\frac{1}{\alpha_k}\left(\nabla\phi({\vx^{k+1}})-\nabla\phi({\vx^k})\right)},\,{\vx-{\vx^{k+1}}}\big\rangle.
\]
By the four points identity and the definition of ${\bm\varepsilon}_k$, we get 
\[
 R(\vx)-R({\vx^{k+1}})\geq\frac{1}{\alpha_k}\left[{\cD_\phi}({\vx^{k+1}},{\vx^k})+{\cD_\phi}(\vx,{\vx^{k+1}})-{\cD_\phi}(\vx,{\vx^k})\right]-\inprod{\nabla F({\vx^k})}{\vx-{\vx^{k+1}}}+\inprod{{\bm\varepsilon}_k}{\vx-{\vx^{k+1}}}.
\]
Set $\vx={\vx^k}$ in the above inequality, we have the following inequality:
\[
R({\vx^k})-R({\vx^{k+1}})\geq\frac{1}{\alpha_k}\left[{\cD_\phi}({\vx^{k+1}},{\vx^k})+{\cD_\phi}({\vx^k},{\vx^{k+1}})\right]-\inprod{\nabla F({\vx^k})}{{\vx^k}-{\vx^{k+1}}}+\inprod{{\bm\varepsilon}_k}{{\vx^k}-{\vx^{k+1}}}.
\]
By the smooth adaptivity of $F$, we have 
\[
F({\vx^{k+1}})\leq F({\vx^k})+\inprod{\nabla F({\vx^k})}{{\vx^{k+1}}-{\vx^k}}+L_F{\cD_\phi}({\vx^{k+1}},{\vx^k}).
\]
Combining the above two inequalities above, we complete the proof.
\end{proof}

\subsection{Convergence analysis of SBPG}

 In this subsection, we establish the convergence results for SBPG, which is an extension of the convergence result in \citep{ghadimi2016mini}, in which the lassical Lipschitz gradient assumption is required. In many literature, the bounded sequence assumption is often required in the convergence analysis of stochastic algorithms. However, in this section, we relax this assumption and prove that under a certain condition, the sequence generated by \eqref{vanilla-SBPG} is bounded almost surely. We need the following result to bound the stochastic error term $\inprod{{\bm\varepsilon}_k}{{\vx^{k+1}}-{\vx^k}}$ in Lemma \ref{basic-descent-lemma-2}.
\begin{lemma}
Suppose Assumption \ref{relative-smooth-assumption}, \ref{variance-assumption} hold. We have the following estimation of the error term:
\[
\mathbb E\left[\inprod{{\bm\varepsilon}_k}{{\vx^{k+1}}-{\vx^k}}\right]\leq\frac{\alpha_k}{\mu}\E[\|{\bm\varepsilon}_k\|^2]\leq\frac{\alpha_k\sigma^2}{\mu m_k}.
\]
\label{inprod-bound}
\end{lemma}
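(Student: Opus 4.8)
The plan is to handle the statistical dependence between the stochastic error ${\bm\varepsilon}_k$ and the updated iterate ${\vx^{k+1}}$. One cannot simply condition on $\cF_k$ and invoke unbiasedness, because ${\vx^{k+1}}$ is constructed from $\wtnabla_k$ and is therefore only $\cF_{k+1}$-measurable, hence correlated with ${\bm\varepsilon}_k$; in general $\E[\inprod{{\bm\varepsilon}_k}{{\vx^{k+1}}-{\vx^k}}\mid\cF_k]\neq 0$. The remedy is to compare ${\vx^{k+1}}$ against a \emph{noiseless shadow iterate} that is $\cF_k$-measurable.

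Concretely, I would introduce
\[
\bar{\vx}^{k+1} := {\rm Prox}^\phi_{\alpha_k R}\!\left(\nabla\phi^*\!\left(\nabla\phi({\vx^k})-\alpha_k\nabla F({\vx^k})\right)\right),
\]
the point SBPG would produce if it used the exact gradient $\nabla F({\vx^k})$ in place of $\wtnabla_k$. Since ${\vx^k}\vartriangleleft\cF_k$ and $\nabla F({\vx^k})$ is deterministic given ${\vx^k}$, the increment $\bar{\vx}^{k+1}-{\vx^k}$ is $\cF_k$-measurable. I then split
\[
\inprod{{\bm\varepsilon}_k}{{\vx^{k+1}}-{\vx^k}} = \inprod{{\bm\varepsilon}_k}{{\vx^{k+1}}-\bar{\vx}^{k+1}} + \inprod{{\bm\varepsilon}_k}{\bar{\vx}^{k+1}-{\vx^k}}.
\]
For the second term, the tower property together with $\E[{\bm\varepsilon}_k\mid\cF_k]=0$ and the $\cF_k$-measurability of $\bar{\vx}^{k+1}-{\vx^k}$ yields $\E[\inprod{{\bm\varepsilon}_k}{\bar{\vx}^{k+1}-{\vx^k}}]=0$.

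For the first term I would use Cauchy--Schwarz and then bound $\|{\vx^{k+1}}-\bar{\vx}^{k+1}\|$. The key observation is the identity ${\vx^{k+1}}={\vx^k}-\alpha_k\widetilde{\cG}_{\alpha_k}({\vx^k})$ and $\bar{\vx}^{k+1}={\vx^k}-\alpha_k\cG_{\alpha_k}({\vx^k})$, so that $\|{\vx^{k+1}}-\bar{\vx}^{k+1}\|=\alpha_k\|\cG_{\alpha_k}({\vx^k})-\widetilde{\cG}_{\alpha_k}({\vx^k})\|$. This gap is precisely what Proposition \ref{gradient-map-gap} (itself a consequence of the non-expansiveness in Proposition \ref{nonexpansive}) controls, giving $\|{\vx^{k+1}}-\bar{\vx}^{k+1}\|\le\frac{\alpha_k}{\mu}\|{\bm\varepsilon}_k\|$. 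Hence $\inprod{{\bm\varepsilon}_k}{{\vx^{k+1}}-\bar{\vx}^{k+1}}\le\frac{\alpha_k}{\mu}\|{\bm\varepsilon}_k\|^2$, and taking expectations establishes the first inequality. The second inequality follows at once from Assumption \ref{variance-assumption} and the tower property, since $\E[\|{\bm\varepsilon}_k\|^2]=\E\big[\E[\|\wtnabla_k-\nabla F({\vx^k})\|^2\mid\cF_k]\big]\le\sigma^2/m_k$.

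The main obstacle is conceptual rather than computational: recognizing that unbiasedness cannot be applied directly to the correlated term and building the $\cF_k$-measurable surrogate $\bar{\vx}^{k+1}$ so that the cross term cancels in expectation while the residual coincides exactly with the non-expansiveness gap already quantified in Proposition \ref{gradient-map-gap}. Once this decomposition is set up, the remaining estimates are routine applications of Cauchy--Schwarz, the tower property, and Assumption \ref{variance-assumption}.
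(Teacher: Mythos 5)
Your proposal is correct and follows essentially the same route as the paper: both introduce the noiseless shadow iterate $\bar{\vx}^{k+1}$, split the inner product into a term that vanishes in expectation by $\cF_k$-measurability and a residual term bounded by $\frac{\alpha_k}{\mu}\|{\bm\varepsilon}_k\|^2$. The only cosmetic difference is that you invoke Proposition \ref{gradient-map-gap} (non-expansiveness) to bound $\|{\vx^{k+1}}-\bar{\vx}^{k+1}\|$, whereas the paper re-derives that same bound directly from the optimality conditions and the monotonicity of $\partial R$.
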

\begin{proof}
Define ${\bar\vx}^{k+1}:= {{\rm Prox}_{\alpha_kR}^\phi} (\nabla\phi^*(\nabla\phi({\vx^k})-\alpha_k\nabla F({\vx^k})))$. By Proposition \ref{subprob-closed-form-sol} and 
the optimality condition for {${\bar\vx}^{k+1}$}, we have
\[
0\in\partial R({\bar\vx}^{k+1})+\nabla F({\vx^k})+\frac{1}{\alpha_k}(\nabla\phi({\bar\vx}^{k+1})-\nabla\phi({\vx^k})).
\]
Similarly,
\[
0\in\partial R({\vx^{k+1}})+\wtnabla_k+\frac{1}{\alpha_k}(\nabla\phi({\vx^{k+1}})-\nabla\phi({\vx^k})).
\]
By the monotonicity of $\partial R$ and Lemma \ref{four-point-identity}, we have
\[
\big\langle{{\bar\vx}^{k+1}-{\vx^{k+1}}},\,
{-{\bm\varepsilon}_k-\frac{1}{\alpha_k}(\nabla\phi({\bar\vx}^{k+1})-\nabla\phi({\vx^{k+1}}))}
\big\rangle
\geq0.
\]
Therefore, 
\[
\inprod{{\vx^{k+1}}-{\bar\vx}^{k+1}}{{\bm\varepsilon}_k}\geq
\inprod{{\bar\vx}^{k+1}-{\vx^{k+1}}}{\frac{1}{\alpha_k}(\nabla\phi({\bar\vx}^{k+1})-\nabla\phi({\vx^{k+1}}))}\geq\frac{\mu}{\alpha_k}\|{\bar\vx}^{k+1}-{\vx^{k+1}}\|^2.
\]
By Cauchy-Schwarz inequality, we get $\|{\bar\vx}^{k+1}-{\vx^{k+1}}\|\leq\frac{\alpha_k}{\mu}\|{\bm\varepsilon}_k\|$.  

Now, we are ready to prove Lemma \ref{inprod-bound}. From the definition, we know that ${\bar\vx}^{k+1}\vartriangleleft\cF_k$. Therefore, $\E[\inprod{{\bm\varepsilon}_k}{{\vx^k}-{\bar\vx}^{k+1}}]=\E[\E[\inprod{{\bm\varepsilon}_k}{{\vx^k}-{\bar\vx}^{k+1}}|\cF_k]]=\E[\inprod{\E[{\bm\varepsilon}_k|\cF_k]}{{\vx^k}-{\bar\vx}^{k+1}}]=0$, {where} the first equality is from the tower rule of conditional expectation, the second comes from {the fact} that ${\vx^k}-{\bar\vx}^{k+1}\vartriangleleft\cF_k$. Hence,
{\small
\[
\mathbb E\left[\inprod{{\bm\varepsilon}_k}{{\vx^{k+1}}-\vx^{k}}\right]=\E\left[\inprod{{\bm\varepsilon}_k}{{\vx^{k+1}}-{\bar\vx}^{k+1}}\right]-\E\left[\inprod{{\bm\varepsilon}_k}{{\vx^k}-{\bar\vx}^{k+1}}\right]\leq\frac{\alpha_k}{\mu}\E[\|{\bm\varepsilon}_k\|^2]\leq\frac{\alpha_k\sigma^2}{\mu m_k},
\]}
which completes the proof.
\end{proof}
 
\begin{lemma}[Bounded sequence]
Suppose Assumption \ref{relative-smooth-assumption}, \ref{variance-assumption} hold. If $\sum_k\frac{\alpha_k}{m_k}<\infty$, $\sup_k\alpha_k\leq\bar\alpha<\frac{1}{L_F}$, then,
 \begin{itemize}
     \item[1.] $\sum_{k=0}^\infty \E[\cD_\phi({\vx^{k+1}},{\vx^k})]<\infty$.
     \item[2.] If $\Phi$ is level bounded, then $\{{\vx^k}\}_{k\geq0}$ is bounded almost surely.
   \end{itemize}
   \label{bounded sequence-vanilla}
\end{lemma}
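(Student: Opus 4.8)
The plan is to use the one-step descent inequality of Lemma~\ref{basic-descent-lemma-2} as the engine, controlling the stochastic cross term through Lemma~\ref{inprod-bound}. The crucial structural observation is that the stepsize restriction $\sup_k\alpha_k\leq\bar\alpha<1/L_F$ forces $\frac{1}{\alpha_k}-L_F\geq\frac{1}{\bar\alpha}-L_F=:c>0$, so that \emph{both} Bregman terms in Lemma~\ref{basic-descent-lemma-2} carry a negative sign and act as genuine descent. This is what converts the descent inequality into a summability statement once the noise is absorbed.

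For part 1, I would take full expectations in Lemma~\ref{basic-descent-lemma-2}, substitute the bound $\E[\inprod{{\bm\varepsilon}_k}{{\vx^{k+1}}-{\vx^k}}]\leq\frac{\alpha_k\sigma^2}{\mu m_k}$ from Lemma~\ref{inprod-bound}, and discard the harmless nonpositive term $-\frac{1}{\alpha_k}\E[\cD_\phi(\vx^k,\vx^{k+1})]$. Rearranging gives
\[
c\,\E[\cD_\phi(\vx^{k+1},\vx^k)]\leq \E[\Phi(\vx^k)]-\E[\Phi(\vx^{k+1})]+\frac{\alpha_k\sigma^2}{\mu m_k}.
\]
Summing over $k$ telescopes the first two terms on the right, and I would lower-bound the tail $\E[\Phi(\vx^{N+1})]$ by $\inf_{\vx\in\overline{C}}\Phi(\vx)>-\infty$ (Assumption A4). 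Since $\sum_k\alpha_k/m_k<\infty$ by hypothesis, the right-hand side stays finite as $N\to\infty$, yielding $\sum_k\E[\cD_\phi(\vx^{k+1},\vx^k)]<\infty$.

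For part 2, I would instead condition on $\cF_k$ and invoke the supermartingale convergence lemma (Lemma~\ref{martingale-convergence}). Setting $y_k:=\Phi(\vx^k)-\inf_{\vx\in\overline{C}}\Phi(\vx)\geq0$, $a_k:=0$, $b_k:=\frac{\alpha_k\sigma^2}{\mu m_k}$, and letting $u_k$ collect the two nonnegative conditional Bregman terms, Lemma~\ref{basic-descent-lemma-2} yields $\E[y_{k+1}\mid\cF_k]\le y_k-u_k+b_k$, provided the \emph{conditional} version $\E[\inprod{{\bm\varepsilon}_k}{{\vx^{k+1}}-{\vx^k}}\mid\cF_k]\le\frac{\alpha_k\sigma^2}{\mu m_k}$ holds. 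Since $\sum_k b_k<\infty$, Lemma~\ref{martingale-convergence} gives that $y_k$, hence $\Phi(\vx^k)$, converges almost surely to a finite limit; in particular $\sup_k\Phi(\vx^k)<\infty$ almost surely. Level boundedness of $\Phi$ then confines the whole trajectory to a sublevel set of the form $\{\vx:\Phi(\vx)\le M\}$, which is bounded, so $\{\vx^k\}_{k\geq0}$ is bounded almost surely.

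The main obstacle is the conditional strengthening of Lemma~\ref{inprod-bound}, which is what the supermartingale lemma actually requires, since the stated lemma is only in full expectation. I expect this to follow verbatim from the proof of Lemma~\ref{inprod-bound}: the auxiliary point ${\bar\vx}^{k+1}={\rm Prox}^\phi_{\alpha_k R}(\nabla\phi^*(\nabla\phi(\vx^k)-\alpha_k\nabla F(\vx^k)))$ is $\cF_k$-measurable, so $\E[\inprod{{\bm\varepsilon}_k}{\vx^k-{\bar\vx}^{k+1}}\mid\cF_k]=\inprod{\E[{\bm\varepsilon}_k\mid\cF_k]}{\vx^k-{\bar\vx}^{k+1}}=0$ by unbiasedness, while the remaining term is controlled by $\|{\bar\vx}^{k+1}-{\vx^{k+1}}\|\le\frac{\alpha_k}{\mu}\|{\bm\varepsilon}_k\|$ together with Cauchy--Schwarz and Assumption~\ref{variance-assumption}. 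A secondary point to verify is integrability of $\Phi(\vx^k)$, so that the full and conditional expectations appearing above are legitimate.
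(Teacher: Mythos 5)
Your argument is correct, and both parts reach the paper's conclusions by the same overall engine (the one-step descent of Lemma~\ref{basic-descent-lemma-2}, summability of the noise via $\sum_k \alpha_k/m_k<\infty$, and the supermartingale Lemma~\ref{martingale-convergence} plus level boundedness for the almost-sure statement). The one place where you diverge is the handling of the cross term $\inprod{{\bm\varepsilon}_k}{{\vx^{k+1}}-{\vx^k}}$. The paper does not use Lemma~\ref{inprod-bound} here at all: it applies Cauchy--Young \emph{pathwise}, $|\inprod{{\bm\varepsilon}_k}{{\vx^k}-{\vx^{k+1}}}|\leq\frac{\mu}{2\alpha_k}\|{\vx^k}-{\vx^{k+1}}\|^2+\frac{\alpha_k}{2\mu}\|{\bm\varepsilon}_k\|^2$, and absorbs the first piece into the $-\frac{1}{\alpha_k}\cD_\phi({\vx^k},{\vx^{k+1}})$ term of the descent lemma via strong convexity of $\phi$. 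This yields the deterministic recursion $\bigl(\frac{1}{\alpha_k}-L_F\bigr)\cD_\phi({\vx^{k+1}},{\vx^k})\leq\Phi({\vx^k})-\Phi({\vx^{k+1}})+\frac{\alpha_k}{2\mu}\|{\bm\varepsilon}_k\|^2$, after which taking conditional expectation only requires the second-moment bound $\E[\|{\bm\varepsilon}_k\|^2|\cF_k]\leq\sigma^2/m_k$ from Assumption~\ref{variance-assumption} --- no unbiasedness or measurability argument is needed, and the ``conditional strengthening of Lemma~\ref{inprod-bound}'' that you identify as the main obstacle never arises. Your route is nonetheless sound: the auxiliary point ${\bar\vx}^{k+1}$ is indeed $\cF_k$-measurable, so the proof of Lemma~\ref{inprod-bound} does go through conditionally exactly as you claim, and your telescoping-in-full-expectation argument for part 1 is a perfectly valid (if slightly less economical) alternative to the paper's single recursion serving both parts. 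The trade-off is that the paper sacrifices the $\cD_\phi({\vx^k},{\vx^{k+1}})$ term to buy a pathwise inequality, whereas you keep both Bregman terms at the cost of re-proving the cross-term bound in conditional form; for the purposes of this lemma the two are equivalent in strength.
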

\begin{proof}
By Cauchy-Young inequality, we have 
\[
|\inprod{{\bm\varepsilon}_k}{{\vx^k}-{\vx^{k+1}}}|\leq\frac{\mu}{2\alpha_k}\|{\vx^k}-{\vx^{k+1}}\|^2+\frac{\alpha_k}{2\mu}\|{\bm\varepsilon}_k\|^2\leq \frac{1}{\alpha_k}{\cD_\phi}({\vx^k},{\vx^{k+1}})+\frac{\alpha_k}{2\mu}\|{\bm\varepsilon}_k\|^2.
 \]
 By Lemma \ref{basic-descent-lemma-2}, we have
 \begin{equation}
 \left(\frac{1}{\alpha_k}-L_F\right){\cD_\phi}({\vx^{k+1}},{\vx^k})\leq\Phi({\vx^k})-\Phi({\vx^{k+1}})+\frac{\alpha_k}{2\mu}\|{\bm\varepsilon}_k\|^2.
 \label{bound-sequence-recursion}
 \end{equation}
Taking conditional expectation for both sides of \eqref{bound-sequence-recursion}, we get
\[
\E\left[\left(\frac{1}{\alpha_k}-L_F\right){\cD_\phi}({\vx^{k+1}},{\vx^k})|\cF_k\right]\leq\Phi({\vx^k})-\E[\Phi({\vx^{k+1}})|\cF_k]+\frac{\alpha_k}{2\mu}\E[\|{\bm\varepsilon}_k\|^2|\cF_k].
\]
Since $\sum_{k\geq0}\frac{\alpha_k}{2\mu}\E[\|{\bm\varepsilon}_k\|^2|\cF_k]\leq\sum_{k\geq0}\frac{\alpha_k\sigma^2}{2\mu m_k}<\infty$, applying Theorem \ref{martingale-convergence}, we have that $\Phi({\vx^k})$ converges and $\sum_{k\geq0}\E\left[\left(\frac{1}{\alpha_k}-L_F\right){\cD_\phi}({\vx^{k+1}},{\vx^k})|\cF_k\right]<\infty$  almost surely.  By the tower rule of conditional expectation, we have $\sum_{k=0}^\infty \E[{\cD_\phi}({\vx^{k+1}},{\vx^k})]<\infty$. Since $\Phi({\vx^k})$ converges {almost surely}, thus $\{\Phi({\vx^k})\}_{k\geq0}$ is bounded almost surely. By the level boundness of $\Phi$, we deduce that $\{{\vx^k}\}_{k\geq0}$ is bounded almost surely.
\end{proof}

Now, we present our main convergence result for the vanilla SBPG, which is in the sense of expectation.
\begin{theorem}[Convergence result in expectation]
  Suppose Assumption \ref{relative-smooth-assumption}, \ref{variance-assumption} hold, $\alpha_k<\frac{1}{L_F}\min\{1,\frac{1}{\mu}\}$. Define a random variable $r$ with the distribution $\mathbb{P}\{r=k\}=\frac{\alpha_k}{\sum_{k=0}^{N-1}\alpha_k}$ for $k=0,...,N-1$. Then,
\be\label{vSBPG-convergence-thm}
 \E[\|\widetilde{\mathcal G}_{\alpha_r}(\vx^{r})\|^2]\leq\frac{2\Delta_0+2\sum_{k=0}^{N-1}\frac{\alpha_k\sigma^2}{\mu m_k}}{\mu\sum_{k=0}^{N-1}\alpha_k},
\ee
  where $\Delta_0:=\Phi(\vx^0)-\Phi^*$. If $\sum_{k}\frac{\alpha_k}{m_k}<+\infty$ and $\sum_{k}\alpha_k=+\infty$, then the right hand side of \eqref{vSBPG-convergence-thm} converges to zero. Moreover, if $\Phi$ is level bounded, then the sequence $\{{\vx^k}\}_{k\geq0}$ is bounded almost surely.
 \label{converge-thm-vanilla-expt}
\end{theorem}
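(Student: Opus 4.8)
The plan is to turn the one-step descent estimate of Lemma \ref{basic-descent-lemma-2} into a per-iteration inequality that controls the squared stochastic Bregman gradient mapping by the decrease in $\Phi$, sum over iterations with a telescoping argument, and conclude via the randomized-index device of \citet{ghadimi2016mini}.

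First I would exploit the step-size condition: since $\alpha_k < \frac{1}{L_F}$, the coefficient $\frac{1}{\alpha_k}-L_F$ is positive, so the term $\left(\frac{1}{\alpha_k}-L_F\right)\cD_\phi(\vx^{k+1},\vx^k)$ in Lemma \ref{basic-descent-lemma-2} is nonnegative and may be dropped, leaving
\[
\tfrac{1}{\alpha_k}\cD_\phi(\vx^k,\vx^{k+1}) \leq \Phi(\vx^k)-\Phi(\vx^{k+1})+\inprod{{\bm\varepsilon}_k}{\vx^{k+1}-\vx^k}.
\]
I would then use the relation $\vx^{k+1}=\vx^k-\alpha_k\widetilde{\cG}_{\alpha_k}(\vx^k)$ together with the $\mu$-strong convexity of $\phi$ from Assumption \ref{relative-smooth-assumption} to bound the left side from below by $\tfrac{\mu\alpha_k}{2}\|\widetilde{\cG}_{\alpha_k}(\vx^k)\|^2$.

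Taking total expectation and substituting the cross-term bound $\E[\inprod{{\bm\varepsilon}_k}{\vx^{k+1}-\vx^k}]\leq \frac{\alpha_k\sigma^2}{\mu m_k}$ from Lemma \ref{inprod-bound} gives the recursion $\frac{\mu\alpha_k}{2}\E[\|\widetilde{\cG}_{\alpha_k}(\vx^k)\|^2]\leq \E[\Phi(\vx^k)]-\E[\Phi(\vx^{k+1})]+\frac{\alpha_k\sigma^2}{\mu m_k}$. Summing for $k=0,\dots,N-1$ the potential terms telescope, and $\Phi(\vx^N)\geq\Phi^*$ bounds the telescoped difference by $\Delta_0$. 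Dividing through by $\frac{\mu}{2}\sum_{k=0}^{N-1}\alpha_k$ and recognizing the weighted average $\sum_{k}\frac{\alpha_k}{\sum_j\alpha_j}\E[\|\widetilde{\cG}_{\alpha_k}(\vx^k)\|^2]$ as $\E[\|\widetilde{\cG}_{\alpha_r}(\vx^r)\|^2]$ under the prescribed law of $r$ yields \eqref{vSBPG-convergence-thm}. The two final assertions are then immediate: under $\sum_k\frac{\alpha_k}{m_k}<\infty$ and $\sum_k\alpha_k=\infty$ the numerator of \eqref{vSBPG-convergence-thm} stays bounded while the denominator diverges, forcing the bound to $0$; and the almost-sure boundedness is exactly Lemma \ref{bounded sequence-vanilla}(2), whose hypotheses are subsumed by the present assumptions.

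The main obstacle is controlling the stochastic cross term $\inprod{{\bm\varepsilon}_k}{\vx^{k+1}-\vx^k}$ without degrading the constants. A direct Cauchy–Young split would absorb part of $\cD_\phi(\vx^k,\vx^{k+1})$ and spoil the clean $\frac{\mu\alpha_k}{2}$ coefficient; the key is instead to route this term through Lemma \ref{inprod-bound}, which uses unbiasedness and the $\cF_k$-measurable auxiliary point $\bar{\vx}^{k+1}$ to annihilate the first-order noise in expectation. Once this is available, the remainder is the standard telescoping-and-randomization argument, and the step-size restriction $\alpha_k<\frac{1}{L_F}\min\{1,\frac{1}{\mu}\}$ suffices to ensure both the positivity of the discarded Bregman term and compatibility with Lemma \ref{bounded sequence-vanilla}.
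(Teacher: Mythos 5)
Your proposal is correct and follows essentially the same route as the paper: combine the one-step descent of Lemma \ref{basic-descent-lemma-2} with the $\mu$-strong convexity of $\phi$ and the identity $\vx^{k+1}=\vx^k-\alpha_k\widetilde{\cG}_{\alpha_k}(\vx^k)$, control the noise term via Lemma \ref{inprod-bound}, telescope, and invoke the randomized index $r$; the only cosmetic difference is that you discard the nonnegative term $\left(\tfrac{1}{\alpha_k}-L_F\right)\cD_\phi(\vx^{k+1},\vx^k)$ outright, whereas the paper retains it to obtain the coefficient $\mu\left(\alpha_k-\tfrac{\mu L_F\alpha_k^2}{2}\right)$ before lower-bounding it by $\tfrac{\mu\alpha_k}{2}$, and both yield the identical bound \eqref{vSBPG-convergence-thm}.
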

\begin{proof}
    Note that ${\vx^{k+1}}={\vx^k}-\alpha_k\widetilde{\cG}_{\alpha_k}({\vx^k})$ and by the strongly convexity of $\phi$, Lemma \ref{basic-descent-lemma-2} yields 
\[
\begin{aligned}
    \mu(\alpha_k-\frac{L_F\alpha_k^2}{2})\|\widetilde{\cG}_{\alpha_k}({\vx^k})\|^2&\leq\frac{1}{\alpha_k}\cD_\phi({\vx^k},{\vx^{k+1}})+\left(\frac{1}{\alpha_k}-L_F\right)\cD_\phi({\vx^{k+1}},{\vx^k})\\
    &\leq\Phi({\vx^k})-\Phi({\vx^{k+1}})+\inprod{{\bm\varepsilon}_k}{{\vx^{k+1}}-{\vx^k}}.
\end{aligned}
\]
Taking expectations, telescoping from $k=0...N-1$, and using Lemma \ref{inprod-bound}, we obtain
\be
 \sum_{k=0}^{N-1}\mu(\alpha_k-\frac{\mu L_F\alpha_k^2}{2})\E[\|\widetilde{\mathcal G}_{\alpha_k}({\vx^k})\|^2]\leq\Phi(\vx^0)-\Phi(\vx^N)+\sum_{k=0}^{N-1}\frac{\alpha_k\sigma^2}{\mu m_k}.
 \label{proof-vsbpg-thm}
\ee
By utilizing the inequality $\alpha_k - \frac{\mu L_F \alpha_k^2}{2} \geq \frac{\alpha_k}{2}$, the condition $\Phi(\vx^N) \geq \Phi^*$, and considering the definition of the random variable $r$, we can derive \eqref{vSBPG-convergence-thm} from \eqref{proof-vsbpg-thm}. 
\end{proof}

\begin{remark}
We give some remarks for Theorem \ref{converge-thm-vanilla-expt}.
\begin{itemize} 
\item[1.] The mini-batch size plays a crucial role in ensuring convergence, as it allows us to control the stochastic error term in Lemma \ref{basic-descent-lemma-2} and provide a bound for $\E[\|\widetilde{\cG}_{\alpha_k}({\vx^k})\|^2]$ that converges to $0$ as $k$ tends to infinity. If $m_k=1$ for all $k$, then the upper bound for $\E[\|\widetilde{\cG}_{\alpha_k}({\vx^k})\|^2]$ will not converge to $0$, no matter how $\{\alpha_k\}$ is selected. 
\item[2.] In \citep{ghadimi2016mini}, a similar convergence result is established for mini-batch stochastic proximal gradient methods, but our analysis differs in a crucial aspect in that we do not assume the Lipschitz continuity of $F(\vx)$. Instead, we rely on the smooth adaptivity of $F(\vx)$, which is a more relaxed assumption. Additionally, we provide specific conditions on the stepsizes $\{\alpha_k\}$ and mini-batch sizes $\{m_k\}$ that guarantee the convergence of $\E[\|\widetilde{\cG}_{\alpha_k}({\vx^k})\|^2]$ to $0$, as well as the almost sure boundedness of the sequence $\{\vx^k\}$.
\item[3.] {We now provide a specific choice of $\{\alpha_k\}$ and $\{m_k\}$ to establish a convergence rate in terms of expected stationarity. For given positive constants $c_1,c_2,\gamma\in(0,\infty)$ and $\delta\in[0,1)$, we choose
\[
\alpha_k=\frac{c_1}{(k+1)^\delta},\;m_k=\ceil{c_2(k+1)^\gamma}.
\]
Under these choices, we have $\sum_{i=0}^k\alpha_i=\mathcal{O}(k^{1-\delta})$ and $\sum_{i=0}^k\frac{\alpha_k}{m_k}=\mathcal{O}(k^{1-(\delta+\gamma)})$. As a result, the RHS of \eqref{vSBPG-convergence-thm} has the bound of $\mathcal{O}(k^{-(1-\delta)}+k^{-\gamma})$. When $\delta=0$ and $\gamma\geq1$, the order is $\mathcal{O}(k^{-1})$ which recovers the optimal rate of deterministic first order method. In this case, the mini-batch size $m_k$ increases rapidly and the variance in the stochastic gradient reduces to zero quickly, so the algorithm behaves like a deterministic algorithm. However, such a favorable convergence rate comes at the expense of a heavy computational burden and high memory cost in each iterative step.}
\end{itemize}
\label{Vanilla-convergence-expt-remark}
\end{remark}

{Next, we present the sample complexity of the SBPG method. We define each computation of $\nabla f(\vx,\bm\xi)$ for a fix $\bm\xi$ as an oracle evaluation.
\begin{corollary}
\label{cor:complexity-SBPG}
Given an accuracy level $\epsilon>0$ and a positive constant $\gamma > 0$, choose $\alpha_k=\alpha<\frac{1}{L_F}\min\{1,\frac{1}{\mu}\}$. Then to achieve an $\epsilon$-stationary point in expectation, at most $\bar N:=\ceil{\max\left\{\gamma^2,\,\frac{4\sigma^4}{\mu^4\epsilon^4}\left(\frac{2\Delta_0\mu\gamma}{\alpha\sigma^2}+\frac{1}{\gamma}\right)^2\right\}}$ oracle evaluations are required. 
In this case, we need to set $m_k = m := \lceil\min\{\gamma\sqrt{\bar N},\, \bar{N} \}\rceil$ for all $k.$
\end{corollary}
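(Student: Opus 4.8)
The plan is to specialize the general bound \eqref{vSBPG-convergence-thm} to constant parameters and then optimize the oracle budget. First I would set $\alpha_k=\alpha$ and $m_k=m$ for all $k=0,\dots,N-1$ in Theorem \ref{converge-thm-vanilla-expt}; since $\alpha<\frac{1}{L_F}\min\{1,\frac{1}{\mu}\}$ meets the theorem's stepsize requirement, the right-hand side of \eqref{vSBPG-convergence-thm} collapses to
\[
\E[\|\widetilde{\mathcal G}_{\alpha_r}(\vx^r)\|^2]\leq\frac{2\Delta_0+2N\frac{\alpha\sigma^2}{\mu m}}{\mu N\alpha}=\frac{2\Delta_0}{\mu N\alpha}+\frac{2\sigma^2}{\mu^2 m}.
\]
The total number of oracle evaluations is $T=Nm$. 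The structure here is a bias--variance tradeoff: the first term is an optimization error that decays in the iteration count $N$, while the second is a variance error that decays in the batch size $m$; to reach an $\epsilon$-stationary point in expectation, i.e.\ $\E[\|\widetilde{\mathcal G}_{\alpha_r}(\vx^r)\|^2]\leq\epsilon^2$, both must be driven below roughly $\epsilon^2$.

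Next I would couple the two terms to the budget through the scaling $m=\gamma\sqrt{T}$, equivalently $N=\sqrt{T}/\gamma$. Substituting $Nm=T$ gives
\[
\E[\|\widetilde{\mathcal G}_{\alpha_r}(\vx^r)\|^2]\leq\frac{1}{\sqrt{T}}\left(\frac{2\Delta_0\gamma}{\mu\alpha}+\frac{2\sigma^2}{\mu^2\gamma}\right)=\frac{2\sigma^2}{\mu^2\sqrt{T}}\left(\frac{\mu\Delta_0\gamma}{\alpha\sigma^2}+\frac{1}{\gamma}\right).
\]
Requiring this to be at most $\epsilon^2$ and solving for $T$ yields $T\geq\frac{4\sigma^4}{\mu^4\epsilon^4}\left(\frac{\mu\Delta_0\gamma}{\alpha\sigma^2}+\frac{1}{\gamma}\right)^2$, which is the second argument of the maximum defining $\bar N$. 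The side condition $\bar N\geq\gamma^2$ is exactly what guarantees $\gamma\sqrt{\bar N}\leq\bar N$, so the minimum in $m:=\lceil\min\{\gamma\sqrt{\bar N},\bar N\}\rceil$ is attained at $\gamma\sqrt{\bar N}$; simultaneously $m\leq\bar N$ ensures that at least one full iteration fits within the budget.

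Finally I would close the argument by accounting for the integer rounding. Since $m=\lceil\gamma\sqrt{\bar N}\rceil\geq\gamma\sqrt{\bar N}$, the variance term is controlled as intended; but the ceiling can shrink the realized iteration count $N=\lfloor\bar N/m\rfloor$ below $\sqrt{\bar N}/\gamma$, inflating the optimization term. Bounding $m\leq 2\gamma\sqrt{\bar N}$ (valid once $\gamma\sqrt{\bar N}\geq 1$) forces $N\geq\sqrt{\bar N}/(2\gamma)$, and it is precisely this factor of two that converts the coefficient $\frac{\mu\Delta_0\gamma}{\alpha\sigma^2}$ into the $\frac{2\Delta_0\mu\gamma}{\alpha\sigma^2}$ appearing in $\bar N$, thereby making the stated budget sufficient. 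I expect the main obstacle to be this bookkeeping with the floor/ceiling operations: one must verify that the realized integer pair $(N,m)$ genuinely drives both error terms below $\epsilon^2$, not merely the idealized real-valued choice. The specialization and the balancing substitution, by contrast, are routine.
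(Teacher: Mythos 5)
Your proposal is correct and follows essentially the same route as the paper: specialize \eqref{vSBPG-convergence-thm} to constant $\alpha$ and $m$, split into the optimization term $\frac{2\Delta_0}{\mu N\alpha}$ and variance term $\frac{2\sigma^2}{\mu^2 m}$, balance them via $m\approx\gamma\sqrt{\bar N}$ with the condition $\bar N\geq\gamma^2$ resolving the minimum, and absorb the integer rounding into a factor of $2$. The only cosmetic difference is where that factor of $2$ is booked — you place it in $m\leq 2\gamma\sqrt{\bar N}$, while the paper uses $N=\lfloor\bar N/m\rfloor\geq\bar N/(2m)$ — but these amount to the same accounting.
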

\begin{proof}
Let the total number of oracle evaluations be $\bar N$. 
Then $N=\floor{\frac{\bar N}{m}}$ and $N\geq\frac{\bar N}{2m}$.  We have that
\[
\begin{aligned}
 \text{RHS of }\eqref{vSBPG-convergence-thm}  \leq &\frac{4\Delta_0m}{\alpha\mu\bar N}+\frac{2\sigma^2}{\mu^2m}
 \leq \frac{4\Delta_0}{\alpha\mu\bar N}\left(\gamma\sqrt{\bar N}\right)+\frac{2\sigma^2}{\mu^2}\max\left\{\frac{1}{\gamma\sqrt{\bar N}},\frac{1}{\bar N}\right\}
 \\
 \leq&
 \frac{2\sigma^2}{\mu^2\sqrt{\bar N}}\left(\frac{2\Delta_0\mu\gamma}{\alpha\sigma^2}+\frac{1}{\gamma}\right),
\end{aligned}
\]
where $\Delta_0:=\Phi(\vx^0)-\Phi(\vx^*)$. By the choice of $\bar N$, we have $\text{RHS of }\eqref{vSBPG-convergence-thm}\leq\epsilon^2$. This completes the proof.
\end{proof} 
This oracle complexity matches the lower bound provided by \citep{arjevani2023lower}, thus it cannot be improved in general.}

\subsection{Momentum based Stochastic Bregman Gradient Descent Method}
{Remark \ref{Vanilla-convergence-expt-remark} and Corollary \ref{cor:complexity-SBPG} suggest that a large mini-batch size $m_k$ is necessary  to achieve a small stationarity error. However, employing a large mini-batch size in each iteration can be computationally expensive and memory inefficient, especially in modern large-scale problems such as deep neural network training.} In this part, we resort to the momentum technique to address this issue. Specifically, we consider using a stochastic moving average estimator (SMAE) for the true gradient given by:
\be
\vv^k=(1-\beta_k)\vv^{k-1}+\beta_k\widetilde\nabla_k,\quad \mbox{where}\quad \E[\widetilde\nabla_k|\cF_k]=\nabla F({\vx^k}),
\label{SMAE}
\ee
where $\vv^{k-1}$ can be viewed as the momentum which contains the information of all historical stochastic gradients, and $\E[\|\widetilde\nabla_k\|^2|\cF_k]\leq\frac{\sigma^2}{m_k}$. We expect that incorporating the SMAE technique can achieve a certain level of variance reduction without requiring an increase in the mini-batch size or the computation of the full gradient. In our approach, we utilize the gradient estimator $\vv^k$ within SBPG, and we refer to the resulting method as MSBPG. Specifically, we consider the following update scheme:
\begin{equation}
{\vx^{k+1}}=\argmin_{\vx\in\overline{C}}\;R(\vx)+\inprod{\vv^k}{\vx-{\vx^k}}+\frac{1}{\alpha_k}\cD_\phi(\vx,{\vx^k}).
\label{momentum-SBPG}
\end{equation}
We need the following assumption that the difference of gradients of $F$ can be bounded by the Bregman distance. 
\begin{assumption}\label{momentum-assumption}
 There exists $\kappa>0$, such that $\|\nabla F(\vx)-\nabla F(\vy)\|^2\leq\kappa \cD_\phi(\vx,\vy)$ for all $\vx\in dom\,\phi$, $\vy\in int\,dom\,\phi$.   
\end{assumption}
\begin{remark}
This assumption generalizes the case of Lipschitz kernel function. If $F$ is $L_F$-smooth adaptable to $\phi$, it can be easily shown that if $\phi$ has $L_\phi$-Lipschitz gradient, this assumption holds for $\kappa\geq\frac{2L_F^2L_\phi^2}{\mu}$. In this paper, we are particularly interested in polynomial kernel functions. For functions with polynomially bounded growth rates, this assumption is not restrictive. For example, consider the one-dimensional objective function $F(x)=\frac{1}{4}x^4$ and the kernel function $\phi(x)=\frac{1}{2}x^2+\frac{1}{8}x^8$. Then, by \cite[Proposition 2.1]{lu2018relatively}, we know that $F$ is smooth adaptable with respect to $\phi$. Simple algebra shows that $\cD_\phi(x,y)=\frac{1}{8}(x-y)^2(x^6 + 2 x^5 y + 3 x^4 y^2 + 4 x^3 y^3 + 5 x^2 y^4 + 6 x y^5 + 7 y^6 + 4)$ and $(F'(x)-F'(y))^2=(x-y)^2(x^2+xy+y^2)^2$. Numerical computation shows that $(x^6 + 2 x^5 y + 3 x^4 y^2 + 4 x^3 y^3 + 5 x^2 y^4 + 6 x y^5 + 7 y^6 + 4)-(x^2+xy+y^2)^2\geq3.71$. Therefore, $(F'(x)-F'(y))^2\leq8\cD_\phi(x,y)$, which holds globally for any $x$ and $y$ in $\R^d$. 
\end{remark}

Next, we present a recursion lemma that allows us to estimate the accuracy of the SMAE. While similar lemmas have been proposed in the literature, such as in \citep{wang2017stochastic}, their bounds are not directly applicable in the Bregman setting. As a result, we have developed a version of the recursion lemma that is tailored to our specific context. 
\begin{lemma}
\label{SMAE-recursion-lemma}
The following recursion holds
{\small
\[
\E[\|\vv^k-\nabla F({\vx^k})\|^2|\cF_k]\leq (1-\beta_k)\|\vv^{k-1}-\nabla F(\vx^{k-1})\|^2+\beta_k^2\E[\|\widetilde\nabla_k-\nabla F({\vx^k})\|^2|\cF_k]+\frac{\|\nabla F(\vx^{k-1})-\nabla F({\vx^k})\|^2}{\beta_k}.
\]
}
\end{lemma}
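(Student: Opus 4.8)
The plan is to expand the squared estimation error directly and exploit the martingale structure of the noise. First I would rewrite the error from the SMAE update \eqref{SMAE} in affine-combination form by adding and subtracting $\nabla F(\vx^k)$:
\[
\vv^k-\nabla F(\vx^k)=(1-\beta_k)\bigl(\vv^{k-1}-\nabla F(\vx^k)\bigr)+\beta_k\bigl(\widetilde\nabla_k-\nabla F(\vx^k)\bigr).
\]
Expanding the square produces a term proportional to $\|\vv^{k-1}-\nabla F(\vx^k)\|^2$, a term proportional to $\|\widetilde\nabla_k-\nabla F(\vx^k)\|^2$, and a cross term $2(1-\beta_k)\beta_k\inprod{\vv^{k-1}-\nabla F(\vx^k)}{\widetilde\nabla_k-\nabla F(\vx^k)}$. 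Taking $\E[\cdot\mid\cF_k]$, I would use that $\vv^{k-1}\vartriangleleft\cF_k$ (it is assembled from $\widetilde\nabla_0,\dots,\widetilde\nabla_{k-1}$) and $\vx^k\vartriangleleft\cF_k$, so the factor $\vv^{k-1}-\nabla F(\vx^k)$ pulls out of the cross term, and the unbiasedness $\E[\widetilde\nabla_k\mid\cF_k]=\nabla F(\vx^k)$ annihilates it. This yields the exact identity
\[
\E[\|\vv^k-\nabla F(\vx^k)\|^2\mid\cF_k]=(1-\beta_k)^2\|\vv^{k-1}-\nabla F(\vx^k)\|^2+\beta_k^2\,\E[\|\widetilde\nabla_k-\nabla F(\vx^k)\|^2\mid\cF_k].
\]

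Second, I would convert the leading term, which compares the old momentum against the \emph{new} gradient $\nabla F(\vx^k)$, into a recursion in $\|\vv^{k-1}-\nabla F(\vx^{k-1})\|^2$. Decomposing $\vv^{k-1}-\nabla F(\vx^k)=\bigl(\vv^{k-1}-\nabla F(\vx^{k-1})\bigr)+\bigl(\nabla F(\vx^{k-1})-\nabla F(\vx^k)\bigr)$ and applying Young's inequality $\|\va+\vb\|^2\leq(1+\theta)\|\va\|^2+(1+\theta^{-1})\|\vb\|^2$ with the tuned parameter $\theta=\beta_k/(1-\beta_k)$, one obtains $(1-\beta_k)^2(1+\theta)=1-\beta_k$ and $(1-\beta_k)^2(1+\theta^{-1})=(1-\beta_k)^2/\beta_k\leq1/\beta_k$. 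Substituting these two estimates back into the identity above reproduces exactly the coefficients $1-\beta_k$ and $1/\beta_k$ that appear in the claimed bound.

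The only delicate step is the tuning of the Young parameter: a generic $\theta$ leaves a factor $(1-\beta_k)^2(1+\theta)$ in front of $\|\vv^{k-1}-\nabla F(\vx^{k-1})\|^2$, and unless this factor equals $1-\beta_k$ the recursion loses the contraction that later convergence arguments rely on. The value $\theta=\beta_k/(1-\beta_k)$ is precisely the choice making that factor equal to $1-\beta_k$ while keeping the drift coefficient at most $1/\beta_k$; the rest is routine algebra. I would also record the measurability facts $\vv^{k-1},\vx^k\vartriangleleft\cF_k$ explicitly, since they are what license extracting the cross term and killing it by unbiasedness. Note that Assumption \ref{momentum-assumption} is not needed for this lemma; it only enters later, to bound the drift term $\|\nabla F(\vx^{k-1})-\nabla F(\vx^k)\|^2$ by a Bregman distance.
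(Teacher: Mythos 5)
Your proposal is correct and is essentially the paper's own argument, just reorganized: the paper expands a three-term decomposition of $\vv^k-\nabla F(\vx^k)$ and bounds the cross term $2(1-\beta_k)^2\inprod{\vv^{k-1}-\nabla F(\vx^{k-1})}{\nabla F(\vx^{k-1})-\nabla F(\vx^k)}$ with exactly the Young parameter you tuned, which is algebraically identical to your two-step route (exact conditional identity, then Young's inequality on the deterministic leading term). Your explicit remarks on the measurability of $\vv^{k-1}$ and on Assumption \ref{momentum-assumption} being unnecessary here are both accurate.
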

\begin{proof}
Note that $\vv^k-\nabla F(\vx^k)=(1-\beta_k)(\vv^{k-1}-\nabla F(\vx^{k-1}))+(1-\beta_k)(\nabla F(\vx^{k-1})-\nabla F(\vx^k))+\beta_k(\widetilde\nabla_k-\nabla F(\vx^k))$, and $\E[\widetilde\nabla_k-\nabla F(\vx^k)|\cF_k]=0$. Then we have
\[
\begin{aligned}
&\E[\|\vv^k-\nabla F(\vx^k)\|^2|\cF_k]\\
={}&(1-\beta_k)^2\|\vv^{k-1}-\nabla F(\vx^{k-1})\|^2+(1-\beta_k)^2\|\nabla F(\vx^{k-1})-\nabla F(\vx^k)\|^2+\\&\beta_k^2\E[\|\widetilde\nabla_k-\nabla F(\vx^k)\|^2|\cF_k]+2(1-\beta_k)^2\inprod{\vv^{k-1}-\nabla F(\vx^{k-1})}{\nabla F(\vx^{k-1})-\nabla F(\vx^k)}\\
\leq{}&(1-\beta_k)^2\|\vv^{k-1}-\nabla F(\vx^{k-1})\|^2+(1-\beta_k)^2\|\nabla F(\vx^{k-1})-\nabla F(\vx^k)\|^2+\\
&\beta_k^2\E[\|\widetilde\nabla_k-\nabla F(\vx^k)\|^2|\cF_k]+\beta_k(1-\beta_k)\|\vv^{k-1}-\nabla F(\vx^{k-1})\|^2+\frac{(1-\beta_k)^3}{\beta_k}\|\nabla F(\vx^{k-1})-\nabla F(\vx^k)\|^2\\
={}&(1-\beta_k)\|\vv^{k-1}-\nabla F(\vx^{k-1})\|^2+\beta_k^2\E[\|\widetilde\nabla_k-\nabla F(\vx^k)\|^2|\cF_k]+\frac{(1-\beta_k)^2\|\nabla F(\vx^{k-1})-\nabla F(\vx^k)\|^2}{\beta_k}\\
\leq{}&(1-\beta_k)\|\vv^{k-1}-\nabla F(\vx^{k-1})\|^2+\beta_k^2\E[\|\widetilde\nabla_k-\nabla F(\vx^k)\|^2|\cF_k]+\frac{\|\nabla F(\vx^{k-1})-\nabla F(\vx^k)\|^2}{\beta_k}.
\end{aligned}
\]
This completes the proof.
\end{proof}

Now we are ready to provide the convergence result for our momentum based SBPG. 
\begin{theorem}
Suppose Assumption \ref{relative-smooth-assumption}, \ref{variance-assumption} and \ref{momentum-assumption} hold. Let $\alpha_k=c\mu\beta_{k+1}$ for any $c\in(0,\frac{1}{2\sqrt{\mu\kappa}}]$. Then, it holds that
\begin{equation}
\E[\|\widetilde{\mathcal G}_{\alpha_r}(\vx^{r})\|^2]\leq\frac{\Delta_0+c\|\vv_0-\nabla F(\vx^0)\|^2+\sum_{k=0}^{N-1}\frac{\alpha_k^2\sigma^2}{c\mu^2m_k}}{\sum_{k=0}^{N-1}\frac{\mu\alpha_k}{8}},
\label{eq:rate-MSBPG}
\end{equation}
where $r$ is a random variable with distribution $\mathbb{P}\{r=k\}=\frac{\alpha_k}{\sum_{k=0}^{N-1}\alpha_k}$, for $k=0,...,N-1$.
\label{momentum-convergence-thm}
\end{theorem}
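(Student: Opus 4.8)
The plan is to run a Lyapunov (potential-function) argument on
\[
\Psi_k:=\Phi(\vx^k)+c\,\|\vv^k-\nabla F(\vx^k)\|^2,
\]
combining a one-step descent estimate with the momentum recursion of Lemma~\ref{SMAE-recursion-lemma}. Write $\ve_k:=\nabla F(\vx^k)-\vv^k$ for the momentum error. Since the update \eqref{momentum-SBPG} is exactly \eqref{vanilla-SBPG} with $\widetilde\nabla_k$ replaced by $\vv^k$, the proof of Lemma~\ref{basic-descent-lemma-2} applies verbatim and yields
\[
\Phi(\vx^{k+1})\le\Phi(\vx^k)-\tfrac{1}{\alpha_k}\cD_\phi(\vx^k,\vx^{k+1})-\Big(\tfrac{1}{\alpha_k}-L_F\Big)\cD_\phi(\vx^{k+1},\vx^k)+\inprod{\ve_k}{\vx^{k+1}-\vx^k}.
\]
The argument of Proposition~\ref{subprob-closed-form-sol} also gives $\vx^{k+1}-\vx^k=-\alpha_k\widetilde{\cG}_{\alpha_k}(\vx^k)$, so that $\|\vx^{k+1}-\vx^k\|^2=\alpha_k^2\|\widetilde{\cG}_{\alpha_k}(\vx^k)\|^2$ and, by $\mu$-strong convexity, $\cD_\phi(\vx^k,\vx^{k+1}),\,\cD_\phi(\vx^{k+1},\vx^k)\ge\tfrac{\mu\alpha_k^2}{2}\|\widetilde{\cG}_{\alpha_k}(\vx^k)\|^2$.

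Next I would invoke Lemma~\ref{SMAE-recursion-lemma} \emph{at index $k+1$}, bound $\E[\|\widetilde\nabla_{k+1}-\nabla F(\vx^{k+1})\|^2\mid\cF_{k+1}]\le\sigma^2/m_{k+1}$ by Assumption~\ref{variance-assumption}, and convert $\|\nabla F(\vx^k)-\nabla F(\vx^{k+1})\|^2\le\kappa\,\cD_\phi(\vx^{k+1},\vx^k)$ via Assumption~\ref{momentum-assumption}. Multiplying by $c$ and taking full expectations produces the crucial term $\tfrac{c\kappa}{\beta_{k+1}}\E[\cD_\phi(\vx^{k+1},\vx^k)]$, involving exactly the Bregman distance $\cD_\phi(\vx^{k+1},\vx^k)$ already present in the descent estimate — this is precisely why the potential is indexed so that $\Psi_{k+1}-\Psi_k$ pairs the step-$k$ descent with the step-$(k{+}1)$ momentum recursion. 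Adding the two bounds gives
\[
\E[\Psi_{k+1}]-\E[\Psi_k]\le -\tfrac{1}{\alpha_k}\E[\cD_\phi(\vx^k,\vx^{k+1})]-\Big(\tfrac{1}{\alpha_k}-L_F-\tfrac{c\kappa}{\beta_{k+1}}\Big)\E[\cD_\phi(\vx^{k+1},\vx^k)]+\E[\inprod{\ve_k}{\vx^{k+1}-\vx^k}]-c\beta_{k+1}\E[\|\ve_k\|^2]+\tfrac{c\beta_{k+1}^2\sigma^2}{m_{k+1}}.
\]

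I would then handle the cross term by Young's inequality, splitting it so that its $\|\ve_k\|^2$ part is absorbed into the contraction $-c\beta_{k+1}\|\ve_k\|^2$ while the remainder is expressed directly in terms of $\|\widetilde{\cG}_{\alpha_k}(\vx^k)\|^2$ through $\vx^{k+1}-\vx^k=-\alpha_k\widetilde{\cG}_{\alpha_k}(\vx^k)$. Substituting $\alpha_k=c\mu\beta_{k+1}$ (so $\tfrac{c\kappa}{\beta_{k+1}}=\tfrac{c^2\kappa\mu}{\alpha_k}$) and using $c\le\tfrac{1}{2\sqrt{\mu\kappa}}$ — which forces $c^2\kappa\mu\le\tfrac14$ and $\tfrac{1}{c\mu}-c\kappa>0$, the latter guaranteeing the $\cD_\phi(\vx^{k+1},\vx^k)$ coefficient is nonnegative — the $\|\ve_k\|^2$ contributions cancel and the net coefficient of $\E[\|\widetilde{\cG}_{\alpha_k}(\vx^k)\|^2]$ collapses to a positive multiple of $\mu\alpha_k$, at least $\tfrac{\mu\alpha_k}{8}$ in the regime where $\alpha_k$ is small relative to $1/L_F$ (enforced by the decaying $\beta_k$). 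This produces a per-step inequality $\tfrac{\mu\alpha_k}{8}\E[\|\widetilde{\cG}_{\alpha_k}(\vx^k)\|^2]\le \E[\Psi_k]-\E[\Psi_{k+1}]+\tfrac{c\beta_{k+1}^2\sigma^2}{m_{k+1}}$. Telescoping from $k=0$ to $N-1$, using $\E[\Psi_N]\ge\Phi^*$ (the error term being nonnegative) and $\Psi_0=\Phi(\vx^0)+c\|\vv_0-\nabla F(\vx^0)\|^2$, re-substituting $\beta_{k+1}^2=\alpha_k^2/(c\mu)^2$ to recover the variance sum, dividing by $\sum_k\tfrac{\mu\alpha_k}{8}$, and reading the left side through the distribution of $r$ yields \eqref{eq:rate-MSBPG}.

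The main obstacle is the bookkeeping in the coefficient of $\|\widetilde{\cG}_{\alpha_k}(\vx^k)\|^2$: the potential must be indexed so the momentum recursion's $\cD_\phi(\vx^{k+1},\vx^k)$ aligns with the descent lemma's, and the Young parameter must be tuned so that the $\|\ve_k\|^2$ terms telescope exactly against the $-c\beta_{k+1}\|\ve_k\|^2$ contraction while leaving a strictly positive coefficient on the gradient-mapping norm. This is exactly where the constraint $c\le\tfrac{1}{2\sqrt{\mu\kappa}}$ is forced — it is the largest $c$ for which the contraction on the momentum error and the positivity of the gradient-mapping coefficient survive simultaneously; controlling the residual $L_F\alpha_k$ factor is the only other place requiring care.
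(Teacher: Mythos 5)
Your argument is correct and is essentially the paper's own proof: the same one-step descent bound from Lemma \ref{basic-descent-lemma-2}, the same Cauchy--Young split whose $\frac{\alpha_k}{\mu}\|\ve_k\|^2=c\beta_{k+1}\|\ve_k\|^2$ piece cancels the momentum-error contraction, the same use of Lemma \ref{SMAE-recursion-lemma} with Assumption \ref{momentum-assumption} and $c\le\frac{1}{2\sqrt{\mu\kappa}}$ to absorb the $\frac{c\kappa}{\beta_{k+1}}\cD_\phi$ term, and the same conversion to $\|\widetilde{\cG}_{\alpha_k}(\vx^k)\|^2$ via $\mu$-strong convexity of $\phi$. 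Packaging this as a telescoping Lyapunov inequality for $\Psi_k=\Phi(\vx^k)+c\|\vv^k-\nabla F(\vx^k)\|^2$ instead of summing the descent and recursion inequalities separately, as the paper does, is only a cosmetic reorganization (and, like the paper's proof, yours tacitly needs $\alpha_k$ small relative to $1/L_F$ to dispose of the $\left(\frac{1}{\alpha_k}-L_F\right)\cD_\phi(\vx^{k+1},\vx^k)$ term, a condition the theorem statement omits).
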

\begin{proof}
 From Lemma \ref{basic-descent-lemma-2} and Cauchy-Young‘s inequality, we have 
\[
\begin{aligned}
\Phi({\vx^{k+1}})&\leq\Phi({\vx^k})-\frac{1}{\alpha_k}{\cD_\phi}({\vx^k},{\vx^{k+1}})+\frac{\mu}{4\alpha_k}\|{\vx^k}-{\vx^{k+1}}\|^2+\frac{\alpha_k}{\mu}\|{\bm\varepsilon}_k\|^2\\
&\leq\Phi({\vx^k})-\frac{1}{2\alpha_k}{\cD_\phi}({\vx^k},{\vx^{k+1}})+\frac{\alpha_k}{\mu}\|{\bm\varepsilon}_k\|^2.
\end{aligned}
\]
where we have defined ${\bm\varepsilon}_k:=\nabla F({\vx^k})-\vv^k$. Summing the above inequality over $k=0,\ldots,N-1$ and rearranging the terms, we get
\[
\sum_{k=0}^{N-1}\frac{1}{2\alpha_k}{\cD_\phi}({\vx^k},{\vx^{k+1}})\leq\Phi(\vx^0)-\Phi^*+\sum_{k=0}^{N-1}\frac{\alpha_k}{\mu}\|{\bm\varepsilon}_k\|^2.
\]
By applying Lemma \ref{SMAE-recursion-lemma}, we can obtain the following inequality:
{
\[
\beta_k\E[\|{\bm\varepsilon}_{k-1}\|^2]
\;\leq\; \E[\|{\bm\varepsilon}_{k-1}\|^2] - \E[\|{\bm\varepsilon}_{k}\|^2]
+\beta_k^2\E[\|\widetilde\nabla_k-\nabla F({\vx^k})\|^2] 
+\E\left[\frac{\|\nabla F(\vx^{k-1})-\nabla F({\vx^k})\|^2}{\beta_k}\right].
\]}
Hence
\small{
\[
\sum_{k=0}^{N-1}\beta_{k+1}
\E[\|{\bm\varepsilon}_{k}\|^2]=\sum_{k=1}^N\beta_k\E[\|{\bm\varepsilon}_{k-1}\|^2]\leq\|{\bm\varepsilon}_0\|^2+\sum_{k=1}^N\beta_k^2\E[\|\widetilde\nabla_k-\nabla F({\vx^k})\|^2]+\sum_{k=1}^N\E\left[\frac{\|\nabla F(\vx^{k-1})-\nabla F({\vx^k})\|^2}{\beta_k}\right].
\]}
Since $\frac{\alpha_k}{\mu}=c\beta_{k+1}$ for some constant $c$, we get the following inequality:
\small{
\[
\sum_{k=0}^{N-1}\frac{1}{2\alpha_k}\E[{\cD_\phi}({\vx^k},{\vx^{k+1}})]\leq\Phi({\vx^0})-\Phi^*+c\left(\|{\bm\varepsilon}_0\|^2+\sum_{k=1}^N\beta_k^2\E[\|\widetilde\nabla_k-\nabla F({\vx^k})\|^2]+\sum_{k=1}^N\E\left[\frac{\|\nabla F(\vx^{k-1})-\nabla F({\vx^k})\|^2}{\beta_k}\right]\right).
\]
}
By using Assumption \ref{momentum-assumption}, we obtain that
\[
\frac{\|\nabla F({\vx^k})-\nabla F({\vx^{k+1}})\|^2}{\beta_{k+1}}\leq\frac{\kappa}{\beta_{k+1}}{\cD_\phi}({\vx^k},{\vx^{k+1}}).
\]
Combining above two inequalities, we get
\[
\sum_{k=0}^{N-1}\frac{1}{2\alpha_k}\E[{\cD_\phi}({\vx^k},{\vx^{k+1}})]\leq\Phi(\vx^0)-\Phi^*+c\left(\|{\bm\varepsilon}_0\|^2+\sum_{k=1}^N\beta_k^2\E[\|\widetilde\nabla_k-\nabla F({\vx^k})\|^2]+\sum_{k=0}^{N-1}\frac{\kappa}{\beta_{k+1}}\E[{\cD_\phi}({\vx^k},{\vx^{k+1}})]\right).
\]
{Since $c\leq\frac{1}{2\sqrt{\mu\kappa}}$ and $\frac{\alpha_k}{\mu}=c\beta_{k+1}$, we can deduce that 
 $\frac{c\kappa}{\beta_{k+1}}\leq \frac{1}{4\alpha_k}$}. Using this condition, we obtain the inequality:
\[
\sum_{k=0}^{N-1}\frac{1}{4\alpha_k}\E[{\cD_\phi}({\vx^k},{\vx^{k+1}})]\leq\Phi(\vx^0)-\Phi^*+c\left(\|{\bm\varepsilon}_0\|^2+\sum_{k=1}^N\beta_k^2\E[\|\widetilde\nabla_k-\nabla F({\vx^k})\|^2]\right). 
\]
Note that ${\cD_\phi}({\vx^k},{\vx^{k+1}})\geq\frac{\mu}{2}\|{\vx^k}-{\vx^{k+1}}\|^2=\frac{\mu\alpha_k^2}{2}\|\widetilde{\mathcal{G}}_{\alpha_k}({\vx^k})\|^2$ and by the definition of the random variable $a$, we get
\[
\E[\|\widetilde{\mathcal G}_{\alpha_a}(\vx^{a})\|^2]\leq\frac{\Phi^0-\Phi^*+ {c\|\bm{\varepsilon}_0\|^2}
+c\sum_{k=1}^N\frac{\beta_k^2\sigma^2}{m_k}}{\sum_{k=0}^{N-1}\frac{\mu\alpha_k}{8}},
\]     
which completes the proof.
\end{proof}

\begin{remark}
Now we give some remarks for Theorem \ref{momentum-convergence-thm}.
\begin{itemize}
\item[1.] When the sequence $\{\vx_k\}$ is bounded, an alternative to Assumption \ref{momentum-assumption} is to assume that $C=\R^d$ and that $\phi$ has a locally Lipschitz gradient, as made in \cite[Theorem 4.1]{bolte2018first} and \cite[Theorem 4.7]{latafat2022bregman}. Under these conditions, we can conclude that there exists a compact set $\cU$ containing $\{\vx_k\}$. Therefore, there exists a constant $L_{\phi,\cU}>0$ such that $\nabla\phi$ is Lipschitz continuous over $\cU$, and we can derive that $\|\nabla F(\vx)-\nabla F(\vy)\|^2\leq L_F^2L_{\phi,\cU}^2\|\vx-\vy\|^2\leq\frac{2L_F^2L_{\phi,\cU}^2}{\mu}\cD_\phi(\vx,\vy)$ holds. 
\item[2.] {The stationarity error for SBPG in Theorem \ref{converge-thm-vanilla-expt} is given by $\mathcal{O}\left(\frac{1}{\sum_{i=0}^k\alpha_i}+\frac{\sum_{i=0}^k{\frac{\alpha_i}{m_i}}}{\sum_{i=0}^k\alpha_i}\right)$, while for MSBPG in Theorem \ref{momentum-convergence-thm}, the error is $\mathcal{O}\left(\frac{1}{\sum_{i=0}^k\alpha_i}+\frac{\sum_{i=0}^k{\frac{\alpha_i^2}{m_i}}}{\sum_{i=0}^k\alpha_i}\right)$. Notably, compared to SBPG, even with a small constant mini-batch size $m_k$, MSBPG can still achieve convergence to a zero error bound by carefully selecting the stepsize sequence $\{\alpha_k\}$. A typical stepsize condition is $\sum_{k=0}^\infty\alpha_k=\infty,\;\sum_{k=0}^\infty\alpha_k^2<\infty$, which coincides with the classical stepsize condition ensuring a moderate decrease of the stepsize, as discussed in \citep{bertsekas2000gradient}. Thus, by incorporating the momentum technique, we can achieve improved convergence properties, particularly in terms of relaxed mini-batch size requirements, with minimal additional computational cost. This favorable convergence property theoretically supports the application of MSBPG for large-scale problems, such as deep neural networks training, without the use of very large mini-batch size. To illustrate, we provide a specific choice of $\{\alpha_k\}$ and $\{m_k\}$ that yields a convergence rate in terms of expected stationarity: Set $m_k=1$, $\alpha_k=\frac{c}{\sqrt{k+1}}$, the convergence rate is $\widetilde{\mathcal{O}}\left(\frac{1}{\sqrt{k}}\right)$ with logarithmic terms hidden.}
\end{itemize}
\end{remark}

Next, we present the sample complexity of MSBPG.
{
\begin{corollary}
\label{cor:complexity-MSBPG}
Given an accuracy level $\epsilon>0$, a constant $\alpha_0>0$, and any integer $m>0$, set $m_k=m$. Then to achieve an $\epsilon$-stationary point in expectation, at most $\bar N:=\ceil{\max\left\{(1+\mu^2)\alpha_0^2L_F^2,\frac{1}{\epsilon^4}\left(\frac{16(\Delta_0+c\norm{\varepsilon_0}^2)}{\mu\alpha_0}+\frac{8\alpha_0\sigma^2}{c\mu m}\right)\right\}}$ oracle evaluations are required. To achieve this complexity, we can choose $\alpha_k=\alpha=\frac{\alpha_0}{\sqrt{\bar N}}$.
\end{corollary}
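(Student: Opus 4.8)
The plan is to specialize the horizon-$N$ guarantee \eqref{eq:rate-MSBPG} of Theorem \ref{momentum-convergence-thm} to the constant schedule $\alpha_k\equiv\alpha$, $m_k\equiv m$, and then convert the resulting iteration count into an oracle count, mirroring the argument behind Corollary \ref{cor:complexity-SBPG}. First I would substitute the constant choices into \eqref{eq:rate-MSBPG}: since $\sum_{k=0}^{N-1}\tfrac{\mu\alpha_k}{8}=\tfrac{N\mu\alpha}{8}$ and $\sum_{k=0}^{N-1}\tfrac{\alpha_k^2\sigma^2}{c\mu^2 m_k}=\tfrac{N\alpha^2\sigma^2}{c\mu^2 m}$, the bound collapses to
\[
\E[\|\widetilde{\mathcal G}_{\alpha_r}(\vx^r)\|^2]\le \frac{8\big(\Delta_0+c\|{\bm\varepsilon}_0\|^2\big)}{N\mu\alpha}+\frac{8\alpha\sigma^2}{c\mu^3 m},
\]
an optimization term decaying like $1/(N\alpha)$ plus a noise term growing like $\alpha$. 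The decisive feature inherited from the momentum analysis is that the noise term scales with $\alpha$ rather than $\alpha/m$, which is exactly what lets a fixed batch size $m$ suffice.

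Second, I would pass from iterations to oracle calls. Writing $\bar N=Nm$ for the number of sampled gradient evaluations, one has $N=\lfloor\bar N/m\rfloor\ge \bar N/(2m)$, so the optimization term is at most $\tfrac{16(\Delta_0+c\|{\bm\varepsilon}_0\|^2)m}{\mu\alpha\bar N}$. Inserting the balancing choice $\alpha=\alpha_0/\sqrt{\bar N}$ makes both terms decay as $\bar N^{-1/2}$, yielding $\E[\|\widetilde{\mathcal G}_{\alpha_r}(\vx^r)\|^2]\le \tfrac{1}{\sqrt{\bar N}}\big(\tfrac{16(\Delta_0+c\|{\bm\varepsilon}_0\|^2)m}{\mu\alpha_0}+\tfrac{8\alpha_0\sigma^2}{c\mu^3 m}\big)$. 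Requiring the right-hand side to be at most $\epsilon^2$ and solving for $\bar N$ then produces the $\mathcal O(\epsilon^{-4})$ entry in the maximum defining $\bar N$; this is the accuracy requirement.

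Third, I would account for step-size admissibility, which is the role of the first entry $(1+\mu^2)\alpha_0^2 L_F^2$ in the maximum. The descent inequality underlying Theorem \ref{momentum-convergence-thm}, obtained from Lemma \ref{basic-descent-lemma-2} by discarding $-(\tfrac{1}{\alpha_k}-L_F)\cD_\phi(\vx^{k+1},\vx^k)$, needs $\alpha\le \tfrac{1}{L_F}\min\{1,1/\mu\}$, and the moving-average recursion (Lemma \ref{SMAE-recursion-lemma}) needs $\beta_{k+1}=\alpha/(c\mu)\in[0,1]$. I would verify that $\bar N\ge (1+\mu^2)\alpha_0^2 L_F^2$ forces $\sqrt{\bar N}\ge \max\{1,\mu\}\,\alpha_0 L_F$, hence $\alpha=\alpha_0/\sqrt{\bar N}\le \tfrac{1}{L_F}\min\{1,1/\mu\}$ as required, and that $\beta_{k+1}\le 1$ follows using $c\le \tfrac{1}{2\sqrt{\mu\kappa}}$. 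Taking the maximum of this feasibility threshold and the accuracy threshold gives the claimed $\bar N$.

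The conceptual point requiring care is the self-referential choice $\alpha=\alpha_0/\sqrt{\bar N}$: after substitution one must confirm that the constant multiplying $\bar N^{-1/2}$ is genuinely free of $\bar N$, so that the threshold $\mathrm{RHS}\le\epsilon^2$ yields a clean closed-form lower bound rather than a circular condition (it is, since both constants above depend only on $\alpha_0,m,\mu,c,\sigma,\Delta_0$). The remaining, and most error-prone, obstacle is purely arithmetic: carrying the exact powers of $\mu$, the factor $m$ introduced by the iteration-to-oracle conversion, and the squaring incurred when inverting $\bar N^{-1/2}\le\epsilon^2$, so that the final constants line up with those stated. This constant bookkeeping, rather than any new inequality, is where the bulk of the work lies.
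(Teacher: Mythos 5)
Your proposal is correct and follows essentially the same route as the paper's proof: substitute the constant schedule into \eqref{eq:rate-MSBPG}, pass to oracle counts via $N\ge \bar N/(2m)$, balance the two terms with $\alpha=\alpha_0/\sqrt{\bar N}$, and use the first entry of the maximum to guarantee $\alpha<\frac{1}{L_F}\min\{1,1/\mu\}$. Your more careful bookkeeping in fact gives the noise term as $\frac{8\alpha\sigma^2}{c\mu^3 m}$, which is what \eqref{eq:rate-MSBPG} literally yields, whereas the paper's proof and the corollary statement record it as $\frac{8\alpha\sigma^2}{c\mu m}$ — a harmless $\mu^2$ discrepancy in constants that does not affect the $\mathcal O(\epsilon^{-4})$ order (and note that, as your own final display shows, the decisive structural gain over SBPG is the extra factor of $\alpha$ in the noise term $\alpha\sigma^2/m$ versus $\sigma^2/m$, not the absence of $1/m$).
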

\begin{proof}
Let the total number of oracle evaluations be $\bar N$. Then $N=\floor{\frac{\bar N}{m}}$ and $N\geq\frac{\bar N}{2m}$. By choosing $\alpha_k=\alpha=\frac{\alpha_0}{\sqrt{\bar N}}$ and using the definition of $\bar N$, we have that $\alpha_k<\frac{1}{L_F}\min\left\{1,\frac{1}{\mu}\right\}$, satisfying the conditions for Theorem \ref{momentum-convergence-thm}. Then we have
\[
\begin{aligned}
 \text{RHS of }\eqref{eq:rate-MSBPG}  \leq \; &\frac{8(\Delta_0+c\norm{\varepsilon_0}^2)m}{\mu\alpha\bar N}+\frac{8\sigma^2\alpha}{c\mu m}.
\end{aligned}
\]
By the choices of $\bar N$ and $\alpha$, we have $\text{RHS of }\eqref{momentum-convergence-thm}\leq\epsilon^2$. This completes the proof.
\end{proof} }
{We observe that MSBPG has the same complexity order as SBPG, with both achieving $\mathcal{O}(\epsilon^{-4})$, which, as shown in \citep{arjevani2023lower}, is the optimal bound and cannot be improved. However, to reach this complexity bound, SBPG requires a large mini-batch size, whereas MSBPG allows for an arbitrary mini-batch size. This relaxed requirement for the mini-batch size makes MSBPG more practical for training deep neural networks, as it is more memory-efficient. Therefore, the convergence properties of MSBPG are improved in terms of reduced mini-batch size requirements.}

{As a final remark for this section, in this work, we have only established convergence to a stationary point. However, as demonstrated by \citep{lee2019first} and \citep{panageas2019first}, the Bregman gradient method almost always converges to a local minimum when the loss function has the strict saddle property (i.e. all the saddle points of $f$ are strict saddle points). In general, finding the global minimum of a nonconvex function is NP-hard. However, in many applications, the objective function is  "benign" nonconvex, meaning that all local minima are global minima. Examples include matrix completion \citep{ge2016matrix}, matrix factorization \citep{chi2019nonconvex}, and phase retrieval \citep{sun2018geometric}, where convergence to a local minimum implies convergence to the global minimum. Additionally, when the objective function satisfies the Polyak–Łojasiewicz (PL) condition \citep{polyak1963gradient}, convergence to a stationary point also guarantees convergence to a global minimum.  In the experimental section, we observe that in some instances of training neural networks, the training loss can even approach zero, indicating near convergence to a global minimum.
}

\section{Application in training deep neural networks}\label{DNN implementation}
In this section, we present a detailed description of MSBPG applied to training deep neural networks. Throughout this section, we assume that the optimization domain $\overline{C}$ is the entire space $\R^d$, so that $\phi\in\cM(\R^d)$ and $F\in\cC^1(\R^d)$. For simplicity, we omit the explicit mention of the feasible set $\R^d$ in this section. In this context, we utilize a polynomial kernel function.

The optimization problem we consider here is given by:
\be 
\min_{\mW}\;\underbrace{\frac{1}{N}\sum_{i=1}^N\cL(\mathcal{DNN}(\mW,\vx_i),y_i)+\lambda_2\norm{\mW}_2^2}_{F(\mW)}+\lambda_1\|\mW\|_1,
\label{DNN-loss-model}
\ee 
where $\mathcal{DNN}({\mW},\vx)$ is the neural network function with training parameters $\mW$ and input data $\vx$, $\mathcal{L}$ is the loss function that measures the difference between the output of the neural network $\mathcal{DNN}({\mW},\vx_i)$ and the label $y_i$, $\lambda_2\norm{\mW}_2^2$ is the weight decay term, and $\lambda_1 \|\mW\|_1$ is the $L_1$ regularization term that is the sparsity induced operator and often used to avoid overfitting in training deep neural networks \citep{ng2004feature}. To illustrate the neural network function $\mathcal{DNN}({\mW},\vx)$, in the $L$-layer fully connected neural network, we have $\mW=[\mW_1,\mW_2,\cdots,\mW_L]$ and
\begin{equation}
\mathcal{DNN}(\mW,\vx)=\sigma_L(\mW_L(\sigma_{L-1}(\mW_{L-1}(...(\sigma_1(\mW_1\vx))...)))),
\label{DNN-def}
\end{equation}
where $\sigma_i$ is the nonlinear activation function. In this paper, we focus on smooth activation functions.

At the $k$-th iteration, MSBPG has the following update scheme:
\begin{eqnarray}
       {\vv^{k}} &=& (1-\beta_k)\vv^{k-1}+\beta_k\widetilde\nabla_k
       \\[5pt]
    {\mW}^{k+1} &=& \argmin_{\mW}\;\inprod{\vv^k}{{\mW}-{\mW}^k}+\frac{1}{\alpha_k}\cD_\phi({\mW},{\mW}^k)+\lambda_1\|\mW\|_1,
    \label{DNN-sub-ori}
\end{eqnarray}
where $\widetilde\nabla_k$ is mini-batch gradient of $F(\mW)$. Omitting all the constants, the subproblem takes the form of:
\be 
{\mW}^{k+1}=\argmin_{\mW}\;\phi({\mW})+\inprod{\vp^k}{{\mW}}+\alpha_k\lambda_1\|{\mW}\|_1,
\label{DNN-sub}
\ee
where $\vp^k=\alpha_k\vv^k-\nabla\phi({\mW}^k)$. Here we adopt the kernel function $\phi({\mW})=\frac{1}{2}\|{\mW}\|^2+\frac{\delta}{r}\|{\mW}\|^{r}$ ($r \ge 2$) for training neural networks, and
then we have an explicit solution for \eqref{DNN-sub} in Proposition \ref{DNN-subpro-solution-prop}. 
\begin{proposition}
Given $\vp^k \in \R^d$, positive constant $\alpha_k$, $\lambda$, and the kernel function $\phi({\mW})=\frac{1}{2}\|{\mW}\|^2+\frac{\delta}{r}\|{\mW}\|^{r}$ $(r \geq 2,\ \delta>0)$. The solution of the subproblem \eqref{DNN-sub} is given by 
\[
{\mW}^{k+1}=-t^*\vp^+,
\]
where $t^*$ is the unique positive real root of the equation
\begin{eqnarray}
(\delta\|\vp^+\|^{r-2})t^{r-1}+t-1=0,
\label{eq-t*}
\end{eqnarray}
and $\vp^+$ is given by 
\[
\vp^+ = \argmin_\vp\Big\{\frac{1}{2}\|\vp-\vp^k\|^2+\alpha_k\lambda\|\vp\|_1\Big\}
\]
which has an explicit expression given by $\vp_j^+ = \operatorname{sign}(\vp_j^k) \max(|\vp_j^k|-\alpha_k\lambda,0)$ for 
the $j$-th coordinate.
\label{DNN-subpro-solution-prop}
\end{proposition}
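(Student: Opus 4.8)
The plan is to reduce everything to the first-order optimality condition. The subproblem objective
\[
G(\mW) := \phi(\mW) + \inprod{\vp^k}{\mW} + \alpha_k\lambda\norm{\mW}_1, \qquad \phi(\mW) = \tfrac12\norm{\mW}^2 + \tfrac{\delta}{r}\norm{\mW}^r,
\]
is strictly convex, since the term $\tfrac12\norm{\mW}^2$ is strongly convex while the remaining terms are convex; hence it has a unique minimizer, characterized by $0 \in \nabla\phi(\mW) + \vp^k + \alpha_k\lambda\,\partial\norm{\mW}_1$. Using $\nabla\phi(\mW) = (1 + \delta\norm{\mW}^{r-2})\mW$, the entire proof then amounts to checking that the candidate $\mW = -t^*\vp^+$ satisfies this inclusion, after which uniqueness concludes the argument.

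First I would settle the existence and uniqueness of $t^*$. Setting $q(t) := \delta\norm{\vp^+}^{r-2}t^{r-1} + t - 1$, one has $q(0) = -1 < 0$, $q(1) = \delta\norm{\vp^+}^{r-2} \ge 0$, and $q'(t) = (r-1)\delta\norm{\vp^+}^{r-2}t^{r-2} + 1 > 0$ on $[0,\infty)$, so $q$ is strictly increasing and has a unique root $t^* \in (0,1]$. The crucial algebraic observation is that \eqref{eq-t*} can be rewritten as $\bigl(1 + \delta (t^*)^{r-2}\norm{\vp^+}^{r-2}\bigr)\,t^* = 1$. Substituting $\mW = -t^*\vp^+$, for which $\norm{\mW} = t^*\norm{\vp^+}$, this identity collapses the gradient to
\[
\nabla\phi(\mW) = \bigl(1 + \delta (t^*)^{r-2}\norm{\vp^+}^{r-2}\bigr)(-t^*\vp^+) = -\vp^+,
\]
so that $\nabla\phi(\mW) + \vp^k = \vp^k - \vp^+$ and the optimality inclusion reduces to $\vp^+ - \vp^k \in \alpha_k\lambda\,\partial\norm{\mW}_1$.

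To close this inclusion I would invoke the optimality condition defining $\vp^+$ as the proximal point of $\alpha_k\lambda\norm{\cdot}_1$ at $\vp^k$, namely $\vp^k - \vp^+ \in \alpha_k\lambda\,\partial\norm{\vp^+}_1$, equivalently $\vp^+ - \vp^k \in -\alpha_k\lambda\,\partial\norm{\vp^+}_1$. The remaining and most delicate point --- the main obstacle, since it requires care with the nonsmooth $\ell_1$ subdifferential --- is to verify that $\partial\norm{\mW}_1 = -\,\partial\norm{\vp^+}_1$ when $\mW = -t^*\vp^+$ with $t^* > 0$. This I would check coordinatewise: on indices with $\vp^+_j \neq 0$ the sign flips, $\mathrm{sign}(\mW_j) = -\mathrm{sign}(\vp^+_j)$, while on indices with $\vp^+_j = 0$ both components vanish and the admissible interval $[-1,1]$ is invariant under negation. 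Combining the two displays gives $\vp^+ - \vp^k \in \alpha_k\lambda\,\partial\norm{\mW}_1$, so $\mW = -t^*\vp^+$ is the unique minimizer. I would finally note that the degenerate case $\vp^+ = 0$ (forcing $\mW = 0$) is consistent, as then the soft-thresholding rule reads $\norm{\vp^k}_\infty \le \alpha_k\lambda$, which is precisely the optimality condition for $\mW = 0$.
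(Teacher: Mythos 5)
Your proof is correct and rests on the same ingredients as the paper's: the first-order optimality condition $0 \in \nabla\phi(\mW) + \vp^k + \alpha_k\lambda\,\partial\|\mW\|_1$, the collinearity of the solution with $-\vp^+$ via the scalar equation \eqref{eq-t*}, the sign-flip identity $\partial\|\cdot\|_1(-t\vp^+) = -\partial\|\cdot\|_1(\vp^+)$ for $t>0$, and the soft-threshold characterization of $\vp^+$. The only difference is direction — the paper derives the formula from the optimality condition while you verify the candidate and invoke strict convexity for uniqueness — and your version additionally supplies the existence and uniqueness of the root $t^*\in(0,1]$, which the paper asserts but does not prove.
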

\begin{proof}
The optimality condition of \eqref{DNN-sub} is given by 
\[
0=\mW^{k+1}(1+\delta\|\mW^{k+1}\|^{r-2})+\vp^k+\alpha_k\lambda{\bm\Gamma}^k,\;\text{where }{\bm\Gamma}^k\in\partial\|\cdot\|_1(\mW^{k+1}).
\]
Let $\vp^+=\vp^k+\alpha_k\lambda{\bm\Gamma}^k$. By the optimality condition, we have $\mW^{k+1}=-t\vp^+$ for some positive scalar $t$, and 
\[
(-t-\delta\|\vp^+\|^{r-2}t^{r-1}+1)\vp^+=0.
\]
If $\vp^+\neq0$, then $\delta\|\vp^+\|^{r-2}t^{r-1}+t-1=0$. If $\vp^+=0$, then $\mW^{k+1}=-t\vp^+=0$. Since $t>0$, then we have $\partial\|\cdot\|_1(\mW^{k+1})=\partial\|\cdot\|_1(-t\vp^+)=-\partial\|\cdot\|_1(\vp^+)$. Recall the definition of $\vp^+$, we have 
\[
\vp^+=\vp^k+\alpha_k\lambda{\bm\Gamma}^k\in\vp^k-\alpha_k\lambda\partial\|\cdot\|_1(\vp^+),
\]
which is sufficient and necessary 
optimality condition of the convex optimization problem:
\[
\vp^+ = \argmin_\vp\left\{\frac{1}{2}\|\vp-\vp^k\|^2+\alpha_k\lambda\|\vp\|_1\right\}.
\]
This completes the proof by noting the the above minimization problem is the well-known soft threshold operator, see for example \citep{friedman2010regularization}. 
\end{proof}

In the absence of $L_1$-regularization, that is, when $\lambda_1=0$, 
then $\bm{p}^+ = \bm{p}^k$ and 
the update formula for MSBPG  at the $k$-th iteration simplifies to ${\mW}^{k+1}=-t^* \vp^k$, where $t^*$ is the positive root of the equation \eqref{eq-t*}. In this case, $\mW^{k+1} = t^*(\nabla\phi(\mW^k)-\alpha_k\vv^k)$. Furthermore, if we choose the kernel function simply as the square of Euclidean distance, i.e. $\delta=0$, then SBPG  reduces to SGD with momentum. Specifically, we have $t^*=1$ and the update ${\mW}^{k+1}={\mW}^k-\alpha_k\vv^k$.

\vspace{1em}
\noindent\textbf{Determining degree of kernel function}\quad We now turn our attention to selecting an appropriate parameter $r$ for the kernel function. Intuitively, in order to bound the Hessian of the loss function in \eqref{DNN-loss-model}, particularly when the number of layers $L$ in \eqref{DNN-def} is large, $r$ should also be chosen larger, so that $\nabla^2F\preceq\frac{1}{\alpha}\nabla^2\phi$ holds globally for some $\alpha>0$. However, this can lead to numerical issues when computing when computing $\|{\mW}\|^{r-2}$. This problem can be avoided if the deep neural network exhibits some special structure such that a moderate $r$ can make $F(\mW)$ smooth adaptable with respect to $\phi(\mW)$. For simplicity of analysis, we assume all the given labels $y_i$ as zero and consider a sum of squares loss function. Then, we have a two-layer model defined as follows:
\be 
\min_{\mW=(\vu,\vv)}\;F(\mW)=\frac{1}{2}\sum_{i=1}^N\left(\left\|\sigma\left(\Mat(\vu)(g_i(\vv))\right)\right\|^2\right)+\frac{\lambda_2}{2}(\norm{\vu}^2+\norm{\vv}^2),
\label{general-two-layer-model}
\ee 
where $\vv\in\R^n$, $\vu\in\R^{km}$, $g_i:\R^n\rightarrow\R^m$, $\sigma:\R\rightarrow\R$, $\Mat(\vu)\in\R^{k\times m}$ and $\sigma(\cdot)$ is a coordinate-wise operator. Notably, any deep neural network can be reformulated as the two-layer model in \eqref{general-two-layer-model}. For instance, if we define $\vv=(\mW_1,...,\mW_{L-1})$, $\vu={\rm Vec}(\mW_L)$, $g_i(\mW_1,...,\mW_{L-1})=\sigma_{L-1}(\mW_{L-1}(...(\sigma_1( \mW_1\vx_i))...))$, then model \eqref{DNN-def} can be transformed into \eqref{general-two-layer-model}. We make the following assumptions in this section, which guarantees that we can find a polynomial kernel function $\phi$ with a moderate degree, such that $F$ in \eqref{general-two-layer-model} is smooth adaptable to $\phi$.
\begin{assumption}\label{act-func}
 $\sigma$ is twice differentiable and $\sigma'$ and $\sigma\cdot\sigma''$ are globally bounded.
\label{bound_assumption_activation}
\end{assumption}
\begin{assumption}
 Each $g_i$ is twice differentiable. All partial derivatives of order zero, one, and two of $g_i$ are globally bounded.
\label{bound_assumption_bn}
\end{assumption}

\begin{remark}
Now we give some remarks on the above assumptions.
\begin{itemize}
\item[1.] Assumption \ref{bound_assumption_activation} is typically valid for various commonly used smooth activation functions.  For example, the sigmoid function $\sigma(x)=\frac{1}{1+e^{-x}}$ satisfies global boundedness for both $\sigma$ and $\sigma''$. Certain activation function may not have bounded function value, such as {GELU} \citep{hendrycks2016gaussian}, which takes the formulation of $\sigma(x)=x\Phi(x)$ where $\Phi$ is the standard Gaussian cumulative distribution function. Nonetheless, the product $\sigma\cdot\sigma''$ is globally bounded. Another type of activation function satisfying Assumption \ref{bound_assumption_activation} is the smoothed ReLU function, for example, the following smoothed ReLU function, which will be considered in our numerical experiments:
\[
\sigma_\epsilon(x) = \left\{
\begin{array}{cc}
     0 & x \le 0 \\
     x^3\left(\frac{1}{\epsilon^2}-\frac{x}{2\epsilon^3}\right) & 0 < x \le \epsilon \\
     x - \frac{\epsilon}{2} & x > \epsilon.
\end{array}\right.
\]
We observe that as $\epsilon$ tends to zero, $\sigma_\epsilon$ converges to the ReLU function. It is straightforward to verify that $\sigma_\epsilon\cdot\sigma_\epsilon''$ is globally bounded.  
Specifically, $\frac{3}{4}$ is a uniform bound on $\sigma_\epsilon\cdot\sigma_\epsilon''$ for $\epsilon\in(0,\frac{1}{2})$. 

\item[2.] In many popular neural network frameworks, batch normalization (BN) layers \citep{ioffe2015batch} are often used before the fully connected layers. For example, in the VGG \citep{simonyan2014very} and ResNet \citep{he2016deep}, BN layers are usually used before the last linear layer. In this case, we can treat all layers except the last one as one layer, which can be modeled as \eqref{general-two-layer-model}. It is expected that the BN layer can make the function $g_i$ sufficiently smooth, thereby satisfying  Assumption \ref{bound_assumption_bn}.
\end{itemize}
\end{remark}

By applying the chain rule, we can compute the Hessian of $F$ and determine a suitable degree parameter $r$ in the kernel function, which will ensure that $\nabla^2F$ is bounded by $\nabla^2\phi$ globally. Consequently, $F$ is smooth adaptable with respect to $\phi$.
In order to compute the Hessian of $F$, two formulas are required, which can be verified directly.\vspace{-0.1cm}
\begin{lemma}
Let $\vu\in\R^{km}$, $\vg\in\R^m$, $\mA \in\R^{n\times m}$ and $\vb\in\R^k$. Consider two linear maps: $\vu\mapsto\Mat(\vu)\vg$ and $\vu\mapsto \mA(\Mat(\vu))^T\vb$, then, the Jacobian of the two maps are given by
\[
J_{\vu}\left[\Mat(\vu)\vg\right]=\vg^T\otimes\mathbb{I}_k,
\]
\[
J_{\vu}[\mA(\Mat(\vu))^T\vb]=\mA \otimes \vb^T.
\]
\label{Jac-Hess-aux-lemma}
\end{lemma}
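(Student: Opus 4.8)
The plan is to observe that both maps are \emph{linear} in $\vu$, so each Jacobian is simply the constant matrix representing that linear map; no base point enters and no limiting argument is needed. The entire task reduces to rewriting each map in the form $\vu\mapsto M\vu$ and identifying $M$ with the claimed Kronecker product. I would treat the two identities separately, since the transpose in the second one obstructs a direct reuse of the argument for the first.

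For the first identity I would invoke the standard vectorization rule $\mathrm{Vec}(AXB)=(B^{T}\otimes A)\,\mathrm{Vec}(X)$. Setting $X=\Mat(\vu)$ (so that $\mathrm{Vec}(X)=\vu$ by the column-major convention for $\Mat$), $A=\mathbb{I}_{k}$, and $B=\vg$ regarded as an $m\times 1$ matrix, the output $\Mat(\vu)\vg$ is already a vector in $\R^{k}$ and hence equals its own $\mathrm{Vec}$. This gives $\Mat(\vu)\vg=(\vg^{T}\otimes\mathbb{I}_{k})\vu$, so the Jacobian is exactly $\vg^{T}\otimes\mathbb{I}_{k}$.

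For the second identity the transpose $\Mat(\vu)^{T}$ blocks a direct application of the same rule, since it would force in a commutation matrix through $\mathrm{Vec}(X^{T})=K\,\mathrm{Vec}(X)$. I would instead differentiate entrywise. Writing $X=\Mat(\vu)\in\R^{k\times m}$ with the convention $\vu_{(j-1)k+l}=X_{lj}$, I expand $(\mA X^{T}\vb)_{i}=\sum_{j,l}\mA_{ij}X_{lj}\vb_{l}$ and differentiate to get $\partial(\mA X^{T}\vb)_{i}/\partial X_{lj}=\mA_{ij}\vb_{l}$. Matching this against the $(i,(j-1)k+l)$ entry of $\mA\otimes\vb^{T}$, whose block $(i,j)$ is the row $\mA_{ij}\vb^{T}$ and whose within-block coordinate $l$ carries the value $\mA_{ij}\vb_{l}$, yields $J_{\vu}[\mA\Mat(\vu)^{T}\vb]=\mA\otimes\vb^{T}$.

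The only delicate point—and the step most prone to error—is the index bookkeeping forced by the column-major definition of $\Mat/\mathrm{Vec}$: one must consistently line up the flat coordinate $(j-1)k+l$ of $\vu$ with the block-then-within-block ordering of the Kronecker products (the block $\vg_{p}\mathbb{I}_{k}$ for the first map, and the block $\mA_{ij}\vb^{T}$ for the second). Once this ordering is fixed once and used uniformly, both identities fall out immediately, which is exactly why the lemma can be "verified directly."
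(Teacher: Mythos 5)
Your proposal is correct: both identities are verified exactly as claimed, with the vectorization rule $\mathrm{Vec}(AXB)=(B^{T}\otimes A)\mathrm{Vec}(X)$ handling the first map and a careful entrywise computation (consistent with the column-major convention for $\Mat$) handling the second. The paper offers no proof beyond stating that the formulas ``can be verified directly,'' and your direct verification is precisely the intended argument, with the index bookkeeping done correctly.
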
\vspace{-0.5cm}

\begin{proposition}
Suppose Assumptions \ref{bound_assumption_activation} and \ref{bound_assumption_bn} hold. Then, for any given $\delta>0$ and any $r\geq4$, the function $F$ defined in \eqref{general-two-layer-model} is smooth adaptable with respect to $\phi(\mW)=\frac{1}{2}\|\mW\|^2+\frac{\delta}{r}\|\mW\|^r$.
\label{DNN-kernel-degree-prop}
\end{proposition}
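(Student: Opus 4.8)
The plan is to invoke the Hessian characterization of smooth adaptivity from Lemma \ref{two-side-descent}: since both $F$ and $\phi$ lie in $\cC^2(\R^d)$, it suffices to produce $L>0$ with $L\nabla^2\phi(\mW)\pm\nabla^2F(\mW)\succeq 0$ for all $\mW$. Differentiating the kernel directly gives $\nabla^2\phi(\mW)=(1+\delta\|\mW\|^{r-2})\mI+\delta(r-2)\|\mW\|^{r-4}\mW\mW^\top\succeq(1+\delta\|\mW\|^{r-2})\mI$. Thus the whole statement reduces to a single scalar growth estimate, $\|\nabla^2F(\mW)\|\leq C(1+\|\mW\|^{2})$ for some constant $C$: once this is in hand, the elementary inequality $\|\mW\|^2\leq1+\|\mW\|^{r-2}$ (valid precisely because $r\geq4$) shows that the lower bound $(1+\delta\|\mW\|^{r-2})\mI$ dominates $C(1+\|\mW\|^2)\mI$ up to the factor $2/\min\{1,\delta\}$, so that $L=2C/\min\{1,\delta\}$ closes both semidefinite inequalities at once.

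To establish the growth estimate I would compute $\nabla^2F$ blockwise in $\vu$ and $\vv$. Writing $h_i:=\Mat(\vu)g_i(\vv)$, the $i$-th loss term is $\ell(h_i)=\tfrac12\|\sigma(h_i)\|^2$, with $\nabla\ell(h_i)=\sigma(h_i)\circ\sigma'(h_i)$ and $\nabla^2\ell(h_i)=\text{Diag}\!\left(\sigma'(h_i)^2+\sigma(h_i)\circ\sigma''(h_i)\right)$. Lemma \ref{Jac-Hess-aux-lemma} and the chain rule supply the Jacobians $\partial h_i/\partial\vu=g_i(\vv)^\top\otimes\mI_k$ and $\partial h_i/\partial\vv=\Mat(\vu)\,(\partial g_i/\partial\vv)$. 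The decisive structural facts are that $\vu$ enters $h_i$ linearly while $g_i$ and its first two derivatives are globally bounded (Assumption \ref{bound_assumption_bn}), so $\|h_i\|=O(\|\vu\|)$; and since $\sigma'$ is bounded (Assumption \ref{act-func}), the map $\sigma$ grows at most linearly, whence $\nabla\ell(h_i)=\sigma(h_i)\circ\sigma'(h_i)=O(\|\vu\|)$, while the loss Hessian $\sigma'(h_i)^2+\sigma(h_i)\circ\sigma''(h_i)$ stays globally bounded.

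Feeding these into the block Hessian, I expect the $\vu\vu$ block $(g_i\otimes\mI_k)\nabla^2\ell(h_i)(g_i^\top\otimes\mI_k)$ to be $O(1)$, since $\partial h_i/\partial\vu$ is $\vu$-independent and $\nabla^2\ell$ is bounded; the mixed $\vu\vv$ block to be $O(\|\vu\|)$, its two contributions scaling as $O(1)\cdot O(\|\vu\|)$ and $O(1)\cdot O(1)\cdot O(\|\vu\|)$; and the $\vv\vv$ block to be $O(\|\vu\|^2)$, the worst case, arising both from the curvature term $(\partial h_i/\partial\vv)^\top\nabla^2\ell(h_i)(\partial h_i/\partial\vv)=O(\|\vu\|)\cdot O(1)\cdot O(\|\vu\|)$ and from $\sum_l\bigl(\sigma(h_{i,l})\sigma'(h_{i,l})\bigr)\,\partial^2 h_{i,l}/\partial\vv^2=O(\|\vu\|)\cdot O(\|\vu\|)$. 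Summing the $N$ terms and adding the constant contribution $\lambda_2\mI$ from the regularizer yields $\|\nabla^2F(\mW)\|\leq C(1+\|\vu\|^2)\leq C(1+\|\mW\|^2)$, completing the argument.

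The main obstacle is not the estimate itself but pinpointing why Assumption \ref{act-func} requires boundedness of $\sigma\cdot\sigma''$ rather than of $\sigma''$ alone. If only $\sigma''$ were bounded, then for activations with linear growth (such as GELU or the smoothed ReLU) the entry $\sigma(h_i)\circ\sigma''(h_i)$ of $\nabla^2\ell(h_i)$ would itself be $O(\|\vu\|)$, degrading the $\vv\vv$ block to $O(\|\vu\|^3)$ and forcing $r\geq5$. Controlling the product $\sigma\cdot\sigma''$ is therefore exactly what keeps $\nabla^2\ell(h_i)$ bounded and the overall Hessian growth quadratic, which is the precise condition under which the moderate degree $r\geq4$ suffices.
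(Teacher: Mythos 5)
Your proposal is correct and follows essentially the same route as the paper: both reduce the claim via the Hessian characterization of smooth adaptivity (Lemma \ref{two-side-descent}), compute the block Hessian of $F$ in $(\vu,\vv)$ using Lemma \ref{Jac-Hess-aux-lemma}, use the boundedness of $\sigma'$, $\sigma\cdot\sigma''$, and of $g_i$ with its first two derivatives to obtain $\inprod{\nabla^2F(\mW)\vh}{\vh}=\mathcal O\bigl((1+\|\mW\|^2)\|\vh\|^2\bigr)$, and compare against $\nabla^2\phi(\mW)\succeq(1+\delta\|\mW\|^{r-2})\mI$, which forces only $r-2\geq2$. Your blockwise growth orders ($O(1)$, $O(\|\vu\|)$, $O(\|\vu\|^2)$) match the paper's expressions \eqref{Hessian_Fuu}--\eqref{Hessian_F_vv}, and your added remark on why the product $\sigma\cdot\sigma''$, rather than $\sigma''$ alone, must be bounded is a correct and useful clarification of the role of Assumption \ref{bound_assumption_activation}.
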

\begin{proof}
We denote $\Mat(\vu)$ by $\mM$. The Jacobian of $g$ is denoted by $Jg$, while its transpose is denoted by $J^Tg$. $\mathbb{I}_k$ is $k\times k$ identity matrix. We only need to prove the single sample case, i.e. $N=1$. Using Lemma \ref{Jac-Hess-aux-lemma}, we can compute the Jacobian and Hessian of $F$ as follows:

{\noindent\bf Jacobian of $F$ :}\\
\be 
\begin{aligned}
\frac{\partial F}{\partial\vu}&=\left(g(\vv)\otimes\mathbb I_k\right)\left[\sigma^\prime(\mM g(\vv))\circ\sigma(\mM g(\vv))\right]+\lambda_2\vu,\\
\frac{\partial F}{\partial\vv}&=J^Tg(\vv)\mM^T\left[\sigma^\prime(\mM g(\vv))\circ\sigma(\mM g(\vv))\right]+\lambda_2\vv.
\end{aligned}
\label{Jacobian_F}
\ee 

{\noindent\bf Hessian of $F$ :}\\
\be 
\begin{aligned}
&\frac{\partial^2F}{\partial\vu^2}=(1)+(2)+\lambda_2\mathbb{I}_{km},\\
\text{where }(1)&=\left(g(\vv)\otimes\mathbb I_k\right)\text{Diag}\big(\sigma(\mM g(\vv))\circ\sigma^{\prime\prime}(\mM g(\vv))\big)(\vg^T(v)\otimes\mathbb I_k)\\
(2)&=\left(g(\vv)\otimes\mathbb I_k\right)\text{Diag}\big(\sigma^\prime(\mM g(\vv))\circ\sigma^{\prime}(\mM g(\vv))\big)(\vg^T(v)\otimes\mathbb I_k).
\end{aligned}
\label{Hessian_Fuu}
\ee 

\be 
\begin{aligned}
&\frac{\partial^2F}{\partial\vu\partial\vv}=(1)+(2)+(3),\\
\text{where }(1)&=\left(J^Tg(\vv)\right)\otimes\left[\sigma^\prime(\mM g(\vv))\circ\sigma^{\prime}(\mM g(\vv))\right]^T\\
(2)&=J^Tg(\vv)\mM^T\text{Diag}[\sigma(\mM g(\vv))\circ\sigma^{\prime\prime}(\mM g(\vv))]\left(g^T(\vv)\otimes\mathbb I_k\right)\\
(3)&=J^Tg(\vv)\mM^T\text{Diag}[\sigma^\prime(\mM g(\vv))\circ\sigma^{\prime}(\mM g(\vv))]\left(g^T(\vv)\otimes\mathbb I_k\right).
\end{aligned}
\label{Hessian_F_uv}
\ee 
\be 
\begin{aligned}
&\frac{\partial^2F}{\partial\vv^2}=(1)+(2)+(3)+\lambda_2\mathbb{I}_{n},\\
\text{where }(1)&=D^2g(\vv)\left[\mM^T[\sigma^\prime(\mM g(\vv))\circ\sigma(\mM g(\vv))]\right]=\sum d_i\nabla^2g_i(\vv)\\
(2)&=J^Tg(\vv)\mM^T\text{Diag}[\sigma(\mM g(\vv))\circ\sigma^{\prime\prime}(\mM g(\vv))]\mM Jg(\vv)\\
(3)&=J^Tg(\vv)\mM^T\text{Diag}[\sigma^\prime(\mM g(\vv))\circ\sigma^{\prime}(\mM g(\vv))]\mM Jg(\vv),
\end{aligned}
\label{Hessian_F_vv}
\ee 
where $\vd=\mM^T[\sigma^\prime(\mM g(\vv))\circ\sigma(\mM g(\vv))]$. Now, we are ready to prove this proposition. For any $\vw\in\R^{km+n}$ and $\vh=[\vh^u;\vh^v]\in\R^{km+n}$, it suffices to prove that $\inprod{\nabla^2F(\vw)\vh}{\vh}=\mathcal O(\inprod{\nabla^2\phi(\vw)\vh}{\vh})$. From \eqref{Hessian_Fuu}\eqref{Hessian_F_uv}\eqref{Hessian_F_vv} and Assumption \ref{bound_assumption_activation}, \ref{bound_assumption_bn}, we can easily get $\inprod{\nabla^2F(\vw)\vh}{\vh}=\mathcal O((1+\|\vw\|^2)\|\vh\|^2)$. On the other hand, $\nabla^2\phi(\vw)=I(1+\|\vw\|^{r-2})+(r-2)\|\vw\|^{r-4}\vw\vw^T$. Hence $\inprod{\nabla^2\phi(\vw)\vh}{\vh}\geq(1+\|\vw\|^{r-2})\|\vh\|^2$. So, we only require $r-2\geq2$. This completes the proof.   
\end{proof}

\noindent\textbf{Layerwise kernel function}\quad In Proposition \ref{DNN-subpro-solution-prop}, we use the kernel function $\phi({\mW})=\frac{1}{2}\|{\mW}\|^2+\frac{\delta}{r}\|{\mW}\|^{r}$, which applies the same Bregman distance across all layers of the deep neural network. However, different layers exhibit distinct geometric properties \citep{you2019large}, and computing $\|\mW\|^r$ with $r>2$ may result in numerical instability for neural networks with millions of parameters, such as in VGG \citep{simonyan2014very}. To take advantage of the layerwise structure of neural networks, we design a layerwise kernel function for a $L$-layer neural network as follows:
\be
\phi({\mW})=\sum_{i=1}^L\phi_i({\mW}_i),\quad \phi_i({\mW}_i)=\frac{1}{2}\|{\mW}_i\|^2+\frac{\delta}{r}\|{\mW}_i\|^{r}.
\label{kernel-nn}
\ee
Note that $\delta$ and $r$ can vary from layer to layer, here we take the same $\delta$ and $r$ for all layers for simplicity. With this structure, the Bregman distance takes the form $\cD_\phi=\sum_{i=1}^L\cD_{\phi_i}$. By incorporating this layerwise Bregman distance into the subproblem \eqref{DNN-sub-ori}, our MSBPG algorithm can be implemented in a layerwise manner. Details of the implementation are provided in Algorithm \ref{dnn-alg}.
\begin{algorithm}[ht] 
    \caption{Momentum based Stochastic Bregman Proximal Gradient (MSBPG) for training neural networks}
    \begin{algorithmic}[1]
        \State \textbf{Input: }Total number of iterations $K$, stepsize $\alpha_k>0$, momentum parameter $\beta_k\in(0,1)$, $\delta>0$ and integer $r\geq4$ to determine the kernel function $\phi$, and $\lambda_1.$
        \State \textbf{Initialize: }Set $\mW=\mW^0$, $\vv^0=0$.
        \For {$k=0, \cdots, K-1$}
        \State Compute mini-batch gradient $\widetilde\nabla_k$ of $F$;
        \State Compute SMAE: $\vv^k=(1-\beta_k)\vv^{k-1}+\beta_k\widetilde\nabla_k$;
        \For {$i=1,\ldots,L$}
        \State $\vp_i^k = \alpha_k \vv_i^k - \nabla\phi(\mW^{k}_i)$;
        \State $\vp_i^+ = \argmin_{\vp_i}\{\frac{1}{2}\|\vp_i-\vp_i^k\|^2+\alpha_k\lambda_1\|\vp_i\|_1\}$;
        \State Solve $(\delta \|\vp^+_i\|^{r-2})t_i^{r-1} + t_i -1 =0$ to get $t_i^k$;
        \State $\mW_i^{k+1} = - t_i^k \vp_i^+$;
        \EndFor
        \EndFor
        \State {\bf Output: }$\mW^1, \cdots, \mW^K$.
    \end{algorithmic}\label{dnn-alg}
\end{algorithm}




\vspace{1em}

\noindent\textbf{Mitigating gradient explosion}\quad In the training of deep neural networks, gradient explosion is a common undesired phenomenon, where the gradients of the loss function grow exponentially from layer to layer, leading to numerical instability or even collapse of the training process \citep{hochreiter1991untersuchungen,manchev2020target}. The reasons for gradient explosion include selecting a large stepsize and choosing an improper initialization for the model's parameters \citep{pascanu2013difficulty}. In the following, we will show that MSBPG  provides a novel approach to mitigate gradient explosion. Considering MSBPG  without $L_1$-regularization, the update rule is given by:
\be
{\mW}_i^{k+1}=-t_i^k\vp_i^k=t_i^k\left((1+\delta\|{\mW_i}^k\|^{r-2}){\mW_i}^k-\alpha_k\vv_i^k\right),
\label{Bregman-GD}
\ee
where $t_i^k\in(0,1)$ is the unique positive root of 
\be 
\left(\delta\left\|(1+\delta\|{\mW}_i^k\|^{r-2}){\mW}_i^k-\alpha_k\vv_i^k\right\|^{r-2}\right)t^{r-1}+t-1=0.
\label{Bregman_sub_equation}
\ee
Combining \eqref{Bregman-GD} and \eqref{Bregman_sub_equation}, we have the following equivalent implicit update scheme for the $i$-th layer:
\be
\mW_i^{k+1}=\frac{1+\delta\|\mW_i^k\|^{r-2}}{1+\delta\|\mW_i^{k+1}\|^{r-2}}\mW_i^k-\frac{\alpha_k}{1+\delta\|\mW_i^{k+1}\|^{r-2}}\vv_i^k.
\label{implicit-sbpg}
\ee
It is observed in practice that with large stepsize or large initial point, the gradient $\vv_i^k$ tends to explode if no scaling or clipping is applied, while the norm of the weight $\|\mW_i^{k+1}\|$ also tends to be large. In \eqref{implicit-sbpg}, scaling the gradient by $\frac{1}{1+\delta\|\mW_i^{k+1}\|^{r-2}}$ prevents the weight $\mW_i^{k+1}$ from moving too drastically in the direction of the gradient, thereby controlling the rapid growth of its norm. At the same time, if the norm  $\|\mW_i^{k+1}\|$ does not change significantly, the coefficient of $\mW_i^k$ in \eqref{implicit-sbpg} will remain approximately 1. Thus, the implicit update \eqref{implicit-sbpg} provides automatic scaling of the gradient, effectively mitigating the rapid growth of the weight and preventing subsequent gradient explosion.

Experimental results in Section \ref{DNN-experiment} indeed verify MSBPG 's ability to mitigate gradient explosion for training deep neural networks. An intuitive illustration of SBPG 's ``pull-back" ability is given in Figure \ref{additional-figure} in Appendix \ref{app-fig}, and this ``pull-back" ability originates from the Bregman proximity model and the polynomial kernel function we adopt.

\section{Numerical experiments}\label{section-experiments}
In this section, we conduct numerical experiments to demonstrate the effectiveness and robustness of  MSBPG in comparison to some commonly used optimizers in deep learning. We assess the impact of stepsize and initial point selection on the performance of our method. Our experiments consist of two parts. In the first part, we use a quadratic inverse problem as a toy example to illustrate the capabilities of vanilla SBPG. The second part is the main focus of this section, where we evaluate the performance of MSBPG in training deep neural networks. The experiments for the quadratic inverse problem are conducted using MATLAB R2021b on a Windows workstation equipped with a 12-core Intel Xeon E5-2680 @ 2.50GHz processor and 128GB of RAM. For the deep learning experiments, we conducted the experiments using PyTorch running on a single RTX3090 GPU.

\subsection{Quadratic inverse problem}
The quadratic inverse problem, as formulated in \citep{bolte2018first}, is given by:
\[
\min\left\{\Phi(\vx):=\underbrace{\frac{1}{4}\sum_{i=1}^n(\inprod{A_i\vx}{\vx}-b_i)^2}_{F(\vx)}+\lambda R(\vx): \vx\in\R^d\right\},
\]
which has practical applications \citep{beck2013sparsity}, including the phase retrieval problem as a special case \citep{luke2017phase}. In this experiment, we consider the $L_1$ regularization $R(\vx)=\|\vx\|_1$ with $\lambda=1\times10^{-3}$, and solve the quadratic inverse problem using SBPG and stochastic {(Euclidean)} proximal gradient (SPG) method \citep{bertsekas2011incremental}. Notably, SPG is a special case of SBPG, where $\phi(\vx)=\frac{1}{2}\|\vx\|^2$. Since the smooth term in the objective function $F(\vx)$ does not admit a globally Lipschitz continuous gradient, we employ the kernel function $\phi(\vx)=\frac{1}{2}\|\vx\|^2+\frac{1}{r}\|\vx\|^r$ with $r=4$. It has been shown in \citep{lu2018relatively} that any $r\geq4$ guarantees that $F$ is $\phi$-smooth adaptable globally. Moreover, according to \citep{bolte2018first}, the smooth adaptable constant $L_F$ can be chosen such that $L_F\geq\sum_{i=1}^n(3\|A_i\|^2+\|A_i\||b_i|)$ for $r=4$. In this experiment, we randomly generate the data by the following ${\tt MATLAB}$ commands: 
\[
\begin{aligned}
&{\tt ai = randn(d,1);Ai = ai*ai';}\\
&{\tt x\_true = sprandn(d,1,density\_x); b\_i = x\_true'*(Ai*x\_true);}
\end{aligned}
\]

The true solution for the quadratic inverse problem is chosen as a sparse vector $\vx^*$ that satisfies $\inprod{A_i\vx^*}{\vx^*}=b_i$ for $i=1,\dots,n$. We set the mini-batch size for all algorithms to be $m=1$. To evaluate the effectiveness of each algorithm, we use the following criterion that takes into account the possibility of stationary points being local minimum or saddle points:
\begin{equation}
\epsilon_k=\max\left\{\|\cG({\vx^k})\|,\;\epsilon_{\tt obj}:=\frac{{\tt obj}_k-{\tt obj}_*}{1+{\tt obj}_*}\right\},
\label{eq:exp-qip-stationarity}
\end{equation}
 where ${\tt obj}_k=\Phi({\vx^k})$ and ${\tt obj}_*=\Phi(\vx^*)$. The term $\|\cG(\vx^k)\|$ measures the stationarity of the solution, while a small $\eps_{\tt obj}$ indicates that the solution is a "nearly" global minimum.

We conduct experiments on a problem with data size $d=100$ and ${\tt density\_x=0.05}$. All methods are run until they reach an accuracy of $\epsilon_k\leq0.01$ within a time limit of 30 seconds. To ensure statistical significance, we run each algorithm 10 times and report the median value. The results are presented in Figure \ref{robust-figure}. For Figures \ref{robust-figure}(a), we randomly select initial points within a ball centered at the origin with radius $1\times10^{-2}$. We use the stepsize schedule of $\alpha_k=\max\left\{10^{-4},\frac{\alpha_0}{\sqrt{1+k}}\right\}$, where $\alpha_0$ is the initial stepsize. For Figure \ref{robust-figure}(b), we set constant stepsize schedule $1\times10^{-3}$. For Figures \ref{robust-figure}(c), we randomly select initial points within a ball centered at the origin with radius $1\times10^{-2}$. We use a constant stepsize schedule. To prevent excessively small stepsizes that can slow down all methods, we set a lower bound for the stepsize.

Figure \ref{robust-figure}(a) demonstrates that SBPG has a wider range of convergent stepsizes than SPG, indicating that SBPG is more robust in terms of stepsize selection. The effect of the initial stepsize on the performance of the algorithms is also depicted in this figure. Figure \ref{robust-figure}(b) highlights that SBPG is significantly more robust than SPG with respect to initial point selection, showing high resilience and preventing the training process from collapsing. Additionally, Figure \ref{robust-figure}(c) illustrates that increasing the degree $r$ in the kernel function raises the threshold for safe stepsizes. These observations are explained in Section \ref{DNN implementation}. When a large stepsize or a large initial point radius leads to a potential gradient explosion, the Bregman proximal mapping effectively pulls back the iterate, guiding it toward a more stable region and preventing gradient explosion.

\begin{figure}
\centering
\subfigure[Stepsize robustness]{
\includegraphics[width=4.5cm]{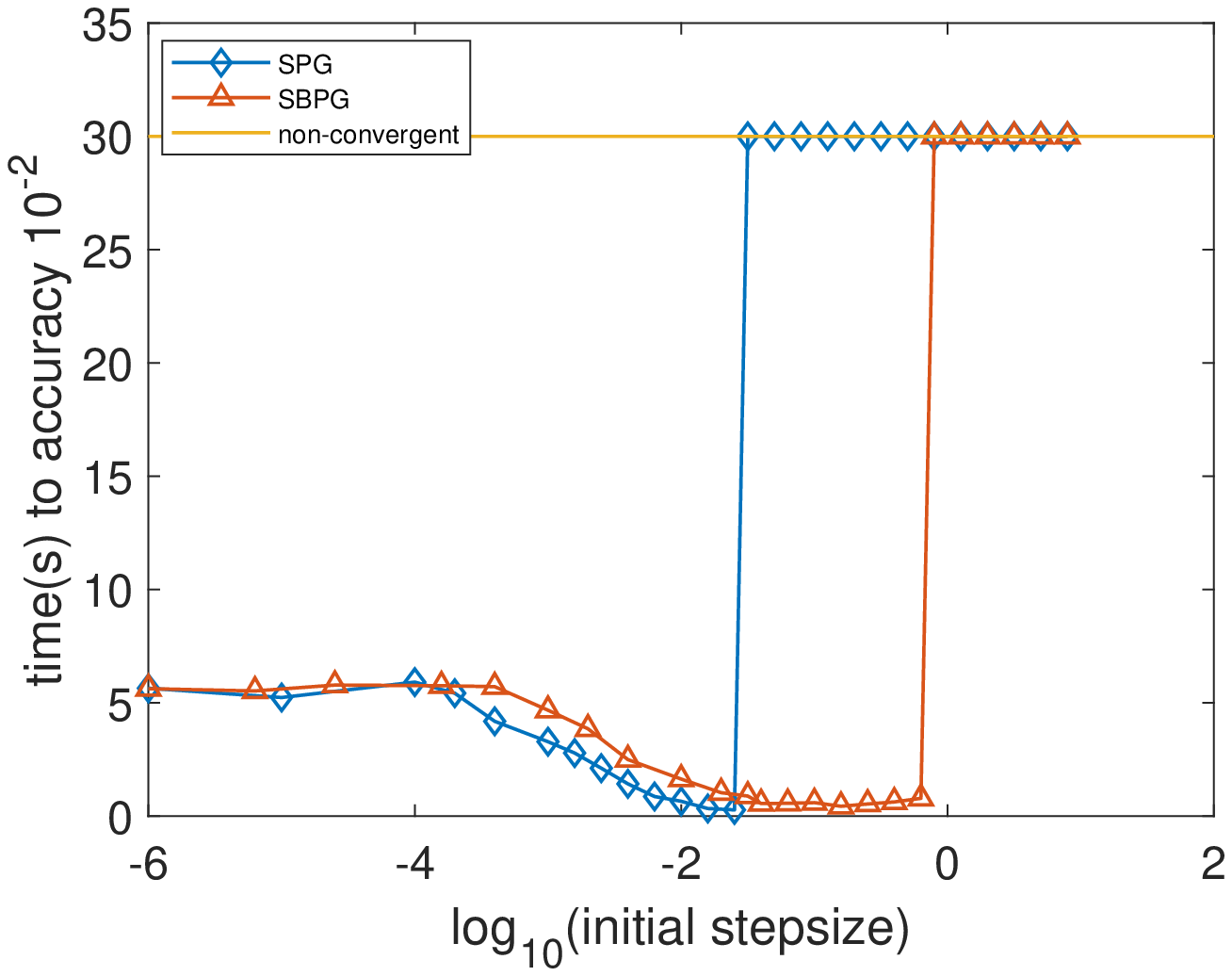}
}
\subfigure[Initial point robustness]{
\includegraphics[width=4.5cm]{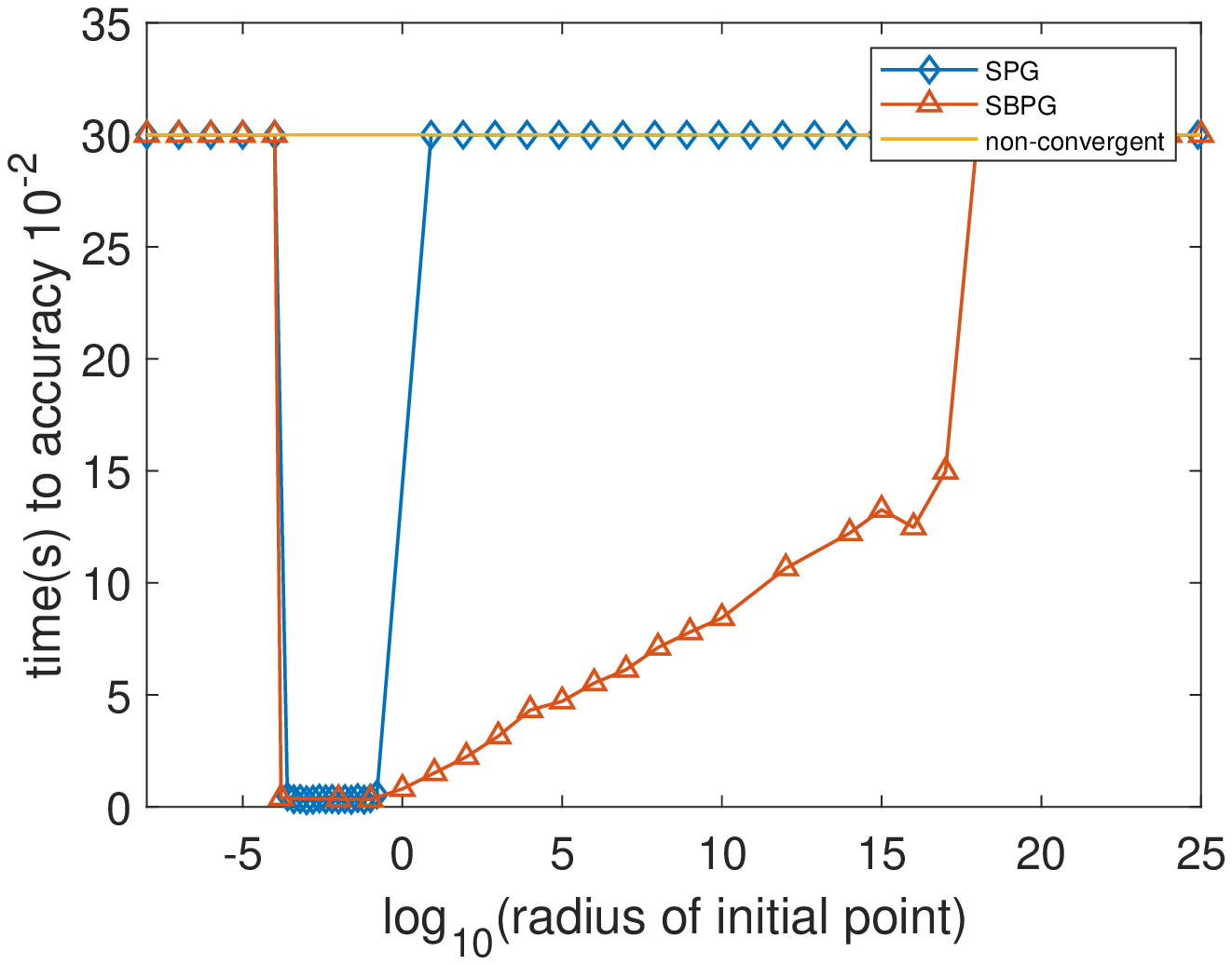}
}
\subfigure[Safe stepsize threshold]{
\includegraphics[width=4.5cm]{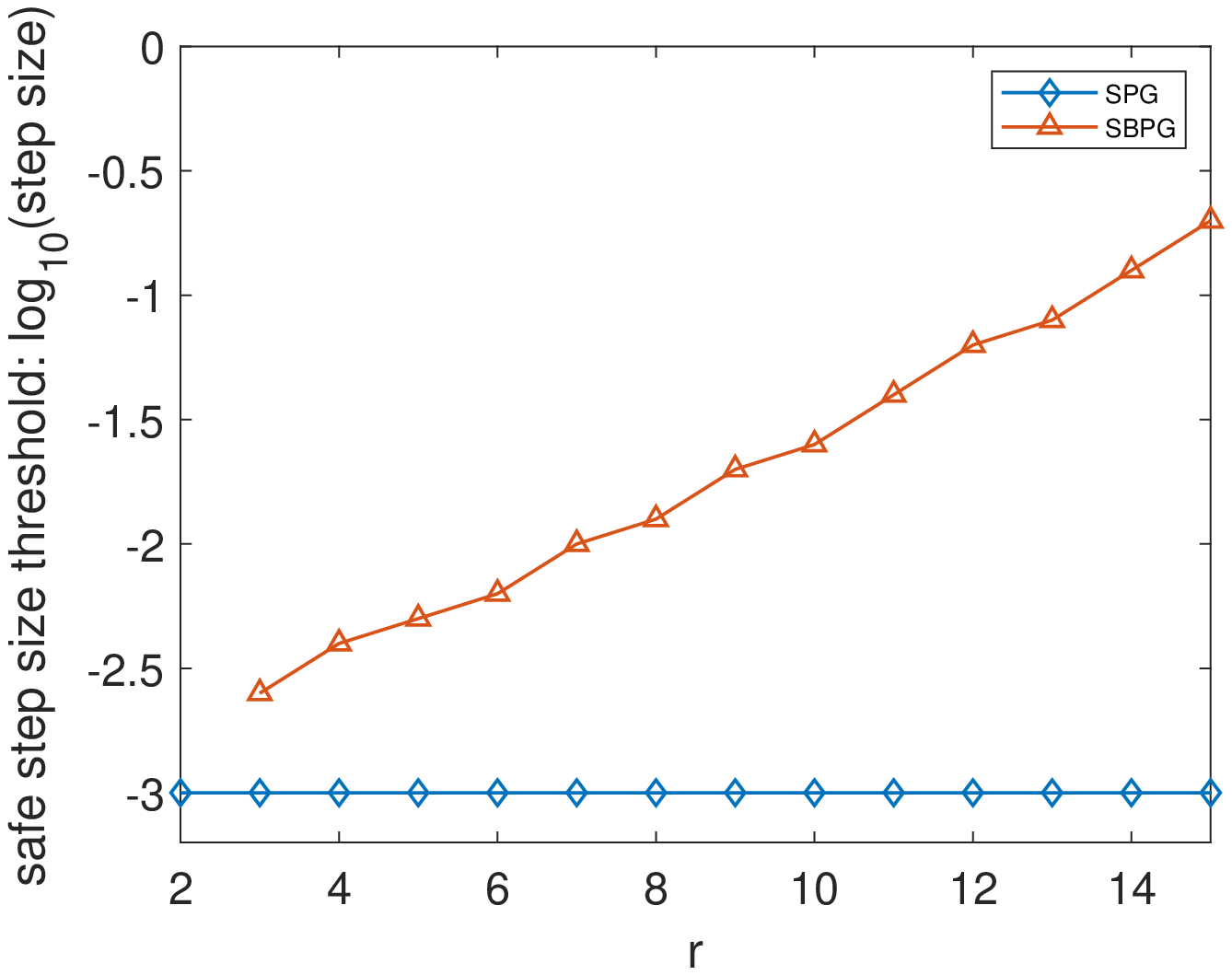}
}
\caption{Comparison of SBPG and SPG in terms of their robustness with respect to stepsize and initial point selction. A method is considered non-convergent if it fails to reach an accuracy of $\epsilon_k<10^{-2}$ within 30 seconds or if it collapses. Generally, choosing large stepsize and large radius for the initial point can cause an algorithm to collapse. The safe stepsize threshold is the maximum stepsize (constant schedule) that a method does not collapse. We run 10 tests for each algorithm and report the median of the results.}
\label{robust-figure}
\end{figure}

\subsection{Deep neural network} \label{DNN-experiment}

To evaluate MSBPG's performance in training deep neural networks, we consider a model with $L_2$ regularization  to enhance generalization:
\be 
\min_{\mW}\;\underbrace{\frac{1}{N}\sum_{i=1}^N\cL(\mathcal{DNN}(\mW,\vx_i),y_i)}_{F(\mW)}+\lambda_2\|\mW\|_2^2.
\label{exp-dnn-model}
\ee 
We employ MSBPG to solve this large-scale problem. {Following AdamW \citep{loshchilov2017decoupled}, we employ the strategy of decoupled weight decay. We use MSBPG to solve this large-scale optimization problem. Following the AdamW approach \citep{loshchilov2017decoupled}, we implement a decoupled weight decay strategy. Specifically, we compute the stochastic gradient only for the loss function $F$, treating $F$ and $L_2$ regularization separately. After performing the Bregman proximal mapping based on the stochastic gradient, we apply a weight decay step. }

The detailed algorithm is summarized in Algorithm \ref{dl-alg-sbpg}. At iteration $k$, MSBPG  first uses automatic differentiation to compute the mini-batch gradient $\widetilde{\nabla}_k$ of $F$. Then, it maintains a bias-corrected gradient estimator $\bar{\vv}^k$ \citep{kingma2014adam} and uses it to calculate the layerwise $\vp_i^k$. With $\vp_i^k$, MSBPG  solves a univariate equation to get $t_i^k$ and updates the weight of the $i$-th layer to $\mW_i^k$. In the end, MSBPG conducts decoupled weight decay.

\begin{algorithm}[ht] 
    \caption{MSBPG with $L_2$ regularization}
    \begin{algorithmic}[1]
        \State \textbf{Input: }Total number of training epochs $K$, momentum coefficient $\beta$, stepsize $\alpha_k$, weight decay coefficient $\lambda_2$, $\delta$ and $r$ to determine the kernel function $\phi$.
        \State \textbf{Initialize: } Set $\mW=\mW^0$, $\vv^0=\bf{0}$.
        \For {$k=1, \cdots, K$}
        \State Compute mini-batch gradient ${\widetilde{\nabla}_k}$;
        \State $\vv^k = \beta\vv^{k-1} + (1-\beta) {\widetilde{\nabla}_k}$, $\bar{\vv}^k = \vv^k / (1 - \beta^k)$;
       \For {$i=1, \cdots, L$} 
        \State $\vp_i^k = \alpha_k \bar{\vv}_i^k - \nabla\phi(\mW^{k-1}_i)$;
        \State Solve $(\delta \|\vp^k_i\|^{r-2})t_i^{r-1} + t_i -1 =0$ to get $t_i^k$;
        \State $\widetilde{\mW}_i^{k} = - t_i^k \vp_i^k$;
        \EndFor
        \State $\mW^k = \widetilde{\mW}^k - \alpha_k \lambda_2 \mW^{k-1}$;
        \EndFor
        \State {\bf Output: $\mW^1, \cdots, \mW^K$}
    \end{algorithmic}\label{dl-alg-sbpg}
\end{algorithm}

{We conducted experiments on several representative benchmarks, including VGG16~\citep{simonyan2014very}, ResNet34~\citep{he2016deep} on CIFAR10 dataset \citep{krizhevsky2009learning}, ResNet34~\citep{he2016deep}, DenseNet121~\citep{huang2017densely} on CIFAR100 dataset \citep{krizhevsky2009learning},
and LSTMs~\citep{schmidhuber1997long} on the Penn Treebank dataset~\citep{marcinkiewicz1994building}. } We compare MSBPG  with the most popular optimization algorithms used for training neural networks, including SGD \citep{sutskever2013importance}, Adam \citep{kingma2014adam}, and AdamW \citep{loshchilov2017decoupled}. Experimental results show that MSBPG has good convergence performance and generalization capacity for both the task that SGD dominates (image classification with CNNs) and the task that Adam dominates (language modeling with LSTMs). We also conducted experiments to compare MSBPG  with SGD on different initial stepsizes and different scales of the initial point. Our experimental results demonstrate the robustness of MSBPG  in training neural networks.

{
To further evaluate the performance of MSBPG on more recent neural network architectures, we conducted additional experiments, with the results presented in Appendix \ref{appendix:additional exp}.}

Before getting into the details of our experiments, we first clarify the activation function. The commonly used ReLU activation function in VGG, ResNet, and DenseNet is defined as $\operatorname{ReLU}(x) = \max(0, x)$, which is not continuously differentiable. To address this, we design a smooth approximation of ReLU with a parameter $\epsilon$, which is twice continuously differentiable and satisfies our Assumption \ref{act-func}:
\[
\sigma_\epsilon(x) = \left\{
\begin{array}{cc}
     0 & x \le 0 \\
     x^3\left(\frac{1}{\epsilon^2}-\frac{x}{2\epsilon^3}\right) & 0 < x \le \epsilon \\
     x - \frac{\epsilon}{2} & x > \epsilon.
\end{array}\right.
\]
The gradient of this activation function is given by:
\[
\sigma_\epsilon'(x) = \left\{
\begin{array}{cc}
     0 & x \le 0 \\
     x^2(\frac{3}{\epsilon^2}-\frac{2x}{\epsilon^3}) & 0 < x \le \epsilon \\
     1 & x > \epsilon.
\end{array}\right.
\]
As $\epsilon$ tends to 0, this smooth activation function converges to the standard ReLU function. We conducted experiments using VGG16 on the CIFAR-10 dataset, replacing all activation functions in VGG16 with $\sigma_\epsilon$. As shown in Figure \ref{act-func}, our algorithm MSBPG 's performance does not degrade as $\epsilon$ tends to 0. Therefore, in the subsequent experiments, we use the original neural network architectures with the ReLU activation function to evaluate our method MSBPG.

\begin{figure*}[th]
              \begin{center}
                     \setlength{\tabcolsep}{0.0pt}  
                     \scalebox{1}{\begin{tabular}{cccc}
                                   \includegraphics[width=0.25\linewidth]{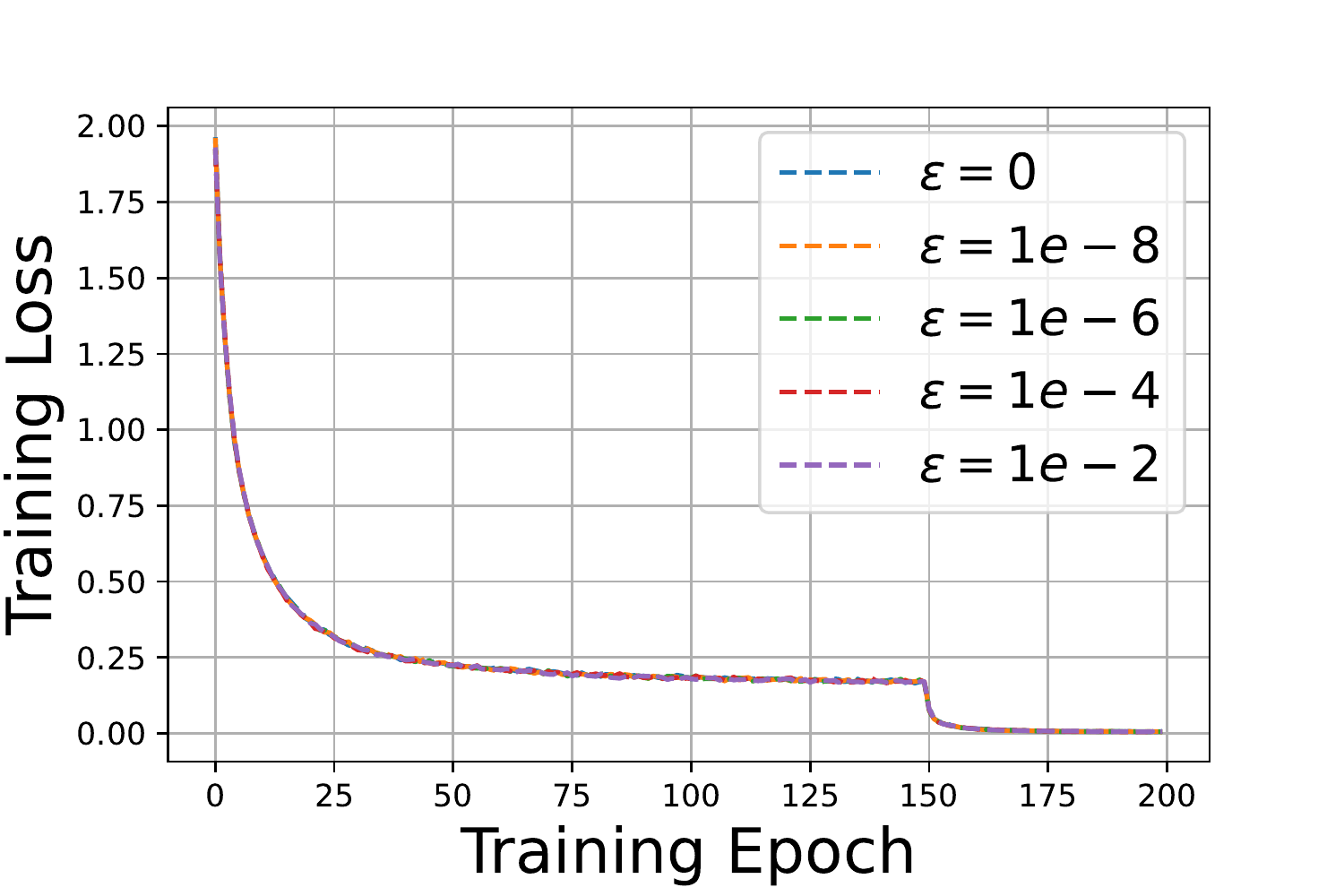}&
                                   \includegraphics[width=0.25\linewidth]{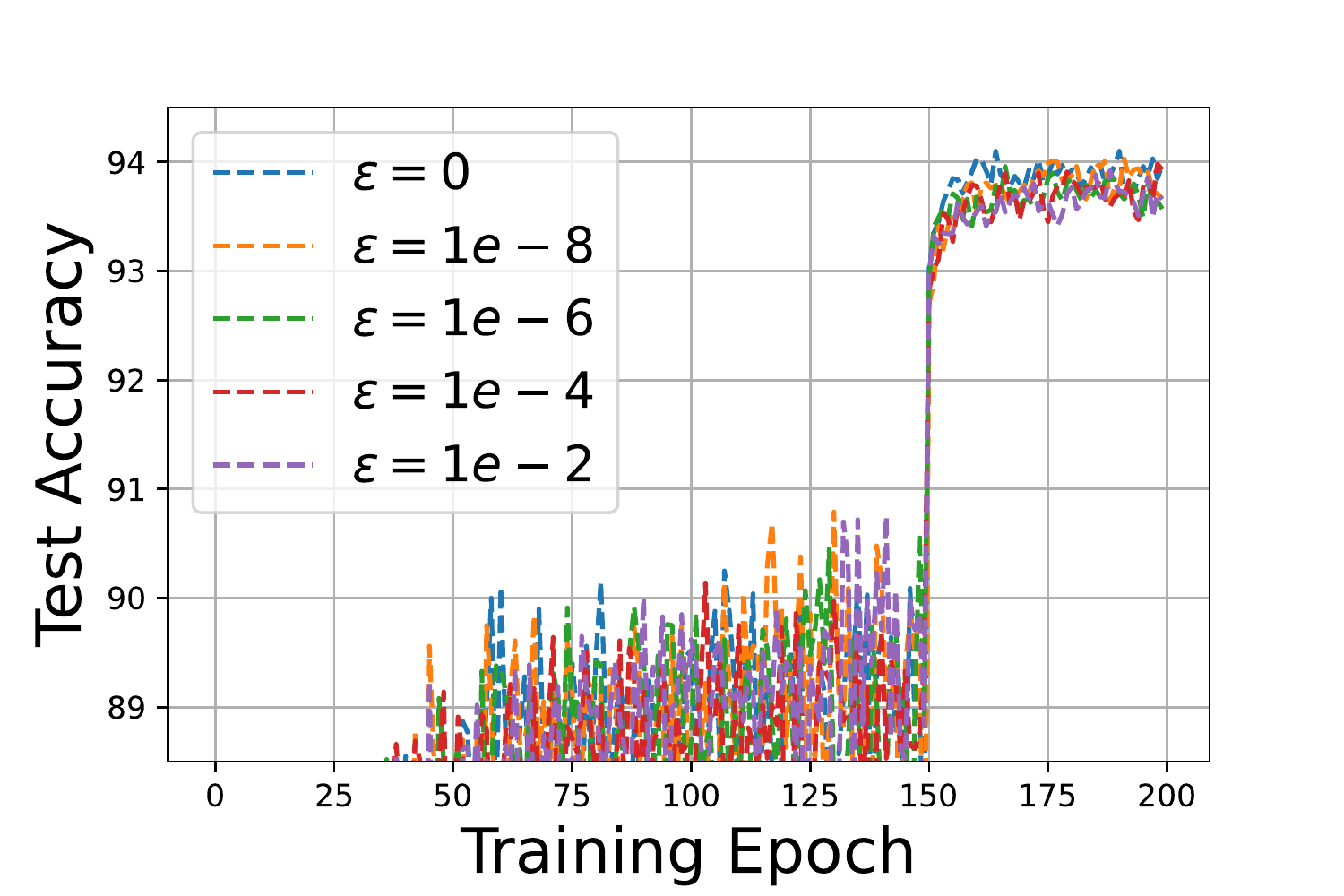}&
                                   \includegraphics[width=0.25\linewidth]{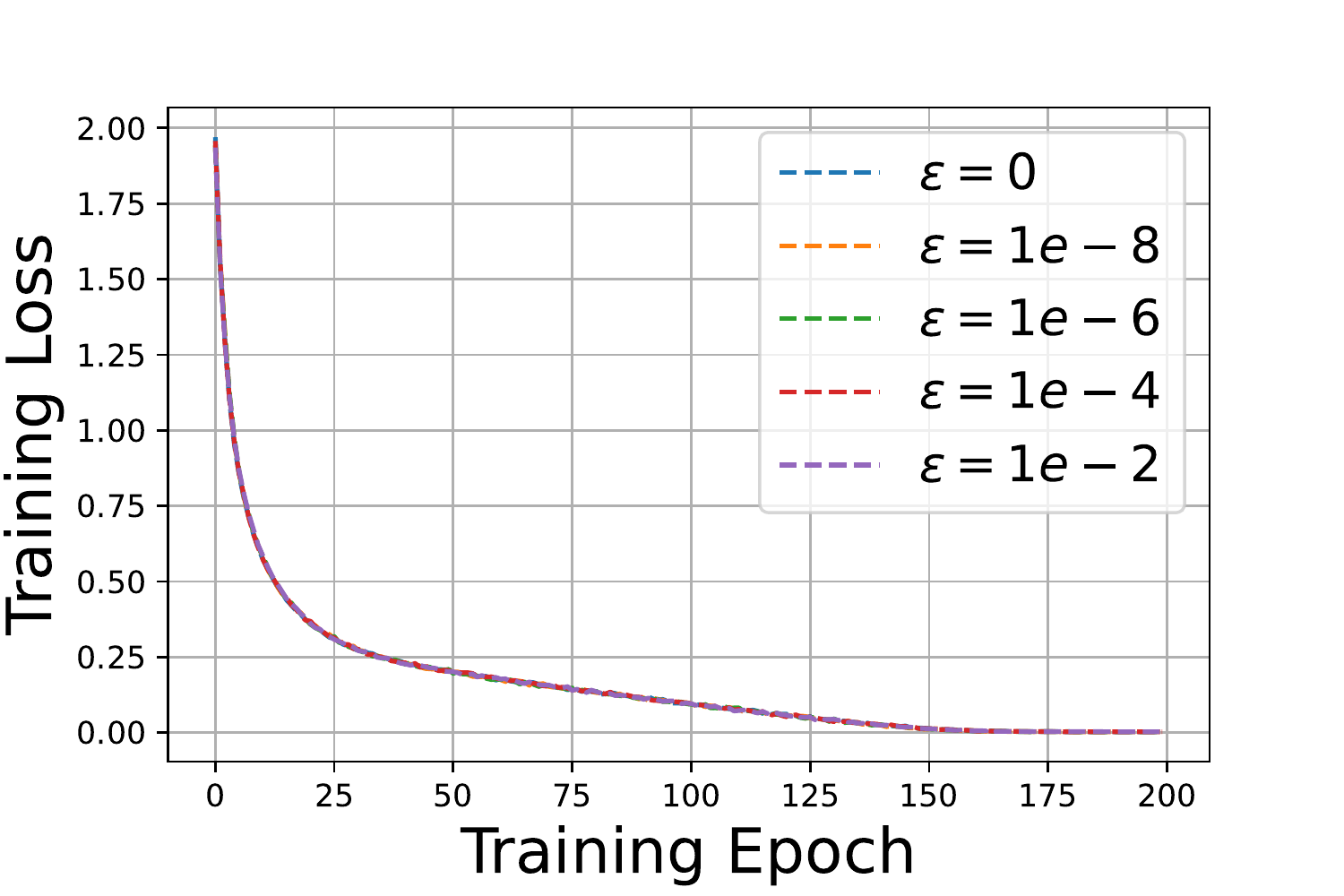} &
                                   \includegraphics[width=0.25\linewidth]{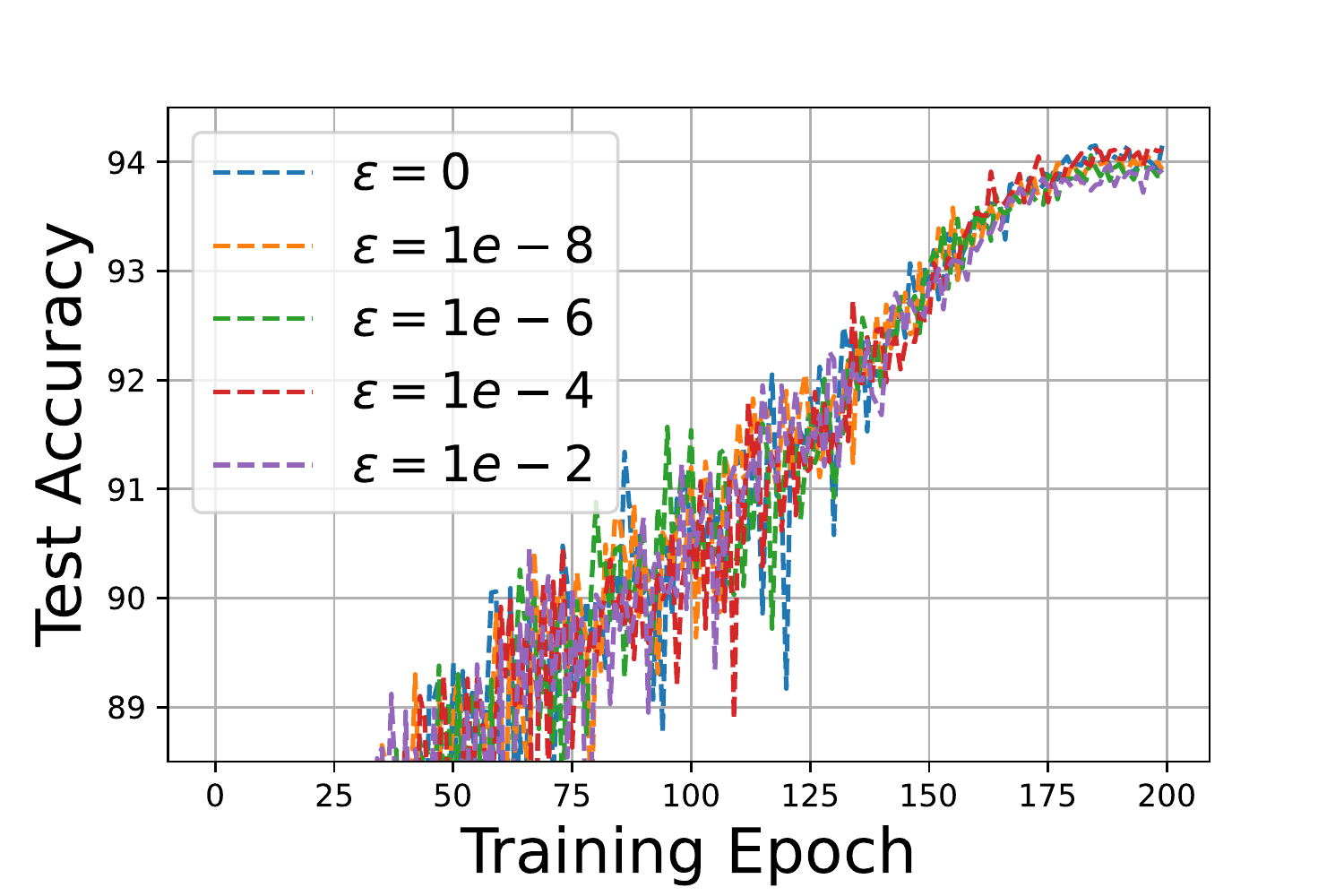}\\
                                   \multicolumn{2}{c}{\footnotesize{(a) Step decay.}} &  \multicolumn{2}{c}{\footnotesize{(b) Cosine annealing.}}\\
                     \end{tabular}}
              \end{center}
       \vspace{-0.8em}
              \caption{Training loss and test accuracy (\%) of VGG16 on CIFAR10 dataset under two frequently used training settings. Here the activation function of VGG16 adopts smoothed ReLU activation function $\sigma_\epsilon$ with different choices of $\epsilon$ ($\epsilon=0$ denotes adopting the original ReLU activation function).} \label{fig:act-func}
\end{figure*}

\paragraph{CNNs on image classification} We conducted experiments with VGG16 and ResNet34 on the CIFAR-10 dataset, and ResNet34 and DenseNet121 on the CIFAR-100 dataset. SGD usually has better generalization performance than adaptive gradient algorithms such as Adam and AdamW when training CNNs on image classification tasks. For our experiments, we utilized two common training strategies: reducing the stepsize to 10\% of its original value near the end of training \citep{zhuang2020adabelief, chen2021closing, luo2019adaptive}, and using a cosine annealing schedule for stepsizes \citep{loshchilov2016sgdr, loshchilov2017decoupled}. These strategies are designed to accelerate convergence and allow for a fair comparison of the generalization capacities of different optimizers. We use the default training hyperparameters of SGD, Adam, and AdamW in these settings \citep{he2016deep,zhuang2020adabelief,chen2021closing}, and set MSBPG 's learning rate (initial stepsize) as 0.1, momentum coefficient $\beta$ as 0.9, weight decay coefficient $\lambda_2$ as $1\times 10^{-3}$. For the layerwise kernel function $\phi_i({\mW}_i)=\frac{1}{2}\|{\mW}_i\|^2+\frac{\delta}{r}\|{\mW}_i\|^{r}$, we set $r=4,\ \delta=1\times 10^{-2}$ for VGG16 and $r=6,\ \delta=1 \times 10^{-3}$ for ResNet34 on CIFAR10 dataset, and $r=4,\ \delta=1\times 10^{-2}$ for ResNet34 and $r=4,\ \delta=1 \times 10^{-3}$ for DenseNet121 on CIFAR100 dataset. 

{Our convergence analysis demonstrates that MSBPG converges to a stationary point. As shown by \citep{lee2019first, panageas2019first}, the Bregman gradient method almost always converges to a local minimum for loss functions with the strict saddle property. When the neural network is highly overparameterized and exhibits a benign nonconvexity in the search region, stochastic first-order methods tend to find the global minimum, where the loss function value is 0.}

{From the experimental results in Figure \ref{fig:cnn1}, \ref{fig:cnn3}, \ref{fig:cnn2}, \ref{fig:cnn4}, we can observe that MSBPG attains almost zero training loss in all the training settings. This implies that our method can find the global minimum in these instances. Furthermore, MSBPG  consistently achieves the highest test accuracy for all experimental settings and attains at least 0.5\% test accuracy improvement compared with the second-best optimization algorithm. This generalization advantage of MSBPG can be attributed to the Bregman proximity model we adopt.  In Appendix B, we further discuss SBPG's performance from the perspective of algorithmic stability, which can influence the generalization gap. We demonstrate that the high-order polynomial kernel used helps reduce the generalization gap bound in high-dimensional scenarios, which partially explains the good generalization performance of our proposed methods.}

\begin{figure*}[th]
              \begin{center}
                     \setlength{\tabcolsep}{0.0pt}  
                     \scalebox{1}{\begin{tabular}{cccc}
                                   \includegraphics[width=0.25\linewidth]{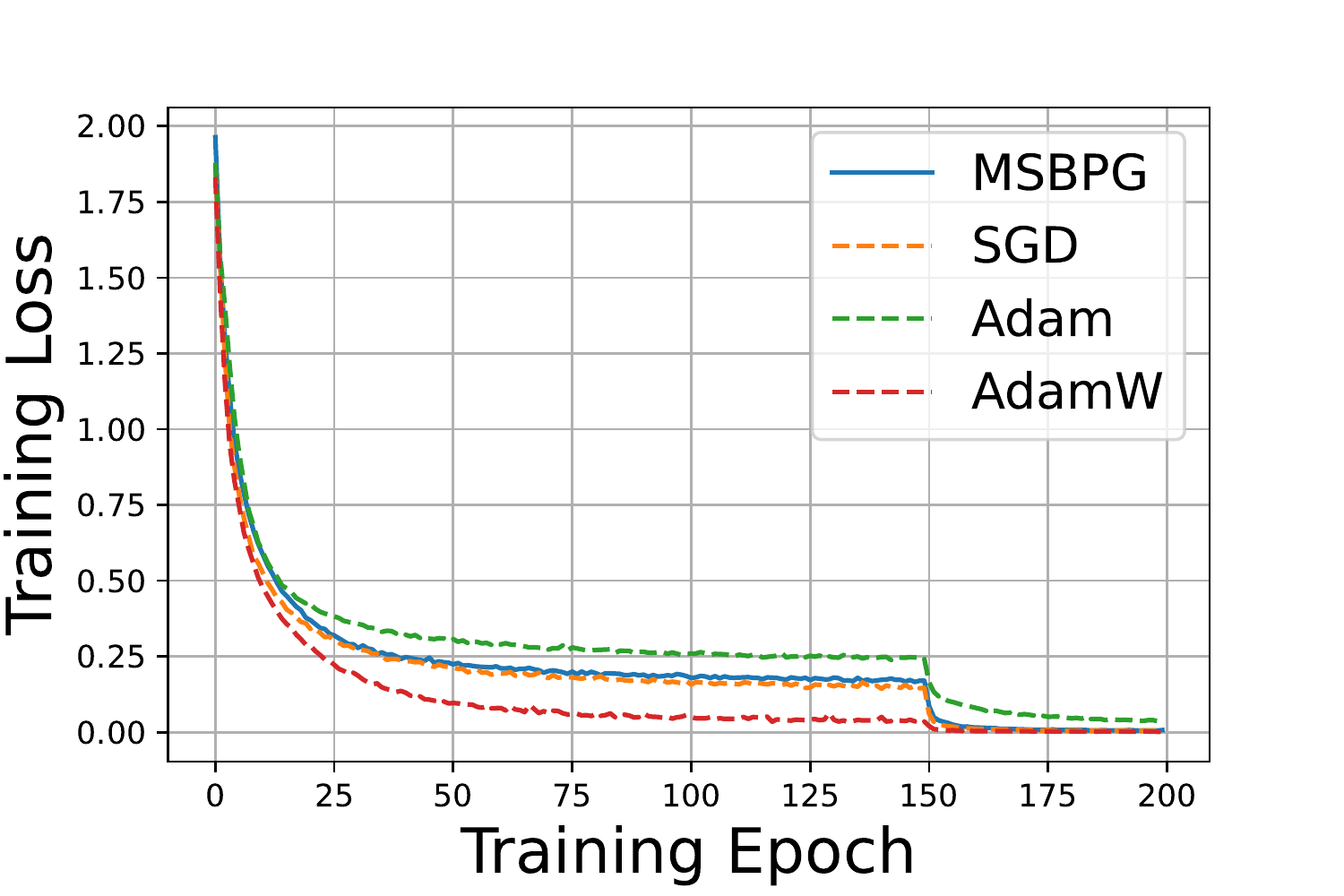}&
                                   \includegraphics[width=0.25\linewidth]{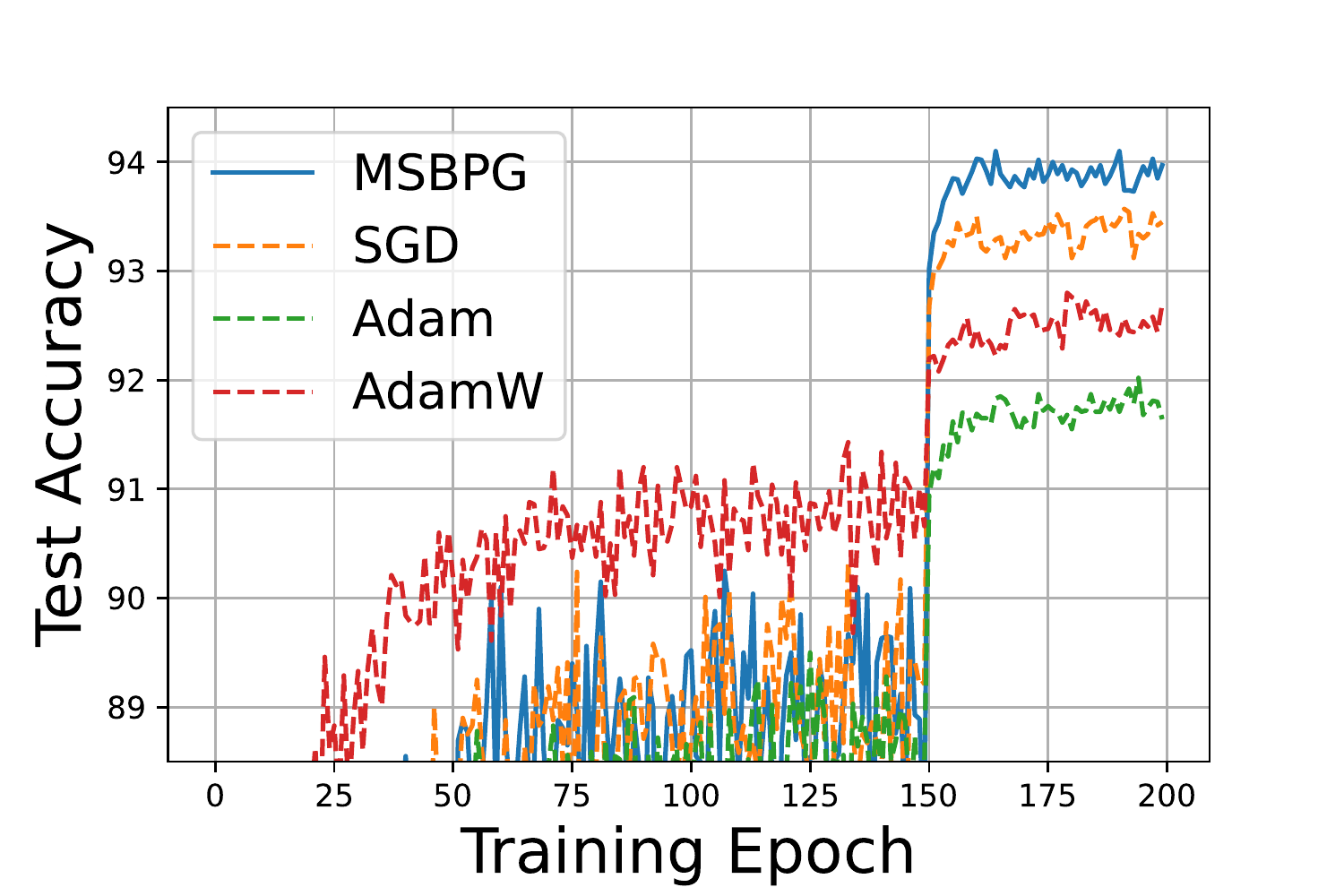}&
                                   \includegraphics[width=0.25\linewidth]{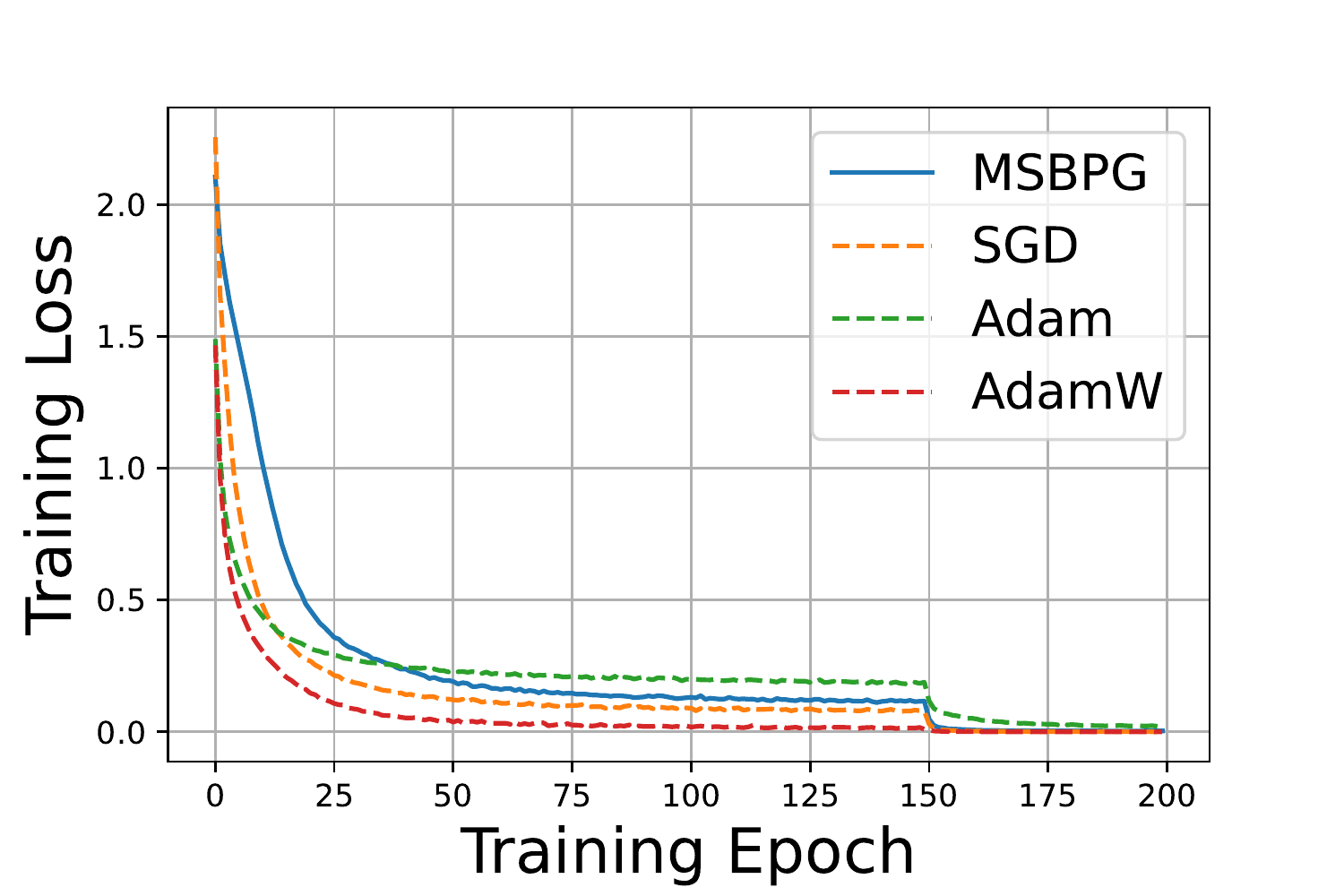} &
                                   \includegraphics[width=0.25\linewidth]{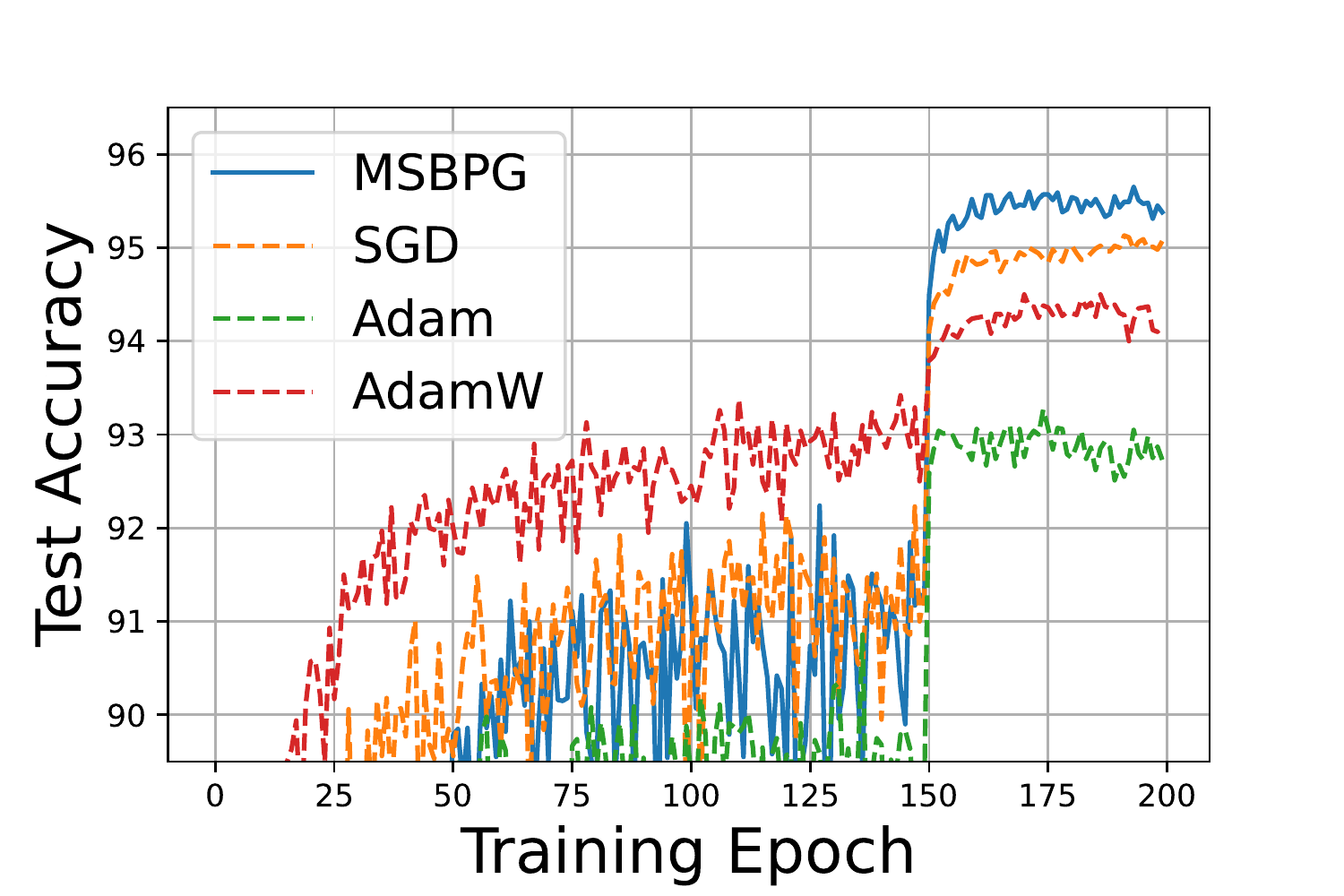}\\
                                          \multicolumn{2}{c}{\footnotesize{(a) VGG16 on CIFAR10}} &  \multicolumn{2}{c}{\footnotesize{(b) ResNet34 on CIFAR10}}\\
                     \end{tabular}}
              \end{center}
       \vspace{-0.8em}
              \caption{Training loss and test accuracy (\%) of CNNs on CIFAR10 dataset with learning rate reduced to 0.1 times of the original value at the 150th epoch.} \label{fig:cnn1}
       \end{figure*}

 \begin{figure*}[th]
              \begin{center}
                     \setlength{\tabcolsep}{0.0pt}  
                     \scalebox{1}{\begin{tabular}{cccc}
                                   \includegraphics[width=0.25\linewidth]{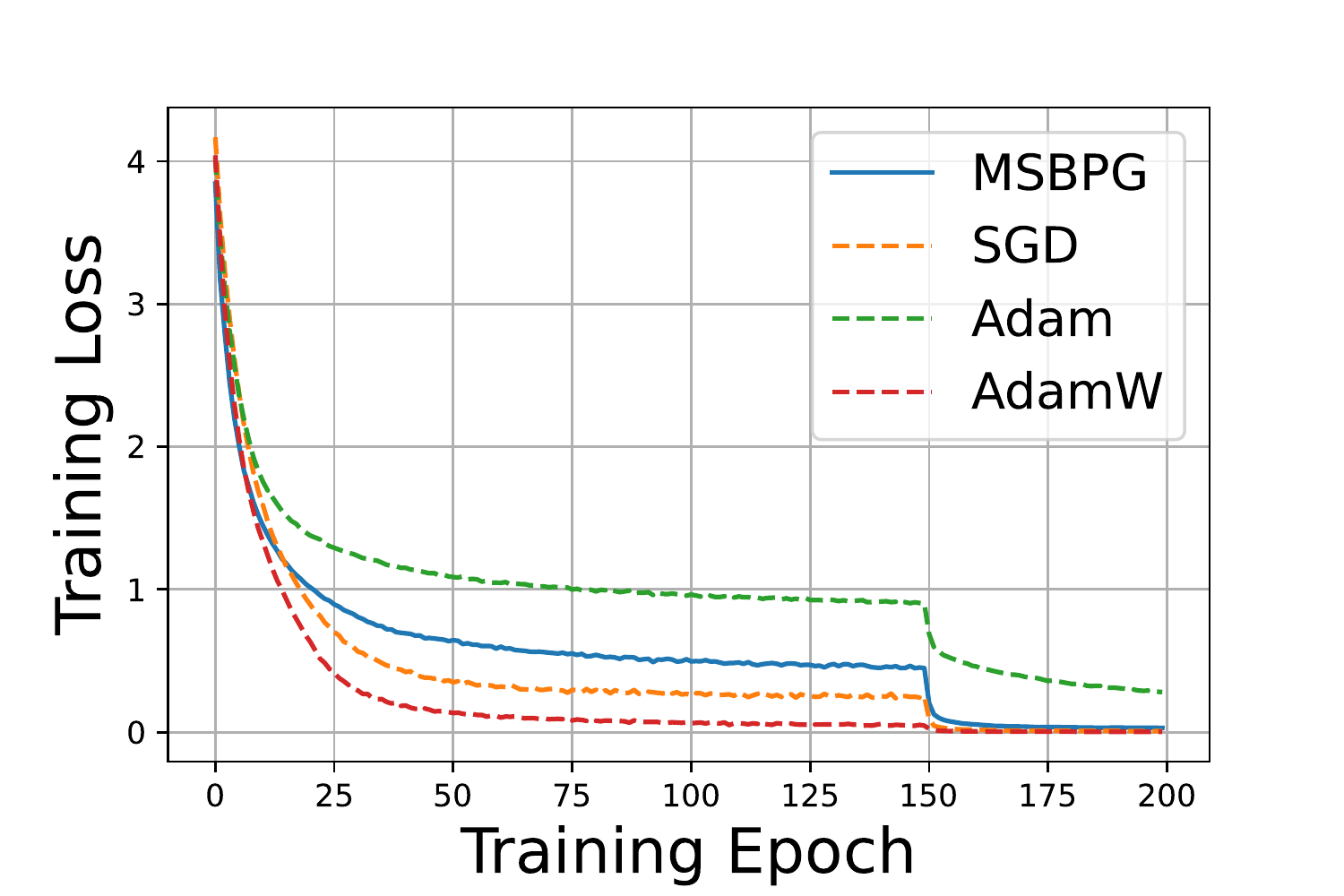}&
                                   \includegraphics[width=0.25\linewidth]{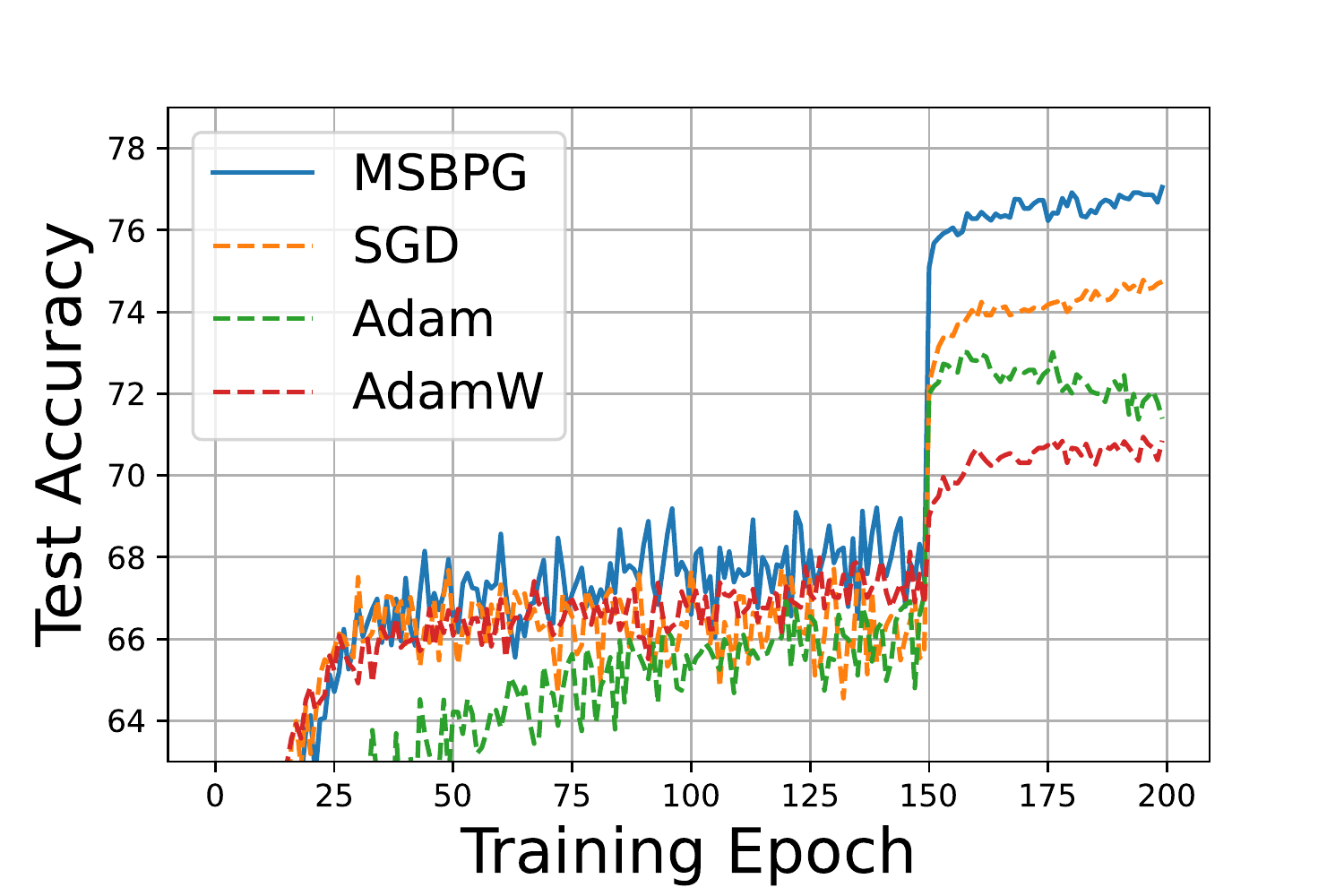}&
                                   \includegraphics[width=0.25\linewidth]{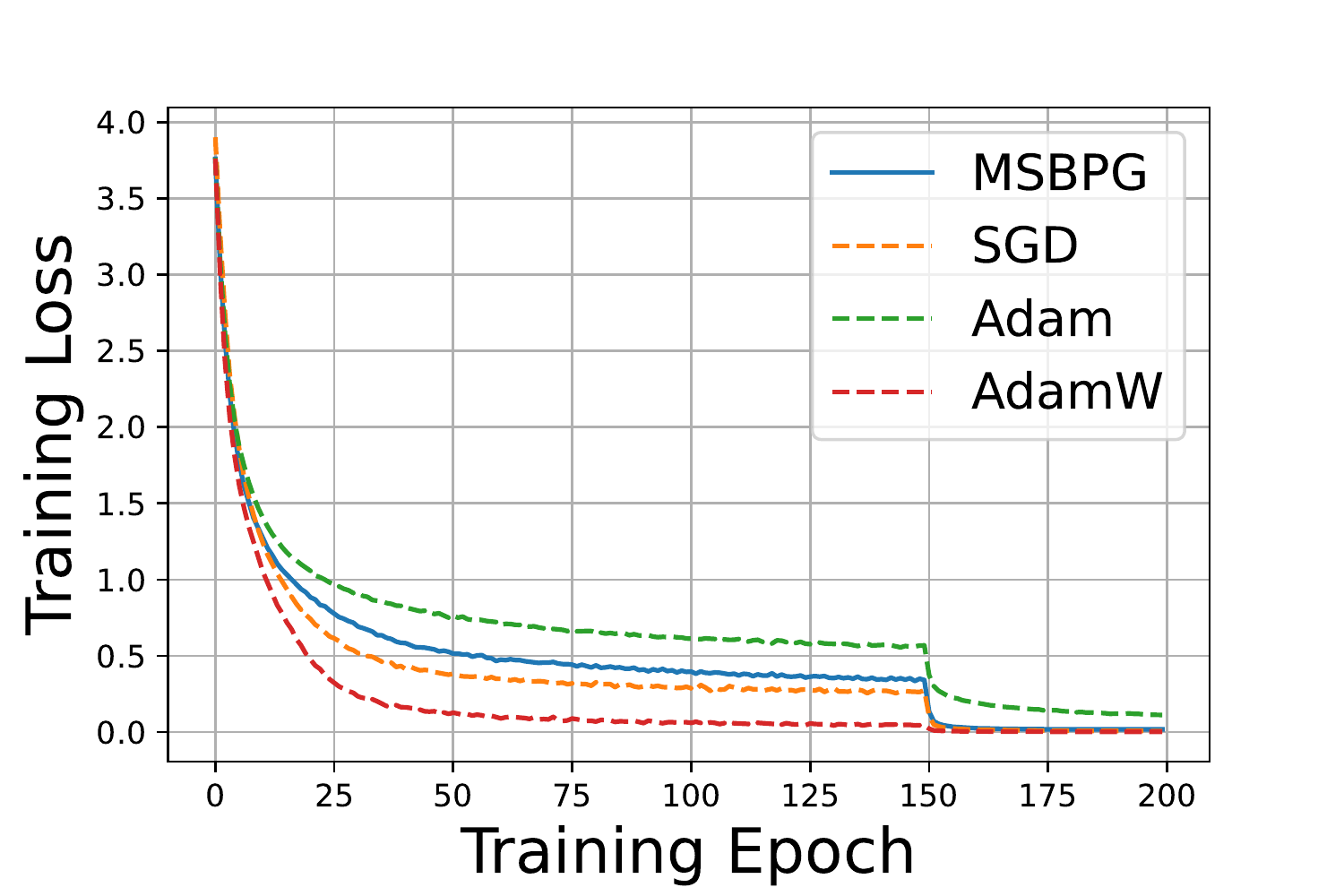} &
                                   \includegraphics[width=0.25\linewidth]{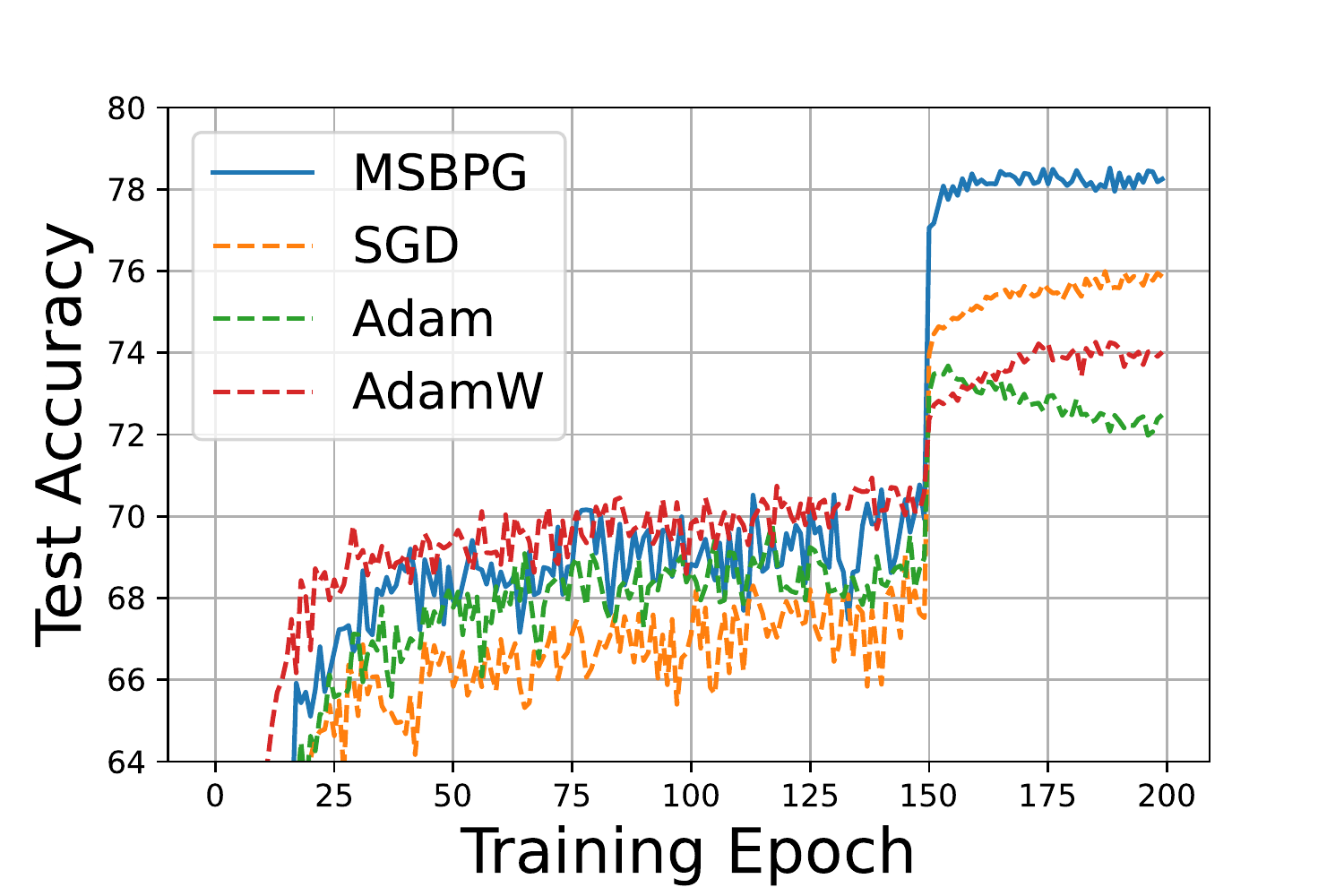}\\
                                          \multicolumn{2}{c}{\footnotesize{(a) ResNet34 on CIFAR100}} &  \multicolumn{2}{c}{\footnotesize{(b) DenseNet121 on CIFAR100}}\\
                     \end{tabular}}
              \end{center}
       \vspace{-0.8em}
              \caption{Training loss and test accuracy (\%) of CNNs on CIFAR100 dataset with learning rate reduced to 0.1 times of the original value at the 150th epoch.} \label{fig:cnn3}
       \end{figure*}

 \begin{figure*}[th]
              \begin{center}
                     \setlength{\tabcolsep}{0.0pt}  
                     \scalebox{1}{\begin{tabular}{cccc}
                                   \includegraphics[width=0.25\linewidth]{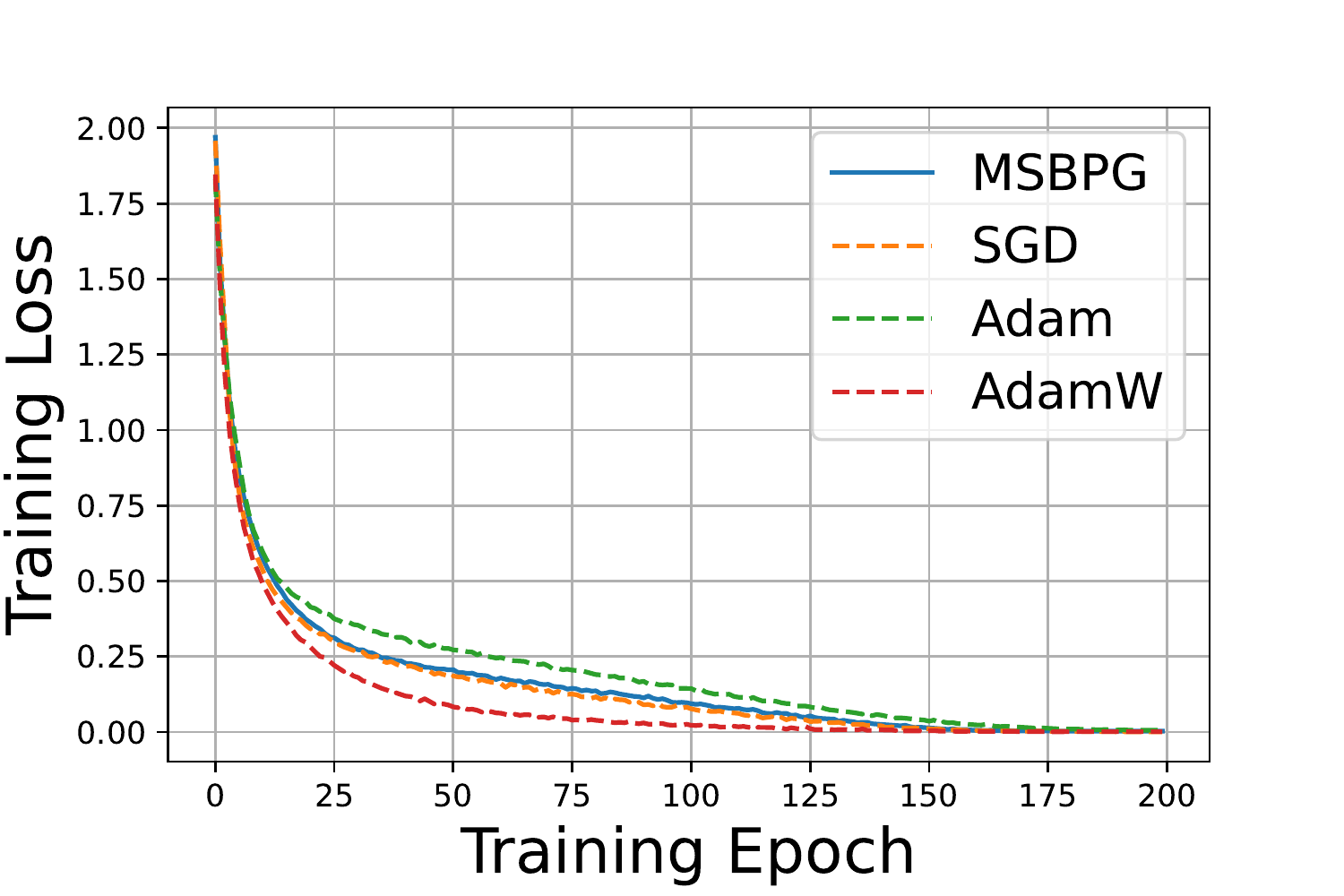}&
                                   \includegraphics[width=0.25\linewidth]{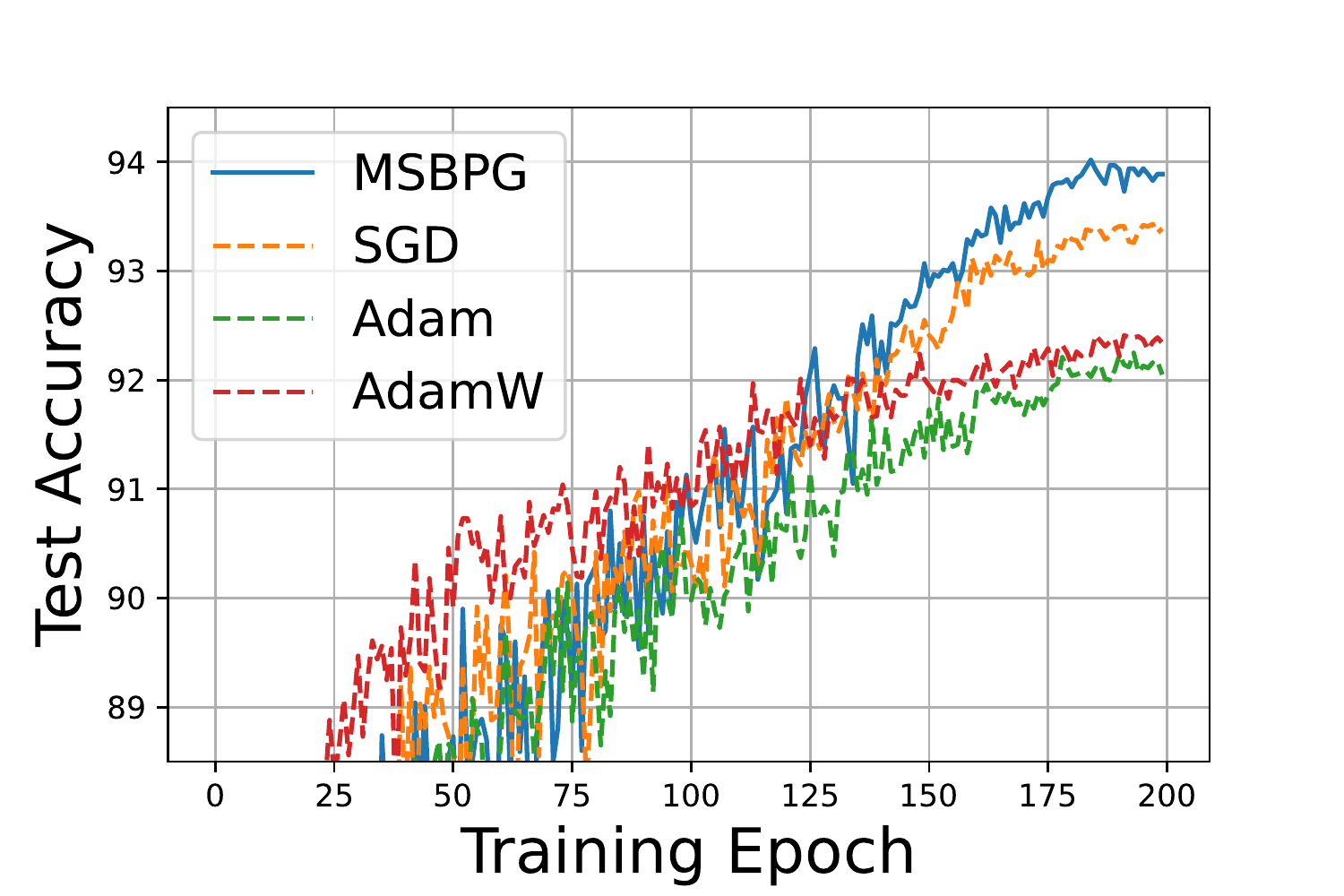}&
                                   \includegraphics[width=0.25\linewidth]{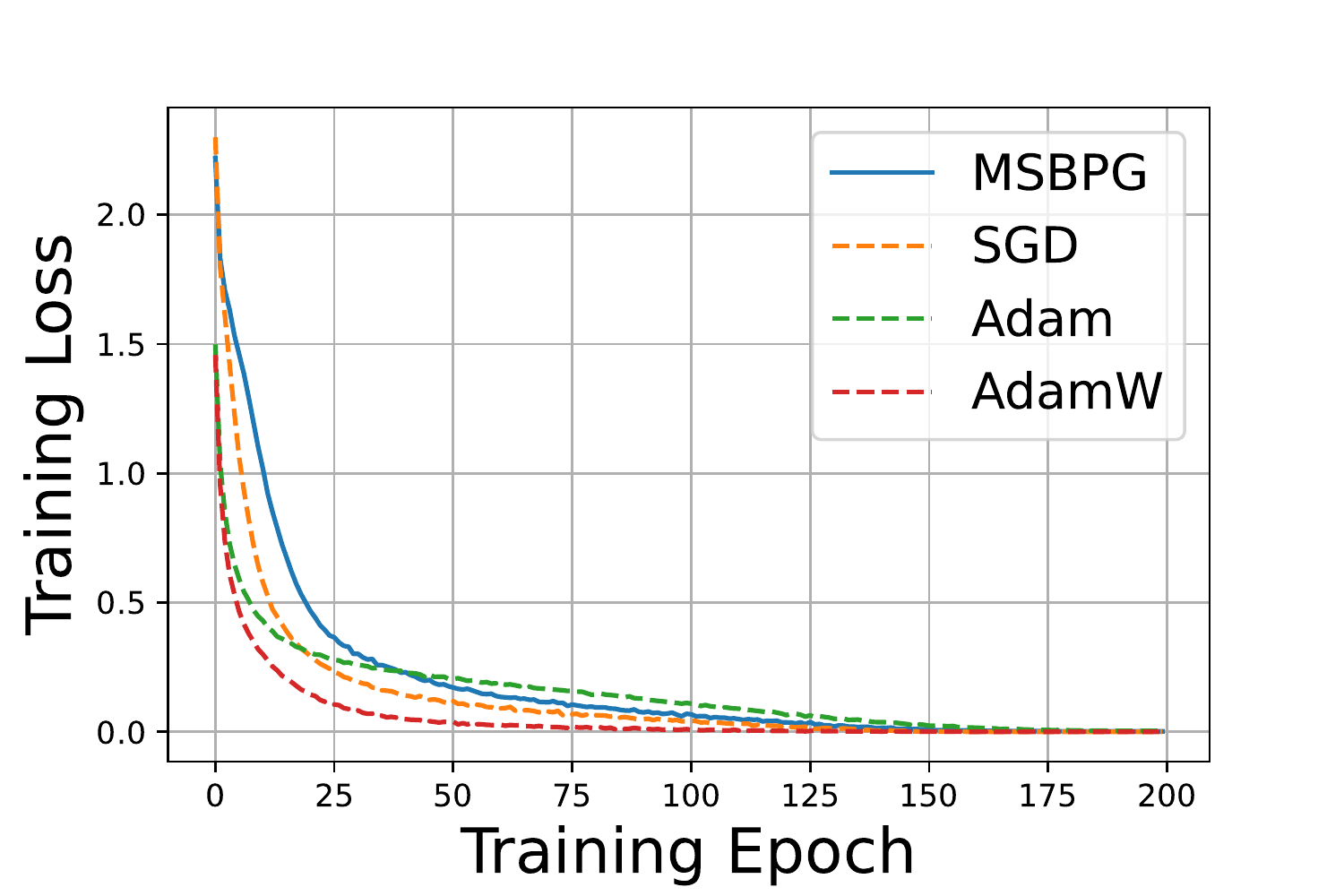} &
                                   \includegraphics[width=0.25\linewidth]{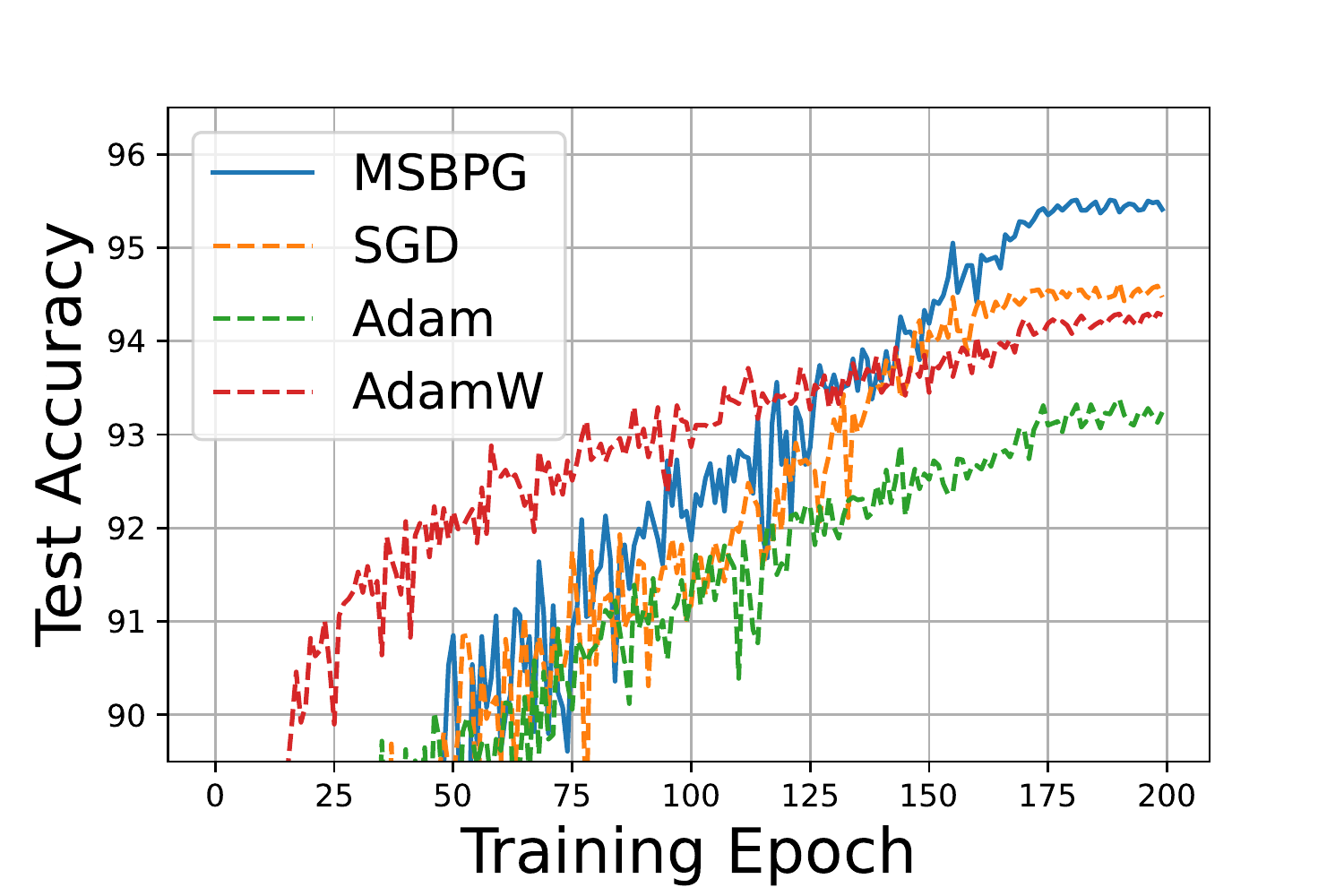}\\
                                          \multicolumn{2}{c}{\footnotesize{(a) VGG16 on CIFAR10}} &  \multicolumn{2}{c}{\footnotesize{(b) ResNet34 on CIFAR10}}\\
                     \end{tabular}}
              \end{center}
       \vspace{-0.8em}
              \caption{Training loss and test accuracy (\%) of CNNs on CIFAR10 dataset with learning rate using the cosine annealing schedule.} \label{fig:cnn2}
       \end{figure*}

 \begin{figure*}[th]
              \begin{center}
                     \setlength{\tabcolsep}{0.0pt}  
                     \scalebox{1}{\begin{tabular}{cccc}
                                   \includegraphics[width=0.25\linewidth]{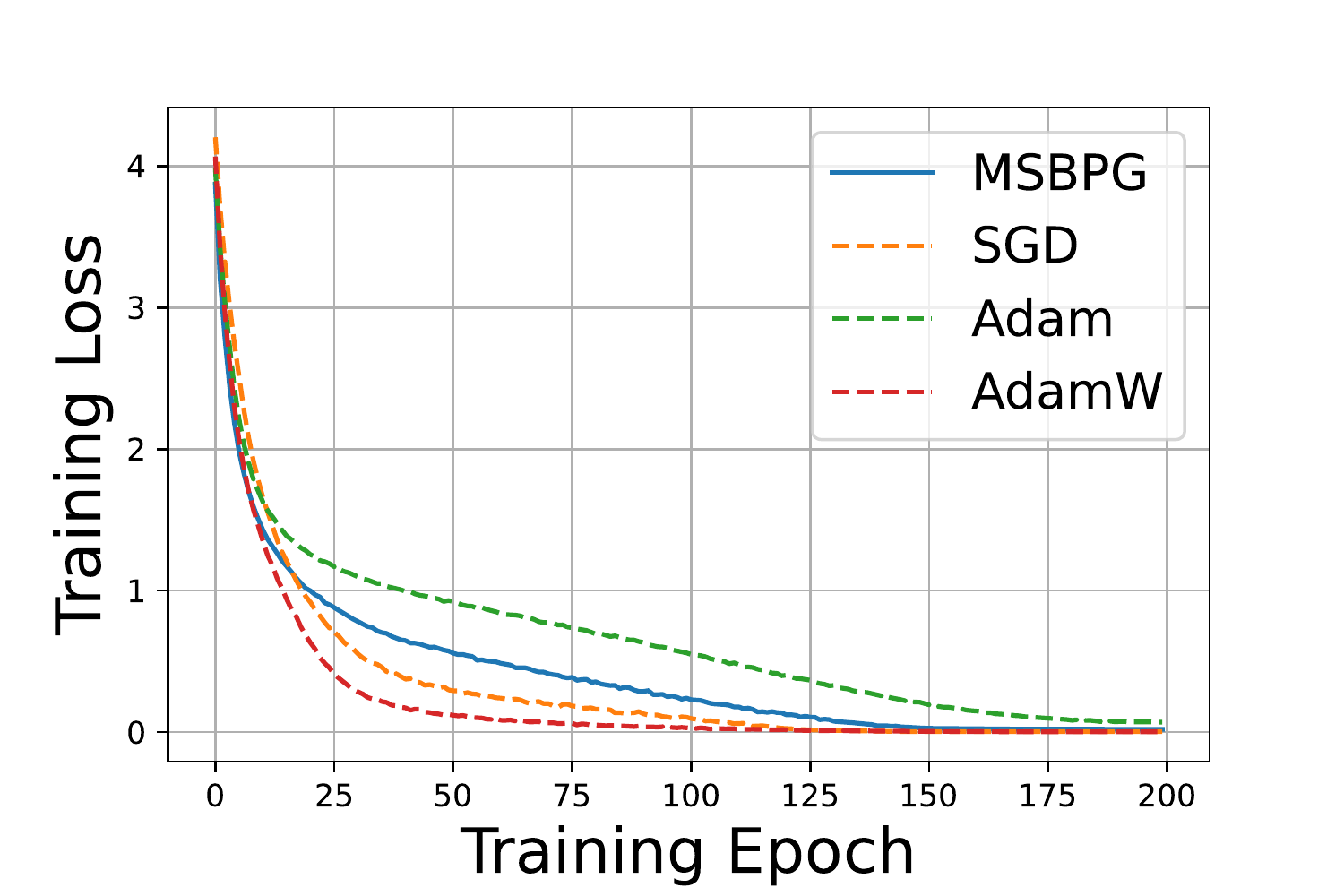}&
                                   \includegraphics[width=0.25\linewidth]{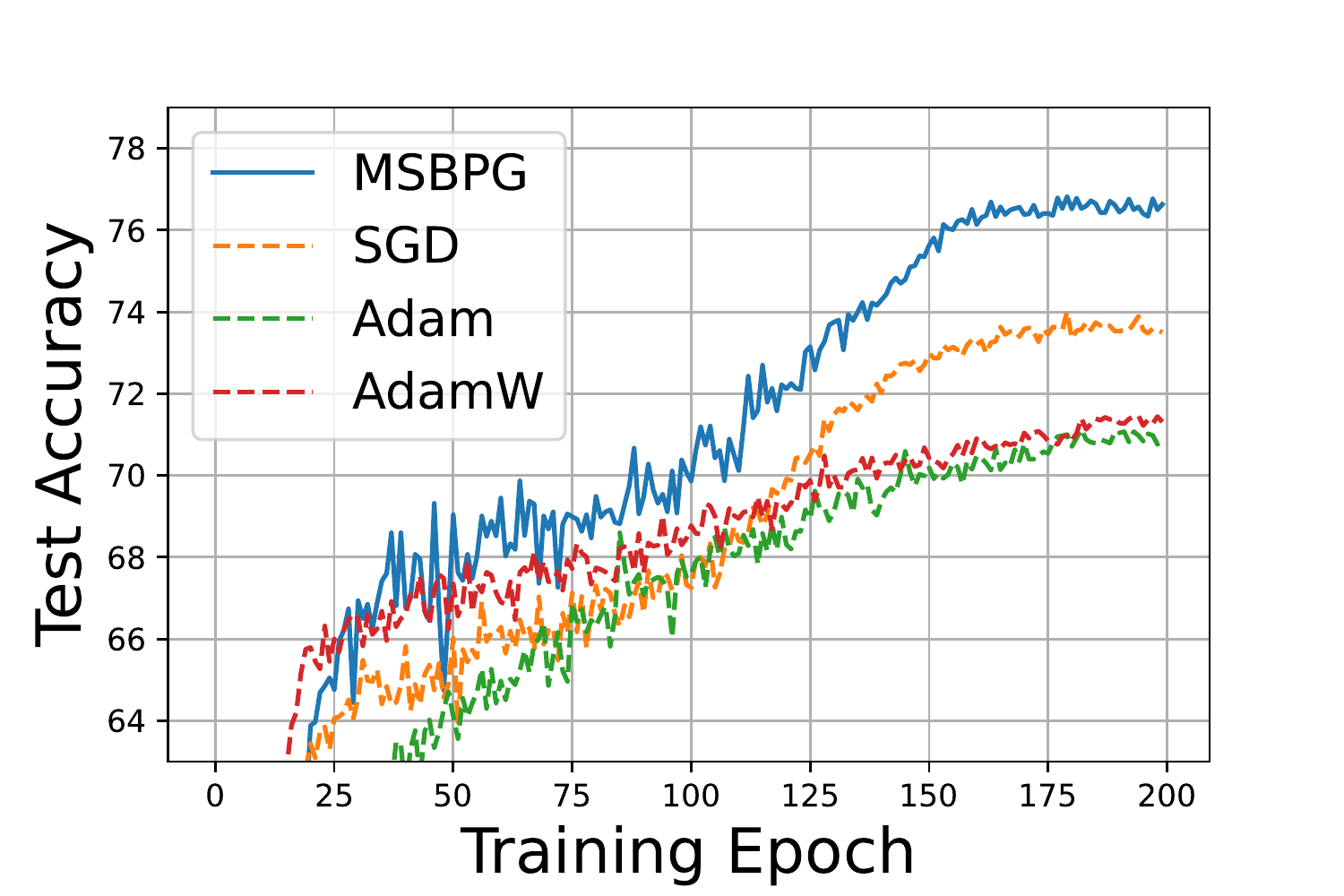}&
                                   \includegraphics[width=0.25\linewidth]{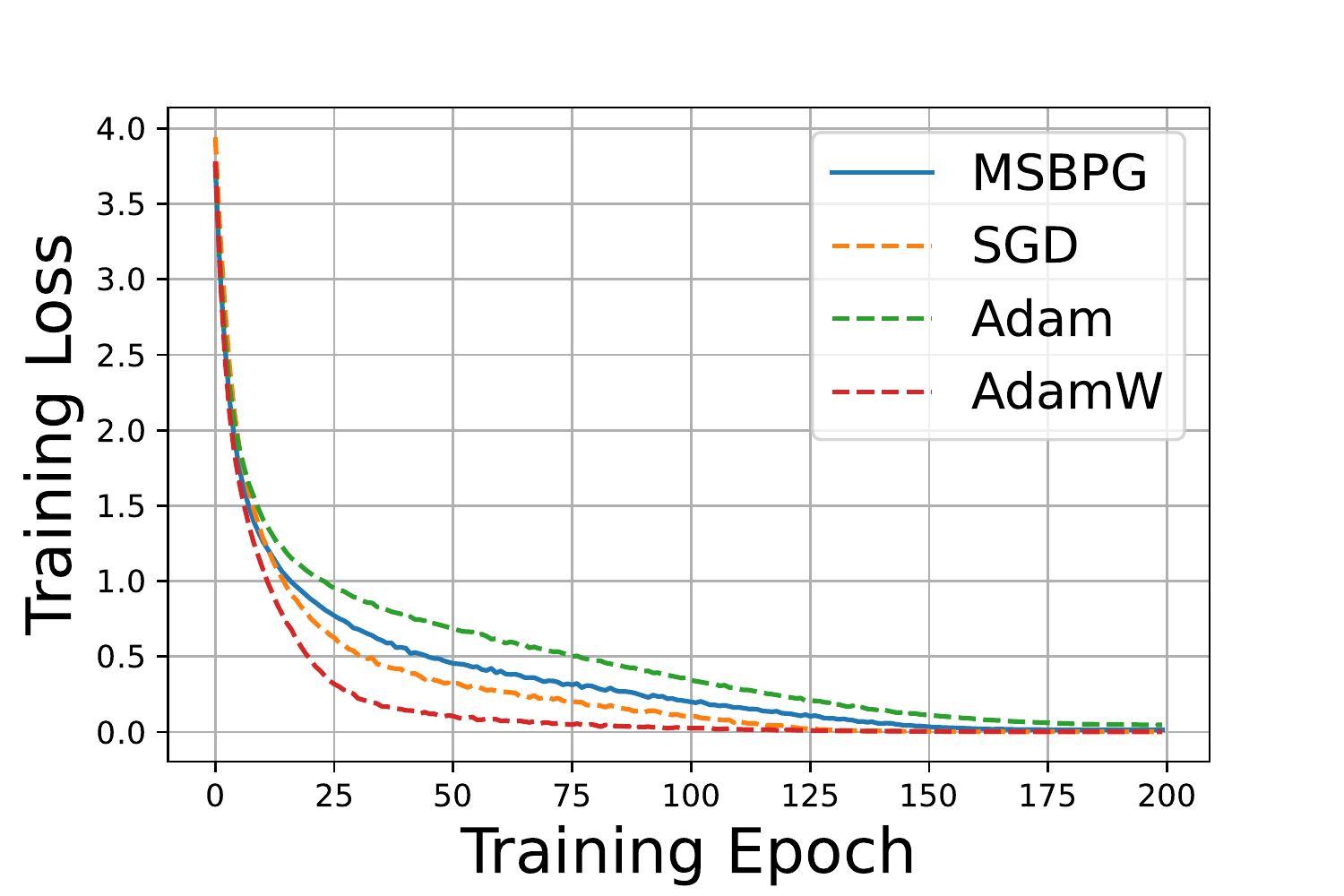} &
                                   \includegraphics[width=0.25\linewidth]{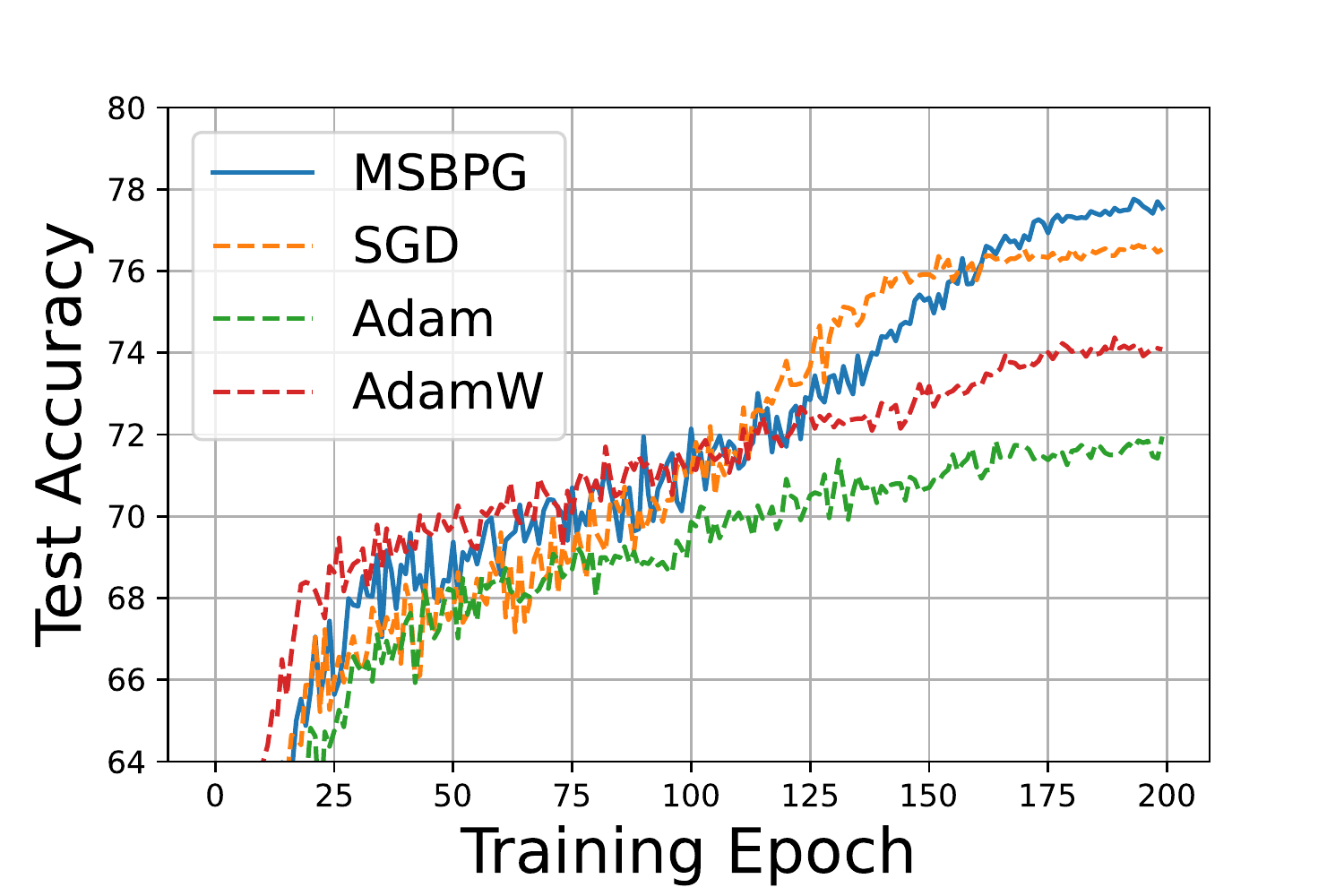}\\
                                          \multicolumn{2}{c}{\footnotesize{(a) ResNet34 on CIFAR100}} &  \multicolumn{2}{c}{\footnotesize{(b) DenseNet121 on CIFAR100}}\\
                     \end{tabular}}
              \end{center}
       \vspace{-0.8em}
              \caption{Training loss and test accuracy (\%) of CNNs on CIFAR100 dataset with learning rate using the cosine annealing schedule.} \label{fig:cnn4}
       \end{figure*}

\paragraph{LSTMs on language modeling}
To further evaluate the performance of MSBPG, we conducted experiments on LSTMs using the Penn Treebank dataset, reporting both training and test perplexity (lower is better). Adam is generally favored over SGD for language modeling tasks due to its better generalization capacity \citep{fu2016using, siami2019performance}, and thus it is the default optimization algorithm for training LSTMs. We followed the standard experimental setup for training LSTMs \citep{zhuang2020adabelief, chen2021closing}, where the learning rate is reduced to 10\%  of its original value twice (at the 75th and 150th epochs). Additionally, we experimented with the cosine annealing learning rate schedule \citep{loshchilov2016sgdr}, which is commonly used in practice. For the training hyperparameters, we used the default settings for SGD, Adam, and AdamW when training 1-, 2-, and 3-layer LSTMs \citep{zhuang2020adabelief, chen2021closing}. For MSBPG, we set the learning rate to 25, 80, and 80 for 1-, 2-, and 3-layer LSTMs, respectively, with a momentum parameter $\beta=0.9$, weight decay coefficient $\lambda_2=2\times 10^{-6}$. For the layerwise kernel function $\phi_i({\mW}_i)=\frac{1}{2}\|{\mW}_i\|^2+\frac{\delta}{r}\|{\mW}_i\|^{r}$, we set $r=4$ and $\delta=1\times 10^{-6}$. From Figure \ref{fig:lstm1} and Figure \ref{fig:lstm2}, we observe that MSBPG converges well on the training dataset for 1-, 2-, and 3-layer LSTMs across both training strategies. In contrast, SGD with the cosine annealing learning rate schedule fails to fully converge on the training dataset, as shown in Figure \ref{fig:lstm2}. Additionally, MSBPG consistently achieves lower test perplexity in all experiments, outperforming other methods by at least 1 unit. This excellent generalization capacity can be attributed to the Bregman proximity model employed by MSBPG. {As an additional evaluation, we further assess the performance of MSBPG on a recently popular transformer model, Transformer-XL~\citep{dai2019transformer}, which is designed for long-sequence tasks. The results are provided in Appendix \ref{appendix:additional exp}.}

 \begin{figure*}[th]
              \begin{center}
                     \setlength{\tabcolsep}{0.0pt}  
                     \scalebox{1}{\begin{tabular}{ccc}
            \includegraphics[width=0.33\linewidth]{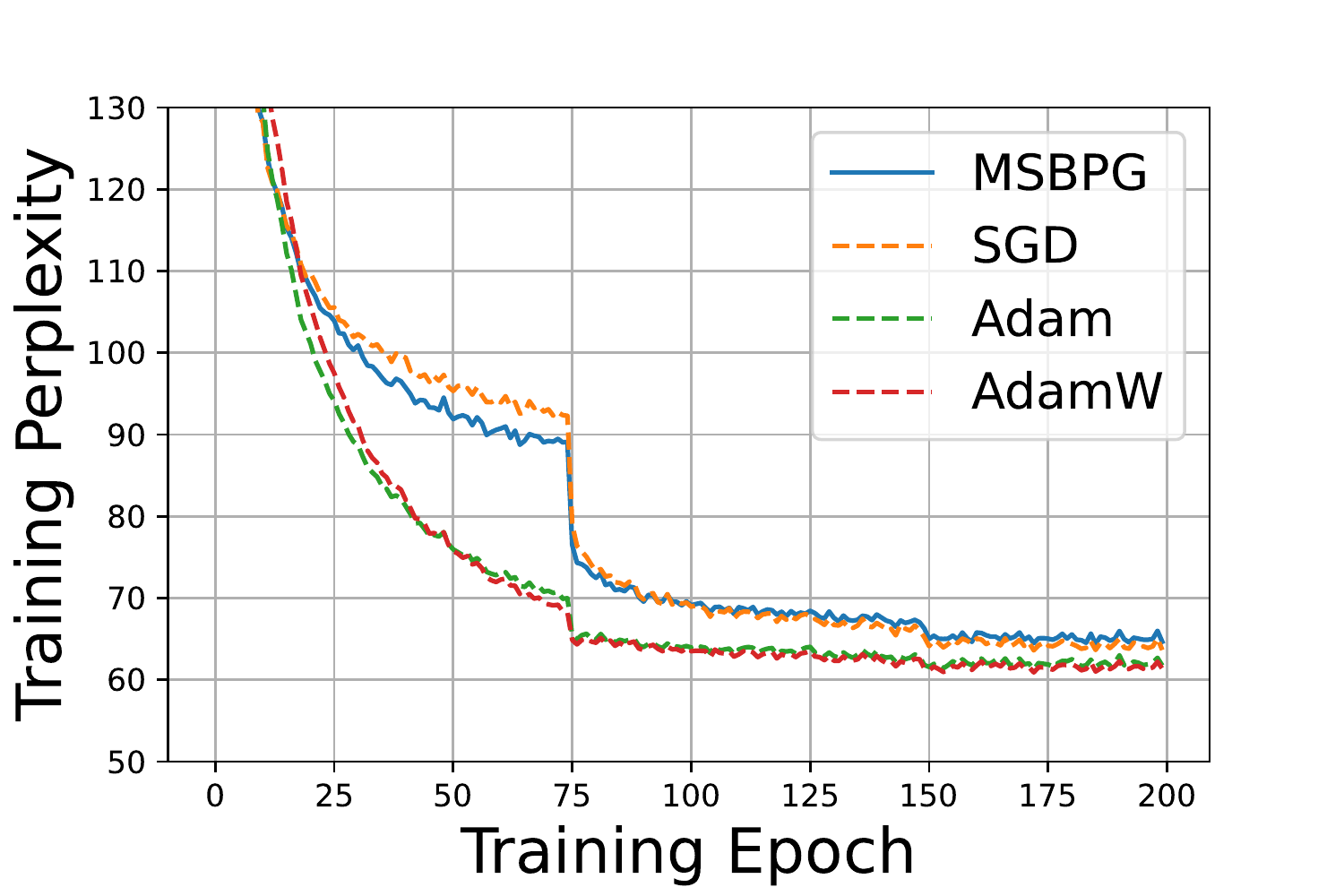}&
            \includegraphics[width=0.33\linewidth]{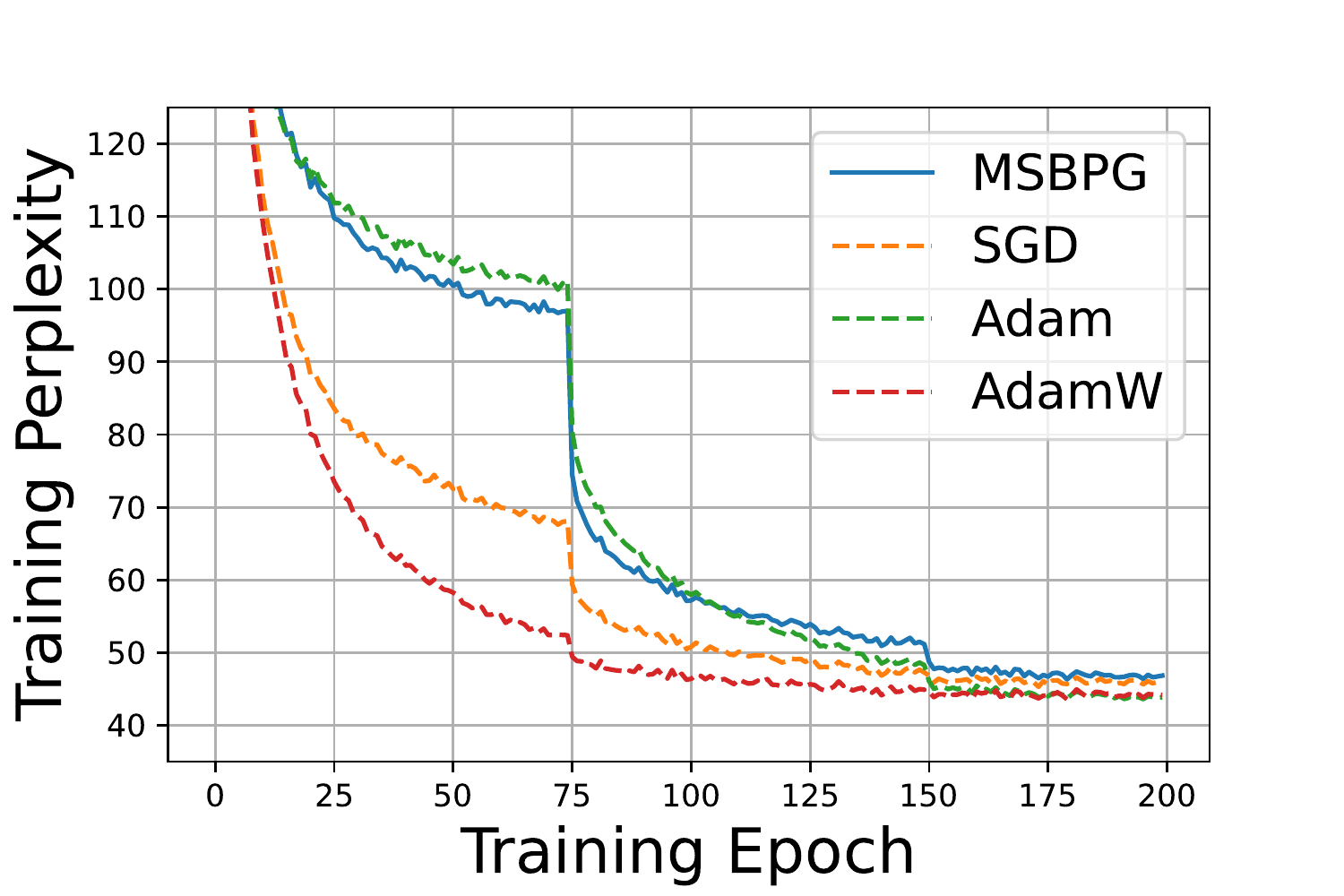}&
            \includegraphics[width=0.33\linewidth]{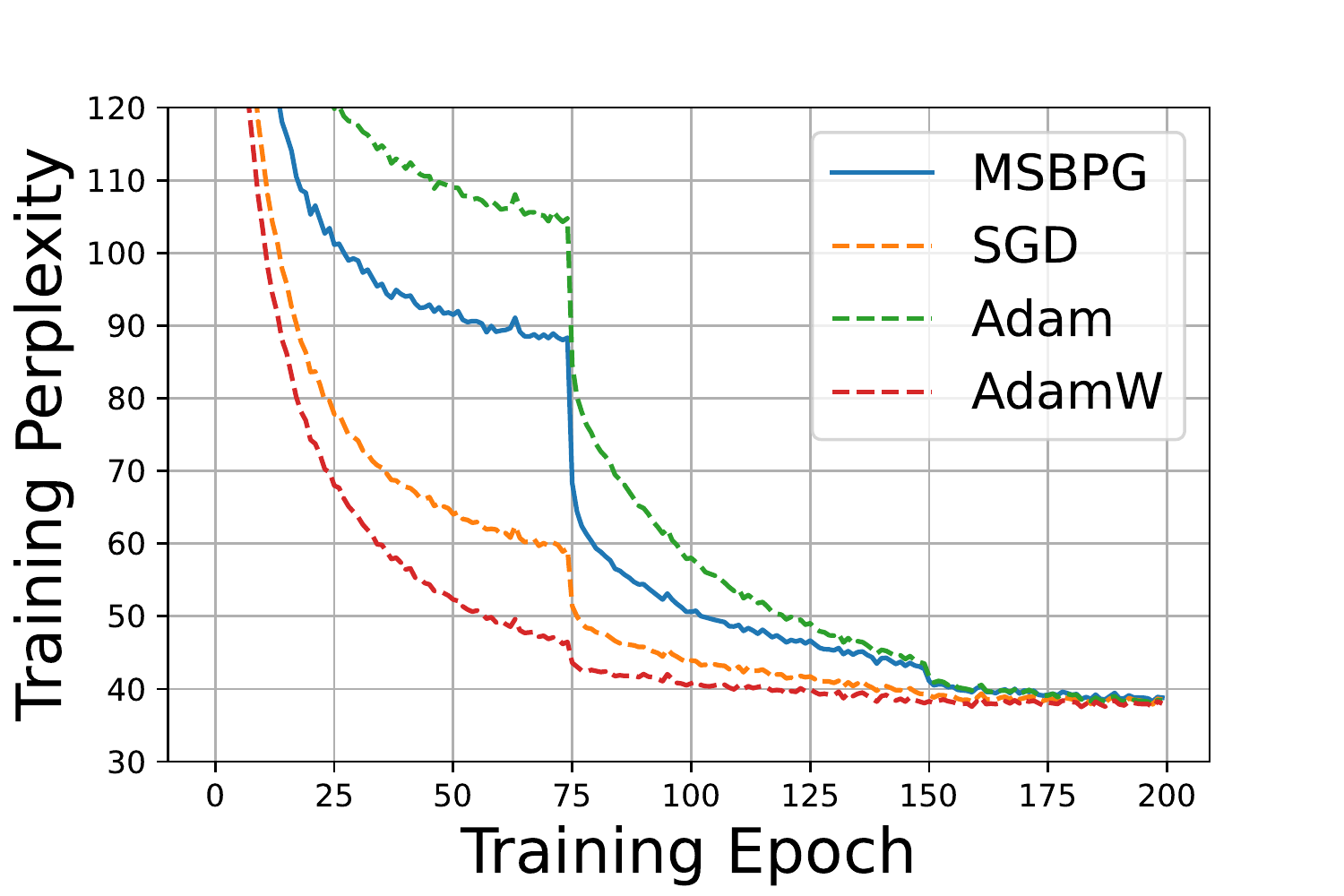}\\

            \includegraphics[width=0.33\linewidth]{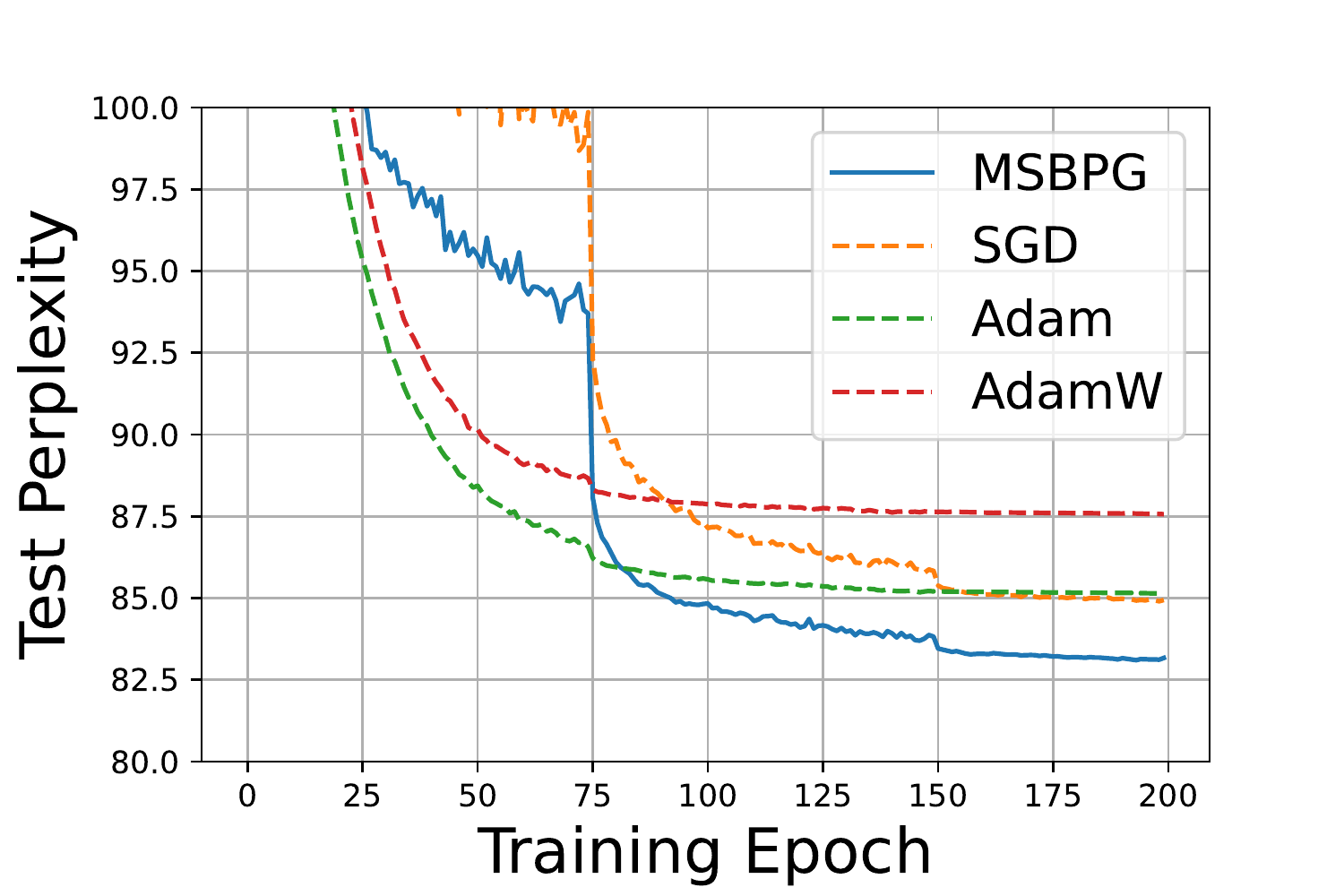}&
            \includegraphics[width=0.33\linewidth]{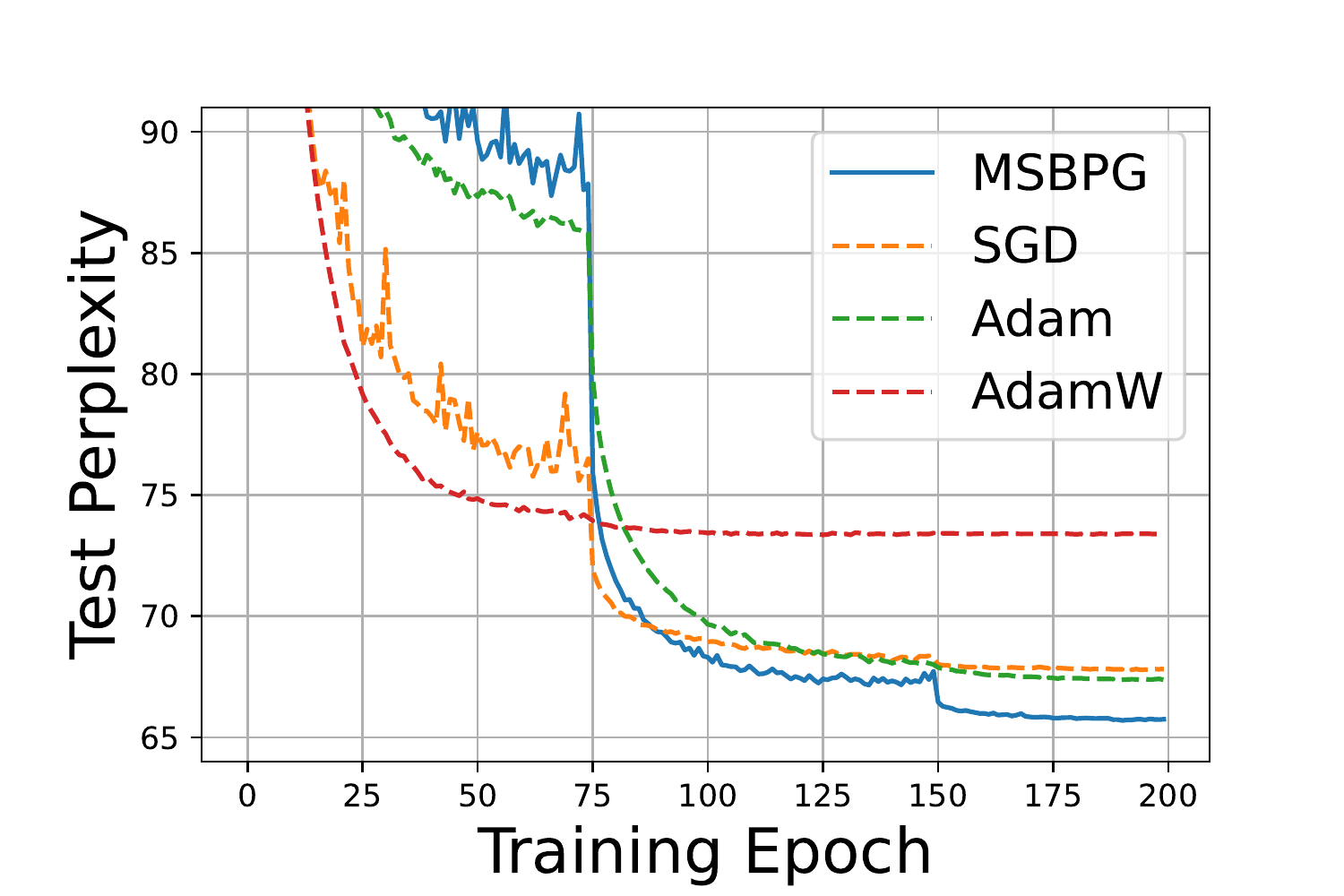}&
            \includegraphics[width=0.33\linewidth]{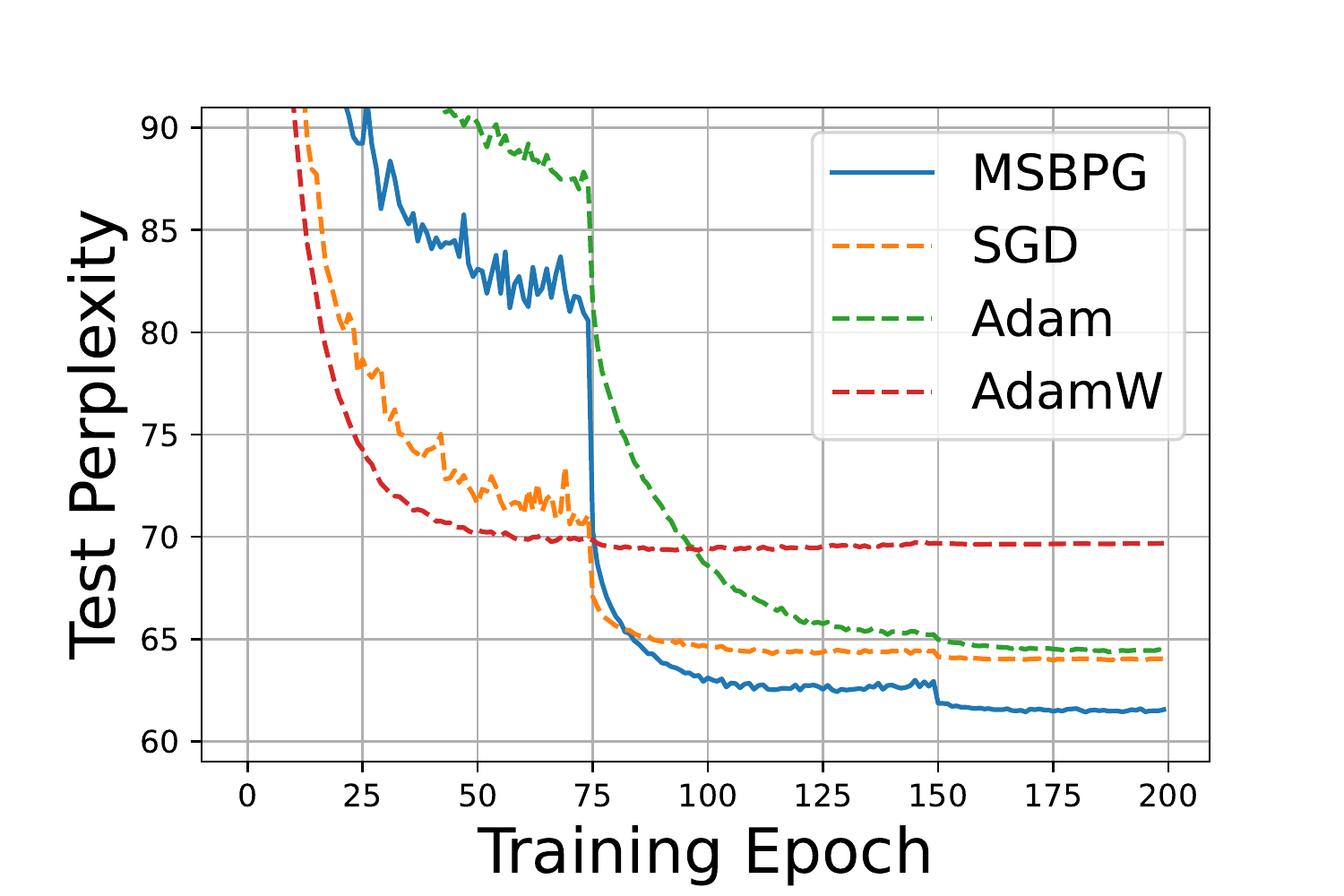}\\
            \footnotesize{(a) 1-layer LSTM} &  \footnotesize{(b) 2-layer LSTM}
            &  \footnotesize{(c) 3-layer LSTM}
            \\
                     \end{tabular}}
              \end{center}
       \vspace{-0.8em}
              \caption{Training and test perplexity (lower is better) of LSTMs on Penn Treebank dataset with learning rate reduced to 0.1 times of the original value at the 75th epoch and 150th epoch.} \label{fig:lstm1}
\end{figure*}

 \begin{figure*}[th]
              \begin{center}
                     \setlength{\tabcolsep}{0.0pt}  
                     \scalebox{1}{\begin{tabular}{ccc}
            \includegraphics[width=0.33\linewidth]{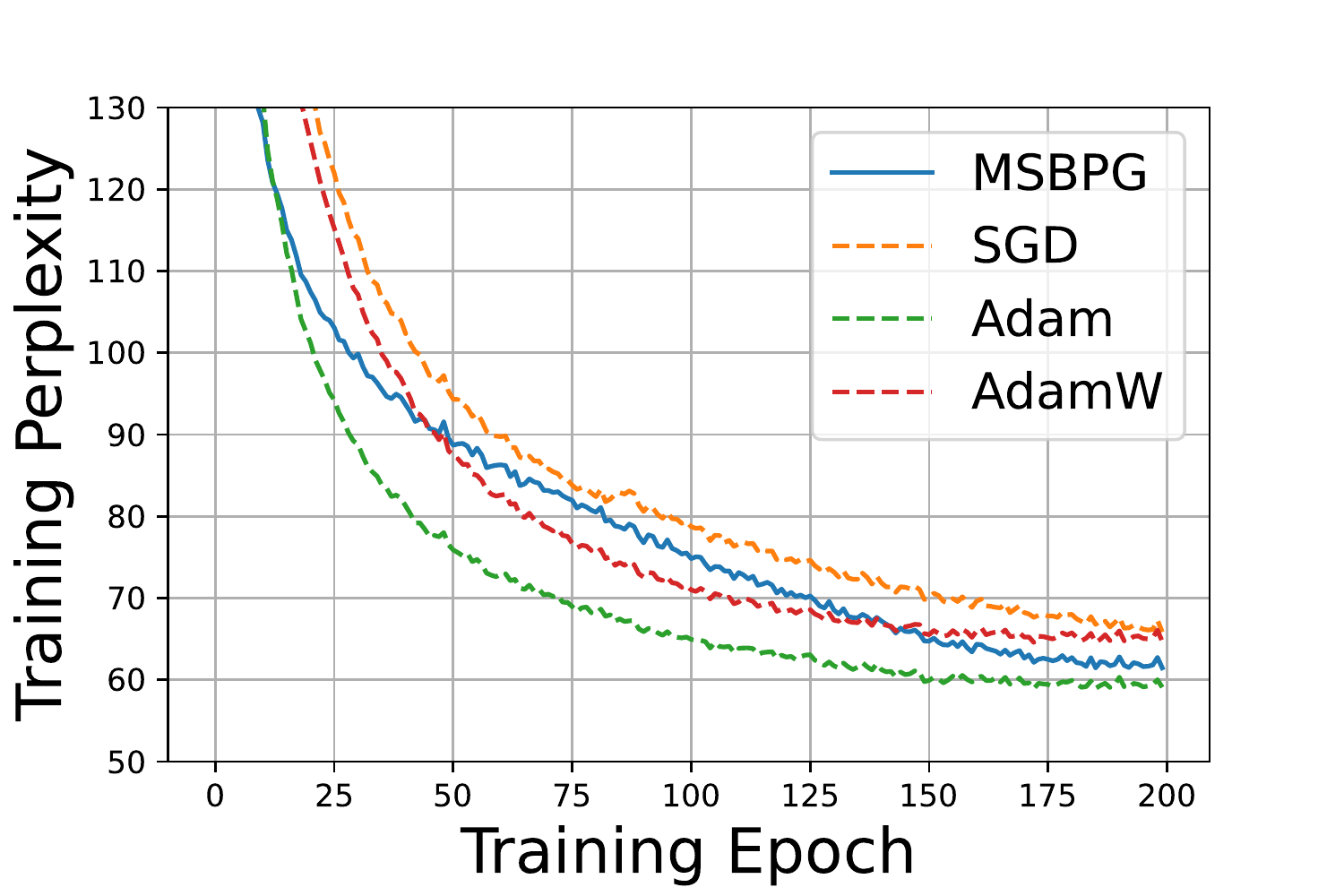}&
            \includegraphics[width=0.33\linewidth]{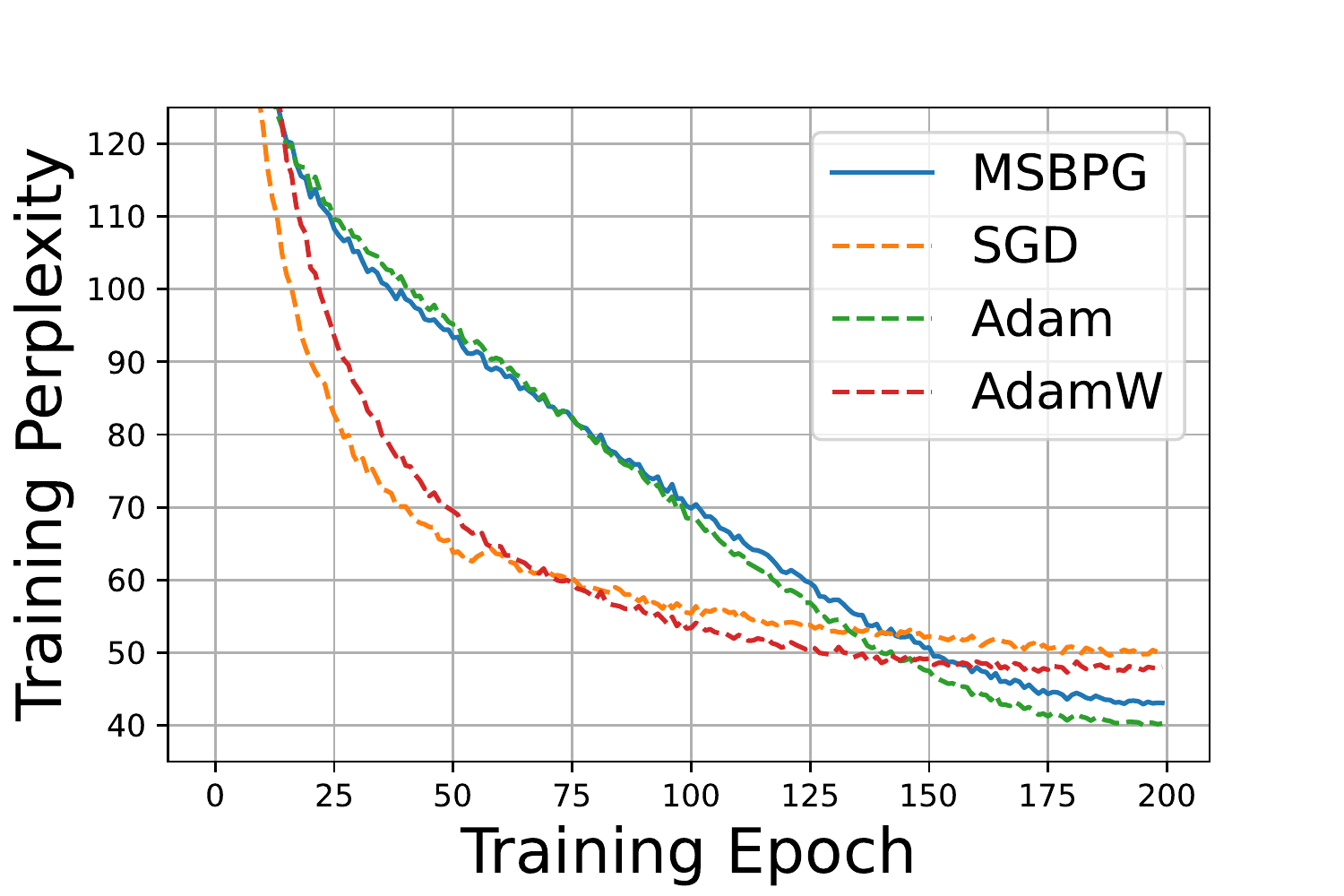}&
            \includegraphics[width=0.33\linewidth]{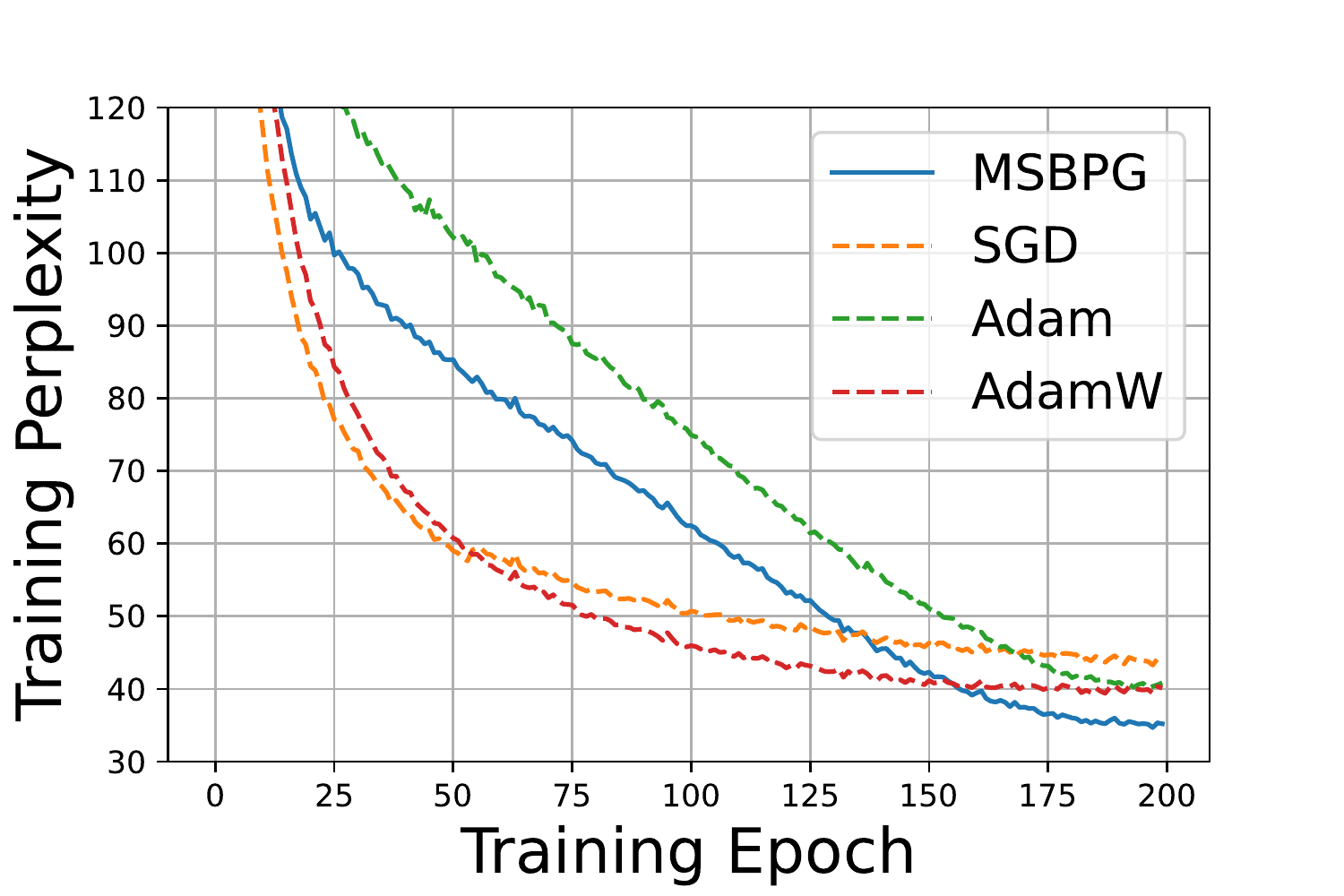}\\

            \includegraphics[width=0.33\linewidth]{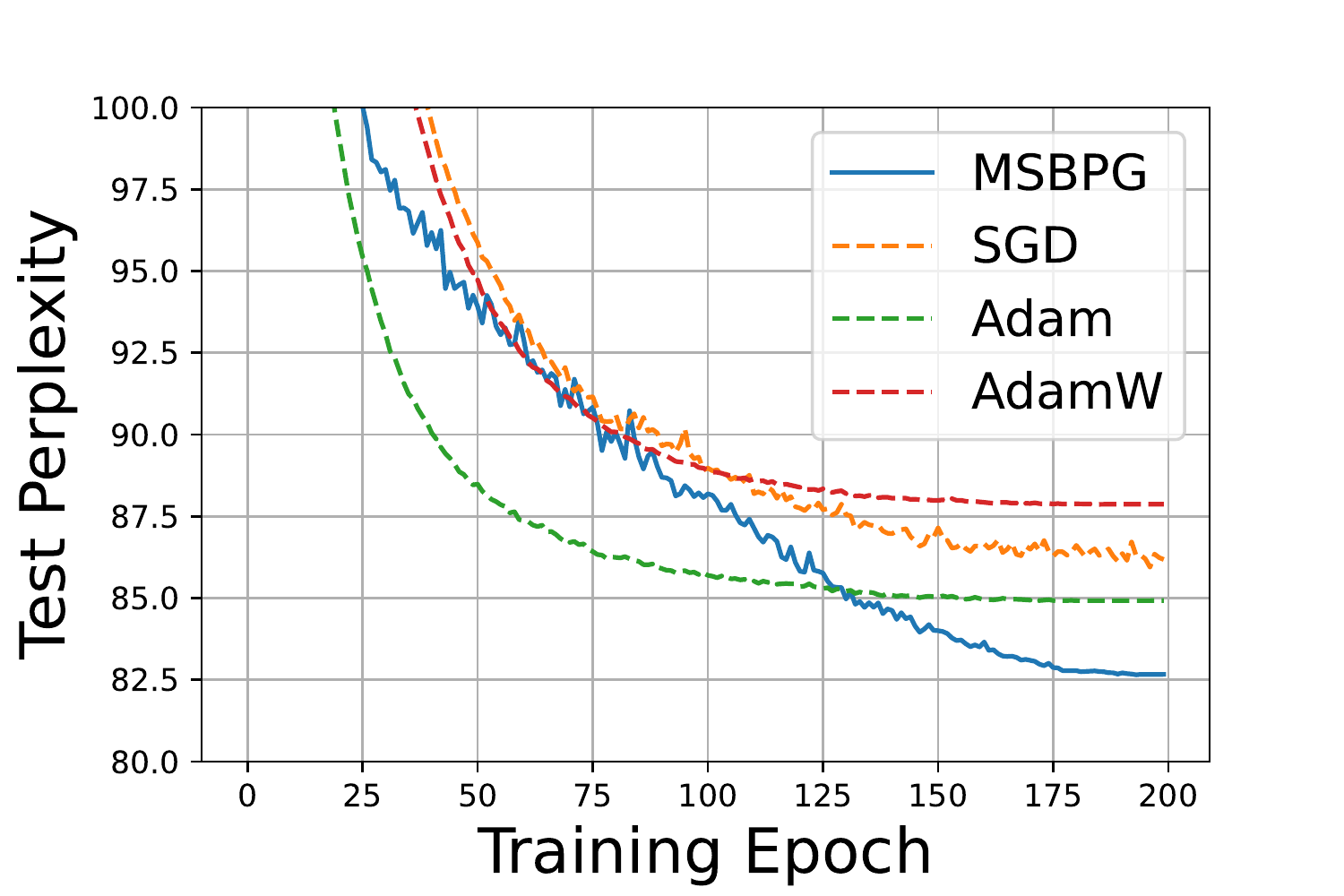}&
            \includegraphics[width=0.33\linewidth]{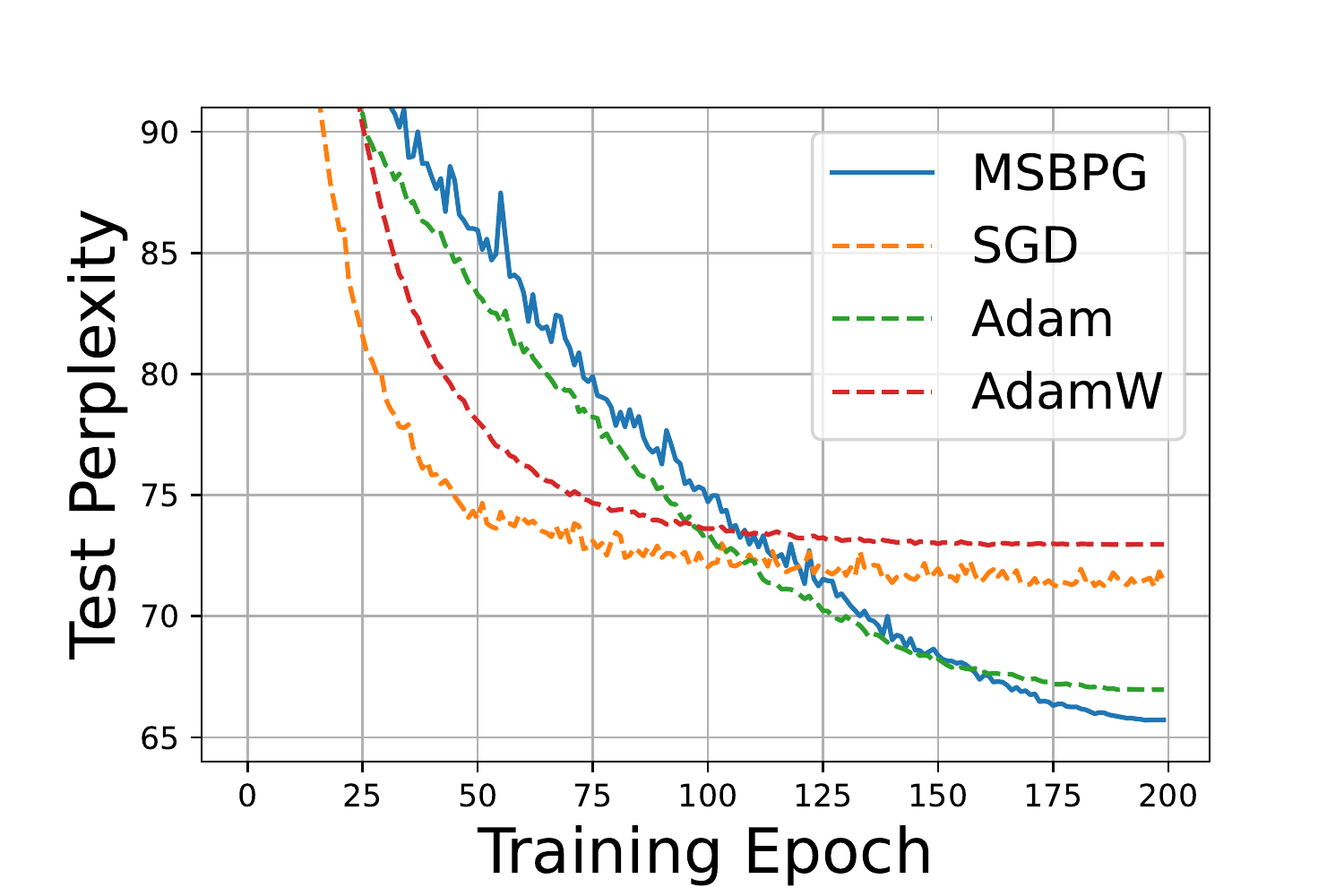}&
            \includegraphics[width=0.33\linewidth]{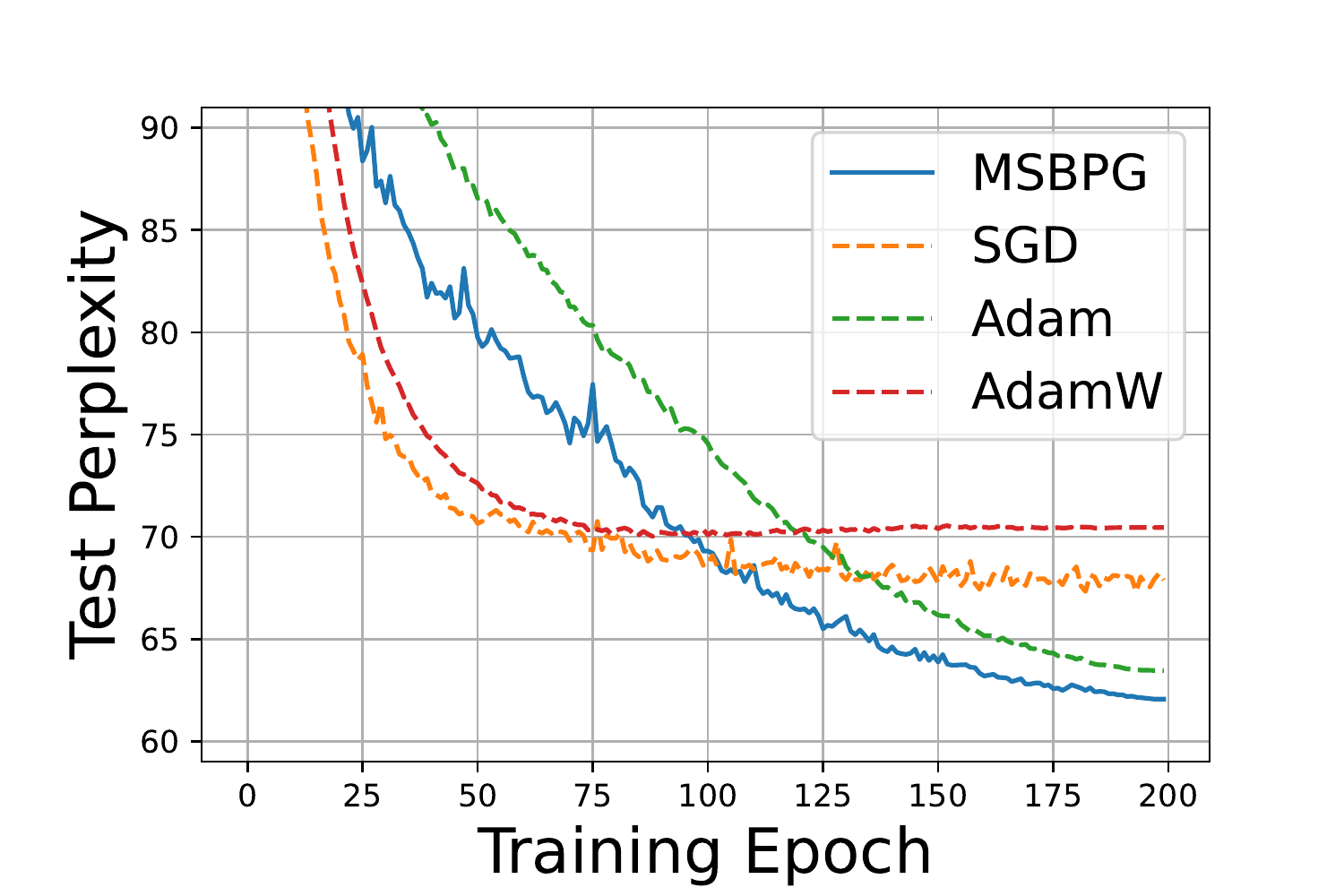}\\
            \footnotesize{(a) 1-layer LSTM} &  \footnotesize{(b) 2-layer LSTM}
            &  \footnotesize{(c) 3-layer LSTM}
            \\
                     \end{tabular}}
              \end{center}
       \vspace{-0.8em}
              \caption{Training and test perplexity (lower is better) of LSTMs on Penn Treebank dataset with learning rate using the cosine annealing schedule.} \label{fig:lstm2}
\end{figure*}

\paragraph{Robustness to initial point scale and stepsize}
As demonstrated in Section \ref{DNN implementation}, MSBPG can mitigate the issue of gradient explosion, which typically occurs when using large stepsizes or large initial point scales. To verify MSBPG's robustness in training neural networks, we conducted experiments using VGG16 on the CIFAR-10 dataset. Specifically, we compared the performance of MSBPG and SGD under different initial point scales and stepsizes, as both algorithms share the same default learning rate of ($1\times 10^{-1}$). Since adaptive gradient algorithms, such as Adam, use a different default learning rate scale ($1\times 10^{-3}$), they were not included in this comparison. For various initial point scales and stepsizes, we ran each optimization algorithm for 50 iterations and reported the best test accuracy. As shown in Figure \ref{fig:robust}, MSBPG is more robust than SGD to large initial points and stepsizes. Training deep neural networks, which have millions or billions of parameters, is highly sensitive to both the initial point scale and stepsize. From Figure \ref{fig:robust}, we can see that SGD fails to converge when the initial point scale is increased to 4.6 or when the stepsize is increased from 0.1 to 0.6. In contrast, MSBPG converges with an initial point scale as large as 20 and a stepsize as large as 5. This robustness of MSBPG  can ease the tuning of hyperparameters for training neural networks, and can also make the training process more robust to noises and errors.

\begin{figure*}[th]
              \begin{center}
                     \setlength{\tabcolsep}{0.0pt}  
                     \scalebox{1}{\begin{tabular}{cc}
            \includegraphics[width=0.5\linewidth]{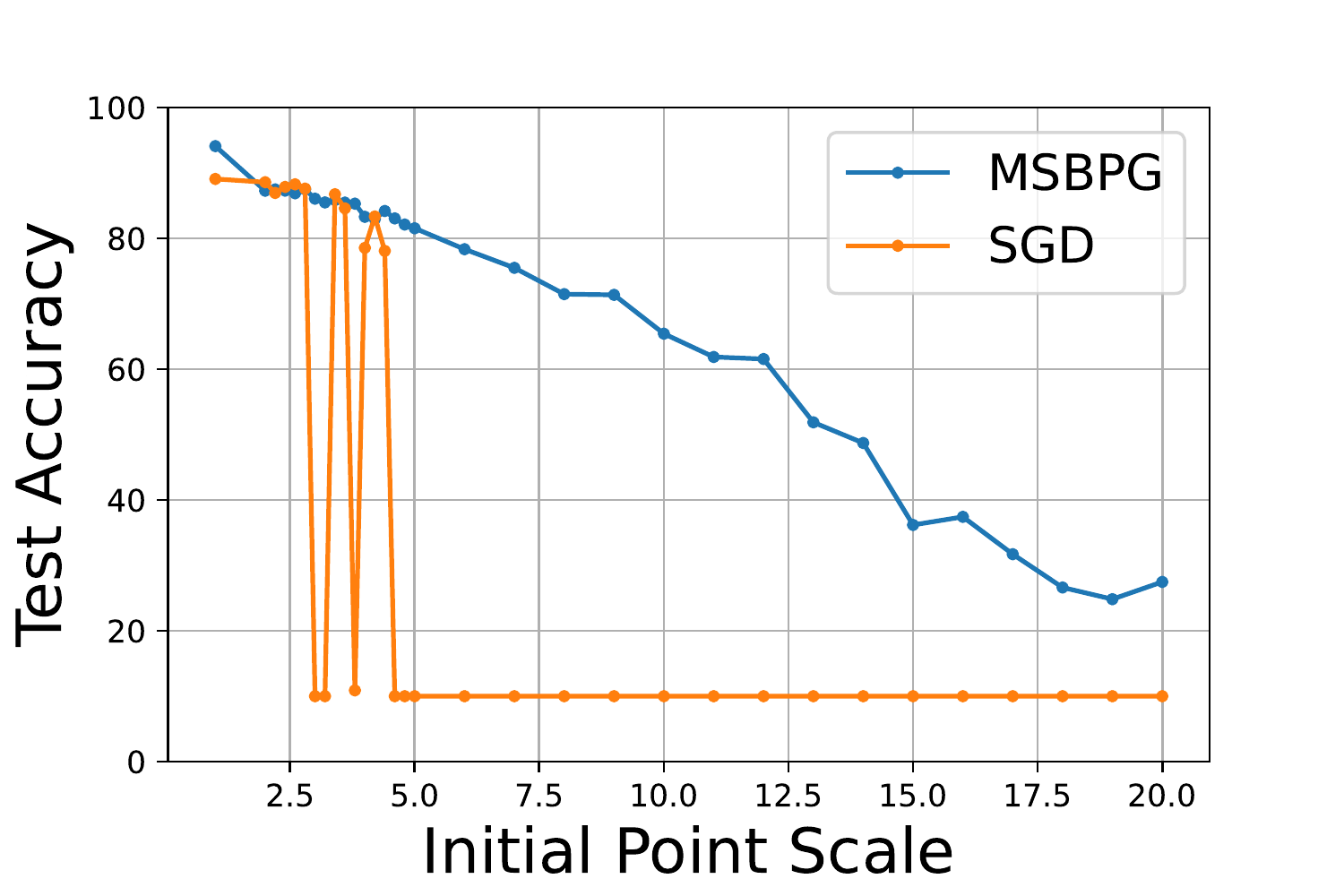}&
            \includegraphics[width=0.5\linewidth]{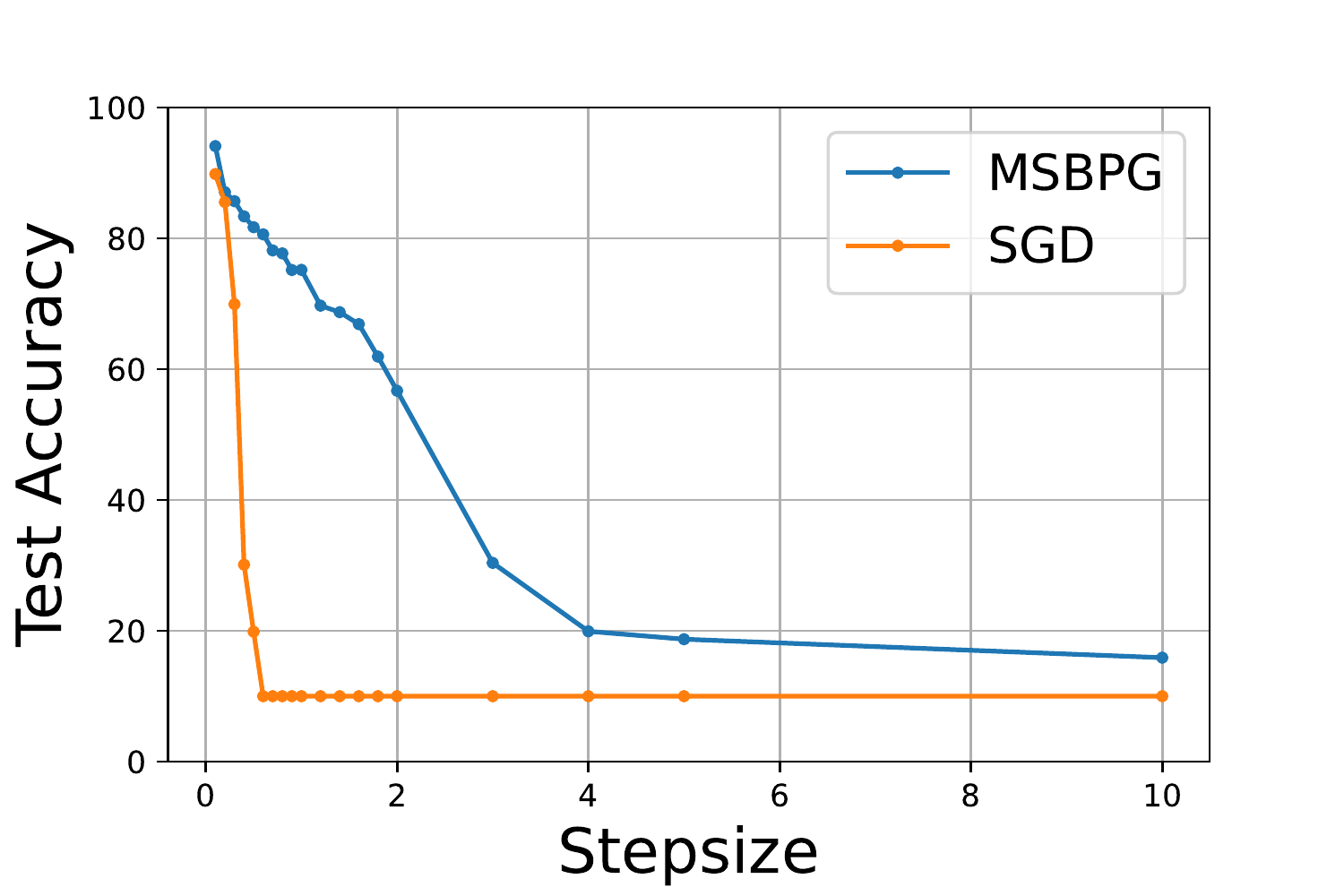}\\
            \footnotesize{(a) Initial point robustness}
            &  \footnotesize{(b) Stepsize robustness}
            \\
                     \end{tabular}}
              \end{center}
       \vspace{-0.8em}
              \caption{Test accuracy (\%) of VGG16 on CIFAR10 dataset with different initial point scale and stepsize choice.} \label{fig:robust}
\end{figure*}

\section{Conclusion}\label{section-conclusion}
In this paper, we introduce a family of nonconvex stochastic Bregman proximal gradient (SBPG) methods to solve optimization problems without the Lipschitz smoothness assumption. By leveraging Bregman proximity measures, SBPG offers a more flexible and robust framework than classical stochastic gradient methods. We establish convergence results for the vanilla SBPG method in the nonconvex setting and propose a momentum-based variant, MSBPG, which improves convergence property by relaxing the mini-batch size requirement. Both methods achieve optimal sample complexity $\mathcal{O}(\epsilon^{-4})$, making them well-suited for large-scale problems. MSBPG is applied to training deep neural networks, where it mitigates gradient explosion and enhances generalization performance. Numerical experiments on sparse quadratic inverse problems and deep neural networks demonstrate MSBPG's robustness and superior performance compared to commonly used optimizers such as SGD, Adam, and AdamW. In conclusion, MSBPG provides an effective and efficient optimization approach for large-scale nonconvex problems, combining theoretical robustness with practical advantages. Future work can explore further refinements and applications in more complex machine learning tasks.

\appendix

\section{Proofs in Preliminaries}\label{appendix-preliminary}
\noindent{\bf Proof of Lemma \ref{well-def-Breg-prox}}
First, we prove the uniqueness of the solution. Problem \eqref{Breg-prox-def} is equivalent to the following problem:
\[
\arg\min_{\vu\in\overline{C}}\;\Psi(\vu):=\alpha R(\vu)+\phi(\vu)-\inprod{\nabla\phi(\vx)}{\vu}.
\]
We have that
\[
\Psi(\vu)\geq \alpha R(\vu)
+\phi(\vu)-\norm{\nabla\phi(\vx)}\norm{\vu}
\geq\|\vu\|\Big(\frac{\alpha R(\vu)+\phi(\vu)}{\|\vu\|}-\|\nabla\phi(\vx)\|\Big).
\]
As $\|\vu\|\rightarrow\infty$, we have $\Psi(\vu)\geq\|\vu\|\Big(\frac{\alpha R(\vu)+\phi(\vu)}{\|\vu\|}-\|\nabla\phi(\vx)\|\Big)=\infty$, where we use the fact that $\phi$ is supercoercive and $R$ is convex. Since $\Psi$ is a proper lower-semicontinuous convex function, by the modern form of Weierstrass theorem \cite[Chapter 1]{rockafellar1997convex}, we know that the solution set of \eqref{Breg-prox-def} is a nonempty compact set. Also note that $\Psi$ is a strictly convex function, which implies the uniqueness of the solution. For any Legendre function $\phi$, from \cite[Chapter 26]{rockafellar1997convex}, we have $dom\,\partial\phi=int\,dom\phi$ with $\partial\phi(\vx)=\{\nabla\phi(\vx)\}$ for all $\vx\in int\,dom\,\phi$. The optimality condition implies that $\partial\phi({\rm Prox}^\phi_{\alpha R}(\vx))$ is nonempty, which automatically forces ${\rm Prox}^\phi_{\alpha P}(\vx)\in\text{int\;dom}\phi$. This completes the proof. $\Box$

 \medskip
 \noindent{\bf Proof of Proposition \ref{prop-Bregman-mapping-subdifferential}}
  Note that $\|\nabla\phi(\vx^+)-\nabla\phi(\vx)\|\leq L_\phi\|\vx^+-\vx\|$ and $\|\nabla F(\vx^+)-\nabla F(\vx)\|\leq L_FL_\phi\|\vx^+-\vx\|$.
By the definition of $\vx^+$, we have 
\[
\nabla F(\vx^+)-\nabla F(\vx)+\frac{\nabla\phi(\vx)-\nabla\phi(\vx^+)}{\alpha}\in\nabla F(\vx^+)+\partial R(\vx^+).
\]
Thus, we obtain 
\[
{\rm dist}\left(0,\partial\Phi(\vx^+)\right)\leq\bigg\|\nabla F(\vx^+)-\nabla F(\vx)+\frac{\nabla\phi(\vx)-\nabla\phi(\vx^+)}{\alpha}\bigg\|\leq\left(L_FL_\phi+\frac{L_\phi}{\alpha}\right)\|\vx^+-\vx\|.
\]
Note that $\|\vx^+-\vx\|=\alpha\|\cG_\alpha(\vx)\|$, which completes the proof. $\Box$

\medskip
\noindent{\bf Proof of Proposition \ref{nonexpansive}}
By the definition of ${\rm Prox}_R^\phi(\cdot)$,  $x_i\in\partial R(\vx_i^+)+\nabla\phi(\vx^+_i)$, $i=1,2$. Since $\partial R(\cdot)$ is monotone, then 
 $\inprod{\vx_1-\vx_2-(\nabla\phi(\vx^+_1)-\nabla\phi(\vx^+_2))}{\vx^+_1-\vx^+_2}\geq0$. From the $\mu$-strong convexity of $\phi$, it follows that $\inprod{\vx_1-\vx_2}{\vx^+_1-\vx^+_2}\geq\inprod{\nabla\phi(\vx^+_1)-\nabla\phi(\vx^+_2)}{\vx^+_1-\vx^+_2}\geq\mu\|\vx^+_1-\vx^+_2\|^2$. Therefore, $\|\vx^+_1-\vx^+_2\|\leq\frac{1}{\mu}\|\vx_1-\vx_2\|$. $\Box$

\section{Algorithmic stability analysis}
{Consider a dataset $\bm\Xi=(\bm\xi_1,...,\bm\xi_n)$ and an algorithm $\cA$. Let $\Phi_{\bm\Xi}(\vx)=F(\vx,\bm\Xi)+R(\vx)$, and let $\cA(\bm\Xi)$ be the output of the algorithm $\cA$ based on the dataset $\bm\Xi$. Since the underlying distribution of $\bm\xi$ is unknown, the population risk can be decomposed into two parts:
 \[
 \E_{\bm\Xi,\cA}[\Phi(\cA(\bm\Xi))-\Phi(\vx^*)]=\underbrace{\E_{\bm\Xi,\cA}\left[\Phi_{\bm\Xi}(\cA(\bm\Xi))-\Phi(\cA(\bm\Xi))\right]}_{\varepsilon_{\rm gen}}+\underbrace{\E_{\bm\Xi,\cA}\left[\Phi_{\bm\Xi}(\cA(\bm\Xi))-\Phi(\vx^*)\right]}_{\varepsilon_{\rm opt}},
 \]
 where $\vx^*$ is independent of $\bm\Xi$ and $\cA$. The first term represents the expected generalization error, and the second term represents the optimization error. Our goal is to assess the expected generalization error $\varepsilon_{\rm gen}$. For clarity, we focus on the simple case where $R(x)\equiv0$, in which case $\Phi(\vx)=F(\vx)=\E_{\bm\xi}[f(\vx,\bm\xi)]$. Consider any two data sets $\bm\Xi,\bm\Xi'$ that differ by at most one example. As shown by \citep{bousquet2002stability,shalev2010learnability,hardt2016train}, the absolute expected generalization error $|\varepsilon_{\rm gen}|$ can be bounded by $\varepsilon$ if the algorithm is $\varepsilon$-uniform stable that is defined as follows:
\begin{definition}
A randomized algorithm $\cA$ is $\varepsilon$-uniformly stable, if for any two datasets $\bm\Xi$ and $\bm\Xi'$ which differ by at most one example, it holds that
\[
\sup_{\vz}\E_{\cA}[f(\cA(\bm\Xi),\vz)-f(\cA(\bm\Xi'),\vz)]\leq\varepsilon.
\]
\end{definition} 
\begin{lemma}
\label{le:general-stability}
Let $\cA$ be $\varepsilon$-uniformly stable, then $|\varepsilon_{\rm gen}|\leq\varepsilon$.
\end{lemma}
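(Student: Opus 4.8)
The plan is to establish the two-sided bound $-\varepsilon\le\varepsilon_{\rm gen}\le\varepsilon$ by the classical ghost-sample (symmetrization) argument, adapted to the present notation. Since we work in the case $R\equiv0$, we have $\Phi(\vx)=F(\vx)=\E_{\bm\xi}[f(\vx,\bm\xi)]$ and $\Phi_{\bm\Xi}(\vx)=F(\vx,\bm\Xi)=\frac1n\sum_{i=1}^n f(\vx,\bm\xi_i)$, so that $\varepsilon_{\rm gen}=\E_{\bm\Xi,\cA}[F(\cA(\bm\Xi),\bm\Xi)-F(\cA(\bm\Xi))]$ is exactly the expected gap between the empirical risk and the population risk evaluated at the algorithm's output.

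First I would introduce an independent ghost dataset $\bm\Xi'=(\bm\xi_1',\dots,\bm\xi_n')$ drawn i.i.d. from the same distribution as $\bm\Xi$, and for each $i$ define the perturbed dataset $\bm\Xi^{(i)}$ obtained from $\bm\Xi$ by replacing $\bm\xi_i$ with $\bm\xi_i'$. By linearity of expectation the two risk terms can be written as
\[
\E_{\bm\Xi,\cA}[F(\cA(\bm\Xi),\bm\Xi)]=\frac1n\sum_{i=1}^n\E[f(\cA(\bm\Xi),\bm\xi_i)],
\]
\[
\E_{\bm\Xi,\cA}[F(\cA(\bm\Xi))]=\frac1n\sum_{i=1}^n\E[f(\cA(\bm\Xi),\bm\xi_i')],
\]
where the second identity uses that each $\bm\xi_i'$ is a fresh sample independent of $\cA(\bm\Xi)$, so that $\E_{\bm\xi_i'}[f(\cA(\bm\Xi),\bm\xi_i')\mid\bm\Xi,\cA]=F(\cA(\bm\Xi))$. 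The central step is a relabeling argument: because $\bm\xi_i$ and $\bm\xi_i'$ are exchangeable, swapping their names transforms $\bm\Xi$ into $\bm\Xi^{(i)}$ while sending the evaluation point $\bm\xi_i'$ to $\bm\xi_i$; since the joint law of $(\bm\Xi,\bm\Xi')$ is invariant under this swap, we obtain $\E[f(\cA(\bm\Xi),\bm\xi_i')]=\E[f(\cA(\bm\Xi^{(i)}),\bm\xi_i)]$. Substituting back yields
\[
\varepsilon_{\rm gen}=\frac1n\sum_{i=1}^n\E\big[f(\cA(\bm\Xi),\bm\xi_i)-f(\cA(\bm\Xi^{(i)}),\bm\xi_i)\big].
\]

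Now $\bm\Xi$ and $\bm\Xi^{(i)}$ differ in exactly one example, so conditioning on $(\bm\Xi,\bm\Xi')$ and invoking the $\sup_{\vz}$ form of $\varepsilon$-uniform stability with $\vz=\bm\xi_i$ gives $\E_\cA[f(\cA(\bm\Xi),\bm\xi_i)-f(\cA(\bm\Xi^{(i)}),\bm\xi_i)\mid\bm\Xi,\bm\Xi']\le\varepsilon$; taking the outer expectation and averaging over $i$ gives $\varepsilon_{\rm gen}\le\varepsilon$. Applying stability with the roles of $\bm\Xi$ and $\bm\Xi^{(i)}$ interchanged gives $\varepsilon_{\rm gen}\ge-\varepsilon$, and together these prove $|\varepsilon_{\rm gen}|\le\varepsilon$. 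The main obstacle is the exchangeability/relabeling step, where one must be careful that the evaluation point $\bm\xi_i$ lies inside the training set $\bm\Xi$; invoking the stability definition with this data-dependent $\vz$ is legitimate precisely because the definition quantifies over $\sup_{\vz}$ and over all dataset pairs differing in one example, which decouples the bound from any dependence between $\vz$ and the datasets. The remaining steps are routine applications of linearity of expectation, the tower rule, and i.i.d. symmetry.
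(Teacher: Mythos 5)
Your proof is correct. The paper does not actually prove this lemma; it simply cites \citet{bousquet2002stability}, \citet{shalev2010learnability}, and \citet{hardt2016train}, and the argument you give is precisely the standard symmetrization/ghost-sample proof from those references (cf.\ Theorem 2.2 of \citet{hardt2016train}). Your handling of the two potentially delicate points---the relabeling identity $\E[f(\cA(\bm\Xi),\bm\xi_i')]=\E[f(\cA(\bm\Xi^{(i)}),\bm\xi_i)]$ via exchangeability, and the legitimacy of instantiating the $\sup_{\vz}$ in the stability definition at the data-dependent point $\vz=\bm\xi_i$ after conditioning on the datasets---is sound, and the two-sided bound follows correctly by applying the definition with the roles of $\bm\Xi$ and $\bm\Xi^{(i)}$ interchanged.
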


In this section, we demonstrate that the polynomial kernel employed in our method (as discussed in Section 4) enhances algorithmic stability, particularly in high-dimensional scenarios such as training deep neural networks. Inspired by the ODE approach for Bregman gradient-type methods \citep{alvarez2004hessian, ding2024stochastic}, the ODE corresponding to SBPG is given by:
\[
\frac{d\nabla\phi(\vx(t))}{d t}=-\nabla F(\vx(t)), 
\]
which is equivalent to the following ODE:
\[
\dot\vx(t)=-[\nabla^2\phi(\vx(t))]^{-1}\nabla F(\vx(t)).
\]
The discrete version of this ODE then leads to the Hessian preconditioned gradient method:
\[
\vx^{k+1}=\vx^k-\alpha_k [\nabla^2\phi(\vx^k)]^{-1}\nabla F(\vx^k).
\]
This connection between the Bregman gradient method and the Hessian preconditioned gradient method inspires us to derive the following estimation on the expected generalization gap:
\begin{theorem}
Given a dataset $\bm\Xi$ containing $n$ samples. Let $\cA$ be the method \eqref{vanilla-SBPG} where the last iterate is the output, and let $\cE$ be the event where the iterates generated by $\cA$ is contained within a compact set $\cB$. Suppose $f$ is differentiable and the kernel function $\phi$ is twice differentiable. Under the event $\cE$, for any sufficiently small $\{\alpha_k\}$, we have
\begin{equation}
|\E_{\bm\Xi,\cA}[\Phi_{\bm\Xi}(\cA(\bm\Xi))-\Phi(\cA(\bm\Xi))]|\leq\sum_{t=1}^k\exp\left(\left(1-\frac{1}{n}\right)L_{\cB}\sum_{i=t+1}^k\alpha_i\right)\frac{3\ell_{f,\cB}^2\ell_{\cB}\alpha_t}{n}=:\varepsilon_{\rm gen}^{\phi},
\label{eq:stability-bound}
\end{equation}
where $L_{\cB}=\sup_{\vx\in\cB,\vy\in\cB,\vz}\frac{\norm{[\nabla^2\phi(\vx)]^{-1}\nabla f(\vx,\vz)-[\nabla^2\phi(\vy)]^{-1}\nabla f(\vy,\vz)}}{\norm{\vx-\vy}}$, $\ell_{f,\cB}=\sup_{\vx\in\cB,\vy\in\cB,\vz}\frac{\norm{f(\vx,\vz)-f(\vy,\vz)}}{\norm{\vx-\vy}}$, and $\ell_{\cB}=\sup_{\vx\in\cB}\norm{[\nabla^2\phi(\vx)]^{-1}}$.
\label{thm:alg-stability}
\end{theorem}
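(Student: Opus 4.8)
The plan is to establish $\varepsilon_{\rm gen}^{\phi}$-uniform stability of $\cA$ conditionally on the event $\cE$, and then invoke Lemma~\ref{le:general-stability}, which bounds $|\varepsilon_{\rm gen}|$ by the stability parameter. Fix two datasets $\bm\Xi$ and $\bm\Xi'$ differing in exactly one sample, couple the two runs of \eqref{vanilla-SBPG} so that they draw the same sample index at every iteration, and track the expected divergence $\delta_k:=\E[\|\vx^k-{\vx'}^k\|]$ of the resulting trajectories $\{\vx^k\}$ and $\{{\vx'}^k\}$, which start from $\delta_0=0$.

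The first step is to replace the exact mirror-type update by its Hessian-preconditioned proxy. Writing the update (with $R\equiv0$) as $\vx^{k+1}=\nabla\phi^*(\nabla\phi(\vx^k)-\alpha_k\nabla f(\vx^k,\bm\xi))$ and Taylor-expanding $\nabla\phi^*$ about $\nabla\phi(\vx^k)$, the identity $\nabla^2\phi^*(\nabla\phi(\vx^k))=[\nabla^2\phi(\vx^k)]^{-1}$ yields $\vx^{k+1}=\vx^k-\alpha_k[\nabla^2\phi(\vx^k)]^{-1}\nabla f(\vx^k,\bm\xi)+O(\alpha_k^2)$, with a remainder that is uniform over the compact set $\cB$. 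This is precisely the discretized ODE flagged before the theorem, and it is where the hypothesis of sufficiently small $\{\alpha_k\}$ enters: it guarantees that each step obeys $\|\vx^{k+1}-\vx^k\|\le\tfrac32\alpha_k\ell_{\cB}\ell_{f,\cB}$ and that the one-step map $\vx\mapsto\nabla\phi^*(\nabla\phi(\vx)-\alpha_k\nabla f(\vx,\bm\xi))$ is $(1+\alpha_k L_{\cB})$-Lipschitz on $\cB$, the constants $L_{\cB}$, $\ell_{\cB}$, $\ell_{f,\cB}$ being exactly the preconditioned Lipschitz and bound quantities appearing in the statement.

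Next I would derive the per-step recursion by conditioning on whether the index drawn at iteration $k$ coincides with the position where $\bm\Xi$ and $\bm\Xi'$ differ. With probability $1-\tfrac1n$ the two runs apply an identical map, so expansiveness gives $\|\vx^{k+1}-{\vx'}^{k+1}\|\le(1+\alpha_k L_{\cB})\|\vx^k-{\vx'}^k\|$; with probability $\tfrac1n$ the maps differ, and bounding the two step lengths separately gives $\|\vx^{k+1}-{\vx'}^{k+1}\|\le\|\vx^k-{\vx'}^k\|+3\alpha_k\ell_{\cB}\ell_{f,\cB}$, where the factor $3$ collects the two step lengths together with the first-order approximation slack absorbed for small $\alpha_k$. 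Taking expectations, the two cases combine into
\[
\delta_{k+1}\le\Big(1+\big(1-\tfrac1n\big)\alpha_k L_{\cB}\Big)\delta_k+\frac{3\alpha_k\ell_{\cB}\ell_{f,\cB}}{n}.
\]

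Finally I would unroll this linear recursion from $\delta_0=0$, using $\prod_i(1+x_i)\le\exp(\sum_i x_i)$ on the homogeneous factors, to get $\delta_k\le\sum_{t=1}^k\exp\big((1-\tfrac1n)L_{\cB}\sum_{i=t+1}^k\alpha_i\big)\tfrac{3\alpha_t\ell_{\cB}\ell_{f,\cB}}{n}$; multiplying through by $\ell_{f,\cB}$ (Lipschitzness of $f$ in its first argument over $\cB$) converts this into a bound on $\sup_{\vz}\E[f(\cA(\bm\Xi),\vz)-f(\cA(\bm\Xi'),\vz)]$, which is exactly $\varepsilon_{\rm gen}^{\phi}$, so Lemma~\ref{le:general-stability} closes the argument. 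I expect the main obstacle to be the second step: rigorously controlling the Taylor remainder of $\nabla\phi^*$ uniformly on $\cB$ and checking that, for small enough stepsizes, it is genuinely dominated by the stated constants, so that the preconditioned Lipschitz constant $L_{\cB}$ and the factor-$3$ bounded-step term are legitimate rather than merely heuristic.
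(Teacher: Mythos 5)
Your proposal follows essentially the same route as the paper's proof: the same Taylor expansion of $\nabla\phi^*$ reducing the update to the Hessian-preconditioned gradient step, the same probability-$(1-\tfrac1n)$ versus $\tfrac1n$ case split yielding the recursion $\delta_{k+1}\le(1+(1-\tfrac1n)\alpha_k L_{\cB})\delta_k+\tfrac{3\alpha_k\ell_{\cB}\ell_{f,\cB}}{n}$, the same exponential unrolling from $\delta_0=0$, and the same final multiplication by $\ell_{f,\cB}$ before invoking Lemma~\ref{le:general-stability}. You even correctly identify that the factor $3$ absorbs the $\mathcal{O}(\alpha_k^2)$ Taylor slack on top of the two step lengths, which matches the paper's treatment exactly.
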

\begin{proof}
Given the dataset $\bm\Xi$, recall the update scheme of SBPG:
\[
\vx_{k+1}=\nabla\phi^*(\nabla \phi(\vx_k)-\alpha_k\nabla f(\vx_k,\vz_k)).
\]
Noting that $\nabla\phi^*(\nabla\phi(\vx))=\vx$ and $\nabla^2\phi^*(\nabla\phi(\vx))=[\nabla^2\phi(\vx)]^{-1}$, we have
\[
\begin{aligned}
\vx_{k+1}=&\nabla\phi^*(\nabla \phi(\vx_k)-\alpha_k\nabla f(\vx_k,\vz_k))\\
=&\nabla\phi^*(\nabla\phi(\vx_k))-\alpha_k\nabla^2\phi^*(\nabla\phi(\vx_k))\nabla f(\vx_k,\vz_k)+\mathcal{O}(\alpha_k^2)\\
=&\vx_k-\alpha_k[\nabla^2\phi(\vx_k)]^{-1}\nabla f(\vx_k,\vz_k)+\mathcal{O}(\alpha_k^2),
\end{aligned}
\]
where the second equality comes from Taylor's expansion. Let $\bm\Xi'$ be a dataset that differs from $\bm\Xi$ by one example. Define $\delta_{k+1}:=\E\left[\norm{\vx_{k+1}-\vx'_{k+1}}\right]$, where $\vx'_{k+1}$ corresponds to the dataset $\bm\Xi'$. From the inequality above, we have 
\[
\begin{aligned}
&\norm{\vx_{k+1}-\vx'_{k+1}}\\
=&\norm{\left(\vx_k-\alpha_k[\nabla^2\phi(\vx_k)]^{-1}\nabla f(\vx_k,\vz_k)\right)-\left(\vx'_k-\alpha_k[\nabla^2\phi(\vx'_k)]^{-1}\nabla f(\vx'_k,\vz'_k)\right)}+\mathcal{O}(\alpha_k^2)\\
\leq&\norm{\vx_k-\vx'_k}+\alpha_k\norm{[\nabla^2\phi(\vx_k)]^{-1}\nabla f(\vx_k,\vz_k)-[\nabla^2\phi(\vx'_k)]^{-1}\nabla f(\vx'_k,\vz'_k)}+\mathcal{O}(\alpha_k^2).
\end{aligned}
\]
Since $\vx_0=\vx'_0$, then we have
\begin{equation}
\begin{aligned}
\delta_{k+1}=&\left(1-\frac{1}{n}\right)\left(1+\alpha_kL_{\cB}\right)\delta_k+\frac{1}{n}\left(\delta_k+2\ell_{f,\cB}\ell_{\cB}\alpha_k\right)+C\alpha_k^2\\
\leq&\left(1+\alpha_k\left(1-\frac{1}{n}\right)L_{\cB}\right)\delta_k+\frac{3\ell_{f,\cB}\ell_{\cB}\alpha_k}{n}\\
\leq&\exp\left(\alpha_k\left(1-\frac{1}{n}\right)L_{\cB}\right)\delta_k+\frac{3\ell_{f,\cB}\ell_{\cB}\alpha_k}{n}.
\end{aligned}
\label{eq:stability-recursion}
\end{equation}
The first inequality comes from the fact that $\alpha_k$ is sufficiently small. Since $\delta_0=0$, by recursion \eqref{eq:stability-recursion}, we have 
\[
\delta_k\leq\sum_{t=1}^k\exp\left(\left(1-\frac{1}{n}\right)L_{\cB}\sum_{i=t+1}^k\alpha_i\right)\frac{3\ell_{f,\cB}\ell_{\cB}\alpha_t}{n}. 
\]
Finally, using the bound $\sup_{\vz}\E_{\cA}[f(\vx_k,\vz)-f(\vx'_k,\vz)]\leq\ell_{f,\cB}\delta_k$ and Lemma \ref{le:general-stability}, we complete the proof.
\end{proof}
Now, we make some remarks on Theorem \ref{thm:alg-stability}. 
\begin{remark}
\begin{enumerate}
    \item When $\phi(\vx)=\frac{1}{2}\norm{\vx}^2$, we use the notation $\bar L_{\cB}=\sup_{\vx\in\cB,\vy\in\cB,\vz}\frac{\norm{\nabla f(\vx,\vz)-\nabla f(\vy,\vz)}}{\norm{\vx-\vy}}$, $\bar\ell_\cB=1$.
    If $\alpha_k\leq\frac{c}{k}$ for some constant $c$, the right-hand side of \eqref{eq:stability-bound} recovers the bound in \cite[Theorem 3.12]{hardt2016train}. 
    \item If we choose the polynomial kernel from Proposition \ref{DNN-subpro-solution-prop} with $\delta=1$ and $r=4$, i.e. $\phi(\vx)=\frac{1}{2}\norm{\vx}^2+\frac{1}{4}\norm{\vx}^4$, then $\nabla^2\phi(\vx)^{-1}=\frac{1}{1+\norm{\vx}^2}I-\frac{2\vx\vx^T}{(1+3\norm{\vx}^2)(1+\norm{\vx}^2)}$. Thus, we have $\ell_{\cB}=\sup_{\vx\in\cB}\norm{\frac{1}{1+\norm{\vx}^2}I-\frac{2\vx\vx^T}{(1+3\norm{\vx}^2)(1+\norm{\vx}^2)}}\leq1$. When $\cB$ is a convex compact set, by the intermediate theorem, we have
    \[
    L_{\cB}=\ell_{f,\cB}\sup_{\vx\in\cB}\left\{\norm{D_{\nabla^2\phi(\vx)^{-1}}}\right\}+\ell_{\cB}\bar L_{\cB},
    \]
    where $D_{\nabla^2\phi(\vx)^{-1}}$ is the first order differential operator of $\nabla^2\phi(\vx)^{-1}$. Furthermore, if ${\rm dist}(0,\cB)\geq M>0$, then $\ell_{\cB}\leq\frac{1}{1+M^2}<1$. Moreover by some basic algebraic calculations, we have $D_{\cB}:=\sup_{\vx\in\cB}\left\{\norm{D_{\nabla^2\phi(\vx)^{-1}}}\right\}\leq\left\{\begin{array}{cc}
        6\norm{\vx}(1+4\norm{\vx}^2), & \norm{\vx}\leq1, \\
        \frac{12}{1+3\norm{\vx}^2}, & \norm{\vx}>1.
    \end{array}\right.$ When $M$ is sufficiently large, which usually occurs in high-dimensional scenarios, $D_{\cB}$ becomes very small. Thus, $L_{\cB}<\bar L_{\cB}$. In summary, when a high-order polynomial kernel is employed, \eqref{vanilla-SBPG} tends to have a better stability bound compared to standard SGD.
     
\end{enumerate}
\end{remark}
}

\section{Additional Experimental Results}
\label{appendix:additional exp}
{In this section, we provide further experimental results and assess the performance of MSBPG on more recent neural network architectures, including ConvNext~\citep{liu2022convnet} and the Vision Transformer (ViT)~\citep{dosovitskiy2020image}. The networks are trained on the CIFAR-100 dataset for 200 epochs, utilizing a cosine annealing learning rate schedule~\citep{loshchilov2016sgdr}, with a batch size of 128. The test accuracies for different optimizers, including MSBPG, SGD, Adam, and AdamW, are reported in Table \ref{tab:test_acc}. Notably, MSBPG achieved slightly higher test accuracy compared to the other methods. This advantage can be attributed to the Bregman proximity model used in our approach.
\begin{table}[ht]
\centering
\begin{tabular}{l|c|c|c|c}
\hline
Method & MSBPG & SGD & Adam & AdamW \\ \hline 
ViT Tiny & 62.37 & 60.65 & 56.88 & 58.77 \\ \hline
ViT Small & 63.59 & 61.90 & 57.66 & 59.39 \\ \hline
ConvNext Atto & 75.98 & 74.46 & 73.76 & 75.29 \\ \hline
ConvNext Femto & 76.47 & 75.96 & 74.35 & 75.65 \\ \hline
\end{tabular}
\caption{Test accuracy (\%) of different optimizers on CIFAR-100 dataset.}
\label{tab:test_acc}
\end{table}

To further evaluate the performance of MSBPG, we applied it to Transformer-XL~\citep{dai2019transformer}, a model designed for long-sequence tasks. We followed the official configuration to train the Transformer-XL-based model on the WikiText-103 dataset~\citep{merity2016pointer}, a large-scale word-level language modeling benchmark that involves long-term dependencies. The performance of MSBPG, along with that of SGD, Adam, and AdamW, was measured by the test perplexity after 50,000 training steps. The results are presented in Table \ref{tab:transformer_ppl}. 

\begin{table}[ht]
\centering
\begin{tabular}{l|c|c|c|c}
\hline
 & MSBPG & SGD & Adam & AdamW \\ \hline 
Transformer-XL & 31.07 & 33.81 & 33.53 & 32.17 \\ \hline
\end{tabular}
\caption{Test perplexity (lower is better) for Transformer-XL-based model on the WikiText-103 dataset.}
\label{tab:transformer_ppl}
\end{table}

We also plot the gradient norm of the objective function, which indicates the stationarity of the minimization problem when the nonsmooth term is absent, as shown in Figure \ref{gradient-norm-figure}. From the figure, we observe that our method, MSBPG, successfully reaches a stationary point, similar to other optimization methods. It is important to highlight that our method theoretically converges to the stationary point without requiring Lipschitz smoothness, while the convergence of traditional methods relies on this property. With careful tuning of the learning rate, methods such as SGD, Adam, and AdamW can also achieve stationary points, but they are more sensitive to learning rate choices compared to MSBPG. This sensitivity is further demonstrated experimentally in Figure \ref{fig:robust}, and is partially attributed to the absence of Lipschitz smoothness, which increases the sensitivity to the learning rate. 

\begin{figure*}[th]
              \begin{center}
                     \setlength{\tabcolsep}{0.0pt}  
                     \scalebox{1}{\begin{tabular}{cc}
            \includegraphics[width=0.5\linewidth]{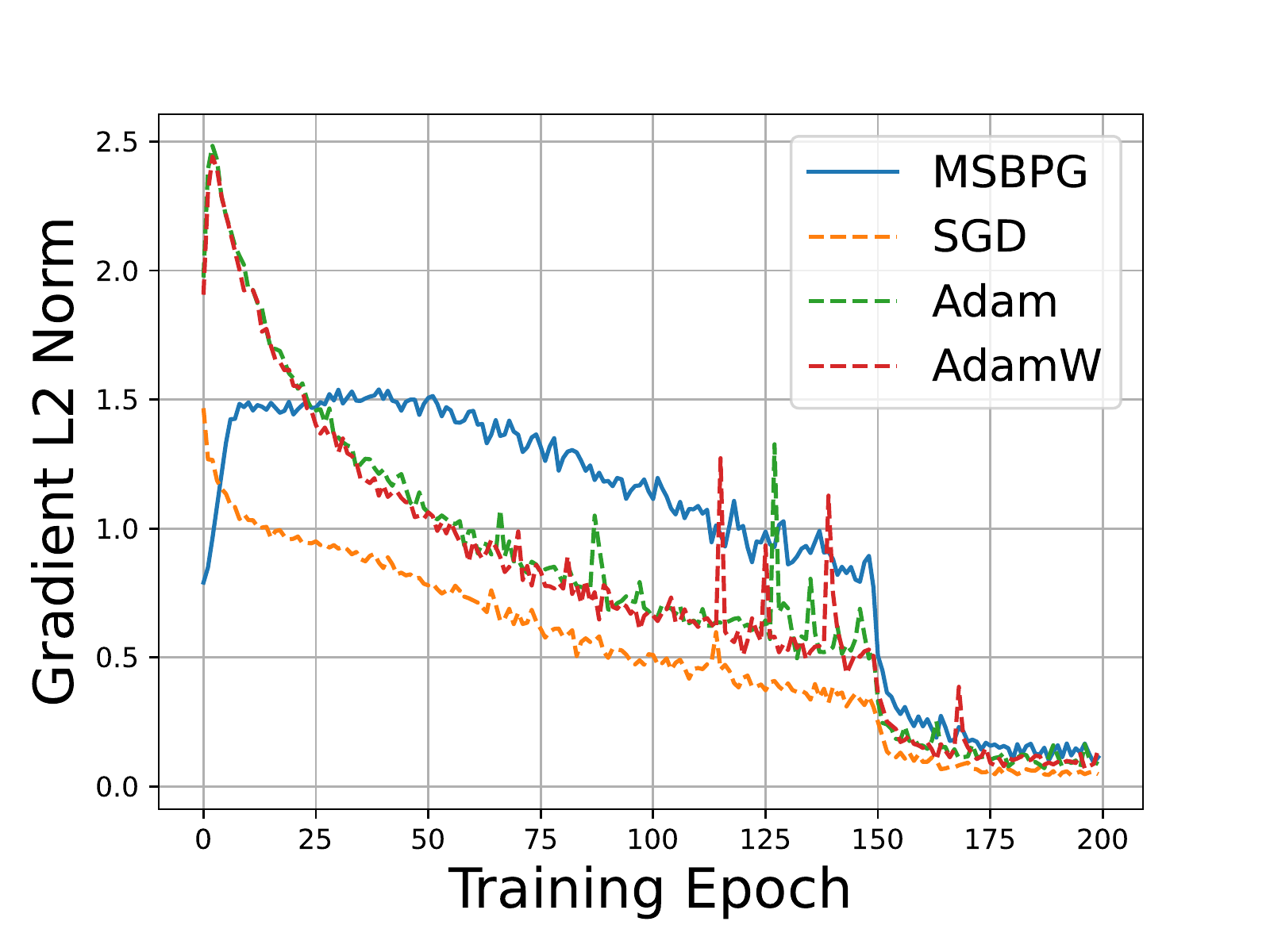}&
            \includegraphics[width=0.5\linewidth]{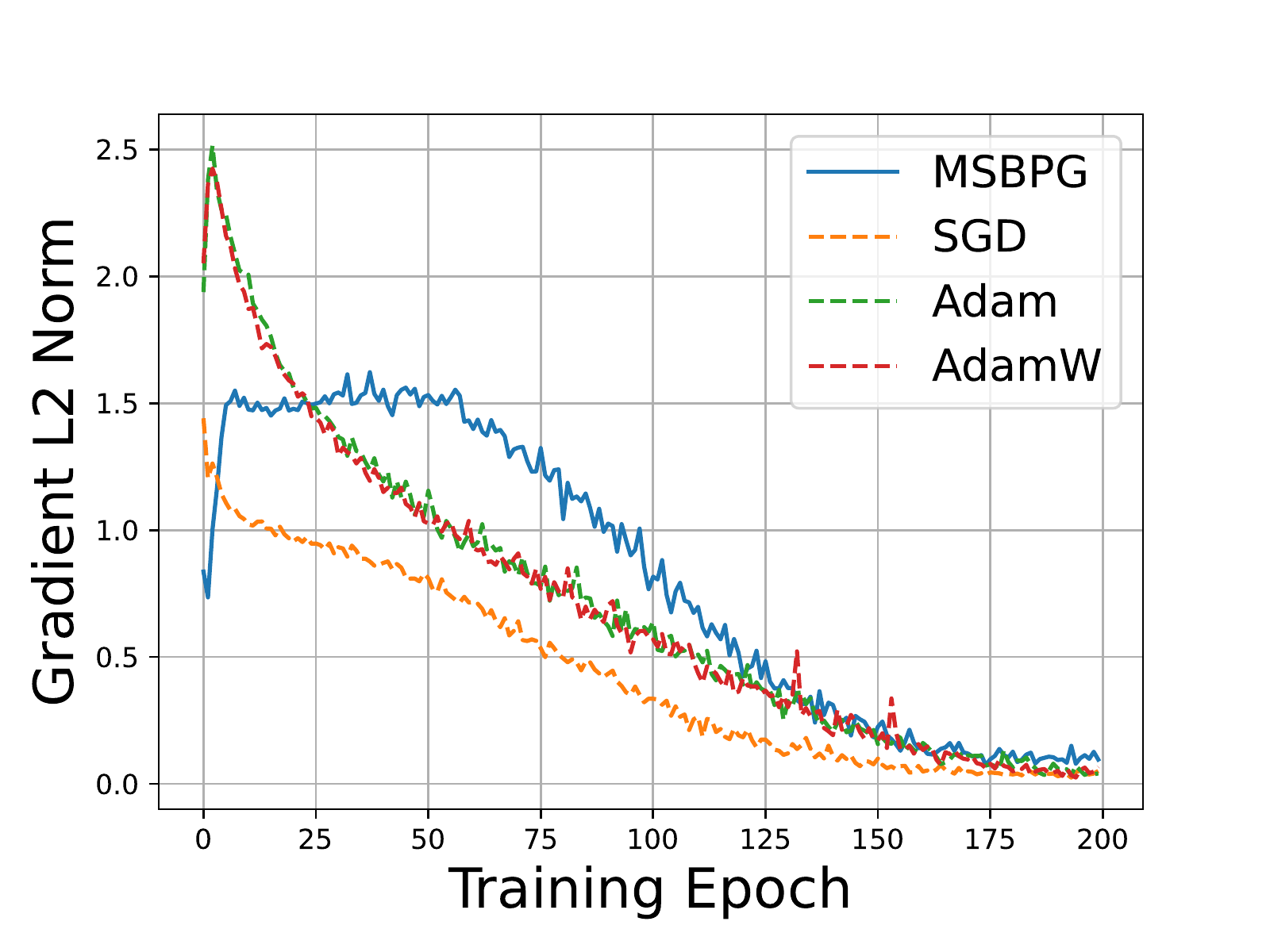}\\
            \footnotesize{(a) Step decay}
            &  \footnotesize{(b) Cosine annealing}
            \\
                     \end{tabular}}
              \end{center}
       \vspace{-0.8em}
              \caption{Gradient norm of the objective function. When the nonsmooth term is absent, the gradient norm can imply stationarity of the minimization problem.}
\label{gradient-norm-figure}
\end{figure*}
}

\section{Additional Figures}\label{app-fig}
\begin{figure}[th]
\centering
\subfigure[Illustration of SBPG and SGD updates]{
\includegraphics[width=5.5cm,height=4.5cm]{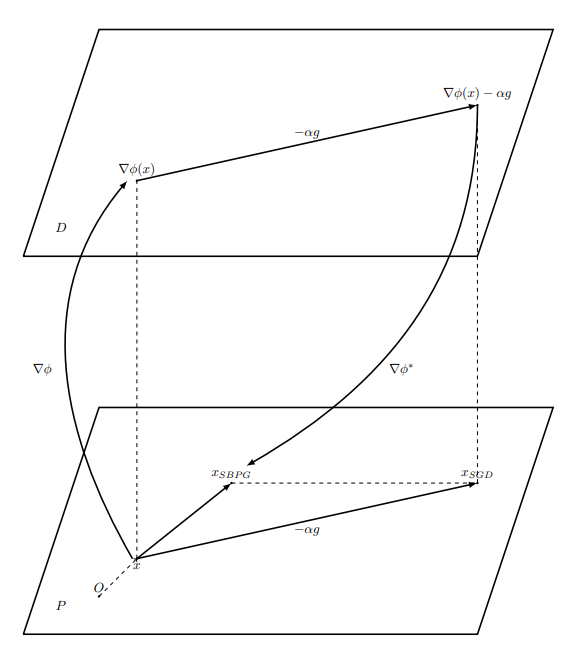}
}
\subfigure[Different initial points for SBPG]{
\includegraphics[width=5.5cm]{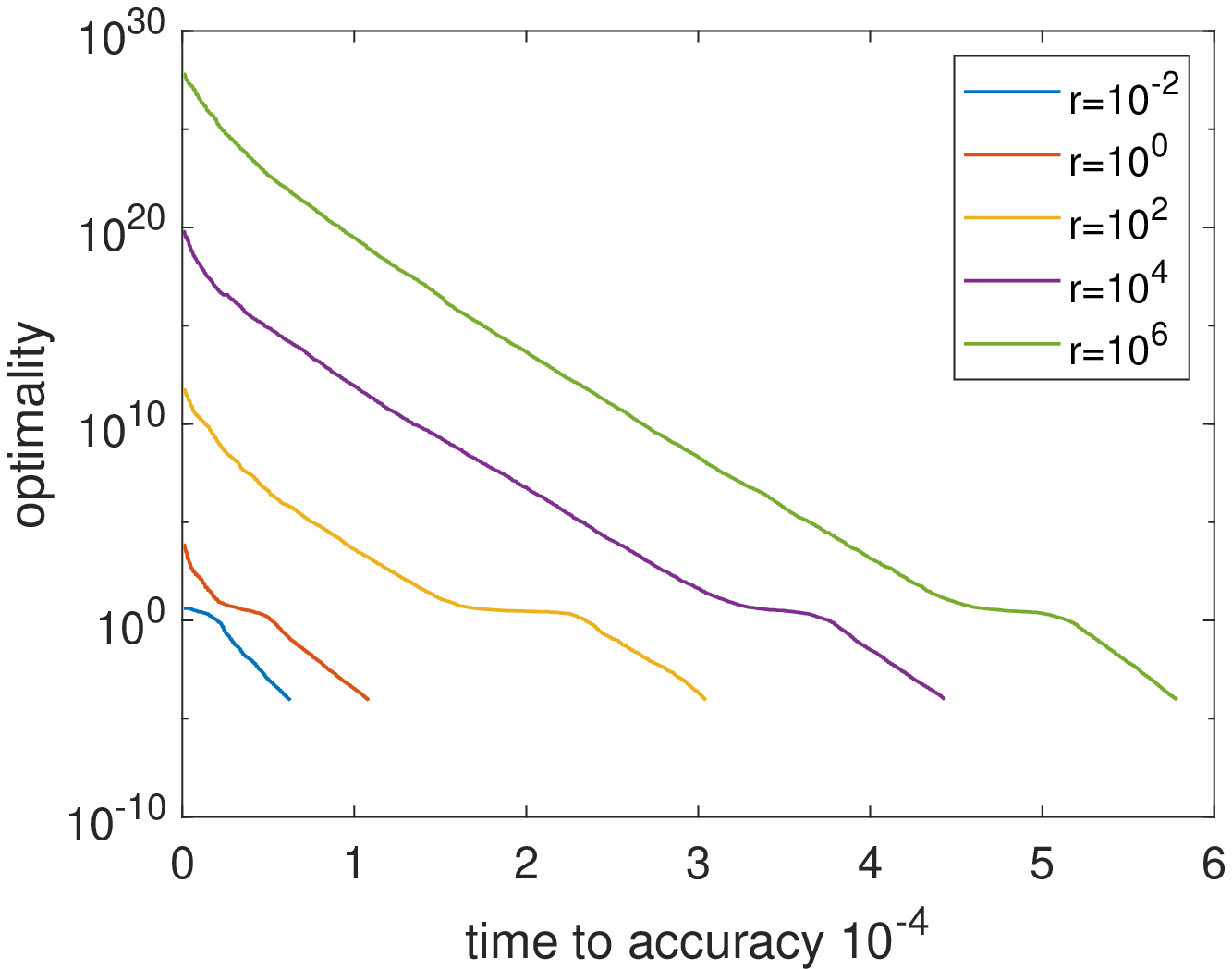}
}
\caption{Figure (a) depicts SGD and SBPG updates. SBPG includes a "pull back" mechanism that prevents the point from moving excessively in any given direction. $"P"$ and $"D"$ refer to the primal and dual spaces, respectively, and these terms are commonly used in the mirror descent method literature (see, e.g., \cite{bubeck2015convex,nemirovskij1983problem}). Figure (b) illustrates the effect of {choosing the initial point from a ball of radius $r$
on the SBPG when $r$ changes} for 
the QIP example with $d=100$ and $n=5000$. All initial step sizes are set to $1\times10^{-3}$. As shown in Figure (b), even for an initial point that is far from the optimal point, SBPG can pull back the iterates to the optimal point. }
\label{additional-figure}
\end{figure}

\vskip 0.2in
\bibliography{SBPG}

\begin{thebibliography}{88}
\providecommand{\natexlab}[1]{#1}
\providecommand{\url}[1]{\texttt{#1}}
\expandafter\ifx\csname urlstyle\endcsname\relax
  \providecommand{\doi}[1]{doi: #1}\else
  \providecommand{\doi}{doi: \begingroup \urlstyle{rm}\Url}\fi

\bibitem[Allen-Zhu(2018)]{JMLR:v18:16-410}
Zeyuan Allen-Zhu.
\newblock Katyusha: The first direct acceleration of stochastic gradient
  methods.
\newblock \emph{Journal of Machine Learning Research}, 18\penalty0
  (221):\penalty0 1--51, 2018.

\bibitem[Alvarez et~al.(2004)Alvarez, Bolte, and Brahic]{alvarez2004hessian}
Felipe Alvarez, J{\'e}r{\^o}me Bolte, and Olivier Brahic.
\newblock Hessian riemannian gradient flows in convex programming.
\newblock \emph{SIAM journal on control and optimization}, 43\penalty0
  (2):\penalty0 477--501, 2004.

\bibitem[Arjevani et~al.(2023)Arjevani, Carmon, Duchi, Foster, Srebro, and
  Woodworth]{arjevani2023lower}
Yossi Arjevani, Yair Carmon, John~C Duchi, Dylan~J Foster, Nathan Srebro, and
  Blake Woodworth.
\newblock Lower bounds for non-convex stochastic optimization.
\newblock \emph{Mathematical Programming}, 199\penalty0 (1):\penalty0 165--214,
  2023.

\bibitem[Azizan et~al.(2019)Azizan, Lale, and Hassibi]{azizan2019stochastic}
Navid Azizan, Sahin Lale, and Babak Hassibi.
\newblock Stochastic mirror descent on overparameterized nonlinear models:
  Convergence, implicit regularization, and generalization.
\newblock \emph{arXiv preprint arXiv:1906.03830}, 2019.

\bibitem[Bauschke et~al.(2017)Bauschke, Bolte, and
  Teboulle]{bauschke2017descent}
Heinz~H Bauschke, J{\'e}r{\^o}me Bolte, and Marc Teboulle.
\newblock A descent lemma beyond {L}ipschitz gradient continuity: first-order
  methods revisited and applications.
\newblock \emph{Mathematics of Operations Research}, 42\penalty0 (2):\penalty0
  330--348, 2017.

\bibitem[Beck and Eldar(2013)]{beck2013sparsity}
Amir Beck and Yonina~C Eldar.
\newblock Sparsity constrained nonlinear optimization: Optimality conditions
  and algorithms.
\newblock \emph{SIAM Journal on Optimization}, 23\penalty0 (3):\penalty0
  1480--1509, 2013.

\bibitem[Bertsekas(2011)]{bertsekas2011incremental}
Dimitri~P Bertsekas.
\newblock Incremental proximal methods for large scale convex optimization.
\newblock \emph{Mathematical Programming}, 129\penalty0 (2):\penalty0 163--195,
  2011.

\bibitem[Bertsekas and Tsitsiklis(2000)]{bertsekas2000gradient}
Dimitri~P Bertsekas and John~N Tsitsiklis.
\newblock Gradient convergence in gradient methods with errors.
\newblock \emph{SIAM Journal on Optimization}, 10\penalty0 (3):\penalty0
  627--642, 2000.

\bibitem[Bianchi(2016)]{bianchi2016ergodic}
Pascal Bianchi.
\newblock Ergodic convergence of a stochastic proximal point algorithm.
\newblock \emph{SIAM Journal on Optimization}, 26\penalty0 (4):\penalty0
  2235--2260, 2016.

\bibitem[Bolte et~al.(2018)Bolte, Sabach, Teboulle, and
  Vaisbourd]{bolte2018first}
J{\'e}r{\^o}me Bolte, Shoham Sabach, Marc Teboulle, and Yakov Vaisbourd.
\newblock First order methods beyond convexity and {L}ipschitz gradient
  continuity with applications to quadratic inverse problems.
\newblock \emph{SIAM Journal on Optimization}, 28\penalty0 (3):\penalty0
  2131--2151, 2018.

\bibitem[Bousquet and Elisseeff(2002)]{bousquet2002stability}
Olivier Bousquet and Andr{\'e} Elisseeff.
\newblock Stability and generalization.
\newblock \emph{The Journal of Machine Learning Research}, 2:\penalty0
  499--526, 2002.

\bibitem[Bregman(1967)]{bregman1967relaxation}
Lev~M Bregman.
\newblock The relaxation method of finding the common point of convex sets and
  its application to the solution of problems in convex programming.
\newblock \emph{USSR Computational Mathematics and Mathematical Physics},
  7\penalty0 (3):\penalty0 200--217, 1967.

\bibitem[Bubeck et~al.(2015)]{bubeck2015convex}
S{\'e}bastien Bubeck et~al.
\newblock Convex optimization: Algorithms and complexity.
\newblock \emph{Foundations and Trends in Machine Learning}, 8\penalty0
  (3-4):\penalty0 231--357, 2015.

\bibitem[Chen and Teboulle(1993)]{chen1993convergence}
Gong Chen and Marc Teboulle.
\newblock Convergence analysis of a proximal-like minimization algorithm using
  bregman functions.
\newblock \emph{SIAM Journal on Optimization}, 3\penalty0 (3):\penalty0
  538--543, 1993.

\bibitem[Chen et~al.(2021)Chen, Zhou, Tang, Yang, Cao, and Gu]{chen2021closing}
Jinghui Chen, Dongruo Zhou, Yiqi Tang, Ziyan Yang, Yuan Cao, and Quanquan Gu.
\newblock Closing the generalization gap of adaptive gradient methods in
  training deep neural networks.
\newblock In \emph{Proceedings of the Twenty-Ninth International Conference on
  International Joint Conferences on Artificial Intelligence}, pages
  3267--3275, 2021.

\bibitem[Chi et~al.(2019)Chi, Lu, and Chen]{chi2019nonconvex}
Yuejie Chi, Yue~M Lu, and Yuxin Chen.
\newblock Nonconvex optimization meets low-rank matrix factorization: An
  overview.
\newblock \emph{IEEE Transactions on Signal Processing}, 67\penalty0
  (20):\penalty0 5239--5269, 2019.

\bibitem[Dai et~al.(2019)Dai, Yang, Yang, Carbonell, Le, and
  Salakhutdinov]{dai2019transformer}
Zihang Dai, Zhilin Yang, Yiming Yang, Jaime~G Carbonell, Quoc Le, and Ruslan
  Salakhutdinov.
\newblock Transformer-xl: Attentive language models beyond a fixed-length
  context.
\newblock In \emph{Proceedings of the 57th Annual Meeting of the Association
  for Computational Linguistics}, pages 2978--2988, 2019.

\bibitem[Defazio and Bottou(2019)]{defazio2019ineffectiveness}
Aaron Defazio and L{\'e}on Bottou.
\newblock On the ineffectiveness of variance reduced optimization for deep
  learning.
\newblock \emph{Advances in Neural Information Processing Systems}, 32, 2019.

\bibitem[Ding and Toh(2024)]{ding2024stochastic}
Kuangyu Ding and Kim-Chuan Toh.
\newblock Stochastic bregman subgradient methods for nonsmooth nonconvex
  optimization problems.
\newblock \emph{arXiv preprint arXiv:2404.17386}, 2024.

\bibitem[Dosovitskiy(2020)]{dosovitskiy2020image}
Alexey Dosovitskiy.
\newblock An image is worth 16x16 words: Transformers for image recognition at
  scale.
\newblock \emph{arXiv preprint arXiv:2010.11929}, 2020.

\bibitem[Dragomir et~al.(2021{\natexlab{a}})Dragomir, d’Aspremont, and
  Bolte]{dragomir2021quartic}
Radu-Alexandru Dragomir, Alexandre d’Aspremont, and J{\'e}r{\^o}me Bolte.
\newblock Quartic first-order methods for low-rank minimization.
\newblock \emph{Journal of Optimization Theory and Applications}, 189:\penalty0
  341--363, 2021{\natexlab{a}}.

\bibitem[Dragomir et~al.(2021{\natexlab{b}})Dragomir, Even, and
  Hendrikx]{dragomir2021fast}
Radu~Alexandru Dragomir, Mathieu Even, and Hadrien Hendrikx.
\newblock Fast stochastic bregman gradient methods: Sharp analysis and variance
  reduction.
\newblock In \emph{International Conference on Machine Learning}, pages
  2815--2825. PMLR, 2021{\natexlab{b}}.

\bibitem[Duchi and Singer(2009)]{duchi2009efficient}
John Duchi and Yoram Singer.
\newblock Efficient online and batch learning using forward backward splitting.
\newblock \emph{Journal of Machine Learning Research}, 10:\penalty0 2899--2934,
  2009.

\bibitem[Duchi et~al.(2011)Duchi, Hazan, and Singer]{duchi2011adaptive}
John Duchi, Elad Hazan, and Yoram Singer.
\newblock Adaptive subgradient methods for online learning and stochastic
  optimization.
\newblock \emph{Journal of Machine Learning Research}, 12\penalty0 (7), 2011.

\bibitem[Friedman et~al.(2010)Friedman, Hastie, and
  Tibshirani]{friedman2010regularization}
J.~Friedman, T.~Hastie, and R.~Tibshirani.
\newblock Regularization paths for generalized linear models via coordinate
  descent.
\newblock \emph{Journal of Statistical Software}, 33\penalty0 (1):\penalty0
  1--22, 2010.

\bibitem[Fu et~al.(2016)Fu, Zhang, and Li]{fu2016using}
Rui Fu, Zuo Zhang, and Li~Li.
\newblock Using {LSTM} and {GRU} neural network methods for traffic flow
  prediction.
\newblock In \emph{2016 31st Youth Academic Annual Conference of Chinese
  Association of Automation (YAC)}, pages 324--328. IEEE, 2016.

\bibitem[Ge et~al.(2016)Ge, Lee, and Ma]{ge2016matrix}
Rong Ge, Jason~D Lee, and Tengyu Ma.
\newblock Matrix completion has no spurious local minimum.
\newblock \emph{Advances in neural information processing systems}, 29, 2016.

\bibitem[Ghadimi et~al.(2016)Ghadimi, Lan, and Zhang]{ghadimi2016mini}
Saeed Ghadimi, Guanghui Lan, and Hongchao Zhang.
\newblock Mini-batch stochastic approximation methods for nonconvex stochastic
  composite optimization.
\newblock \emph{Mathematical Programming}, 155\penalty0 (1-2):\penalty0
  267--305, 2016.

\bibitem[Gu et~al.(2020)Gu, Xian, Huo, Deng, and Huang]{gu2020unified}
Bin Gu, Wenhan Xian, Zhouyuan Huo, Cheng Deng, and Heng Huang.
\newblock A unified q-memorization framework for asynchronous stochastic
  optimization.
\newblock \emph{The Journal of Machine Learning Research}, 21\penalty0
  (1):\penalty0 7761--7813, 2020.

\bibitem[Hanzely and Richt{\'a}rik(2021)]{hanzely2021fastest}
Filip Hanzely and Peter Richt{\'a}rik.
\newblock Fastest rates for stochastic mirror descent methods.
\newblock \emph{Computational Optimization and Applications}, 79:\penalty0
  717--766, 2021.

\bibitem[Hanzely et~al.(2021)Hanzely, Richtarik, and
  Xiao]{hanzely2021accelerated}
Filip Hanzely, Peter Richtarik, and Lin Xiao.
\newblock {Accelerated Bregman proximal gradient methods for relatively smooth
  convex optimization}.
\newblock \emph{Computational Optimization and Applications}, 79:\penalty0
  405--440, 2021.

\bibitem[Hardt et~al.(2016)Hardt, Recht, and Singer]{hardt2016train}
Moritz Hardt, Ben Recht, and Yoram Singer.
\newblock Train faster, generalize better: Stability of stochastic gradient
  descent.
\newblock In \emph{International conference on machine learning}, pages
  1225--1234. PMLR, 2016.

\bibitem[Hastie et~al.(2009)Hastie, Tibshirani, Friedman, and
  Friedman]{hastie2009elements}
Trevor Hastie, Robert Tibshirani, Jerome~H Friedman, and Jerome~H Friedman.
\newblock \emph{The elements of statistical learning: data mining, inference,
  and prediction}, volume~2.
\newblock Springer, 2009.

\bibitem[He et~al.(2016)He, Zhang, Ren, and Sun]{he2016deep}
Kaiming He, Xiangyu Zhang, Shaoqing Ren, and Jian Sun.
\newblock Deep residual learning for image recognition.
\newblock In \emph{Proceedings of the IEEE conference on Computer Vision and
  Pattern Recognition}, pages 770--778, 2016.

\bibitem[Hendrycks and Gimpel(2016)]{hendrycks2016gaussian}
Dan Hendrycks and Kevin Gimpel.
\newblock Gaussian error linear units (gelus).
\newblock \emph{arXiv preprint arXiv:1606.08415}, 2016.

\bibitem[Hiriart-Urruty and Lemar{\'e}chal(1993)]{hiriart1993convex}
Jean-Baptiste Hiriart-Urruty and Claude Lemar{\'e}chal.
\newblock \emph{Convex analysis and minimization algorithms I: Fundamentals},
  volume 305.
\newblock Springer science \& business media, 1993.

\bibitem[Hochreiter(1991)]{hochreiter1991untersuchungen}
Sepp Hochreiter.
\newblock Untersuchungen zu dynamischen neuronalen netzen.
\newblock \emph{Diploma, Technische Universit{\"a}t M{\"u}nchen}, 91\penalty0
  (1), 1991.

\bibitem[Hochreiter and Schmidhuber(1997)]{schmidhuber1997long}
Sepp Hochreiter and J{\"u}rgen Schmidhuber.
\newblock Long short-term memory.
\newblock \emph{Neural Computation}, 9\penalty0 (8):\penalty0 1735--1780, 1997.

\bibitem[Huang et~al.(2017)Huang, Liu, Van Der~Maaten, and
  Weinberger]{huang2017densely}
Gao Huang, Zhuang Liu, Laurens Van Der~Maaten, and Kilian~Q Weinberger.
\newblock Densely connected convolutional networks.
\newblock In \emph{Proceedings of the IEEE conference on computer vision and
  pattern recognition}, pages 4700--4708, 2017.

\bibitem[Ioffe and Szegedy(2015)]{ioffe2015batch}
Sergey Ioffe and Christian Szegedy.
\newblock Batch normalization: Accelerating deep network training by reducing
  internal covariate shift.
\newblock In \emph{International Conference on Machine Learning}, pages
  448--456. pmlr, 2015.

\bibitem[Kingma and Ba(2014)]{kingma2014adam}
Diederik~P Kingma and Jimmy Ba.
\newblock Adam: A method for stochastic optimization.
\newblock \emph{arXiv preprint arXiv:1412.6980}, 2014.

\bibitem[Krizhevsky et~al.(2009)Krizhevsky, Hinton,
  et~al.]{krizhevsky2009learning}
Alex Krizhevsky, Geoffrey Hinton, et~al.
\newblock Learning multiple layers of features from tiny images.
\newblock 2009.

\bibitem[Latafat et~al.(2022)Latafat, Themelis, Ahookhosh, and
  Patrinos]{latafat2022bregman}
Puya Latafat, Andreas Themelis, Masoud Ahookhosh, and Panagiotis Patrinos.
\newblock Bregman finito/miso for nonconvex regularized finite sum minimization
  without lipschitz gradient continuity.
\newblock \emph{SIAM Journal on Optimization}, 32\penalty0 (3):\penalty0
  2230--2262, 2022.

\bibitem[LeCun et~al.(2015)LeCun, Bengio, and Hinton]{lecun2015deep}
Yann LeCun, Yoshua Bengio, and Geoffrey Hinton.
\newblock Deep learning.
\newblock \emph{Nature}, 521\penalty0 (7553):\penalty0 436--444, 2015.

\bibitem[Lee et~al.(2019)Lee, Panageas, Piliouras, Simchowitz, Jordan, and
  Recht]{lee2019first}
Jason~D Lee, Ioannis Panageas, Georgios Piliouras, Max Simchowitz, Michael~I
  Jordan, and Benjamin Recht.
\newblock First-order methods almost always avoid strict saddle points.
\newblock \emph{Mathematical programming}, 176:\penalty0 311--337, 2019.

\bibitem[Li et~al.(2021)Li, Ju, Fang, and Zhao]{li2021implicit}
Yan Li, Caleb Ju, Ethan~X Fang, and Tuo Zhao.
\newblock Implicit regularization of bregman proximal point algorithm and
  mirror descent on separable data.
\newblock \emph{arXiv preprint arXiv:2108.06808}, 2021.

\bibitem[Lin et~al.(2007)Lin, Weng, and Keerthi]{lin2007trust}
Chih-Jen Lin, Ruby~C Weng, and S~Sathiya Keerthi.
\newblock Trust region newton methods for large-scale logistic regression.
\newblock In \emph{Proceedings of the 24th international conference on Machine
  learning}, pages 561--568, 2007.

\bibitem[Liu et~al.(2022)Liu, Mao, Wu, Feichtenhofer, Darrell, and
  Xie]{liu2022convnet}
Zhuang Liu, Hanzi Mao, Chao-Yuan Wu, Christoph Feichtenhofer, Trevor Darrell,
  and Saining Xie.
\newblock A convnet for the 2020s.
\newblock In \emph{Proceedings of the IEEE/CVF conference on computer vision
  and pattern recognition}, pages 11976--11986, 2022.

\bibitem[Loshchilov and Hutter(2016)]{loshchilov2016sgdr}
Ilya Loshchilov and Frank Hutter.
\newblock Sgdr: Stochastic gradient descent with warm restarts.
\newblock \emph{arXiv preprint arXiv:1608.03983}, 2016.

\bibitem[Loshchilov and Hutter(2017)]{loshchilov2017decoupled}
Ilya Loshchilov and Frank Hutter.
\newblock Decoupled weight decay regularization.
\newblock \emph{arXiv preprint arXiv:1711.05101}, 2017.

\bibitem[Lu(2019)]{lu2019relative}
Haihao Lu.
\newblock {“Relative continuity”} for non-{L}ipschitz nonsmooth convex
  optimization using stochastic (or deterministic) mirror descent.
\newblock \emph{INFORMS Journal on Optimization}, 1\penalty0 (4):\penalty0
  288--303, 2019.

\bibitem[Lu et~al.(2018)Lu, Freund, and Nesterov]{lu2018relatively}
Haihao Lu, Robert~M Freund, and Yurii Nesterov.
\newblock Relatively smooth convex optimization by first-order methods, and
  applications.
\newblock \emph{SIAM Journal on Optimization}, 28\penalty0 (1):\penalty0
  333--354, 2018.

\bibitem[Luke(2017)]{luke2017phase}
D~Russell Luke.
\newblock Phase retrieval, what’s new.
\newblock \emph{SIAG/OPT Views and News}, 25\penalty0 (1):\penalty0 1--5, 2017.

\bibitem[Luo et~al.(2019)Luo, Xiong, Liu, and Sun]{luo2019adaptive}
Liangchen Luo, Yuanhao Xiong, Yan Liu, and Xu~Sun.
\newblock Adaptive gradient methods with dynamic bound of learning rate.
\newblock \emph{arXiv preprint arXiv:1902.09843}, 2019.

\bibitem[Manchev and Spratling(2020)]{manchev2020target}
Nikolay Manchev and Michael Spratling.
\newblock Target propagation in recurrent neural networks.
\newblock \emph{The Journal of Machine Learning Research}, 21\penalty0
  (1):\penalty0 250--282, 2020.

\bibitem[Marcinkiewicz(1994)]{marcinkiewicz1994building}
Mary~Ann Marcinkiewicz.
\newblock Building a large annotated corpus of english: The penn treebank.
\newblock \emph{Using Large Corpora}, 273, 1994.

\bibitem[Merity et~al.(2016)Merity, Xiong, Bradbury, and
  Socher]{merity2016pointer}
Stephen Merity, Caiming Xiong, James Bradbury, and Richard Socher.
\newblock Pointer sentinel mixture models.
\newblock \emph{arXiv preprint arXiv:1609.07843}, 2016.

\bibitem[Nemirovskij and Yudin(1983)]{nemirovskij1983problem}
Arkadij~Semenovi{\v{c}} Nemirovskij and David~Borisovich Yudin.
\newblock Problem complexity and method efficiency in optimization.
\newblock \emph{Wiley Interscience}, 1983.

\bibitem[Nesterov(2005)]{nesterov2005smooth}
Yu~Nesterov.
\newblock Smooth minimization of non-smooth functions.
\newblock \emph{Mathematical Programming}, 103:\penalty0 127--152, 2005.

\bibitem[Nesterov(2003)]{nesterov2003introductory}
Yurii Nesterov.
\newblock \emph{Introductory lectures on convex optimization: A basic course},
  volume~87.
\newblock Springer Science \& Business Media, 2003.

\bibitem[Ng(2004)]{ng2004feature}
Andrew~Y Ng.
\newblock Feature selection, {L1} vs. {L2} regularization, and rotational
  invariance.
\newblock In \emph{Proceedings of the twenty-first International Conference on
  Machine Learning}, page~78, 2004.

\bibitem[Panageas et~al.(2019)Panageas, Piliouras, and Wang]{panageas2019first}
Ioannis Panageas, Georgios Piliouras, and Xiao Wang.
\newblock First-order methods almost always avoid saddle points: The case of
  vanishing step-sizes.
\newblock \emph{Advances in Neural Information Processing Systems}, 32, 2019.

\bibitem[Paren et~al.(2022)Paren, Berrada, Poudel, and
  Kumar]{paren2022stochastic}
Alasdair Paren, Leonard Berrada, Rudra~PK Poudel, and M~Pawan Kumar.
\newblock A stochastic bundle method for interpolating networks.
\newblock \emph{Journal of Machine Learning Research}, 23:\penalty0 1--57,
  2022.

\bibitem[Pascanu et~al.(2013)Pascanu, Mikolov, and
  Bengio]{pascanu2013difficulty}
Razvan Pascanu, Tomas Mikolov, and Yoshua Bengio.
\newblock On the difficulty of training recurrent neural networks.
\newblock In \emph{International Conference on Machine Learning}, pages
  1310--1318. Pmlr, 2013.

\bibitem[Patrascu and Necoara(2017)]{patrascu2017nonasymptotic}
Andrei Patrascu and Ion Necoara.
\newblock Nonasymptotic convergence of stochastic proximal point methods for
  constrained convex optimization.
\newblock \emph{The Journal of Machine Learning Research}, 18\penalty0
  (1):\penalty0 7204--7245, 2017.

\bibitem[Polyak(1963)]{polyak1963gradient}
Boris~Teodorovich Polyak.
\newblock Gradient methods for minimizing functionals.
\newblock \emph{Zhurnal vychislitel'noi matematiki i matematicheskoi fiziki},
  3\penalty0 (4):\penalty0 643--653, 1963.

\bibitem[Robbins and Monro(1951)]{robbins1951stochastic}
Herbert Robbins and Sutton Monro.
\newblock A stochastic approximation method.
\newblock \emph{The Annals of Mathematical Statistics}, pages 400--407, 1951.

\bibitem[Rockafellar(1976)]{rockafellar1976monotone}
R~Tyrrell Rockafellar.
\newblock Monotone operators and the proximal point algorithm.
\newblock \emph{SIAM Journal on Control and Optimization}, 14\penalty0
  (5):\penalty0 877--898, 1976.

\bibitem[Rockafellar(1997)]{rockafellar1997convex}
R~Tyrrell Rockafellar.
\newblock \emph{Convex analysis}, volume~11.
\newblock Princeton university press, 1997.

\bibitem[Rockafellar and Wets(1998)]{RockWets98}
{R. Tyrrell} Rockafellar and Roger J.-B. Wets.
\newblock \emph{Variational Analysis}.
\newblock Springer Verlag, Heidelberg, Berlin, New York, 1998.

\bibitem[Shalev-Shwartz et~al.(2010)Shalev-Shwartz, Shamir, Srebro, and
  Sridharan]{shalev2010learnability}
Shai Shalev-Shwartz, Ohad Shamir, Nathan Srebro, and Karthik Sridharan.
\newblock Learnability, stability and uniform convergence.
\newblock \emph{The Journal of Machine Learning Research}, 11:\penalty0
  2635--2670, 2010.

\bibitem[Shapiro et~al.(2021)Shapiro, Dentcheva, and
  Ruszczynski]{shapiro2021lectures}
Alexander Shapiro, Darinka Dentcheva, and Andrzej Ruszczynski.
\newblock \emph{Lectures on stochastic programming: modeling and theory}.
\newblock SIAM, 2021.

\bibitem[Siami-Namini et~al.(2019)Siami-Namini, Tavakoli, and
  Namin]{siami2019performance}
Sima Siami-Namini, Neda Tavakoli, and Akbar~Siami Namin.
\newblock The performance of {LSTM} and {BiLSTM} in forecasting time series.
\newblock In \emph{2019 IEEE International Conference on Big Data}, pages
  3285--3292. IEEE, 2019.

\bibitem[Simonyan and Zisserman(2014)]{simonyan2014very}
Karen Simonyan and Andrew Zisserman.
\newblock Very deep convolutional networks for large-scale image recognition.
\newblock \emph{arXiv preprint arXiv:1409.1556}, 2014.

\bibitem[Sun et~al.(2022)Sun, Ahn, Thrampoulidis, and Azizan]{sun2022mirror}
Haoyuan Sun, Kwangjun Ahn, Christos Thrampoulidis, and Navid Azizan.
\newblock Mirror descent maximizes generalized margin and can be implemented
  efficiently.
\newblock \emph{Advances in Neural Information Processing Systems},
  35:\penalty0 31089--31101, 2022.

\bibitem[Sun et~al.(2023)Sun, Gatmiry, Ahn, and Azizan]{sun2023unified}
Haoyuan Sun, Khashayar Gatmiry, Kwangjun Ahn, and Navid Azizan.
\newblock A unified approach to controlling implicit regularization via mirror
  descent.
\newblock \emph{Journal of Machine Learning Research}, 24\penalty0
  (393):\penalty0 1--58, 2023.

\bibitem[Sun et~al.(2018)Sun, Qu, and Wright]{sun2018geometric}
Ju~Sun, Qing Qu, and John Wright.
\newblock A geometric analysis of phase retrieval.
\newblock \emph{Foundations of Computational Mathematics}, 18:\penalty0
  1131--1198, 2018.

\bibitem[Sutskever et~al.(2013)Sutskever, Martens, Dahl, and
  Hinton]{sutskever2013importance}
Ilya Sutskever, James Martens, George Dahl, and Geoffrey Hinton.
\newblock On the importance of initialization and momentum in deep learning.
\newblock In \emph{International Conference on Machine Learning}, pages
  1139--1147. PMLR, 2013.

\bibitem[Teboulle(2018)]{teboulle2018simplified}
Marc Teboulle.
\newblock A simplified view of first order methods for optimization.
\newblock \emph{Mathematical Programming}, 170\penalty0 (1):\penalty0 67--96,
  2018.

\bibitem[Wang et~al.(2022)Wang, Ma, and Xue]{wang2022riemannian}
Bokun Wang, Shiqian Ma, and Lingzhou Xue.
\newblock Riemannian stochastic proximal gradient methods for nonsmooth
  optimization over the stiefel manifold.
\newblock \emph{The Journal of Machine Learning Research}, 23\penalty0
  (1):\penalty0 4599--4631, 2022.

\bibitem[Wang et~al.(2017)Wang, Fang, and Liu]{wang2017stochastic}
Mengdi Wang, Ethan~X Fang, and Han Liu.
\newblock Stochastic compositional gradient descent: algorithms for minimizing
  compositions of expected-value functions.
\newblock \emph{Mathematical Programming}, 161:\penalty0 419--449, 2017.

\bibitem[Wang and Han(2023)]{wang2023bregman}
Qingsong Wang and Deren Han.
\newblock {A Bregman stochastic method for nonconvex nonsmooth problem beyond
  global Lipschitz gradient continuity}.
\newblock \emph{Optimization Methods and Software}, 38\penalty0 (5):\penalty0
  914--946, 2023.

\bibitem[Yang and Toh(2021)]{yang2021inexact}
Lei Yang and Kim-Chuan Toh.
\newblock {Inexact Bregman Proximal Gradient Method and its Inertial Variant
  with Absolute and Relative Stopping Criteria}.
\newblock \emph{arXiv preprint arXiv:2109.05690}, 2021.

\bibitem[Yang and Toh(2022)]{yang2022bregman}
Lei Yang and Kim-Chuan Toh.
\newblock {Bregman proximal point algorithm revisited: A new inexact version
  and its inertial variant}.
\newblock \emph{SIAM Journal on Optimization}, 32\penalty0 (3):\penalty0
  1523--1554, 2022.

\bibitem[Yang et~al.(2024)Yang, Hu, and Toh]{yang2024inexact}
Lei Yang, Jingjing Hu, and Kim-Chuan Toh.
\newblock An inexact bregman proximal difference-of-convex algorithm with two
  types of relative stopping criteria.
\newblock \emph{arXiv preprint arXiv:2406.04646}, 2024.

\bibitem[You et~al.(2019)You, Li, Reddi, Hseu, Kumar, Bhojanapalli, Song,
  Demmel, Keutzer, and Hsieh]{you2019large}
Yang You, Jing Li, Sashank Reddi, Jonathan Hseu, Sanjiv Kumar, Srinadh
  Bhojanapalli, Xiaodan Song, James Demmel, Kurt Keutzer, and Cho-Jui Hsieh.
\newblock Large batch optimization for deep learning: Training bert in 76
  minutes.
\newblock \emph{arXiv preprint arXiv:1904.00962}, 2019.

\bibitem[Zhang(2004)]{zhang2004solving}
Tong Zhang.
\newblock Solving large scale linear prediction problems using stochastic
  gradient descent algorithms.
\newblock In \emph{Proceedings of the twenty-first International Conference on
  Machine Learning}, page 116, 2004.

\bibitem[Zhuang et~al.(2020)Zhuang, Tang, Ding, Tatikonda, Dvornek,
  Papademetris, and Duncan]{zhuang2020adabelief}
Juntang Zhuang, Tommy Tang, Yifan Ding, Sekhar~C Tatikonda, Nicha Dvornek,
  Xenophon Papademetris, and James Duncan.
\newblock Adabelief optimizer: {Adapting} stepsizes by the belief in observed
  gradients.
\newblock \emph{Advances in Neural Information Processing Systems},
  33:\penalty0 18795--18806, 2020.

\end{thebibliography}

\end{document}